\newtheorem{theorem}{Theorem}[section]
\newtheorem{prop}[theorem]{Proposition}
\newtheorem{lemma}[theorem]{Lemma}
\newtheorem{cor}[theorem]{Corollary}
\theoremstyle{remark}
\newtheorem{dfn}[theorem]{Definition}
\newtheorem{remark}[theorem]{Remark}
\newtheorem{claim}[theorem]{Claim}
\def\co{\colon\thinspace}
\def\delbar{\bar{\partial}}
\def\barnu{\bar{\nu}}
\def\ep{\epsilon}
\def\R{\mathbb{R}}
\def\Z{\mathbb{Z}}
\def\mm{\mathop{\min\!\max}\nolimits}
\def\osc{\mathop{osc}\nolimits}
\begin{document}
\title{Hofer's metrics and boundary depth 
}
\author{Michael Usher}
\address{Department of Mathematics\\University of Georgia\\Athens, GA 30602}
\email{usher@math.uga.edu}
\subjclass{53D22, 53D40}
\keywords{Hofer metric, Hamiltonian diffeomorphism, Lagrangian submanifold, Floer complex 
}

\maketitle
\begin{abstract}
We show that if $(M,\omega)$ is a closed symplectic manifold which admits a nontrivial Hamiltonian vector field all of whose contractible closed orbits are constant, then Hofer's metric on the group of Hamiltonian diffeomorphisms of $(M,\omega)$ has infinite diameter, and indeed admits infinite-dimensional quasi-isometrically embedded normed vector spaces.  A similar conclusion applies to Hofer's metric on various spaces of Lagrangian submanifolds, including those Hamiltonian-isotopic to the diagonal in $M\times M$ when $M$ satisfies the above dynamical condition.  To prove this, we use the properties of a Floer-theoretic quantity called the boundary depth, which measures the nontriviality of the boundary operator on the Floer complex in a way that encodes robust symplectic-topological information.
\end{abstract}



\section{Introduction}

Let $(M,\omega)$ be a symplectic manifold and let $H\co [0,1]\times M\to \R$ be a smooth function, which is compactly supported in $[0,1]\times int(M)$ in case $M$ is noncompact or has boundary.
$H$ then induces a time dependent Hamiltonian vector field by the prescription that \[ \omega(\cdot,X_H(t,\cdot))=d_M(H(t,\cdot)),\] and thence an isotopy $\phi_{H}^{t}\co M\to M$ by the prescription that $\phi_{H}^{0}=1_M$ and $\frac{d}{dt}\phi_{H}^{t}(m)=X_H(t,\phi_{H}^{t}(m))$.  

The Hamiltonian diffeomorphism group $Ham(M,\omega)$ is by definition the set of diffeomorphisms $\phi\co M\to M$ which can be written as $\phi=\phi_{H}^{1}$ for some $H$ as above (in particular if $M$ is noncompact or has boundary our convention is that all elements of $Ham(M,\omega)$ are compactly supported in the interior of $M$).   Of course $Ham(M,\omega)$ forms a group, all elements of which are symplectomorphisms of $(M,\omega)$.

For a function $H\co [0,1]\times M\to \R$ as above define \[ \osc H=\int_{0}^{1}\left(\max_{M}H(t,\cdot)-\min_MH(t,\cdot)\right)dt.\]  Now for $\phi\in Ham(M,\omega)$ let \[ \|\phi\|=\inf\left\{\osc H|\phi_{H}^{1}=\phi\right\}.\]  The \emph{Hofer metric on }$Ham(M,\omega)$ is then defined by, for $\phi,\psi\in Ham(M,\omega)$, \[ d(\phi,\psi)=\|\phi^{-1}\circ\psi\|.\]  As was shown for $\R^{2n}$ in \cite{Ho90} and for general symplectic manifolds in \cite{LM95a}, $d$ is a nondegenerate, biinvariant metric on $Ham(M,\omega)$.

Notwithstanding a significant amount of fairly deep work on this metric, our understanding of its global properties remains somewhat limited.  In particular, it is not yet known whether the metric is always unbounded.  It is widely believed that this is most likely the case, and we provide in this paper further evidence for this belief, as follows:

\begin{theorem}\label{hammain}
 Suppose that a closed symplectic manifold $(M,\omega)$ admits a nonconstant autonomous Hamiltonian $H\co M\to \R$ such that all contractible closed orbits of $X_H$ are constant.  Then the diameter of $Ham(M,\omega)$ with respect to Hofer's metric is infinite. 
In fact, there is a homomorphism \[ \Phi\co \R^{\infty}\to Ham(M,\omega) \] such that, for all $v,w\in \R^{\infty}$, \[ \|v-w\|_{\ell_{\infty}}\leq d(\Phi(v),\Phi(w))\leq \osc(v-w).\]
\end{theorem}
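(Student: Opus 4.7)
The plan is to build $\Phi$ explicitly from a commuting family of Hamiltonians of the form $f_n\circ H$, so that the upper bound follows from an oscillation computation, and to establish the lower bound using a sequence of boundary-depth invariants, one per coordinate, each localized to a separate level-set region of $H$. Since $H$ is nonconstant and $M$ is closed, $H(M)\subset\R$ is a nondegenerate compact interval, so I may pick a sequence $(I_n)_{n\geq 1}$ of pairwise disjoint closed subintervals of $\operatorname{int}H(M)$ consisting only of regular values of $H$, and for each $n$ a smooth $f_n\co \R\to [0,1]$ with $\operatorname{supp}f_n\subset I_n$ and $\max f_n=1$. For $c=(c_n)\in\R^\infty$ set $K(c)=\sum_n c_n(f_n\circ H)$ and $\Phi(c)=\phi^1_{K(c)}$. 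Each $f_n\circ H$ is a function of $H$ alone, so the Poisson brackets vanish pairwise, the flows commute, and $\Phi$ is a group homomorphism. For the upper bound the terms $(v_n-w_n)(f_n\circ H)$ have pairwise disjoint supports on $M$ and $K(v-w)$ vanishes outside their union, so $\osc K(v-w)=\osc(v-w)$, giving $d(\Phi(v),\Phi(w))=\|\Phi(v-w)\|\leq\osc K(v-w)=\osc(v-w)$.

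For the lower bound I would construct, for each $n$, a function $\beta_n\co Ham(M,\omega)\to [0,\infty)$ satisfying (a) $|\beta_n(\phi)-\beta_n(\psi)|\leq d(\phi,\psi)$ with $\beta_n(1_M)=0$, and (b) $\beta_n(\Phi(c))\geq |c_n|$. Granted (a) and (b), one has $|v_n-w_n|\leq\beta_n(\Phi(v-w))\leq d(\Phi(v),\Phi(w))$ for every $n$, and a supremum then yields $\|v-w\|_{\ell_\infty}\leq d(\Phi(v),\Phi(w))$. The invariant $\beta_n$ is to be a localized version of the boundary depth of the filtered Hamiltonian Floer complex of $\phi\in Ham(M,\omega)$, configured to see only action data attached to orbits in the region $H^{-1}(I_n)$. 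Property (a) would come from the standard robustness of boundary depth under Floer continuation. For (b) the dynamical hypothesis is crucial: because every contractible $X_H$-orbit is constant, the same holds of $X_{c_n f_n\circ H}$ (its orbits stay inside level sets of $H$), and the part of the filtered Floer complex of $K(c)$ that $\beta_n$ reads should reduce to a Morse-style complex on the critical set of $c_n f_n\circ H$, on which the boundary depth equals $|c_n|$.

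The delicate step is constructing $\beta_n$ as a genuine Hofer-Lipschitz invariant of elements of $Ham(M,\omega)$ that is sensitive only to the $n$-th slot of $\Phi(c)$. This will require interpreting the boundary depth through action windows adapted to the partition of $M$ by level sets of $H$, choosing Floer data compatible with that partition, and verifying that the contributions from the other summands $c_m f_m\circ H$ with $m\neq n$ drop out of the relevant filtered subquotient. The Morse-theoretic calculation of the boundary depth for an autonomous Hamiltonian whose contractible orbits are all constant, while standard in flavour, has to be executed with care to produce the sharp lower bound on which the whole lower-bound argument rests; the dynamical hypothesis on $H$ is precisely what rules out spurious nonconstant orbits that would otherwise contaminate this computation.
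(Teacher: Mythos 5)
Your construction of $\Phi$ and the upper bound are essentially the paper's: disjointly supported functions of $H$ commute under Poisson bracket, giving a homomorphism, and the oscillation of a sum of disjointly supported bumps equals the oscillation of the coefficient vector. Where you diverge is the lower bound, and that is precisely where your sketch has a genuine gap.

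The paper does not attempt to build per-coordinate invariants $\beta_n$. It uses a single global boundary depth $\beta$ together with two facts you have not invoked: the inequality $\beta(\phi^1_{f\circ H};K)\geq \mm f-\min f$ of Theorem \ref{mmcomp}, where $\mm f$ is the infimum of the values of $f$ at its local maxima, and the symmetry $\beta(\phi;K)=\beta(\phi^{-1};K)$ of Theorem \ref{mainpropham}(ii). The bump $g$ is chosen so that its only local minima are at zeros; consequently no $f_c$ has a negative local maximum, so $\mm f_c\geq 0$ and $\mm f_c-\min f_c\geq -\min f_c=\max\left(0,\max_n(-c_n)\right)$. Applying this to $f_{v-w}$ and, via the symmetry and the fact that $\Phi$ is a homomorphism, also to $f_{w-v}$ yields $d(\Phi(v),\Phi(w))\geq\max_n|v_n-w_n|=\|v-w\|_{\ell_\infty}$. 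The separation of coordinates is thus achieved through the shape of $f_c$ and the $\mm$-structure of the bound, not through the invariant.

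By contrast, the crucial step in your proposal is never carried out, and I see a real obstruction. The global boundary depth is Hofer--Lipschitz because continuation maps shift the action filtration uniformly; this is an intrinsically global statement about the whole Floer complex. If you instead try to define a ``localized'' $\beta_n$ reading only orbit and action data attached to $H^{-1}(I_n)$, you must explain why this quantity is still $1$-Lipschitz in the Hofer norm: a Hamiltonian perturbation of $\Phi(c)$ of small Hofer norm need not be of the form $\phi^1_{f\circ H}$, will in general create orbits that are not confined to level sets of $H$, and the continuation maps have no reason to respect a decomposition of the Floer complex adapted to the partition of $M$ by $H$-level sets. In particular there is no obvious filtered subquotient of the Floer complex of an arbitrary nearby $\psi\in Ham(M,\omega)$ that plays the role of ``the part supported in $H^{-1}(I_n)$,'' so property (a) in your outline cannot be obtained from the ``standard robustness'' you cite. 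The paper's route sidesteps this entirely.
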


Theorem \ref{hammain} is proven in Section \ref{largebeta}.

To clarify notation, $\R^{\infty}$ denotes the direct sum of infinitely many copies of $\R$, \emph{i.e.}, the vector space of sequences $\{v_i\}_{i=1}^{\infty}$ where $v_i\in \R$ and all but finitely many $v_i$ are zero, and for $v=\{v_i\}_{i=1}^{\infty}$
we write $\osc(v)=\max_{i,j}|v_i-v_j|$ and $\|v\|_{\ell_{\infty}}=\max_i|v_i|$.  Thus $\|v\|_{\ell_{\infty}}\leq \osc(v)\leq 2\|v\|_{\ell_{\infty}}$, and if either all $v_i$ are nonnegative or all $v_i$ are nonpositive then $\|v\|_{\ell_{\infty}}=\osc(v)$.
It will be apparent from the construction that  $\Phi(v)$ is generated by a Hamiltonian $G_v$ with $\osc G_v=\osc v$.  From this it follows that, for those $v\in\R^{\infty}$ with $\|v\|_{\ell_{\infty}}=\osc(v)$, every segment of the path $s\mapsto \Phi(sv)$  minimizes the Hofer length among \emph{all} paths connecting its endpoints.  For comparison, there are criteria guaranteeing that a path will be Hofer-length minimizing within its homotopy class in \cite{MSl01}, \cite{S06} (and our paths do satisfy these criteria), but (except in the rare case that $Ham(M,\omega)$ is known to be simply connected) it seems to be unusual to find such globally length-minimizing paths in the Hamiltonian diffeomorphism group of a closed symplectic manifold.

To put Theorem \ref{hammain} into context we indicate some examples of symplectic manifolds $(M,\omega)$ obeying its hypotheses:
\begin{enumerate}
	\item[(A)] Any positive-genus surface $\Sigma$ with area form $\omega$ admits Hamiltonians as in Theorem \ref{hammain}.  Indeed if $\gamma\subset \Sigma$ is a noncontractible closed curve and $U\cong\{(s,\theta)|s\in(-\ep,\ep),\theta\in S^1\}$ is a Darboux--Weinstein neighborhood of $\gamma$ and if $f\co (-\ep,\ep)\to\R$ is a compactly supported smooth function then where $H(s,\theta)=f(s)$ for $(s,\theta)\in U$ and $H(z)=0$ for $z\notin U$, all orbits of $X_H$ either will be constant or will wrap around a noncontractible loop parallel to $\gamma$.  
	
	Generalizing this somewhat, consider fiber bundles $\pi\co M\to \Sigma$ which admit a Thurston-type symplectic form $\Omega=\Omega_0+K\pi^*\omega$ where $\Omega_0$ is closed and fiberwise symplectic and $K\in\R$.  The $\Omega_0$-orthogonal complements to the fibers determine a horizontal subbundle $T^hM$, and in order to ensure that $\Omega$ is symplectic one should take $K$ sufficiently large as to guarantee that at every point it holds that $\Omega|_{T^hM}$ is a positive multiple of the pullback of $\omega$.  As long as this condition on $K$ holds, one can check that if $H\co \Sigma\to\R$ is as in the previous paragraph then $\widetilde{H}=H\circ \pi$ will obey the hypothesis of the theorem, as all orbits $\gamma$ of $X_{\widetilde{H}}$ which are not constant will be contained in $\pi^{-1}(U)$ and will have  $\int_{\gamma}\pi^*d\theta\neq 0$.
Of course this property depends only on the behavior of the symplectic form near  $\pi^{-1}(\gamma)\subset M$ and so the property will continue to hold for suitable symplectic forms if instead the map $\pi\co M\to\Sigma$ has singularities away from $\gamma$ (\emph{e.g.}, if $\pi$ is a Lefschetz fibration).

\item[(B)] Work of Perutz implies that if $\Sigma$ is a positive-genus surface and $d\geq 2$ then the symmetric product $M=Sym^d\Sigma$ obeys the hypothesis of Theorem \ref{hammain}, when $M$ is equipped with any of the continuous family of K\"ahler forms from \cite[Theorem A]{P}.  Indeed, let $\gamma\co S^1\to \Sigma$ be a homologically essential simple closed curve, and let $\Sigma_{\gamma}$ denote the result of surgery along $\gamma$ (\emph{i.e.}, cut $\Sigma$ along $\gamma$ and cap off the resulting boundary components by discs).  Perutz then obtains a Lagrangian correspondence $\hat{V}_{\gamma}\subset Sym^d\Sigma\times Sym^{d-1}\Sigma_{\gamma}$ with the property that the first projection embeds $\hat{V}_{\gamma}$ as a hypersurface $V_{\gamma}\subset Sym^d\Sigma$ while the second projection exhibits $\hat{V}_{\gamma}$ as a $S^1$-bundle over $Sym^{d-1}\Sigma_{\gamma}$.  One can then find a tubular neighborhood $U=(-\ep,\ep)\times V_{\gamma}\subset Sym^d\Sigma$ such that, where $s$ denotes the $(-\ep,\ep)$ coordinate, a Hamiltonian $H$ which is compactly supported in $U$ and such that $H|_U$ depends only on $s$ will have the property that, at all points, $X_H$ either vanishes or is directed along the fibers of the $S^1$-bundle $V_{\gamma}\to Sym^{d-1}\Sigma_{\gamma}$.  Thus any nonconstant closed orbits of $X_H$ are homotopic to iterates of these $S^1$ fibers.  It follows from \cite[Lemma 3.16]{P} that the $S^1$ fibers are homotopic in $Sym^d\Sigma$ to loops of the form $t\mapsto\{\gamma(t),p_1,\ldots,p_{d-1}\}$ for any fixed choice of $p_1,\ldots,p_{d-1}\notin Im(\gamma)$.  So the fact that $\gamma$ is homologically essential in $\Sigma$ implies (by standard facts about the topology of symmetric products, see \emph{e.g.} the proof of \cite[Theorem 9.1]{BT}) that the fibers have infinite order in $\pi_1(Sym^d\Sigma)$. Thus indeed such a Hamiltonian $H\co Sym^d\Sigma\to\R$ obeys the requirements of Theorem \ref{hammain}.

\item[(C)] A variety of symplectic manifolds $(M,\omega)$ which admit a nonconstant autonomous Hamiltonian $H\co M\to\R$ such that $X_H$ has no nonconstant closed orbits at all (contractible or otherwise) are exhibited in \cite{U11}.  Especially in dimension four, these examples are quite topologically diverse: they include for instance the elliptic surfaces $E(n)$ with $n\geq 2$ as well as infinitely many manifolds homeomorphic but not diffeomorphic to them; the symplectic four-manifolds $X_G$ constructed by Gompf \cite{Go} having $\pi_1(X_G)=G$ for any finitely presented group $G$; and simply-connected symplectic four-manifolds whose Euler characteristics and signatures can be arranged to realize many different values.  In general, these examples have a hypersurface $V\subset M$ diffeomorphic to the three-torus such that a suitable Hamiltonian $H$  supported near $V$ will have the property that $X_H$ points along an irrational line on the torus and so has no nonconstant closed orbits.  The construction in \cite{U11} requires $\omega$ to represent an irrational de Rham cohomology class in $H^2(M;\R)$; it is not clear whether one can obtain such Hamiltonians when $[\omega]$ is rational.

\item[(D)] Obviously, if $(M,\omega)$ obeys the hypothesis of Theorem \ref{hammain} then so will $(M\times N,\omega\oplus \sigma)$ for any closed symplectic manifold $(N,\sigma)$ (regardless of whether $(N,\sigma)$ obeys the hypothesis).  Namely, we can just pull back the Hamiltonian $H\co M\to\R$ to $M\times N$.

\item[(E)] If $(M,\omega)$ obeys the hypothesis of Theorem \ref{hammain} and if $(\widetilde{M},\widetilde{\omega})$ is obtained by blowing up a sufficiently small ball $B\subset M$, then  $(\widetilde{M},\widetilde{\omega})$ will also obey the hypothesis.  For if $H\co M\to\R$ is as in Theorem \ref{hammain} and if the ball $B$ is small enough that  $\overline{H(B)}$ is properly contained in $H(M)$, we can choose a nonconstant smooth function $f\co H(M)\to\R$ such that $f|_{\overline{H(B)}}=0$.  Then since $X_{f\circ H}=f'(H)X_H$, the vector field $X_{f\circ H}$ will still have no nonconstant contractible closed orbits.  But $f\circ H$ now lifts to a Hamiltonian on $\widetilde{M}$, whose Hamiltonian vector field again has no nonconstant contractible closed orbits.  

\item[(F)] A well-established criterion (used \emph{e.g.} in \cite{LP}) for $(M,\omega)$ to obey the hypothesis of Theorem \ref{hammain} is for there to exist a Lagrangian submanifold $L\subset M$ such that the inclusion-induced map $\pi_1(L)\to\pi_1(M)$ is injective and such that $L$ admits a Riemannian metric of nonpositive sectional curvature (for in this case the metric on $L$ will have no contractible closed geodesics, and one can take a Hamiltonian supported in a Darboux--Weinstein neighborhood of $L$ which generates a reparametrization of the geodesic flow).  Of course the case of a noncontractible closed curve in a surface as in (A) above is a baby example of this.  In the presence of such a Lagrangian submanifold, a somewhat weaker version of Theorem \ref{hammain} was proven in \cite{Py08}---namely Py proves that for all $N$ one has an embedding $\phi\co \Z^N\to Ham(M,\omega)$ obeying a bound $C_{N}^{-1}|v-w|_{\ell_{\infty}}\leq d(\phi(v),\phi(w))\leq C_N|v-w|_{\ell_{\infty}}$.  (Actually, our embedding in Theorem \ref{hammain} appears to reduce to Py's in this special case, and so Theorem \ref{hammain} improves Py's constants.)

It should be clear from the examples that we have provided that the hypothesis of Theorem \ref{hammain} is substantially more general than the assumption that $M$ contains a $\pi_1$-injective Lagrangian submanifold which admits a metric with nonpositive sectional curvature. 
Writing $2n=\dim M$, so that $\dim L=n$, in order for $L$ to admit such a metric $L$ would have to be either flat and hence (by old results of Bieberbach) a finite quotient of $T^n$, or else by \cite[Theorem A]{BE} $\pi_1(L)$ would contain a nonabelian free group. Thus $\pi_1(M)$ would have to contain either $\Z^n$ or the free group on two generators.
But in many of the above examples $\pi_1(M)$ is not large enough to accommodate such subgroups---indeed in some of the examples $M$ is even simply connected.
\end{enumerate}

There are however closed symplectic manifolds to which Theorem \ref{hammain} can be proven not to apply, namely those which have finite $\pi_1$-sensitive Hofer--Zehnder capacity.  It is shown in \cite[Corollary 1.19]{Lu06} (using an argument that essentially dates back to \cite{HV}) that any closed symplectic manifold which admits a nonvanishing genus-zero Gromov--Witten invariant counting pseudoholomorphic spheres that pass through two generic points has finite $\pi_1$-sensitive Hofer--Zehnder capacity; if the manifold is simply connected one can instead use arbitrary-genus Gromov--Witten invariants counting curves through two generic points.  For instance this applies to all closed toric manifolds (to see this one can use Iritani's theorem \cite{Ir} that a toric manifold has generically semisimple big quantum homology,  so  that in particular the class of a point is not nilpotent in quantum homology), and also to any simply-connected closed symplectic four-manifold with $b^+=1$ (this follows from work of Taubes and Li--Liu; see \cite[Appendix A]{U11} for the argument).

There is a substantial history of results showing Hofer's metric on $Ham(M,\omega)$ to have infinite diameter for a variety of symplectic manifolds $(M,\omega)$; Theorem \ref{hammain} overlaps somewhat with these prior results but also includes many new cases (and conversely, there are a some examples which are covered by previous results but are not covered by Theorem \ref{hammain}, including $\mathbb{C}P^n$).  Notable early results in this direction include those in \cite[Section II.5.3]{LM95}, \cite{Po98}, \cite[Section 5.1]{Sc00}, and \cite[Remark 1.10]{EP03}.   More recent work of McDuff \cite[Lemma 2.7]{M09} shows that the Hofer metric has infinite diameter provided that the asymptotic spectral invariants, which \emph{a priori} are defined on the universal cover $\widetilde{Ham}(M,\omega)$, descend to $Ham(M,\omega)$.  \cite[Theorems 1.1 and 1.3]{M09} provide a range of sufficient conditions for the asymptotic spectral invariants to descend, which are general enough to encompass nearly all of the cases in which infinite Hofer diameter has been proven for closed $(M,\omega)$ until now.\footnote{The only exceptions to this that I am aware of are products of positive genus surfaces with other manifolds (for which the result follows from the stabilized non-squeezing theorem of \cite{LP99}, as mentioned on \cite[II, p. 64]{LM95}---of course this case is also covered by Theorem \ref{hammain}) and the case of a small blowup of $\mathbb{C}P^2$ which is covered in \cite{M10}.}   The argument in \cite{M09} combines a construction of Ostrover \cite{Os03} of a path $\{\phi_t\}_{t\in\R}$ in $Ham(M,\omega)$ for \emph{any} closed $(M,\omega)$ for which the asymptotic spectral invariants (and hence the lifted Hofer pseudo-norm on $\widetilde{Ham}(M,\omega)$) diverge to $\infty$, with a detailed analysis of the properties of the Seidel representation \cite{Se} of $\pi_1(Ham(M,\omega))$ which finds that the asymptotic spectral invariants descend and hence that Ostrover's path has $\|\phi_t\|\to \infty$ under  the conditions given in  \cite[Theorems 1.1 and 1.3]{M09}.  Roughly speaking, the hypotheses    of \cite[Theorems 1.1 and 1.3]{M09} ask for $(M,\omega)$ to either have large minimal Chern number (at least $n+1$, or $n$ under additional hypotheses, if $\dim M=2n$) or else to admit few nonvanishing genus zero Gromov--Witten invariants (for instance $(M,\omega)$ could be weakly exact or, under mild topological hypotheses, negatively monotone).  As is shown in \cite{M09}, once these conditions are violated the asymptotic spectral invariants can very well fail to descend---for instance by \cite[Proposition 1.8]{M09} they never descend when $(M,\omega)$ is a point blowup of a non-symplectically-aspherical manifold; in this case the minimal Chern number of $M$ can be as large as $n-1$.

There are many manifolds obeying Theorem \ref{hammain} which are not covered by the results of \cite{M09} or by any other results on infinite Hofer diameter that I am aware of.  For instance McDuff's criteria are not robust under taking products or point blowups, whereas we have noted above that (at least for sufficiently small blowups) the criterion in Theorem \ref{hammain} is preserved under these operations.   Thus for instance while the non-symplectically-aspherical \emph{minimal} examples from (C) above obey both Theorem \ref{hammain} and McDuff's criteria, when these examples are blown up or when they are replaced by their products with (say) $S^2$ they obey only Theorem \ref{hammain}.  Prior results also do not seem to suffice to prove infinite Hofer diameter for a variety of nontrivial bundles over positive genus surfaces (for instance nontrivial irrational ruled surfaces) as in (A) above. Also from the calculations of Gromov--Witten invariants in \cite{BT} one can see that $Sym^d\Sigma_g$ does not satisfy the hypotheses of \cite[Theorems 1.1 and 1.3]{M09} when $d\geq g\geq 1$.  

Of course, another advantage of Theorem \ref{hammain} is that it yields not just infinite diameter but also a quasi-isometrically embedded infinite-dimensional normed vector space in $Ham(M,\omega)$.  In the case that $(M,\omega)$ obeys \emph{both} the assumptions of Theorem \ref{hammain} and the property that the asymptotic spectral invariants descend to $Ham(M,\omega)$ as in \cite{M09}, one can use \cite[Proposition 4.1]{U10a} to prove Theorem \ref{hammain}---in fact in this case the embedding $\Phi\co \R^{\infty}\to Ham(M,\omega)$ can actually be seen to obey precisely $d(\Phi(v),\Phi(w))=\osc(v-w)$ rather than just being quasi-isometric (verification of this is left to the reader).   While it seems likely that $Ham(M,\omega)$ always has infinite Hofer diameter, there is less consensus as to whether $Ham(M,\omega)$ should always admit embeddings of infinite-dimensional normed vector spaces like those in Theorem \ref{hammain}.  For instance L. Polterovich has pointed out that nothing currently known about $Ham(S^2)$ is incompatible with it being quasi-isometric to $\R$.

Our proof of Theorem \ref{hammain}, like McDuff's proof of \cite[Lemma 2.7]{M09}, uses a quantity arising from filtered Hamiltonian Floer theory as a lower bound for the Hofer norm.  Whereas McDuff uses the asymptotic spectral invariants for this purpose, we use a different quantity called the \textbf{boundary depth}, which was formally introduced in \cite{U09}, though one can find hints of it earlier---in particular an argument in \cite{Oh09} was influential in leading me to it. Unlike the asymptotic spectral invariants, the boundary depth is, as we will show, \emph{a priori} well-defined on $Ham(M,\omega)$ rather than just on $\widetilde{Ham}(M,\omega)$; consequently there is no need for a subtle analysis of the Seidel morphism as in \cite{M09}.  On the other hand, while Ostrover's construction in \cite{Os03} produces a sequence of Hamiltonians with diverging asymptotic spectral invariants on any closed symplectic manifold, it is not clear whether there always exists a sequence in $Ham(M,\omega)$ with diverging boundary depths---indeed it seems plausible that no such sequence exists for $M=\mathbb{C}P^n$.  In particular Proposition \ref{dispbeta} shows that the boundary depths of the Hamiltonians in Ostrover's sequence remain bounded.  However for manifolds obeying Theorem \ref{hammain} many sequences with diverging boundary depths do exist.

Another advantage of the boundary depth is that it quite naturally and generally adapts to Lagrangian Floer theory and yields results concerning Hofer's metric on Lagrangian submanifolds, as we now discuss.

\subsection{The Lagrangian Hofer metric}

Now suppose that $(M,\omega)$ is tame (\emph{i.e.}, there is an $\omega$-compatible almost complex structure on $M$ whose induced Riemannian metric is complete with injectivity radius bounded below and with bounded sectional curvature).  Fix a closed Lagrangian submanifold $L\subset M$ and let \[ \mathcal{L}(L)=\{\phi(L)|\phi\in Ham(M,\omega)\};\] thus $\mathcal{L}(L)$ is the orbit of $L$ under the natural action of the Hamiltonian diffeomorphism group on the set of Lagrangian submanifolds.\footnote{To be clear, elements of $\mathcal{L}(L)$ are viewed as \emph{unparametrized} submanifolds; equivalently we can think of $\mathcal{L}(L)$ as the set of Lagrangian embeddings of $L$ modulo precomposition by diffeomorphisms of $L$.}  Now for $L_0,L_1\in\mathcal{L}(L)$ define \[ \delta(L_0,L_1)=\inf\{\|\phi\| |\phi(L_0)=L_1\}.\]  Chekanov showed in \cite{Ch00} that $\delta$ defines a nondegenerate metric on $\mathcal{L}(L)$; obviously Hamiltonian diffeomorphisms act by isometries with respect to this metric, which we will refer to as \emph{the Hofer metric on }$\mathcal{L}(L)$.

Relatively little is known about the global properties of the Hofer metric on $\mathcal{L}(L)$, especially when $M$ is closed.  In the model noncompact case in which $M=T^*L$ with its standard symplectic structure and where $L$ is the zero section, results of Oh and Milinkovi\'c imply that, where $C^{\infty}_{0}(L)$ denotes the space of smooth functions on $L$ modulo addition of constants, the embedding $f\mapsto graph(df)$ is isometric with respect to the norm $\osc$ on $C^{\infty}_{0}(L)$ and the Hofer norm on $\mathcal{L}(L)$ (this does not seem to be explicitly stated in Oh and Milinkovi\'c's work, but can be extracted from \cite[Theorem 3]{Mil}).  More recently Khanevsky \cite{Kh09} proved that $\mathcal{L}(L)$ has infinite diameter in case $M=S^1\times (-1,1)$ and $L=S^1\times\{0\}$, or when $M=D^2$ and $L=\{(x,0)|-1\leq x\leq 1\}$.  It is also mentioned in \cite{Kh09} that arguments from \cite{LM95} can be used to show that $\mathcal{L}(L)$ has infinite diameter when $L$ is a homologically essential curve on a positive genus surface.  Another approach to this statement in the case that $L$ is a meridian on a torus appears in \cite[Remark 5.3]{Le08}, where spectral invariants in Lagrangian Floer theory are used.  Leclercq's approach could also be used in some other weakly exact cases (for instance for standard Lagrangian tori in $T^{2n}$); however extensions beyond the weakly exact case seem more difficult due to the lack of a more general theory of Lagrangian spectral invariants.

In contrast to the Hamiltonian case, it should not be expected that Hofer's metric on $\mathcal{L}(L)$ \emph{always} has infinite diameter; indeed we prove by an elementary argument in Section \ref{circlesect} that when $L$ is the unit circle in $\R^2$ the diameter of $\mathcal{L}(L)$ is no larger than $2\pi$.

By using the Lagrangian Floer-theoretic version of the boundary depth, we extend the class of $L$ for which $\mathcal{L}(L)$ has infinite diameter in two directions.

For the first of our results in this regard, note that if $(M,\omega)$ is a symplectic manifold, then if we endow $M\times M$ with the symplectic structure $(-\omega)\oplus \omega$ and denote by $\Delta$ the diagonal, we have an embedding \begin{align*} Ham(M,\omega)&\hookrightarrow \mathcal{L}(\Delta) \\ \phi &\mapsto \Gamma_{\phi}=\{(x,\phi(x))|x\in M\}\end{align*}  This embedding preserves lengths of paths, and hence we have a relation \[ \delta(\Gamma_{\phi},\Delta)\leq \|\phi\| \] (of course equality can in principle fail to hold, since there might be a shorter path from $\Delta$ to $\Gamma_{\phi}$ which leaves the image of the embedding).  We show:

\begin{theorem}\label{diagmain} Let $(M,\omega)$ be a closed symplectic manifold such that there is a nonconstant autonomous Hamiltonian $H\co M\to\R$ such that all contractible closed orbits of $X_H$ are constant.  Then the embedding $\Phi\co \R^{\infty}\to Ham(M,\omega)$ from Theorem \ref{hammain} has the property that, for all $v,w\in \R^{\infty}$, \[ \|v-w\|_{\ell_{\infty}}\leq \delta(\Gamma_{\Phi(v)},\Gamma_{\Phi(w)})\leq \osc(v-w) .\]
\end{theorem}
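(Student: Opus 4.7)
The plan is to establish the two inequalities separately. The upper bound is essentially elementary given the properties of $\Phi$ noted after Theorem~\ref{hammain}. Since $\Phi$ is a homomorphism, $\Phi(v)^{-1}\circ\Phi(w)=\Phi(w-v)$, and the map $id_M\times\Phi(w-v)$, generated by pulling back $G_{w-v}$ along the second projection, is a Hamiltonian diffeomorphism of $(M\times M,(-\omega)\oplus\omega)$ that sends $\Gamma_{\Phi(v)}$ to $\Gamma_{\Phi(w)}$. Its Hofer norm is at most $\osc G_{w-v}=\osc(v-w)$, which, via the length-preserving embedding $\phi\mapsto\Gamma_{\phi}$, yields the right-hand inequality.

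For the lower bound, I would first exploit the fact that Hamiltonian diffeomorphisms of $(M\times M,(-\omega)\oplus\omega)$ act as isometries on $\mathcal{L}(\Delta)$: applying $id_M\times\Phi(v)^{-1}$ carries the pair $(\Gamma_{\Phi(v)},\Gamma_{\Phi(w)})$ to $(\Delta,\Gamma_{\Phi(w-v)})$, so it suffices to show that $\|u\|_{\ell_{\infty}}\leq\delta(\Delta,\Gamma_{\Phi(u)})$ for every $u\in\R^{\infty}$. The key ingredient is a Lagrangian version of the boundary depth: given $L\in\mathcal{L}(\Delta)$ one defines $\beta(\Delta,L)$ from the filtered Lagrangian Floer complex $CF(\Delta,L)$, and continuation-map estimates analogous to those underlying the Hamiltonian version yield both the well-definedness of $\beta(\Delta,\cdot)$ on the unparametrized orbit $\mathcal{L}(\Delta)$ and a Hofer--Lipschitz bound of the form $\beta(\Delta,L)\leq\beta(\Delta,\Delta)+\delta(\Delta,L)$. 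For $L=\Gamma_{\phi_{H}^{1}}$, the canonical isomorphism $CF(\Delta,\Gamma_{\phi_{H}^{1}})\cong CF(H)$ preserves action filtrations up to a global shift, so $\beta(\Delta,\Gamma_{\Phi(u)})=\beta(G_u)$, and the Hamiltonian boundary-depth computation from the proof of Theorem~\ref{hammain} then gives $\beta(G_u)\geq\|u\|_{\ell_{\infty}}$.

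The principal technical step I anticipate is the careful setup of the Lagrangian boundary depth so that it is manifestly an invariant of the unparametrized submanifold in $\mathcal{L}(\Delta)$ and satisfies the Hofer--Lipschitz estimate above. Once that is arranged, identifying $CF(\Delta,\Gamma_{\phi})$ with the Hamiltonian Floer complex of $M$ introduces no new bubbling or transversality difficulties, since $\Delta$ is the graph of the identity and the Floer data for $(\Delta,\Gamma_{\phi})$ can be taken to be of product type compatible with that used for $CF(H)$. Theorem~\ref{diagmain} then follows directly from the boundary-depth computations already required for Theorem~\ref{hammain}.
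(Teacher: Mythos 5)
Your proposal is correct and follows essentially the same route as the paper: the upper bound comes from the fact that $\phi\mapsto\Gamma_{\phi}$ does not increase Hofer distance, and the lower bound reduces via the isometric action of $1_M\times\Phi(\cdot)$ to estimating $\delta(\Delta,\Gamma_{\Phi(u)})$ from below by the Lagrangian boundary depth, which is then identified with the Hamiltonian boundary depth $\beta(\Phi(u);K)$ (this identification is exactly Theorem~\ref{mainproplag}(iv), and the Hofer--Lipschitz property you invoke is Theorem~\ref{mainproplag}(i)). The only cosmetic difference is that the paper cites Theorem~\ref{mainproplag}(ii) ($HF(\hat\Delta,\hat\Delta)\cong H_*(\Delta)$ implies $\beta_{\hat\Delta}(\Delta;K)=0$) to pin down $\beta(\Delta,\Delta)=0$, whereas in your sketch this follows equally well from your claimed equality $\beta(\Delta,\Gamma_{\phi})=\beta(\phi)$ applied at $\phi=1_M$ together with Theorem~\ref{mainpropham}(iv); both are valid.
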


In other words, our lower bound on the Hofer distance persists when we pass from the Hamiltonian to the Lagrangian context by replacing Hamiltonian diffeomorphisms by their graphs.  Thus for any $(M,\omega)$ as in Theorem \ref{hammain}, the space $\mathcal{L}(\Delta)$ of Lagrangian submanifolds of $M\times M$ Hamiltonian-isotopic to the diagonal has infinite diameter, and indeed contains an infinite-dimensional quasi-isometrically embedded normed vector space.  Theorem \ref{diagmain} is proven just after the proof of Theorem \ref{mainproplag} in Section \ref{lagsect}.

This behavior should be contrasted with that seen in \cite{Os03}.  As mentioned earlier, Ostrover constructs therein a path $\{\phi_t\}$ in $Ham(M,\omega)$ for any closed $(M,\omega)$ which, at least in $\widetilde{Ham}(M,\omega)$, goes arbitrarily far away from the identity.  Under topological conditions on $(M,\omega)$ such as those from \cite{M09}, one will indeed have $\|\phi_t\|\to\infty$ where $\|\cdot\|$ denotes the Hofer norm on $Ham(M,\omega)$.  However, Ostrover shows in \cite{Os03} that the Lagrangian submanifolds $\Gamma_{\phi_t}$ remain within a finite distance from $\Delta$.  Thus the Hamiltonian diffeomorphisms in Theorem \ref{diagmain} exhibit rather different behavior than those in Ostrover's path.

To state our other main result on the Lagrangian Hofer metric, we prepare some notation.  We denote $S^1=\R/\mathbb{Z}$, and, for $m\in \Z_+$, denote \[ C_{m}^{\infty}(S^1)=\{f\co S^1\to\R|(\forall x\in S^1)(f(x+1/m)=f(x))\} \] and \[ C_{m,0}^{\infty}(S^1)=\frac{C_{m}^{\infty}(S^1)}{\R}\] where $\R$ acts by addition of constants.  Thus $C_{m,0}^{\infty}(S^1)$
carries the norm $\osc(f)=\max f-\min f$.   Let $T^2=\R^2/\Z^2$ and for $f\in C_{m,0}^{\infty}(S^1)$ denote  $L_f=\{(x,f'(x))|x\in S^1\}$ (where of course the both coordinates are evaluated mod $\Z$).  We then have:

\begin{theorem}\label{s1prodmain} Let $L\subset M$ be a monotone Lagrangian submanifold of a tame symplectic manifold $M$ with minimal Maslov number at least $2$ whose Floer homology $HF(L,L)$ is nonzero.  Consider the space $\mathcal{L}(L_0\times L)$ of Lagrangian submanifolds of $T^2\times M$ Hamiltonian-isotopic to $L_0\times L$.  Then there is a constant $C\geq 0$ such that for any integer $m\geq 2$ and any $f,g\in C_{m,0}^{\infty}(S^1)$, we have \[ \osc(f-g)-C\leq \delta(L_f\times L,L_g\times L)\leq \osc(f-g).\]  In the case that $HF(L,L)$ is isomorphic to the singular homology of $L$, the constant $C$ may be set to zero, so that $f\mapsto L_f\times L$ is an isometric embedding of $C_{m,0}^{\infty}(S^1)$ into $\mathcal{L}(L_0\times L)$.
\end{theorem}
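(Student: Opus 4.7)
For the upper bound, take the autonomous Hamiltonian $H\co T^2\times M\to \R$ defined by $H(x,y,p)=g(x)-f(x)$, pulled back from $S^1\subset T^2$. With $\omega=dx\wedge dy$ one computes $X_H=(g'(x)-f'(x))\partial_y$, so the time-$1$ flow sends $(x,y,p)$ to $(x,y+g'(x)-f'(x),p)$, carrying $L_f\times L$ onto $L_g\times L$; since $H$ is autonomous, $\osc H=\osc(f-g)$.

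For the lower bound my plan is to use the Lagrangian Floer boundary depth $\beta$, defined as the longest finite bar in the barcode of the filtered Floer chain complex of a pair of Lagrangians (taking an infimum over small Hamiltonian perturbations when they coincide). Standard continuation-map estimates yield
\[ |\beta(K_0,K_1)-\beta(K_0',K_1')|\leq \delta(K_0,K_0')+\delta(K_1,K_1'),\]
so, combined with Hamiltonian invariance of $\beta$,
\[ \delta(L_f\times L,L_g\times L)\geq \beta(L_f\times L,L_g\times L)-\beta(L_0\times L,L_0\times L).\]
Setting $C:=\beta(L_0\times L,L_0\times L)$ (intrinsic to $L$, independent of $m$, $f$, $g$), the task reduces to proving $\beta(L_f\times L,L_g\times L)\geq \osc(f-g)$.

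The core computation is $\beta(L_f,L_g)\geq \osc(f-g)$ in $T^2$. Since $L_f$ and $L_g$ are weakly exact in $T^2$, the filtered Floer complex $CF(L_f,L_g)$ is identified with the Morse complex of $f-g$ on $S^1$, with action given by the value of $f-g$ at a critical point. Here $m\geq 2$ enters decisively: by $1/m$-periodicity, for $c$ slightly less than $\max(f-g)$ the sublevel set $\{f-g<c\}$ has exactly $m$ connected components, each born (in persistence) at height $\min(f-g)$, and at height $\max(f-g)$ these $m$ components merge into one, producing $m-1$ persistence bars of length exactly $\osc(f-g)$. To pass to $T^2\times M$, I invoke a K\"unneth formula for filtered Floer complexes: perturb $L_g\times L$ to $L_g\times L'$ with $L'$ the graph of a small Morse function on $L$ in a Weinstein neighborhood; then the product almost complex structure forces holomorphic strips to split, yielding a filtered isomorphism $CF(L_f\times L,L_g\times L')\cong CF(L_f,L_g)\otimes CF(L,L')$ with the sum filtration. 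Because $HF(L,L)\neq 0$, $CF(L,L')$ contributes at least one infinite bar to its barcode, and the standard tensor-product rule for barcodes then places a finite bar of length at least $\beta(L_f,L_g)=\osc(f-g)$ in the barcode of $CF(L_f\times L,L_g\times L')$.

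When $HF(L,L)\cong H_*(L)$, one refines the perturbation: taking the Morse perturbation of $L$ to be arbitrarily small forces the finite bars of $CF(L,L')$ to shrink to $0$, whereby $\beta(L_0\times L,L_0\times L)$ vanishes and the embedding becomes isometric. The main obstacle I anticipate is the precise formulation and proof of the K\"unneth theorem for filtered Lagrangian Floer complexes in this monotone setting---both the tensor-product splitting at the filtered chain level and the persistence-theoretic assertion that the barcode of $C\otimes D$ inherits a copy of the barcode of $C$ when $D$ has at least one infinite bar---together with the verification that $C=\beta(L_0\times L,L_0\times L)$ is truly uniform in the periodicity $m$.
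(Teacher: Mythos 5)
Your proposal follows essentially the same architecture as the paper's: upper bound from the explicit pulled-back Hamiltonian $f-g$, reduction to $f=0$ by invariance, a Morse-theoretic computation on $S^1$ giving boundary depth (bar length) $\osc g$ when $g$ is $1/m$-periodic with $m\geq 2$, and a filtered K\"unneth isomorphism combined with a tensor-product rule for boundary depth (the paper's Theorems \ref{lagprod} and \ref{prodbeta}) to transport that bound to $T^2\times M$.

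Two small gaps worth flagging. First, on $T^2$ (as opposed to $T^*S^1$) the identification $CF(L_f,L_g)\cong CM(f-g)$ is not literally true: the Hamiltonian chords $t\mapsto(x_0,-tf'(x_0))$ with $f'(x_0)\neq 0$ that wrap around the $y$-circle contribute additional generators of the Floer complex. The paper avoids this by restricting throughout to the homotopically trivial sector $\frak{c}_0$ of $\mathcal{P}(L_0,L_0)$, i.e.\ working with $\beta_{\hat{L}_0,\frak{c}_0}$ (Proposition \ref{lagmorse}); your K\"unneth step and tensor-product bound must then be stated sector-by-sector as well, which is harmless but needs saying. Second, when $HF(L,L)\cong H_*(L)$ your claim that ``the finite bars of $CF(L,L')$ shrink to $0$'' as $L'\to L$ is not the right mechanism: the finite bars come from quantum corrections (pearls/discs) whose actions are bounded below by $\sigma(M,L,J)$ and do not shrink with the Morse perturbation. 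What actually forces $\beta_{\widehat{L_0\times L}}(L_0\times L)=0$ is the algebraic observation (the paper's Proposition \ref{qdef}/Lemma \ref{qlem}) that if the quantum-corrected differential has the same rank as the Morse differential---which is exactly the condition $HF(L,L)\cong H_*(L)$---then the boundary depth of the corrected complex vanishes, irrespective of the size of the individual correction terms. With these two adjustments your argument coincides with the paper's.
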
   

Theorem \ref{s1prodmain} is proven at the end of Section \ref{lagsect} (with key input provided by Theorem \ref{prodbeta}). Various small modifications to this result can also be established, as will be apparent in the proof.  First, the torus $T^2$ can be replaced by the infinite cylinder $T^*S^1$, yielding the same conclusion.  Moreover in this latter statement one could replace $S^1$ by a more general closed manifold $L_0$, so that one considers Lagrangian submanifolds in $(T^*L_0)\times M$, and one would obtain at least that $\mathcal{L}(L_0\times L)$ has infinite diameter.  Also the monotonicity assumption on $L$ appears to be only technical; assuming that $HF(L,L)\neq 0$ all that  is really needed is a K\"unneth-type formula relating the Floer complex of $L\subset M$ to that of $S^1\times L\subset T^2\times M$.  This K\"unneth formula is well-known in the monotone context, but likely is true in the more general setup of \cite{FOOO09}; there is work in progress by L. Amorim aimed at showing this.  

In the case where $L=M$ is a point (so that we are just considering Lagrangians in $T^2$ Hamiltonian-isotopic to the meridian) Theorem \ref{s1prodmain} can be inferred from Leclercq's arguments in \cite{Le08} using spectral invariants; indeed in this case there is no need to assume $m\geq 2$.  However our use of the boundary depth requires one to take $m\geq 2$ in order to get nontrivial lower bounds.

\subsection{Boundary depth}

As mentioned earlier, the proofs of our main results are based on the properties of a Floer-theoretic quantity called the boundary depth, which was introduced in the Hamiltonian context in \cite{U09}.  We indicate in this subsection some of the basic features of this quantity.  Either the Hamiltonian Floer complex associated to a Hamiltonian diffeomorphism, or the Lagrangian Floer complex associated to two Hamiltonian-isotopic Lagrangian submanifolds, can be seen formally as the Morse--Novikov complex of an action functional on a cover of a suitable path space.\footnote{Of course, the same is true of the Lagrangian Floer complex of a pair of non-Hamiltonian-isotopic Lagrangians, but since we have not (yet) found interesting applications of the boundary depth in this more general context this paper will restrict to Floer theory for Hamiltonian-isotopic Lagrangians in order to simplify the discussion.}  As such, the complex carries a natural filtration by $\R$, obtained by considering sublevel sets of the action functional.  Given a chain complex $(C,\partial)$ with a filtration by $\R$, its boundary depth $b(C,\partial)$ is the infimal (actually, in the cases considered in this paper, minimal by Proposition \ref{depthattained}) number $\beta$ with the following property:  whenever $x$ lies in the image of  $\partial$, there must be a chain $y$ with $\partial y=x$ and with filtration level at most $\beta$ larger than that of $x$  (see Section \ref{algintro} for a more formal definition).  Thus $b(C,\partial)$ can be seen as a quantitative measurement, in terms of the filtration, of the nontriviality of the differential $\partial$.  In particular if $\partial=0$ then $b(C,\partial)=0$. Unlike, for instance, spectral invariants, $b$ has relatively little to do with the \emph{homology} of the complex; indeed in some cases in Lagrangian Floer theory (and also in the sectors of Hamiltonian Floer theory corresponding to noncontractible loops) the homology vanishes but the boundary depth still provides nontrivial information.

Now the Floer complexes associated to Hamiltonian diffeomorphisms $\phi$ or to pairs of Hamiltonian-isotopic Lagrangian submanifolds $(L,L'=\phi^{-1}(L))$ depend on some additional data, notably including a specific Hamiltonian function $H\co [0,1]\times M\to\R$ inducing $\phi$ as its time-one map.  We will see however that the boundary depth is unaffected by changes in the choices of additional data, and so gives an invariant of the diffeomorphism $\phi$ or of the pair of Lagrangians $(L,L')$.\footnote{In the Lagrangian case, at least if $L$ is not monotone, one must also choose at the outset a relative spin structure and bounding cochain for $L$; the boundary depth (like the homology) may depend on this choice.}  This occurs because different choices result in chain complexes which are what we call in Section \ref{algintro} ``shift-isomorphic''---roughly speaking, up to isomorphism of filtered chain complexes, the complexes associated to different choices differ only by uniform shifts in their filtrations (because in this paper we incorporate homotopically nontrivial loops and paths into the definition of Floer theory the appropriate definition is slightly more complicated than just allowing for a single uniform shift; see Definition \ref{shiftiso}).  Since the boundary depth is obtained by considering \emph{differences} of filtration levels, it is unaffected by such uniform shifts.

This allows one to canonically define the boundary depth for a Hamiltonian diffeomorphism $\phi$ or a pair of Lagrangian submanifolds $(L,L')$ which satisfies the standard nondegeneracy hypotheses required in Floer theory.  Moreover, the notion so obtained is continuous with respect to the Hofer norm, and so by continuity one can then extend the definition to degenerate cases.

We will consistently work over a field $K$ in this paper (standard choices for $K$ in various situations are $\mathbb{Z}/2\Z$ or $\mathbb{Q}$---of course, there is also typically a Novikov ring (which we denote by $\Lambda^{K,\Gamma}$) involved in the definition of the Floer complex, but what we call $K$ refers not to the Novikov ring but to the field in which the coefficients of elements of the Novikov ring take values). The boundary depth can of course be formulated for complexes over rings which are not fields, but for the proofs of some of our algebraic results about the behavior of the boundary depth (\emph{e.g.}, Proposition \ref{depthattained} and Theorem \ref{prodbeta}) it is convenient to take $K$ to be a field.

In any case, for a closed manifold $(M,\omega)$ we obtain a boundary depth function \[ \beta(\cdot;K)\co Ham(M,\omega)\to \R, \] and for a closed Lagrangian submanifold $L$ of a tame symplectic manifold $(M,\omega)$, equipped if necessary with a relative spin structure $\frak{s}$ and a bounding cochain $b$ as in \cite{FOOO09} (we denote by $\hat{L}$ the tuple $(L,\frak{s},b)$), we obtain a boundary depth function \[ \beta_{\hat{L}}(\cdot;K)\co \mathcal{L}(L)\to \R.\]  Here $K$ denotes any field over which the appropriate Floer complex can be defined.  We give complete definitions of these functions in Sections \ref{hamsect} and \ref{lagsect}, but presently we state some of their properties. 

In the Hamiltonian case, we can form the Floer complex over $K$ where $K$ is equal to any field of characteristic zero on arbitrary closed symplectic manifolds (\cite{FO},\cite{LT}), or to any field whatsoever if $(M,\omega)$ is semipositive \cite{HS}.  
\begin{theorem}\label{mainpropham} Let $(M,\omega)$ be a closed $2n$-dimensional symplectic manifold, and let $K$ be a field, with characteristic zero if $(M,\omega)$ is not semipositive.  The boundary depth function $\beta(\cdot;K)\co Ham(M,\omega)\to [0,\infty)$ obeys the following properties: 
\begin{itemize}\item[(i)] If $\psi\in Symp(M,\omega)$ and $\phi\in Ham(M,\omega)$ then $\beta(\psi^{-1}\phi\psi;K)=\beta(\phi;K)$.
\item[(ii)] For any $\phi$ in $Ham(M,\omega)$, \[ \beta(\phi;K)=\beta(\phi^{-1};K).\]
\item[(iii)] $\beta$ is $1$--Lipschitz with respect to the Hofer norm: for any $\phi,\psi\in Ham(M,\omega)$ we have \[ |\beta(\phi;K)-\beta(\psi;K)|\leq \|\phi^{-1}\psi\|.\]
\item[(iv)] $\beta(1_M;K)=0$, where $1_M$ is the identity.
\item[(v)] If $(N,\theta)$ is another closed symplectic manifold and $\phi\in Ham(M,\omega),\psi\in Ham(N,\theta)$, then the diffeomorphism $\phi\times\psi\co M\times N\to M\times N$ obeys \[ \beta(\phi\times \psi;K)\geq \max\{\beta(\phi;K),\beta(\psi;K)\}.\]
\end{itemize}\end{theorem}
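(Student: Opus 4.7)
The plan is to derive each of (i)--(v) from two ingredients: (a) the general principle, to be established in Section \ref{algintro}, that the boundary depth is a shift-isomorphism invariant of the filtered Floer complex (together with natural algebraic facts about boundary depth under duality, tensor products, and chain maps that shift filtration by bounded amounts); and (b) standard structural features of filtered Hamiltonian Floer theory that realize the relevant geometric operations at the level of the complex. Once one shows that the shift-isomorphism class of $CF(H)$ depends only on $\phi_{H}^{1}$, each property reduces to identifying the appropriate filtered structure.

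Properties (i), (iii), and (iv) are comparatively direct. For (i), writing $\phi = \phi_{H}^{1}$, the conjugate $\psi^{-1}\phi\psi$ is generated by $H\circ(\mathrm{id}_{[0,1]}\times\psi)$; since $\psi$ is a symplectomorphism, pulling back orbits, cappings, compatible almost complex structures, and Floer trajectories by $\psi$ produces a genuine filtered chain isomorphism (not merely a continuation map), so $\beta$ is preserved on the nose. For (iii), the standard energy estimate for continuation maps between $CF(H)$ and $CF(K)$ shows that they shift filtration by at most $\int_{0}^{1}\max(K{-}H)\,dt$ and $\int_{0}^{1}\max(H{-}K)\,dt$ respectively; the algebraic lemma from Section \ref{algintro} then bounds the difference of boundary depths by $\osc(H{-}K)$, and taking the infimum over $H,K$ with $\phi_{H}^{1}=\phi$ and $\phi_{K}^{1}=\psi$ recovers $\|\phi^{-1}\psi\|$. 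For (iv), taking $H$ a $C^{2}$-small autonomous Morse function identifies $CF(H)$ with the Morse complex of $H$ up to an overall filtration shift, giving boundary depth at most $\osc(H)$; combined with (iii) and continuous extension to the degenerate identity, this forces $\beta(1_{M};K)=0$.

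For (ii), I would take $\widetilde{H}(t,x)=-H(1-t,x)$, which generates $\phi^{-1}$. Its one-periodic orbits are the time-reversals of those of $H$, and Floer trajectories correspond under $(s,t)\mapsto(-s,1-t)$; this exhibits $CF(\phi^{-1})$ as the $K$-linear filtered dual of $CF(\phi)$ (with filtration negated). A purely algebraic observation, proved by chasing the defining property of $b$ through the duality pairing, is that over any field the boundary depth of a filtered complex equals that of its filtered dual, and (ii) follows.

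Property (v) is the main obstacle and requires a filtered K\"unneth-type identification. Using the split data $H\oplus K$ and $J_{M}\oplus J_{N}$, perturbed generically within the split class to achieve transversality, the one-periodic orbits of $H\oplus K$ split as pairs of orbits with actions summing and low-energy Floer trajectories split correspondingly; this identifies $CF(H\oplus K)$ with the filtered tensor product $CF(H)\otimes_{\Lambda}CF(K)$ over the appropriate Novikov ring, up to shift-isomorphism. The geometric claim then reduces to the algebraic inequality $b(C\otimes D)\geq\max\{b(C),b(D)\}$ for filtered complexes over a field: if $x\in\mathrm{im}\,\partial_{C}$ witnesses $b(C)$ and $z\in D$ is a cycle representing a nonzero Floer homology class (available since Hamiltonian Floer homology is nonzero), then $x\otimes z=\partial(y\otimes z)$ is a boundary whose minimal primitive, by a basis-level argument using the field hypothesis, has filtration at least $\ell(x)+b(C)+\ell(z)$. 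The main technical point is to set up the K\"unneth identification rigorously in the presence of cappings and the Novikov ring, with appropriate compatibility under shift-isomorphism; the algebraic inequality itself is straightforward.
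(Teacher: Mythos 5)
Your proposal follows essentially the same route as the paper's proof: parts (i)--(iv) via shift-isomorphism invariance, the opposite complex/filtered duality of \cite{U10b}, continuation-map quasi-equivalences, and an identification of the Floer complex of a $C^2$-small Hamiltonian with the Morse complex (the paper instead uses a PSS quasi-equivalence for general $H$, but both yield $\beta_{\frak{c}_0}(\phi_H^1;K)\leq\osc H$ and the same limit argument), and part (v) via a filtered K\"unneth identification reducing to exactly the algebraic tensor-product inequality the paper proves as Theorem \ref{prodbeta}, using orthogonal bases over the Novikov field as you sketch. The only minor divergences are cosmetic: your choice $\widetilde H(t,x)=-H(1-t,x)$ rather than $\bar H(t,m)=-H(t,\phi_H^t(m))$ for $\phi^{-1}$ in (ii), and the slightly imprecise phrase ``overall filtration shift'' in (iv) (the shifts $H(p)$ vary with the generator, though the resulting bound $\leq\osc H$ is still correct).
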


Modulo an algebraic result (Theorem \ref{prodbeta}) which is needed in the proof of part (v), Theorem \ref{mainpropham} is proven in Section \ref{hambetaprop}.

Of course (iii) and (iv) combine to yield the following important corollary, which drives most of our applications:

\begin{cor}\label{hoferhambound} For any $\phi\in Ham(M,\omega)$  we have \[ \beta(\phi;K)\leq \|\phi\|.\]
\end{cor}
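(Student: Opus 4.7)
The plan is simply to invoke Theorem \ref{mainpropham}(iii) with the two Hamiltonian diffeomorphisms being $\phi$ and $1_M$, and then reduce using Theorem \ref{mainpropham}(iv). More precisely, applying (iii) to the pair $(\psi,\phi) = (1_M,\phi)$, I obtain
\[ |\beta(1_M;K) - \beta(\phi;K)| \leq \|1_{M}^{-1}\phi\| = \|\phi\|. \]
By (iv), the quantity $\beta(1_M;K)$ vanishes. Since the codomain of $\beta(\cdot;K)$ is $[0,\infty)$, the absolute value on the left is just $\beta(\phi;K)$, and the inequality $\beta(\phi;K)\leq \|\phi\|$ follows immediately.

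There is essentially no obstacle at this stage: once the Lipschitz property (iii) and the normalization (iv) are available from Theorem \ref{mainpropham}, the corollary is a one-line deduction. All of the substantive Floer-theoretic work is hidden inside those two items of Theorem \ref{mainpropham}, whose proof is deferred to Section \ref{hambetaprop}. In particular, the only reason to state this consequence separately is that it is the form of the estimate actually used in the sequel to produce lower bounds on the Hofer norm from lower bounds on the boundary depth.
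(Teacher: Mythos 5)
Your proof is correct and is exactly the deduction the paper has in mind: the paper states the corollary follows by combining Theorem \ref{mainpropham}(iii) and (iv), and you have simply written out that one-line combination with the right choice of $\psi = 1_M$.
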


Before describing the next important property of $\beta$ we introduce some notation.
If $(M,\omega)$ is a closed symplectic manifold we will denote by $\Gamma_{\omega}$ the subgroup of $\mathbb{R}$ given by\[ \Gamma_{\omega}= \left\{\langle[\omega],A\rangle|A\in H_{2}^{T}(M;\Z)\right\}\] where $H_{2}^{T}(M;\Z)$ is the subgroup of  $H_2(M;\Z)$ generated by classes of form $u_*[S^1\times S^1]$ where $u\co S^1\times S^1\to M$ is continuous.  (As a technical point, our use of toroidal classes $H_{2}^{T}$ rather than just spherical classes has to do with the fact that we consider the sectors of Floer theory given by noncontractible orbits in addition to the contractible ones.  Of course, any spherical class is also toroidal.)

Let $\mathcal{L}M$ denote the free loopspace of $M$.  For each path component $\mathfrak{c}$ choose an element $\gamma_{\mathfrak{c}}$ representing $\mathfrak{c}$.  If $\gamma\co S^1\to M$ and $u\co [0,1]\times S^1\to M$ obeys $u(0,\cdot)=\gamma_{\mathfrak{c}}$ and $u(1,\cdot)=\gamma$, and if $H\co S^1\times M\to\R$ is smooth let \[ \mathcal{A}_H(\gamma,u)=-\int_{[0,1]\times S^1}u^*\omega+\int_{0}^{1}H(t,\gamma(t))dt.\]  The $\mathfrak{c}$-action spectrum of $H$ is then by definition \[ \mathcal{S}_{H}^{\mathfrak{c}}=\{\mathcal{A}_H(\gamma,u)|\dot{\gamma}(t)=X_H(t,\gamma(t))\}.\]  In general we have $\mathcal{A}_H(\gamma,u)-\mathcal{A}_H(\gamma,u')\in \Gamma_{\omega}$; thus $\mathcal{S}_{H}^{\mathfrak{c}}$ is a union of cosets of $\Gamma_{\omega}$, one for each $1$-periodic orbit of $H$.  Note that $\mathcal{S}_{H}^{\mathfrak{c}}$ depends on the basepoint $\gamma_{\mathfrak{c}}$ that was chosen for $\mathfrak{c}$; however one easily sees that the \emph{difference set} $\{s-t|s,t\in \mathcal{S}_{H}^{\mathfrak{c}}\}$ is independent of that choice.

\begin{theorem}\label{spectrality}
Assume that $\phi$ is nondegenerate and is generated by the Hamiltonian $H\co S^1\times M\to\mathbb{R}$.  Then where  $\mathcal{S}_{H}^{\mathfrak{c}}$ is the $\mathfrak{c}$-action spectrum of $H$ for $\mathfrak{c}\in \pi_0(\mathcal{L}M)$, we have \[ \beta(\phi;K)\in \{0\}\cup\bigcup_{\mathfrak{c}\in \pi_0(\mathcal{L}M)} \{s-t|s,t\in \mathcal{S}_{H}^{\mathfrak{c}}\}\]   Moreover, again assuming that $\phi$ is nondegenerate, $\beta(\phi;K)=0$ if and only if the number of fixed points of $\phi$ is equal to $\sum_{k=0}^{2n}rank H_k(M;K)$.
\end{theorem}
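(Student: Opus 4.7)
The approach is to work directly with the filtered Floer chain complex $CF(H;K)$ over the Novikov field $\Lambda^{K,\Gamma_\omega}$, relying on two key properties. First (property~(a)), the filtration level $\ell(y)$ of any nonzero chain $y\in CF(H;K)$ is realized as a maximum of the action values $\mathcal{A}_H(\gamma,u)$ over the capped orbits $[\gamma,u]$ appearing in $y$ with nonzero coefficient; this maximum is attained thanks to the Novikov finiteness condition on chains, and in particular lies in $\bigcup_{\mathfrak{c}\in\pi_0(\mathcal{L}M)}\mathcal{S}_H^\mathfrak{c}$. Second (property~(b)), the differential strictly decreases $\ell$: if $\partial y\neq 0$ then $\ell(\partial y)<\ell(y)$, because every Floer cylinder contributing to $\partial$ is nonconstant (by nondegeneracy of $H$) and so carries strictly positive energy equal to the action difference between its two ends.

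For the first assertion, use that the Floer differential preserves the decomposition $CF(H;K)=\bigoplus_{\mathfrak{c}} CF^\mathfrak{c}(H;K)$ by components of the loop space, since Floer cylinders cannot change the homotopy class of an orbit. Since only finitely many $\mathfrak{c}$ support periodic orbits of $H$, one gets $\beta(\phi;K)=\max_\mathfrak{c}\beta(CF^\mathfrak{c}(H;K))$. If this maximum is zero, we are done, as $0$ lies in the stated target set. Otherwise, Proposition \ref{depthattained} supplies a specific $y\in CF^{\mathfrak{c}_0}(H;K)$ with $\partial y\neq 0$ and $\ell(y)-\ell(\partial y)=\beta(\phi;K)$ for some $\mathfrak{c}_0$; property~(a) then places both $\ell(y)$ and $\ell(\partial y)$ into $\mathcal{S}_H^{\mathfrak{c}_0}$, and so $\beta(\phi;K)\in\{s-t\mid s,t\in\mathcal{S}_H^{\mathfrak{c}_0}\}$.

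For the second assertion, combining property~(b) with Proposition \ref{depthattained} immediately gives that $\beta(\phi;K)=0$ if and only if $\partial=0$ on $CF(H;K)$: indeed, attainment of $\beta=0$ at a pair $(y,\partial y)$ with $\partial y\neq 0$ would force $\ell(\partial y)=\ell(y)$, contradicting~(b). Vanishing of $\partial$ is in turn equivalent to $CF(H;K)=HF(H;K)$, hence (since $\Lambda^{K,\Gamma_\omega}$ is a field and ranks are well-defined) to equality of their Novikov ranks. The Novikov rank of $CF(H;K)$ equals $\#\mathrm{Fix}(\phi)$ by nondegeneracy, while that of $HF(H;K)$ equals $\sum_{k=0}^{2n}\mathrm{rank}\,H_k(M;K)$ via the PSS-type identification with $H_*(M;\Lambda^{K,\Gamma_\omega})$. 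The main technical point to verify will be property~(b) for Novikov-linear combinations rather than just individual generators: one must localize at the top-filtration summand of $y$ and rule out cancellations raising $\ell(\partial y)$ back up to $\ell(y)$.
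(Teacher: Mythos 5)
Your proposal is correct and follows essentially the same route as the paper's proof: both apply Proposition \ref{depthattained} to the Floer boundary operator on each finite-dimensional sector $CF_{\frak{c}}(H;K)$, combine this with the strict filtration decrease of $\partial$ under nondegeneracy, reduce to the finitely many $\frak{c}$ supporting orbits, and close the iff with a rank count using the PSS isomorphism. The technical point you flag at the end is immediate from finite-dimensionality of $CF_{\frak{c}}(H;K)$ over $\Lambda^{K,\Gamma_{\frak{c}}}$: $\ell(\partial y)$ is a maximum over finitely many orthogonal-basis directions of quantities each strictly below $\ell(y)$, and Novikov cancellations can only lower, never raise, filtration level.
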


\begin{proof}
See the end of Section \ref{hambetaprop}.
\end{proof}

We now turn to the Lagrangian case.  Lagrangian Floer theory was formulated for closed monotone Lagrangian submanifolds $L$ of tame symplectic manifolds $(M,\omega)$ in \cite{Oh93} over $\mathbb{Z}/2\mathbb{Z}$ coefficients assuming that the minimal Maslov number of $L$ is at least $2$; and for relatively spin Lagrangians which satisfy an unobstructedness condition in \cite{FOOO09} over $\mathbb{Q}$.  In the formulation in \cite{FOOO09} one must additionally choose a relative spin structure on $L$ and a ``bounding cochain'' in order to obtain a chain complex (and the quasi-isomorphism type will depend on these choices); our convention throughout this paper will be to denote by $\hat{L}$ a choice of a Lagrangian submanifold together with whatever such additional structure is needed in the case at hand.

Where $K$ is a field as above, the following theorem describes some salient properties of the boundary depth function $\beta_{\hat{L}}(\cdot;K)\co \mathcal{L}(L)\to \R$ on Lagrangian submanifolds Hamiltonian-isotopic to $L$.  As in \cite{Ch98}, for an almost complex structure $J$ compatible with $\omega$ we let $\sigma(M,L,J)$ equal the smaller of either the minimal area of a nonconstant $J$-holomorphic disc with boundary on $L$, or the minimal energy of a nonconstant $J$-holomorphic sphere intersecting $L$.

\begin{theorem} \label{mainproplag}  For any $L_1,L_2\in \mathcal{L}(L)$ we have:
\begin{itemize} 
\item[(i)]  $|\beta_{\hat{L}}(L_1;K)-\beta_{\hat{L}}(L_2;K)|\leq \delta(L_1,L_2)$.
\item[(ii)] If the Floer homology $HF(L,L)$ is isomorphic to the singular homology $H_*(L)$ (with the appropriate Novikov ring coefficients) then $\beta_{\hat{L}}(L;K)=0$.  However, if $HF(L,L)$ is not isomorphic to $H_*(L)$, then $\beta_{\hat{L}}(L;K)\geq \sigma(M,L,J)$ for any $\omega$-compatible almost complex structure $J$.
\item[(iii)] If $L\cap L_1=\varnothing$ then $\beta_{\hat{L}}(L_1)=0$.
\item[(iv)] Let $(M,\omega)$ be a closed symplectic manifold, $\phi\in Ham(M,\omega)$, and let $\Gamma_{\phi}\subset M\times M$ be the graph of $\phi$.  Then where $M\times M$ is endowed with the symplectic structure $(-\omega)\oplus \omega$ and where $\Delta$ is the diagonal, for a suitable relative spin structure and bounding cochain on $\Delta$ we will have \[ \beta_{\hat{\Delta}}(\Gamma_{\phi};K)=\beta(\phi;K).\]
\end{itemize}
\end{theorem}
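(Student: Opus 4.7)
The plan is to handle the four parts in order, with (i) and (iii) being fairly direct from the construction of filtered Floer complexes, (ii) requiring a small-Hamiltonian argument that bifurcates on the structure of $HF(L,L)$, and (iv) being the main technical step where one must align the Hamiltonian and Lagrangian setups as filtered chain complexes up to shift-isomorphism.

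For (i), I would follow the same template as Theorem \ref{mainpropham}(iii). Given $\phi\in Ham(M,\omega)$ with $\phi(L_1)=L_2$ generated by a Hamiltonian $H$, construct PSS-type continuation chain maps between the filtered Floer complexes $CF(L,L_1)$ and $CF(L,L_2)$, together with chain homotopies interpolating between the compositions and the identities. Standard energy estimates bound the action shifts of these maps by $\int_{0}^{1}\max_M H(t,\cdot)\,dt$ and $-\int_{0}^{1}\min_M H(t,\cdot)\,dt$; invoking the shift-isomorphism invariance of $b$ developed in Section \ref{algintro} and then taking the infimum of $\osc H$ over all $H$ with $\phi_{H}^{1}=\phi$ gives $|\beta_{\hat{L}}(L_1;K)-\beta_{\hat{L}}(L_2;K)|\leq \delta(L_1,L_2)$. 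For (iii), if $L\cap L_1=\varnothing$ then for any sufficiently small nondegenerating Hamiltonian perturbation $H$ the perturbed Lagrangians $L$ and $\phi_{H}^{1}(L_1)$ remain disjoint, so $CF(L,\phi_{H}^{1}(L_1))$ is the zero complex and $\beta_{\hat{L}}(L_1)=0$ trivially.

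For (ii) I would use a $C^2$-small Morse-type perturbation: choose a family $H_\epsilon$ supported in a Weinstein neighborhood of $L$ and arising from a Morse function on $L$, with $\osc H_\epsilon\to 0$, so that $\phi_{H_\epsilon}^{1}(L)$ meets $L$ transversally at points corresponding to critical points of $H_\epsilon|_L$, all with Floer action lying in an interval of width $\osc H_\epsilon$. On $CF(L,\phi_{H_\epsilon}^{1}(L))$ the differential splits (after identifying generators with critical points) into a Morse part whose action jumps stay within this narrow window, plus higher-order $\Lambda$-coefficient terms coming from pseudoholomorphic disc or sphere bubbling, whose Novikov valuation is bounded below by $\sigma(M,L,J)$. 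If $HF(L,L)\cong H_*(L)\otimes\Lambda$, there are no Novikov-torsion classes forcing long primitives, and one obtains $\beta_{\hat{L}}(\phi_{H_\epsilon}^{1}(L))\leq \osc H_\epsilon\to 0$; continuity via (i) then gives $\beta_{\hat{L}}(L;K)=0$. If instead $HF(L,L)\not\cong H_*(L)\otimes\Lambda$, the discrepancy forces at least one exact chain whose only primitive is a bubbling-corrected one, with filtration jump at least $\sigma(M,L,J)-\osc H_\epsilon$; letting $\epsilon\to 0$ and using (i) yields $\beta_{\hat{L}}(L;K)\geq \sigma(M,L,J)$.

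For (iv), the plan is to realize the classical identification of Hamiltonian Floer theory with Lagrangian Floer theory of the graph at the level of filtered chain complexes. Given $\phi$ generated by $H\co S^1\times M\to\R$, take $\widetilde{H}(t,(x,y))=H(t,y)$ on $M\times M$, so that $\Gamma_{\phi}=\phi_{\widetilde{H}}^{1}(\Delta)$. The intersection set $\Delta\cap\Gamma_{\phi}$ is in canonical bijection with the fixed-point set of $\phi$, and the map $u(s,t)\mapsto (u_1(s,1-t),u_2(s,t))$ gives a bijection between Lagrangian Floer strips in $M\times M$ with boundary on $\Delta$ and $\Gamma_\phi$ and Hamiltonian Floer cylinders on $M$ for $H$. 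A direct computation matches the Lagrangian action on $(M\times M,(-\omega)\oplus \omega)$ with the Hamiltonian action $\mathcal{A}_H$ on $M$ up to an overall constant controlled by the choice of basepoints and capping paths on each side. Selecting the relative spin structure on $\Delta$ induced by the diffeomorphism $\Delta\cong M$ and the bounding cochain inherited from the canonical graph-product structure, this identification upgrades to a shift-isomorphism of filtered chain complexes $CF(\Delta,\Gamma_\phi)\cong CF(\phi)$; by the invariance of boundary depth under shift-isomorphism, $\beta_{\hat{\Delta}}(\Gamma_\phi;K)=\beta(\phi;K)$. The main obstacle will be the bookkeeping in (iv): one must verify that the Novikov rings $\Lambda^{K,\Gamma_\omega}$ on the Hamiltonian side and the Lagrangian Novikov ring for $\Delta\subset M\times M$ can be made to agree, that sphere bubbling in $M\times M$ reduces correctly to bubbling in the two factors, and that the basepoint and capping-path choices on each side can be synchronized so that a single uniform filtration shift (which, being uniform, does not perturb the boundary depth) accounts for the whole discrepancy between the two action functionals.
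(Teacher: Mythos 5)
Your proposal follows essentially the same route as the paper for all four parts, but there are a couple of places where you gloss over the genuinely hard steps.

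\textbf{Part (i) and (iii).} Your sketch of (i) is the right argument, though you invoke ``shift-isomorphism invariance'' where the correct ingredient is the notion of a $c$-quasiequivalence (Proposition~\ref{quasicont} applied to the Lagrangian continuation maps of Proposition~\ref{revcontlag}); shift-isomorphisms (Proposition~\ref{shiftisoprop}) are what make $\beta_{\hat{L}}$ well-defined on $\mathcal{L}(L)$, but they do not give the Lipschitz estimate. Part (iii) is handled exactly as you say.

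\textbf{Part (ii).} You have the right picture — the Floer complex near $L$ is the Morse complex plus Novikov-positive quantum corrections, and the dichotomy hinges on whether $HF\cong H_*(L)$ — but the step from ``quantum corrections have valuation $\geq\sigma$'' to ``$\beta_{\hat{L}}(L;K)\geq\sigma$ when $HF\not\cong H_*(L)$'' is not automatic and is exactly where the paper's Proposition~\ref{qdef} does its work. That proposition is a genuine algebraic lemma: one needs to show that if $\mathrm{rank}\,\partial > \mathrm{rank}\,\partial_0$ (the Morse differential), then there is a boundary $a\in\mathrm{Im}\,\partial$ with $\barnu(a)\geq\mu(k)$ all of whose primitives lie at level $\leq 0$. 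This requires constructing $a$ from an orthonormal basis adapted to $\ker\partial$ and comparing the two ranks; a naive ``the higher-order terms force long primitives'' argument can fail because quantum corrections might cancel or be absorbed into primitives of high valuation. Your sketch treats this as obvious, and it is not. The paper's framework of quantum corrections of a chain complex (Section~\ref{qcor}) is precisely what fills this gap.

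\textbf{Part (iv).} The overall strategy (identify $CF(\hat\Delta:\widetilde{H};K)$ with $CF(H;K)$ as filtered complexes) is the same, but your choice of Hamiltonian $\widetilde{H}(t,(x,y))=H(t,y)$ leads to a Floer equation on the strip which \emph{does not} fold directly into a Floer cylinder: with $\widetilde{H}$ pulled back from one factor, the first component of a strip $u=(u_1,u_2)$ satisfies an unperturbed (anti-)holomorphic equation while only $u_2$ sees the Hamiltonian term, and the unfolding of $\R\times[0,1]$ to $\R\times S^1$ requires the two equations to glue to a single Floer equation on the cylinder, which they do not. The paper instead uses the symmetric choice $G(t,m_1,m_2)=\tfrac12(H(1-t/2,m_1)+H(t/2,m_2))$, for which the map $\Upsilon u(s,t)=(u(s/2,1-t/2),u(s/2,t/2))$ gives an exact, filtration-preserving bijection of moduli spaces, with $\mathcal{A}_{H}=\mathcal{A}_{G}\circ\widetilde{\Upsilon}$ on the nose (no shift). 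Finally, the orientation coherence issue you flag in passing is the main technical content of the paper's proof: it requires the specific relative spin structure on $\Delta$ constructed in \cite[p.\ 32]{FOOO09b} with zero bounding cochain, together with a comparison of the disc and sphere determinant-line orientations, which cannot be waved away by invoking ``the canonical graph-product structure.''
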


More specifically, in \cite[p. 32]{FOOO09b} the authors construct a relative spin structure on $\Delta$ such that $0$ is a bounding cochain, and in (iv) we may use this relative spin structure and the zero bounding cochain.

Points (i)--(iii) above evidently combine to recover Chekanov's famous result \cite{Ch98} (at least for Lagrangians with well-defined Floer homology) that any Lagrangian submanifold has displacement energy equal to at least $\sup_J \sigma(M,L,J)$.  
When one unravels the arguments underlying the proofs, though, it becomes clear that this is not really a new proof of Chekanov's theorem, as similar ideas (though organized differently, of course) have been used in proofs such as the one in \cite[Section 4.3]{CL05}.  This approach to estimating the displacement energy of a Lagrangian submanifold seems to be very closely related to the approach using ``torsion thresholds'' in Floer homology in \cite{FOOO11}.

We next consider boundary depths of products of Lagrangians.  Suppose that we have  Lagrangian submanifolds $L\subset M$, $L'\subset M'$ with well-defined Floer homologies (at least after enriching them with appropriate additional data to give $\hat{L}$, $\hat{L'}$).  Let us say that $\hat{L}$ and $\hat{L'}$ \emph{satisfy the K\"unneth property} if, for generic Hamiltonians $H\co [0,1]\times M\to\R$, $H'\co [0,1]\times M\to\R$, auxiliary data can be chosen in such a way that the Floer complex $CF(\widehat{L\times L'}:H+H';K)$ is isomorphic as a filtered chain complex to the tensor product of $CF(\hat{L}:H;K)$ and $CF(\hat{L'}:H';K)$ once coefficients are extended so that all three chain complexes are defined over the same Novikov ring.  When $L,L',L\times L'$ are all monotone the K\"unneth property is well-known; see for instance \cite[Section 4]{Li04}.  In the much more general setting of \cite{FOOO09}, the K\"unneth property has yet to appear in the literature, but is the subject of work in progress by L. Amorim.

\begin{theorem}\label{lagprod}  Assume that $\hat{L}$ and $\hat{L'}$ satisfy the K\"unneth property, and let $N\in \mathcal{L}(L)$, $N'\in\mathcal{L}(L')$.  Assume moreover that the Floer homology $HF(\hat{L'},\hat{L'})$ is nonzero.  Then \[ \beta_{\widehat{L\times L'}}(N\times N')\geq \beta_{\hat{L}}(N).\]
\end{theorem}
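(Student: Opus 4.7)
The plan is to reduce the geometric statement to the algebraic inequality $b(C\otimes C')\geq b(C)$ for filtered chain complexes $(C,\partial_C), (C',\partial_{C'})$ over a field $K$ with $H_*(C')\neq 0$, and then to establish that inequality using an orthogonal (``singular value'') basis decomposition of filtered chain complexes.

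For the reduction, the K\"unneth hypothesis lets me pick generic Hamiltonians $H$ on $M$ and $H'$ on $M'$ with $\phi_H^1(L)=N$ and $\phi_{H'}^1(L')=N'$ so that $CF(\widehat{L\times L'};H+H')$ is isomorphic, after extending scalars to a common Novikov ring, to $CF(\hat L;H)\otimes CF(\hat{L'};H')$ as a filtered chain complex with sum filtration $\ell(a\otimes b)=\ell(a)+\ell(b)$. Since $\phi^1_{H+H'}=\phi_H^1\times\phi_{H'}^1$ carries $L\times L'$ to $N\times N'$, the boundary depth of this tensor product computes $\beta_{\widehat{L\times L'}}(N\times N')$, and by continuation invariance of Lagrangian Floer homology $H_*(CF(\hat{L'};H'))\cong HF(\hat{L'},\hat{L'})\neq 0$. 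Shift-isomorphism invariance of $\beta$ then reduces the theorem to the stated algebraic inequality.

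For the algebraic core, I would invoke the orthogonal basis result for filtered chain complexes over a field from \cite{U09}: there is a basis $\{v_k\}$ of $C$ such that each $v_k$ is either a cycle or satisfies $\partial_C v_k=v_{\sigma(k)}$ for an injective index map $\sigma$, the filtration of any linear combination $\sum c_k v_k$ equals $\max\{\ell(v_k):c_k\neq 0\}$, and $b(C)=\sup\{\ell(v_i)-\ell(v_{\sigma(i)}):\partial_C v_i\neq 0\}$. Given $\epsilon>0$, pick $v_i$ with $\partial_C v_i=v_j$ and $\ell(v_i)-\ell(v_j)>b(C)-\epsilon$, and pick an analogous basis $\{e'_l\}$ of $C'$ containing a cycle $z=e'_{l_0}$ with $[z]\neq 0$ in $H_*(C')$. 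Then $v_j\otimes z=\pm\partial(v_i\otimes z)$ is a boundary in $C\otimes C'$; for any $w$ with $\partial w=v_j\otimes z$, extract the coefficient $w_z\in C$ of $z$ in the expansion of $w$ in the product basis $\{v_k\otimes e'_l\}$. Because $z$ is not in the image of $\partial_{C'}$, the $z$-component of $\partial w$ is exactly $\partial_C w_z$, so $\partial_C w_z=v_j$; injectivity of $\sigma$ then forces $v_i$ to appear with coefficient $1$ in $w_z$. By orthogonality of the product basis, $\ell(w)\geq\ell(w_z)+\ell(z)\geq\ell(v_i)+\ell(z)=\ell(v_j\otimes z)+(\ell(v_i)-\ell(v_j))>\ell(v_j\otimes z)+b(C)-\epsilon$. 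Taking the infimum over $w$ and letting $\epsilon\to 0$ yields $b(C\otimes C')\geq b(C)$.

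The main obstacle is verifying that the ``max of nonzero filtration components'' property of an orthogonal basis is inherited by the product basis of $C\otimes C'$ with the sum filtration; this is what underlies the key step $\ell(w)\geq\ell(w_z)+\ell(z)$ and needs some care in the presence of Novikov-ring coefficients that a priori differ across the three complexes. These are purely algebraic issues built on the framework of \cite{U09}, but they form the technical crux of the argument.
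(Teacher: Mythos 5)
Your proposal is correct and takes essentially the same route as the paper: reduce via the K\"unneth property to the algebraic statement that $b(C\otimes_{\Lambda}D)\geq b(C)$ whenever $H_*(D)\neq 0$, which is Theorem \ref{prodbeta}(b), and prove that by working with orthogonal bases of the factors. The technical point you single out as the crux --- that the product of orthogonal bases is orthogonal for the sum filtration, uniformly across the necessary Novikov-ring extensions --- is exactly what Lemma \ref{cantensor}, Corollary \ref{orthstab} and Remark \ref{coind} supply; beyond that the paper invokes Proposition \ref{depthattained} to attain the supremum defining $b(C)$ rather than using your $\epsilon$-approximation, and works with the orthogonal decomposition $B^C\oplus H^C\oplus F^C$ rather than a basis in canonical form, but these are cosmetic variations on the same argument.
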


\subsection{Outline of the paper}

The focus of this paper alternates between geometry and algebra.  In the upcoming Section \ref{prologue} we introduce the notion of the boundary depth in a simple yet still interesting case, namely that of Morse functions on $S^1$.  This discussion will later become relevant in the proof of Theorem \ref{s1prodmain}; it has been placed near the start of the paper in the hope that it will also help the reader develop an intuition for the boundary depth.  

Section \ref{algintro} introduces the boundary depth from an algebraic standpoint, proving for instance continuity properties of the boundary depth under algebraic assumptions which model the behavior of continuation maps in both Hamiltonian and Lagrangian Floer theory.  Section \ref{qcor} is devoted to an algebraic result about the boundary depths of filtered chain complexes that are obtained as ``quantum corrections'' of unfiltered complexes in a way reminiscent of the ``Morse--Bott'' approach to calculating Floer homology.  This result is the key ingredient in Theorem \ref{mainproplag} (ii).  

Our main results about $Ham(M,\omega)$ are proven in Section \ref{hamsect}, while those about Lagrangian submanifolds are proven in Section \ref{lagsect}, modulo some algebraic details which are deferred to the following two sections, as well as a technical point relating to transversality that is needed for the proof of Theorem \ref{hammain} and is deferred to the Appendix.  Another result of Section \ref{hamsect}, Corollary \ref{encap}, asserts an \emph{a priori} upper bound on the boundary depths of Hamiltonian diffeomorphisms generated by Hamiltonians supported in a given displaceable set (a slightly weaker result appears in \cite{U09}).  Thus, while one might in principle hope to show that an arbitrary closed symplectic manifold admits a sequence of Hamiltonian diffeomorphisms with diverging boundary depths (and hence Hofer norms), Corollary  \ref{encap} suggests that one could not expect  to obtain such a sequence by any purely local construction.

We turn back to algebra in Section \ref{alg2}, whose main result is Proposition \ref{depthattained}, which asserts roughly speaking that the supremum in one version of the definition of the boundary depth is attained.  This fact is used in the proof of Theorem \ref{spectrality}, and also in the following Section \ref{tprod}, which concerns boundary depths of tensor products of chain complexes.  Theorem \ref{prodbeta} plays an important role in the proof of Theorem \ref{lagprod} and hence also of Theorem \ref{s1prodmain}.
 
In Section \ref{circlesect} we give an elementary proof of the fact that the Hofer metric on the space of Lagrangian submanifolds of $\R^2$ Hamiltonian-isotopic to the unit circle has finite diameter.  It would not surprise me if this fact is already known, but I was unable to find documentation of this.   This example suggests that some degree of rigidity should be required of a Lagrangian submanifold $L$ before one expects $\mathcal{L}(L)$ to have infinite diameter.

Finally, the Appendix proves a technical result which implies that the index-one, $t$-independent solutions to the Floer equation for a time-independent Hamiltonian and almost complex structure can be arranged to be cut out transversely in certain circumstances.  This is needed in order to justify the identification of the Floer- and Morse-theoretic boundary depths that occurs in the proof of Theorem \ref{mmcomp}, and hence to complete the proof of Theorem \ref{hammain}. 

\subsection*{Acknowledgements}
I am grateful to the organizers of the Workshop on Symplectic Geometry and Topology (Kyoto, February 2011) and of GESTA 2011 (Castro Urdiales, June 2011) for the opportunity to present some of this work as it was in development.  In particular, talks by and conversations with F. Lalonde, A. Oancea, Y.-G. Oh, and C. Viterbo in Kyoto were influential in prompting me to see that the methods discussed here could prove more than I originally realized.  I also thank the University of Chicago for its hospitality during a visit in March 2011.  L. Polterovich's penetrating questions about the boundary depth during this time motivated me to develop the material in Section \ref{prologue}.  Finally, I am grateful to the anonymous referee for helpful comments and corrections.
This work was partially supported by NSF grant DMS-1105700.

\section{Morse-theoretic boundary depth in $S^1$}\label{prologue}

As an introduction to our main tool, we examine its behavior in perhaps the simplest nontrivial context, namely for Morse functions on the circle.  The calculation which we perform here will be of use to us later in our proof of Theorem \ref{s1prodmain}.

If $M$ is a closed manifold, $K$ a field, $f\co M\to\mathbb{R}$ a Morse function, and $g$ a metric with respect to which the gradient flow of $f$ is Morse-Smale, one obtains a Morse complex $CM_*(f)$ in a standard way (see, \emph{e.g.}, \cite{Sc93}): let $Crit_k(f)$ be the set of critical points of $f$ with Morse index $k$, and let  $CM_k(f;K)$ be $K$-vector space spanned by $Crit_k(f)$.  The boundary operator $\partial\co CM_k(f;K)\to CM_{k-1}(f;K)$ is given by, for $p\in Crit_k(f)$, \[ \partial p=\sum_{q\in Crit_{k-1}(f)}n(p,q)q,\] where $n(p,q)$ is the number of negative $g$-gradient flowlines from $p$ to $q$, counted in the field $K$ with appropriate signs as determined by chosen orientations of the unstable manifolds of the various critical points of $f$.  Of course one has $\partial\circ\partial=0$, and the resulting homology is isomorphic to the singular homology $H_*(M;K)$.  
For an element $a=\sum_i a_ip_i\in CM_*(f;K)$ set \[ \ell(a)=\max\{f(p_i)|a_i\neq 0\} \] and, for $\lambda\in\R$, define \[ CM^{\lambda}_{*}(f;K)=\{a\in CM_*(f;K)|\ell(a)\leq \lambda\}.\]  This gives a filtration by $\mathbb{R}$ on $CM_*(f;K)$.  Now define the boundary depth by \[ \beta_{Morse}(f;K)=\inf\{\beta\geq 0|(\forall \lambda\in\R)(CM^{\lambda}_{*}(f;K)\cap (Im\partial)\subset \partial(CM_{*}^{\lambda+\beta}(f;K)))\}.\]  Equivalently, as follows from a moment's thought, \[ \beta_{Morse}(f;K)=\left\{\begin{array}{ll} 0 & \mbox{ if }\partial=0, \\ \sup_{0\neq x\in Im\partial}\inf\{\ell(y)-\ell(x)|\partial y=x\} & \mbox{ if }\partial\neq 0.\end{array}\right.\]

One can also express the boundary depth in terms of the homologies of the sublevel sets of $f$ and their inclusions into $M$; we leave it to the reader to find a suitable formula.

We will now give another formula for the boundary depth of a Morse function on $S^1$.  As notation, if $k\geq 2$, define \[ S^{1}_{cyc,k}=\left\{(p_1,\ldots,p_k)\in (S^1)^{k}\left|\begin{array}{cc} p_1,\ldots,p_k \mbox{ are distinct and }\\\mbox{ in counterclockwise cyclic order on $S^1$}\end{array}\right.\right\}.\]

\begin{theorem}\label{s1calc}  Let $f\co S^1\to \R$ be a Morse function.  Then \begin{equation}\label{mainlink} \beta_{Morse}(f;K)=\sup\left\{\left.\min\{f(t_1),f(t_3)\}-\max\{f(t_2),f(t_4)\}\right|(t_1,t_2,t_3,t_4)\in S^{1}_{cyc,4}\right\}.\end{equation}
\end{theorem}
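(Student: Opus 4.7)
The plan is to prove the two inequalities $\beta_{Morse}(f;K) \geq$ RHS and $\beta_{Morse}(f;K) \leq$ RHS separately, by direct examination of the Morse complex and of the combinatorics of sublevel sets on $S^1$. Let me write the critical points cyclically as alternating minima $q_1,\ldots,q_n$ and maxima $p_1,\ldots,p_n$, with $p_i$ between $q_{i-1}$ and $q_i$, so that $\partial p_i = \pm(q_i-q_{i-1})$; consequently $\mathrm{Im}\,\partial$ consists of those $\sum a_iq_i$ with $\sum a_i=0$.

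For the lower bound, fix $(t_1,t_2,t_3,t_4)\in S^1_{cyc,4}$; setting $d=\min\{f(t_1),f(t_3)\}$ and $c=\max\{f(t_2),f(t_4)\}$, the inequality $\beta_{Morse}\geq d-c$ is trivial when $d\leq c$, so assume $d>c$. Let $m_2$ and $m_4$ minimize $f$ over the two closed arcs from $t_1$ to $t_3$ through $t_2$ and through $t_4$ respectively. Because $f(t_2),f(t_4)\leq c<d\leq f(t_1),f(t_3)$, both minimizers lie in the interiors of their arcs and so are index-zero critical points of $f$. Set $x=m_2-m_4\in CM_0(f;K)$; then $x\in\mathrm{Im}\,\partial$ with $\ell(x)\leq c$. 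I claim no preimage $y$ with $\partial y=x$ satisfies $\ell(y)<d$. Any such $y$ would lie in $CM^{<d}_*(f)$, which is the Morse complex of the open sublevel set $U=\{f<d\}$ and computes its singular homology. Since $f(t_1),f(t_3)\geq d$, we have $U\subset S^1\setminus\{t_1,t_3\}$, a disjoint union of two open arcs; one contains $m_2$ and the other $m_4$. Thus $[m_2]$ and $[m_4]$ are linearly independent in $H_0(U)$, making $[x]\neq 0$ in $H_0(CM^{<d}_*(f))$, contradicting the existence of $y$. Hence $\ell(y)-\ell(x)\geq d-c$, and $\beta_{Morse}(f;K)\geq d-c$.

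For the upper bound, let $\beta$ denote the right-hand side of (\ref{mainlink}), and fix any $x\in\mathrm{Im}\,\partial\setminus\{0\}$ and small $\epsilon>0$. Set $\mu=\ell(x)+\beta+\epsilon$ and choose $\epsilon$ generic so that $\mu$ is not a critical value of $f$. It suffices to show $[x]=0$ in $H_0(CM^{\leq\mu}_*(f))\cong H_0(\{f\leq\mu\})$, for then some $y$ with $\partial y=x$ lies in $CM^{\leq\mu}_1(f)$, giving $\ell(y)\leq\ell(x)+\beta+\epsilon$ and hence, on letting $\epsilon\downarrow 0$, $\beta_{Morse}(f;K)\leq\beta$. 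Suppose instead that $[x]\neq 0$. Writing $x=\sum a_iq_i$ (with $\sum a_i=0$), the class $[x]$ is the image of $\sum a_i[q_i]$ under the identification with $H_0(\{f\leq\mu\})$; nontriviality produces two distinct components $C,C'$ of $\{f\leq\mu\}$ and minima $q_{i_2}\in C$, $q_{i_4}\in C'$ with $a_{i_2},a_{i_4}\neq 0$, so $f(q_{i_2}),f(q_{i_4})\leq\ell(x)$. Since $C$ and $C'$ are disjoint closed arcs in $S^1$, the complement $S^1\setminus(C\cup C')$ decomposes into two open arcs, one on each side; each of these arcs contains at least one component of $\{f>\mu\}$ separating $C$ from $C'$. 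Choose $t_1$ and $t_3$ in two such components, one on each side. Then $f(t_1),f(t_3)>\mu$, and the quadruple $\{q_{i_2},t_1,q_{i_4},t_3\}$ appears on $S^1$ in the cyclic order (minimum, high point, minimum, high point); a cyclic rotation by one position places the ``high'' points at odd indices, yielding an element of $S^1_{cyc,4}$ whose associated quantity exceeds $\mu-\ell(x)=\beta+\epsilon$, contradicting the definition of $\beta$ as a supremum.

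The essential content is largely topological and takes place on $S^1$, where sub- and super-level sets of $f$ are finite disjoint unions of arcs, so enumerating components and separating regions is automatic; the only technical point to handle is bookkeeping with the cyclic rotation that arranges the four chosen points into the counterclockwise convention of $S^1_{cyc,4}$ with the ``high'' values at odd indices. No essentially novel ingredient is needed beyond the standard identification $H_0(CM^{\leq\mu}_*(f))\cong H_0(\{f\leq\mu\})$ of Morse theory; the infimum in the definition of $\beta_{Morse}(f;K)$ is attained by Proposition \ref{depthattained}, so no further limiting argument is required once the inequality on finite-$\epsilon$ preimages is secured.
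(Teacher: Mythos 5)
Your proof is correct, and it takes a genuinely different route from the one in the paper. The paper's proof is combinatorial and works directly with the Morse differential: after reducing the right-hand side to a maximum $\gamma(f)$ over quadruples of critical points, it exploits the explicit descriptions of $\ker\partial$ and $\mathrm{Im}\,\partial$ on $S^1$ (every preimage of $q_l-q_k$ differs from $\sum_{r=k+1}^{l}p_r$ by a multiple of $\sum p_i$) to pin down the filtration level of any preimage, and for the upper bound it writes down an explicit primitive $y=-\sum_{j\geq 2}(\sum_{i<j}n_i)p_j$ and estimates its level. You instead never compute the boundary operator at all: you use the standard identification $H_*(CM^{\lambda}_*(f))\cong H_*(\{f\leq\lambda\})$ and argue purely in terms of the topology of sub- and super-level sets of $f$ in $S^1$. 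For the lower bound, you observe that with $d=\min\{f(t_1),f(t_3)\}$ the set $\{f<d\}$ is disconnected by $\{t_1,t_3\}$, so $m_2-m_4$ represents a nonzero class in $H_0(\{f<d\})$ and any primitive must cross level $d$; for the upper bound, a nonzero class $[x]$ in $H_0(\{f\leq\mu\})$ forces two separated components of $\{f\leq\mu\}$ and hence two separating excursions above level $\mu$, contradicting the definition of the supremum. This has two advantages: it sidesteps the reduction to critical quadruples (your argument accepts arbitrary $(t_1,t_2,t_3,t_4)\in S^1_{cyc,4}$ from the start), and it makes the ``linking'' interpretation of Remark~\ref{prologue}.3 completely explicit — indeed your argument is essentially a baby case of the framework of \cite{U12}. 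What the paper's proof buys in exchange is a fully explicit formula for an optimal primitive $y$, which could be useful if one wanted quantitative refinements. One small stylistic point: you invoke $CM^{<d}_*(f)$ (strict sublevel), which the paper never formally defines, but as the critical values are discrete this is just $CM^{\lambda_0}_*(f)$ for any regular $\lambda_0<d$ with no critical values in $(\lambda_0,d)$, and the identification with $H_*(\{f<d\})$ is standard; no gap here.
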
 

\begin{proof} A Morse function on $S^1$ has the same number of local maxima as local minima; let $m$ denote this number.  Moreover maxima and minima obviously alternate as one goes around the circle, so we can denote the maxima by $p_1,\ldots,p_m$ and the minima as $q_1,\ldots,q_m$, labeling in such a way that \[ f(p_1)=\max_{S^1}f\quad \mbox{and}\quad (p_1,q_1,p_2,q_2,\ldots,p_m,q_m)\in S^{1}_{cyc,2m}.\]  With respect to the standard orientations of the unstable manifolds one has \[ \partial p_i=q_i-q_{i-1} \] where the indices are evaluated modulo $m$ (so $\partial p_1=q_1-q_{m}$).  

We first dispense with a trivial case: that in which $m=1$.  In this case the Morse differential vanishes, and so $\beta_{Morse}(f;K)=0$.  Meanwhile we assert that in this case the set over which the $\sup$ is taken in (\ref{mainlink}) contains no positive numbers.  Indeed, if $(t_1,t_2,t_3,t_4)\in S^{1}_{cyc,4}$ had the property that $\min\{f(t_1),f(t_3)\}>\max\{f(t_2),f(t_4)\}$ then we would obtain two distinct local minima $q_1,q_2$ of $f$ by having $q_1$ be a minimum of $f$ on the oriented  interval from $t_1$ to $t_3$ and having $q_2$ be a minimum of $f$ on the oriented  interval from $t_3$ to $t_1$, thus contradicting the assumption that $m=1$.  On the other hand the set over which the $\sup$ is taken in (\ref{mainlink}) certainly contains numbers arbitarily close to zero: just take the $t_i$ to be very close to each other.  Thus when $m=1$ both sides of ($\ref{mainlink}$) are zero.

So for the rest of the proof assume that $m\geq 2$.  Since $p_1$ is a global maximum for $f$ and $p_2$ is the unique local maximum on the oriented interval from $q_1$ to $q_2$ we have $\min\{f(p_1),f(p_2)\}-\max\{f(q_1),f(q_2)\}>0$ with $(p_1,q_1,p_2,q_2)\in S^{1}_{cyc,4}$, so the right hand side of (\ref{mainlink}) is positive.  

\begin{claim}The right hand side of (\ref{mainlink}) is equal to \begin{equation}\label{gammaS1} \gamma(f)=\max\left\{\min\{f(p_i),f(p_j)\}-\max\{f(q_k),f(q_l)\}\left|(p_i,q_k,p_j,q_l)\in S^{1}_{cyc,4}\right.\right\}.\end{equation}  
\end{claim}  In other words, we are claiming that (when $m\geq 2$) we may evaluate the right hand side of (\ref{mainlink}) by restricting to the case that $t_1,t_3$ are local maxima and $t_2,t_4$ are local minima.  Indeed, suppose we have some $(t_1,t_2,t_3,t_4)\in S^{1}_{cyc,4}$ with $\min\{t_1,t_3\}>\max\{t_2,t_4\}$ (by our earlier remarks such $t_i$ do exist when $m\geq 2$).  Set $p_i$ equal to a global maximum of $f$ on the oriented interval from $t_4$ to $t_2$, and $p_j$ equal to a global maximum on the oriented interval from $t_2$ to $t_4$.  Then $(p_i,t_2,p_j,t_4)\in S^{1}_{cyc,4}$ with $\min\{f(p_i),f(p_j)\}>\max\{f(t_2),f(t_4)\}$, so set $q_k$ equal to a global minimum of $f$ on the oriented interval from $p_i$ to $p_j$ and $q_l$ equal to a global minimum of $f$ on the oriented interval from $p_j$ to $p_i$.  Then $(p_i,q_k,p_j,q_l)\in S^{1}_{cyc,4}$ and \[ \min\{f(p_i),f(p_j)\}-\max\{f(q_k),f(q_l)\}\geq  \min\{f(t_1),f(t_3)\}-\max\{f(t_2),f(t_4)\},\] which clearly suffices to prove the claim.

It thus remains only to show that $\beta_{Morse}(f;K)=\gamma(f)$ where $\gamma(f)$ is defined in (\ref{gammaS1}).  Note that since $\partial p_i=q_i-q_{i-1}$ we easily find that \begin{equation}\label{kerS1} \ker\partial=\left\{\left. n\sum_{i=1}^{m}p_i\right|n\in\Z\right\} \end{equation} and \begin{equation}\label{imS1} Im\partial=\left\{\left.\sum_{j=1}^{m}n_jq_j\right|\sum n_j=0\right\}.\end{equation}

Choose $i,j,k,l$ achieving the maximum in (\ref{gammaS1}); without loss of generality (since $S^{1}_{cyc,4}$ is invariant under cyclic permutations) say $k<l$.  Let \[ x_0=q_l-q_k\in CM_0(f;K).\]  Then where \[ y_0=\sum_{r=k+1}^{l}p_r \] we have $\partial y_0=x_0$.  So given (\ref{kerS1}), any $y\in CM_1(f;K)$ such that $\partial y=x_0$ has the form \[ y=\sum_{r\in\{k+1,\ldots,l\}}(n+1)p_r+  \sum_{r\notin\{k+1,\ldots,l\}}np_r \] for some $n\in \Z$.  Now since $(p_i,q_k,p_j,q_l)\in S^{1}_{cyc,4}$ we have $j\in \{k+1,\ldots,l\}$ and $i\notin\{k+1,\ldots,l\}$.  Hence if $y=\sum a_rp_r$ has $\partial y=x_0$, the coefficients on $a_i$ and $a_j$ differ by one; in particular (regardless of what field we are working over) they are not both zero.  Thus $\ell(y)\geq \min\{f(p_i),f(p_j)\}$.  So by our choice of $i,j,k,l$ we have shown that \[ \inf\{\ell(y)-\ell(x_0)|\partial y=x_0\}\geq \min\{f(p_i),f(p_j)\}-\max\{f(q_k),f(q_l)\}=\gamma(f).\]  This  proves that \[ \beta_{Morse}(f;K)\geq \gamma(f).\]

We now prove the reverse inequality.  To do this we must show that, if $x=\sum_i n_i q_i$ is a nonzero element of $Im\partial$, then there is $y$ such that $\partial y=x$ and $\ell(y)-\ell(x)\leq \gamma(f)$.

We will show, specifically, that the element \[ y=-\sum_{j=2}^{m}\left(\sum_{i=1}^{j-1}n_i\right)p_j \] satisfies the required property.  Given that $x\in Im\partial$ and hence $\sum_{i=1}^{m} n_i=0$ by (\ref{imS1}), it is easy to see that $\partial y=x$.  Now we have \begin{equation}\label{elly} \ell(y)=\max\left\{f(p_j)\left|\sum_{i=1}^{j-1}n_i\neq 0\right.\right\} \end{equation} (of course, the set above is nonempty since we assume $x\neq 0$),
and \begin{equation}\label{ellx} \ell(x)=\max\{f(q_i)|n_i\neq 0\}\end{equation}

Let $j_1$ be the index corresponding to the maximum in (\ref{elly}) and let $i_1$ be the index corresponding to the maximum in (\ref{ellx}).  Since our ordering was such that $p_1$ was a global maximum for $f$, we have \[ \ell(y)=f(p_{j_1})=\min\{f(p_1),f(p_{j_1})\}.\]  
Let $i_0$ and $i_2$ be, respectively, the minimal and maximal elements of $\{i|n_i\neq 0\}$.  Since \[ \sum_{i=1}^{m}n_i=\sum_{i=i_0}^{i_2}n_i=0 \] we have $i_0+1\leq j_1\leq i_2$.   Of course, \[ \ell(x)=f(q_{i_1})=\max\{f(q_{i_0}),f(q_{i_1})\}=\max\{f(q_{i_1}),f(q_{i_2})\}.\]

If $i_1<j_1$, then we have $(p_1,q_{i_1},p_{j_1},q_{i_2})\in S^{1}_{cyc,4}$ with \[ \ell(x)-\ell(y)=\min\{f(p_1),f(p_{j_0})\}-\max\{f(q_{i_1}),f(q_{i_2})\}\leq \gamma(f),\] while if $j_1\leq i_1$ we have $(p_1,q_{i_0},p_{j_1},q_{i_1})\in S^{1}_{cyc,4}$ with \[ \ell(x)-\ell(y)=\min\{f(p_1),f(p_{j_0})\}-\max\{f(q_{i_0}),f(q_{i_1})\}\leq \gamma(f).\]  So we have indeed shown that any nonzero $x\in Im\partial$ has a preimage $y$ with $\ell(y)-\ell(x)\leq \gamma(f)$, implying that $\beta_{Morse}(f;K)\leq \gamma(f)$.
\end{proof}

\begin{remark} While $\beta_{Morse}(f;K)$ is defined using Morse theory, the formula on the right-hand side of (\ref{mainlink}) obviously does not require $f$ to be a Morse function---or even to be differentiable.  Moreover Theorem \ref{s1calc} clearly shows that $\beta_{Morse}(\cdot;K)$ is continuous with respect to the $C^0$-norm.  This latter property continues to hold on a general manifold $M$, and so implies that $\beta_{Morse}(\cdot;K)$ can always be canonically extended to all of $C^0(M;\R)$; we will prove analogues of this fact in the Floer-theoretic context later.
\end{remark}

\begin{remark} In (\ref{mainlink}), the fact that $(t_1,t_2,t_3,t_4)\in S^{1}_{cyc,4}$ amounts to the fact that the two copies of $S^0\subset S^1$ given by $\{t_1,t_3\}$ and $\{t_2,t_4\}$ are \emph{linked} in the sense that, while both copies of $S^0$ are nullhomologous, any chain whose boundary is equal to one of the copies of $S^0$ must intersect the other copy.  Thus on $S^1$ the statement that the boundary depth of a function $f$ is nonzero amounts to the statement that one can find two linked copies $C_0,C_1$ of $S^0\subset S^1$ such that $f|_{C_0}>f|_{C_1}$.  A similar ``linking'' interpretation of the Morse-theoretic boundary depth on more general manifolds has recently been obtained in \cite[Proposition 5.6 and Theorem 5.9]{U12}.
\end{remark}

\section{General algebraic considerations}\label{algintro}

Let us formulate abstractly some of the relevant algebraic notions.  First of all, as notation, if $S$ is a set equipped with an action of the integers $\mathbb{Z}$ and if $k\in S$ we denote by $k+1$ and $k-1$ the results of acting on $k$ by, respectively, $1$ and $-1$.  

\begin{dfn}\label{rfilt} Let $S$ be a set equipped with an action of  $\mathbb{Z}$, and let $K$ be a field.  An \emph{S}-graded, $\mathbb{R}$-filtered complex over $K$ consists of the following data:
\begin{itemize}\item A $K$-vector space $C$ together with a $K$-linear map $\partial\co C\to C$ such that $\partial\circ\partial=0$.
\item For each $k\in S$, a subspace $C_k\leq C$ such that $C=\oplus_{k\in S}C_k$ and $\partial(C_k)\leq C_{k-1}$.
\item For each $k\in S$ and each $\lambda\in \R$, a subspace $C_{k}^{\lambda}\leq C_k$ such that \begin{itemize}\item $C_k=\bigcup_{\lambda\in\R}C_{k}^{\lambda}$; \item $\bigcap_{\lambda\in \R}C_{k}^{\lambda}=\{0\}$; \item  if $\lambda<\mu$ then $C_{k}^{\lambda}\leq C_{k}^{\mu}$; and \item $\partial(C_{k}^{\lambda})\leq C_{k-1}^{\lambda}$.\end{itemize}
We write \[ C^{\lambda}=\oplus_{k\in S}C_{k}^{\lambda}.\]
\end{itemize}
\end{dfn}

\begin{dfn} Let $(C,\partial)$ and $(D,\delta)$ be two $S$-graded, $\R$-filtered complexes over $K$, and let $c\in \R$.  A \emph{$c$-morphism} $\Phi\co C\to D$ is a $K$-linear map such that\begin{itemize}\item $\Phi$ is a chain map: $\Phi\circ \partial=\delta\circ \Phi$ \item For all $k\in S$ and $\lambda\in \R$ we have \[ \Phi(C_{k}^{\lambda})\leq D_{k}^{\lambda+c}.\]   \end{itemize}
\end{dfn}

\begin{dfn}  Let $\Phi_1,\Phi_2\co C\to D$ be two chain maps where $(C,\partial)$ and $(D,\delta)$ are two $S$-graded, $\R$-filtered complexes over $K$, and let $c\in \R$.  A \emph{$c$-homotopy} from $\Phi_1$ to $\Phi_2$ is a $K$-linear map $\mathcal{K}\co C\to D$ which, for each $k\in S$ and $\lambda\in \R$, obeys \begin{itemize}\item $\mathcal{K}(C_{k}^{\lambda})\leq D_{k+1}^{\lambda+c}$, and \item $\Phi_2-\Phi_1=\mathcal{K}\partial+\delta\mathcal{K}.$\end{itemize}
\end{dfn}

\begin{dfn}\label{shiftiso}  Let $(C,\partial)$ and $(D,\delta)$ be two $S$-graded, $\R$-filtered complexes over $K$.  A \emph{shift-isomorphism} from $C$ to $D$ consists of the following data:\begin{itemize}\item
A bijection $\phi\co S\to S$ such that $\phi(k+1)=\phi(k)+1$ for all $k\in S$.
\item A function $\sigma\co S\to \mathbb{R}$ such that $\sigma(k+1)=\sigma(k)$ for all $k\in S$.
\item A chain map $\Phi\co C\to D$ such that, for each $k\in S$ and $\lambda\in \mathbb{R}$, $\Phi$ restricts as an isomorphism from $C_{k}^{\lambda}$ to $D_{\phi(k)}^{\lambda+\sigma(k)}$.\end{itemize}
\end{dfn}

Of course, using that compositions and inverses of bijective chain maps are bijective chain maps, shift-isomorphism defines an equivalence relation on $S$-graded, $\R$-filtered complexes.

\begin{dfn}  Let $(C,\partial)$ be an $S$-graded, $\mathbb{R}$-filtered complex over $K$.  If $k\in S$, the \emph{boundary depth of $C$ in grading $k$} is the quantity \[ b_k(C,\partial)=\inf\left\{\beta\geq 0\left|(\forall \lambda\in \mathbb{R})\left((Im \partial)\cap C_{k}^{\lambda} \subset \partial(C_{k+1}^{\lambda+\beta})\right)\right.\right\}.\]
The \emph{boundary depth of $C$} is simply the quantity \[ b(C,\partial)=\sup_{k\in S}b_k(C,\partial).\]
\end{dfn}

It is easy to check that, equivalently, \[ b(C,\partial)=\inf\left\{\beta\geq 0\left|(\forall \lambda\in \mathbb{R})\left((Im \partial)\cap C^{\lambda} \subset \partial(C^{\lambda+\beta})\right)\right.\right\},\] \emph{i.e.}, $b(C,\partial)$ is just the boundary depth that we would obtain by forgetting about the grading (or rather, by considering $C$ to be graded by a one-element set).  

In principle, the set over which we take the infimum in the definition of $b_k(C,\partial)$ could be empty, in which case we would set $b_k(C,\partial)=\infty$.  However, in the situations that we will consider, the complex $(C,\partial)$ will have additional structure which will guarantee the finiteness of $b_k(C,\partial)$ using results such as \cite[Theorem 1.3]{U08} and/or \cite[Proposition 6.3.9]{FOOO09}. 

Note that in the case where $\partial|_{C_{k+1}}=0$ (including the case where $C_k$ or $C_{k+1}$ is zero), we have $b_k(C,\partial)=0$.

\begin{prop}\label{shiftisoprop} If there is a shift-isomorphism $(\Phi,\phi,\sigma)$ from $(C,\partial)$ to $(D,\delta)$ then, for all $k\in S$, \[ \beta_k(C,\partial)=\beta_{\phi(k)}(D,\delta).\]  In particular \[ \beta(C,\partial)=\beta(D,\delta).\]
\end{prop}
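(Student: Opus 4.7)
The plan is to translate the defining condition for $b_k$ under the shift-isomorphism, checking that the hypotheses $\phi(k+1)=\phi(k)+1$ and $\sigma(k+1)=\sigma(k)$ line things up exactly so that shifts at grading $k$ and $k+1$ cancel out. First I would observe that, taking $\lambda$ sufficiently large, the condition that $\Phi$ restricts to an isomorphism $C_k^\lambda\to D_{\phi(k)}^{\lambda+\sigma(k)}$ implies that $\Phi$ restricts to a \emph{bijective} chain map $C_k\to D_{\phi(k)}$ for every $k\in S$.

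Next I would prove the inequality $b_{\phi(k)}(D,\delta)\le b_k(C,\partial)$ directly. Suppose $\beta\ge 0$ is such that $(Im\,\partial)\cap C_k^\lambda\subset \partial(C_{k+1}^{\lambda+\beta})$ for every $\lambda\in\R$. Given $\mu\in\R$ and $y\in (Im\,\delta)\cap D_{\phi(k)}^\mu$, I use bijectivity of $\Phi$ at gradings $k$ and $k+1$ to find $x\in C_k^{\mu-\sigma(k)}$ with $\Phi(x)=y$ and some $w'\in C_{k+1}$ with $\Phi(\partial w')=\delta(\Phi(w'))=y=\Phi(x)$, whence $\partial w'=x$, so $x\in (Im\,\partial)\cap C_k^{\mu-\sigma(k)}$. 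By hypothesis there is $x'\in C_{k+1}^{\mu-\sigma(k)+\beta}$ with $\partial x'=x$. Applying $\Phi$ and using $\phi(k+1)=\phi(k)+1$ and $\sigma(k+1)=\sigma(k)$ gives
\[
\Phi(x')\in D_{\phi(k)+1}^{\mu-\sigma(k)+\beta+\sigma(k+1)}=D_{\phi(k)+1}^{\mu+\beta},\qquad \delta(\Phi(x'))=\Phi(\partial x')=y,
\]
which is exactly what is needed. Taking the infimum over such $\beta$ yields $b_{\phi(k)}(D,\delta)\le b_k(C,\partial)$.

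For the reverse inequality I would simply note that the inverse $(\Phi^{-1},\phi^{-1},-\sigma\circ\phi^{-1})$ is a shift-isomorphism from $(D,\delta)$ to $(C,\partial)$: the relations $\phi^{-1}(k'+1)=\phi^{-1}(k')+1$ and $(-\sigma\circ\phi^{-1})(k'+1)=(-\sigma\circ\phi^{-1})(k')$ are immediate from those for $(\phi,\sigma)$, and the restriction condition on $\Phi^{-1}$ follows by reindexing ($\mu=\lambda+\sigma(k)$ in the original condition). Applying the previous paragraph to this inverse yields $b_k(C,\partial)=b_{\phi^{-1}(\phi(k))}(C,\partial)\le b_{\phi(k)}(D,\delta)$. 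The statement $\beta(C,\partial)=\beta(D,\delta)$ then follows by taking the sup over $k\in S$ and using that $\phi$ is a bijection.

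There is no serious obstacle here; the proposition is essentially formal once one keeps track of the indices carefully. The only point that requires any attention is the compatibility $\sigma(k+1)=\sigma(k)$, which is precisely what is needed so that a preimage found at filtration level $\lambda+\beta$ in $C_{k+1}$ maps to something at filtration level $\mu+\beta$ (with no extra shift) in $D_{\phi(k)+1}$.
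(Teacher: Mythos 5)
Your proof is correct and takes essentially the same approach as the paper: the paper proves the result by observing that $x\in C_{k}^{\lambda}$ iff $\Phi x\in D_{\phi(k)}^{\lambda+\sigma(k)}$ and that $(y\in C_{k+1}^{\lambda+\beta},\ \partial y = x)$ iff $(\Phi y\in D_{\phi(k)+1}^{\lambda+\sigma(k)+\beta},\ \delta\Phi y = \Phi x)$, then concludes directly from bijectivity; you prove one inequality explicitly and then deduce the other by inverting the shift-isomorphism, which is just a more spelled-out version of the same observation.
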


\begin{proof} For $x\in C$, we have $x\in C_{k}^{\lambda}$ iff $\Phi x\in D_{\phi(k)}^{\lambda+\sigma(k)}$, and, using that $\Phi$ is a chain map while $\phi(k+1)=\phi(k)+1$ and $\sigma(k+1)=\sigma(k)$, it holds that $y\in C_{k+1}^{\lambda+\beta}$ and $\partial y=x$ iff $\Phi y\in C_{\phi(k)+1}^{\lambda+\sigma(k)+\beta}$ and $\delta\Phi y=\Phi x$.  Since $\Phi$ and $\phi$ are bijections the proposition follows immediately from the definitions. 
\end{proof}

\begin{dfn}  Let $c\in\mathbb{R}$, $k\in S$ and let $(C,\partial)$, $(D,\delta)$ be two $S$-graded, $\R$-filtered complexes over $K$.  We say that $(C,\partial)$ and $(D,\delta)$ are \emph{$c$-quasiequivalent} if there are:\begin{itemize} \item numbers $c_1$ and $c_2$ such that $c_1+c_2\leq c$; \item a  $c_1$-morphism $\Phi\co C\to D$ and a  $c_2$-morphism $\Psi\co D\to C$; and \item a  $c$-homotopy $\mathcal{K}_1\co C\to C$ from the identity to $\Psi\circ \Phi$, and a $c$-homotopy $\mathcal{K}_2\co D\to D$ from the identity to $\Phi\circ \Psi$.\end{itemize}
\end{dfn}

\begin{prop}\label{quasicont}  If $(C,\partial)$ and $(D,\delta)$ are $c$-quasiequivalent then for all $k\in S$ we have \[ |b_k(C,\partial)-b_k(D,\delta)|\leq c.\]  Thus in particular $|b(C,\partial)-b(D,\delta)|\leq c$.\footnote{In case some of the terms involved here are infinite, these equations should be read as stating that if one of the terms is infinite then so is the other.}.
\end{prop}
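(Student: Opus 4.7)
The plan is to prove the inequality $b_k(C,\partial)\leq b_k(D,\delta)+c$ directly from the definitions; the reverse inequality then follows by symmetry (interchanging the roles of $\Phi$ and $\Psi$, and of $\mathcal{K}_1$ and $\mathcal{K}_2$). The bound $|b(C,\partial)-b(D,\delta)|\leq c$ then follows by taking $\sup_{k\in S}$.

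To establish $b_k(C,\partial)\leq b_k(D,\delta)+c$, I will take an arbitrary $x\in (Im\,\partial)\cap C_k^{\lambda}$ and construct, for every $\epsilon>0$, an element $y\in C_{k+1}^{\lambda+c+b_k(D,\delta)+\epsilon}$ with $\partial y=x$. First I push $x$ over to $D$: since $\Phi$ is a $c_1$-morphism, $\Phi x\in D_k^{\lambda+c_1}$, and since $\Phi$ is a chain map and $x\in Im\,\partial$, also $\Phi x\in Im\,\delta$. By the infimum definition of $b_k(D,\delta)$, for each $\epsilon>0$ there exists $w\in D_{k+1}^{\lambda+c_1+b_k(D,\delta)+\epsilon}$ with $\delta w=\Phi x$. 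Then I pull back by the $c_2$-morphism $\Psi$ to get $\Psi w\in C_{k+1}^{\lambda+c_1+c_2+b_k(D,\delta)+\epsilon}$, satisfying $\partial(\Psi w)=\Psi\Phi x$.

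The remaining error is corrected by the chain homotopy $\mathcal{K}_1$. Using that $x\in Im\,\partial$ implies $\partial x=0$, the homotopy relation gives $x-\Psi\Phi x=\mathcal{K}_1\partial x+\partial\mathcal{K}_1 x=\partial\mathcal{K}_1 x$, so
\[ x=\partial(\Psi w+\mathcal{K}_1 x).\]
Since $\mathcal{K}_1$ is a $c$-homotopy, $\mathcal{K}_1 x\in C_{k+1}^{\lambda+c}$, and since $c_1+c_2\leq c$, both $\Psi w$ and $\mathcal{K}_1 x$ lie in $C_{k+1}^{\lambda+c+b_k(D,\delta)+\epsilon}$. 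Thus $y:=\Psi w+\mathcal{K}_1 x$ has the desired properties. Letting $\epsilon\to 0$ completes the argument.

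I do not foresee a substantial obstacle here: the argument is a standard ``transport through a quasi-equivalence, then correct by the homotopy'' maneuver, and the only mild subtlety is making sure that the filtration bookkeeping produces $c$ rather than $c_1+c_2$ or $2c$ as the final loss, which is arranged exactly by the hypothesis $c_1+c_2\leq c$ together with $\mathcal{K}_1$ being a $c$-homotopy. The case in which $b_k(D,\delta)=\infty$ needs a remark: in that case the inequality $b_k(C,\partial)\leq b_k(D,\delta)+c$ is vacuous, while the reverse inequality (by the same argument with the roles reversed) shows $b_k(D,\delta)\leq b_k(C,\partial)+c$, forcing $b_k(C,\partial)=\infty$ as well, in agreement with the footnote convention.
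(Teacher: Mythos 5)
Your proof is correct and follows essentially the same path as the paper's: push $x$ through $\Phi$, pick a low-filtration primitive in $D$, pull back through $\Psi$, and correct by the homotopy $\mathcal{K}_1$, with the hypothesis $c_1+c_2\leq c$ and the $c$-bound on $\mathcal{K}_1$ giving exactly the loss $c$. One trivial slip: with the paper's convention ($\mathcal{K}_1$ is a $c$-homotopy \emph{from the identity to} $\Psi\circ\Phi$, so $\Psi\Phi-\mathrm{id}=\mathcal{K}_1\partial+\partial\mathcal{K}_1$), the corrected primitive is $\Psi w-\mathcal{K}_1 x$ rather than $\Psi w+\mathcal{K}_1 x$, but this sign does not affect the filtration estimate.
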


\begin{proof}  By symmetry it suffices to prove that $b_k(C,\partial)\leq b_k(D,\delta)+c$.  In other words, we should show that, if $\beta>0$ has the property that, for all $\lambda\in \R$, $(Im \delta)\cap D^{\lambda}_{k}\subset \delta(D^{\lambda+\beta}_{k+1})$, then it also holds that, for all $\lambda\in\R$, $(Im\partial)\cap C^{\lambda}_{k}\subset \partial(C^{\lambda+c+\beta}_{k+1})$.

So let $c_1,c_2,\Phi,\Psi,\mathcal{K}_1$ be as in the definition of $c$-quasiequivalence and let $x\in (Im\partial)\cap C^{\lambda}_{k}$.  Since $\Phi$ is a $c_1$-morphism, we have $\Phi x\in (Im\delta)\cap D_{k}^{\lambda+c_1}$.  So by the assumption on $\beta$ there is $y\in D_{k+1}^{\lambda+c_1+\beta}$ such that $\delta y=\Phi x$.  So since $\Psi$ is a $c_2$-morphism and $c_1+c_2\leq c$ we have \[ \partial \Psi y=\Psi\circ \Phi(x)\quad\mbox{and}\quad \Psi y\in C_{k+1}^{\lambda+c_1+c_2+\beta}\leq C_{k+1}^{\lambda+c+\beta}.\]  Of course, since $x$ is a boundary we have $\partial x=0$, and so the chain homotopy equation reads
\[ \Psi\circ \Phi(x)-x=\partial \mathcal{K}_1 x,\] where $\mathcal{K}_1x\in C_{k+1}^{\lambda+c}$ since $\mathcal{K}_1$ is a $c$-homotopy.  So (using that $\beta\geq 0$) we have \[ x=\partial(\Psi y-\mathcal{K}_1x)\quad \mbox{where} \quad \Psi y-\mathcal{K}_1x\in C_{k+1}^{\lambda+c+\beta},\] as desired.

\end{proof}

\section{Quantum corrections and the boundary depth}\label{qcor}

The Lagrangian Floer homology $HF(L,L)$ of a Lagrangian submanifold $L$ can be obtained as the homology of a chain complex whose boundary operator is obtained by adding to the standard Morse boundary operator on $L$ an operator defined over a Novikov field which represents ``quantum corrections'' (see the construction of the ``pearl complex'' in \cite{BC07}, and also the construction in \cite{FOOO09c}).  We presently put this idea in abstract algebraic terms, and then make an observation concerning the boundary depths of complexes obtained in this way.

First, if $K$ is a field and $\Gamma\leq\R$ is an additive subgroup, the \emph{Novikov field} associated to $K$ and $\Gamma$ is the field \[ \Lambda^{K,\Gamma}=\left\{\sum_{g\in\Gamma}a_gT^g\left|a_g\in K,\,(\forall C\in \R)(\#\{g\in \Gamma|a_g\neq 0,\,g<C\}<\infty)\right.\right\},\] 
equipped with the obvious ``power series'' addition and multiplication.

\begin{dfn} If $K$ is a field, we say that a field $\Lambda$ is a \emph{Novikov field over $K$} if we have $\Lambda=\Lambda^{K,\Gamma}$ for some subgroup $\Gamma\leq\R$.
\end{dfn}

If $\Lambda$ is a Novikov field over $K$, we view $K$ as the subring of $\Lambda$ consisting of those elements $\sum a_gT^g$ with $a_g=0$ for all $g\neq 0$.  Also, let \[ \Lambda_{\geq 0}=\left\{\sum a_gT^g\in \Lambda\left|a_g\neq 0\Rightarrow g\geq 0\right.\right\}\] and \[ \Lambda_+=\left\{\sum a_gT^g\in \Lambda\left|a_g\neq 0\Rightarrow g> 0\right.\right\}.\]

\begin{dfn} Where $S$ is a set with an action of $\Z$, let $(\bar{C}=\oplus_{k\in S}\bar{C}_k,\partial_0)$ be a chain complex of $K$-vector spaces.   Let $\Lambda=\Lambda^{K,\Gamma}$ be a Novikov field over $K$, and let $\mu\co S\to \Gamma\cap (0,\infty)$ be a function.  A chain complex $(C=\oplus_{k\in S}C_k,\partial)$ of $\Lambda$-modules is called a \emph{quantum correction of $\bar{C}$ of gap at least $\mu$} if the following holds:
\begin{itemize} \item For all $k\in S$ we have $C_k=\bar{C}_k \otimes_K \Lambda$
\item  Where we use the inclusion of $K$ to view each $\bar{C}_k$ as a subgroup of $C_k\otimes_K\Lambda$, we have \[ (\partial-\partial_0\otimes 1)(\bar{C}_{k+1})\leq T^{\mu(k)}(\bar{C}_k \otimes_K \Lambda_{\geq 0}).\] \end{itemize}
\end{dfn}

In particular, in the context of the above definition it always holds that $\partial(\bar{C}\otimes_K\Lambda_{\geq 0})\leq \bar{C}\otimes_K\Lambda_{\geq 0}.$  Of course, if $\partial|_{\bar{C}_{k+1}}=(\partial_0\otimes 1)|_{\bar{C}_{k+1}}$ then we may choose to take $\mu(k)$ as large as we like.

If $(C,\partial)$ is a quantum correction of $(\bar{C},\partial_0)$, then $(C,\partial)$ naturally has the structure of an $S$-graded, $\mathbb{R}$-filtered complex over $K$ (not over $\Lambda$!) in the sense defined earlier.  Namely, for any $k$ define the function \[ \barnu\co C_k\to \mathbb{R}\cup\{\infty\}\] by \begin{equation}\label{barnudef} \barnu(x)=\sup\{\mu\in \R|x\in T^{\mu}(\bar{C}\otimes_K\Lambda_{\geq 0})\}.\end{equation} Then for $\lambda\in \R$ and $k\in S$ we set \[ C_{k}^{\lambda}=\{x\in C| -\barnu(x)\leq \lambda \}.\]  Verification of the relevant axioms is straightforward.

\begin{prop}\label{qdef}  Let $(\bar{C},\partial_0)$ be a chain complex of $K$-vector spaces such that each $\bar{C}_k$ is finite-dimensional over $K$, and let $(C,\partial)$ be a quantum correction of $(\bar{C},\partial_0)$ of gap at least $\mu$.  Then $b_k(C,\partial)<\infty$ for all $k$.  Moreover, for all $k\in S$, exactly one of the following two alternatives holds:
\begin{itemize}\item[(i)] $b_k(C,\partial)=b_{k-1}(C,\partial)=0$, and $\dim_{\Lambda}H_k(C,\partial)=\dim_{R}H_k(\bar{C},\partial_0)$; \textbf{or} \item[(ii)] $b_k(C,\partial)\geq \mu(k)$ or $b_{k-1}(C,\partial)\geq \mu(k-1)$, and $\dim_{\Lambda}H_k(C,\partial)<\dim_{R}H_k(\bar{C},\partial_0)$.\end{itemize}
\end{prop}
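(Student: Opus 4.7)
My plan is to reduce all three assertions to a ``barcode'' normal form for $\partial$ on $C=\bar{C}\otimes_K\Lambda$, in the spirit of the non-Archimedean Smith normal forms underlying results such as \cite[Theorem 1.3]{U08} and \cite[Proposition 6.3.9]{FOOO09}. First, since $\bar{C}_k$ is finite-dimensional over $K$, $C_k$ is finite-dimensional over $\Lambda$; then the finiteness $b_k(C,\partial)<\infty$ follows from the existence of a $\barnu$-orthogonal $\Lambda$-basis of $C_{k+1}$ adapted to the image of $\partial$, as in the cited results. Second, the inequality $\dim_\Lambda H_k(C,\partial)\leq \dim_K H_k(\bar{C},\partial_0)$ is a pure rank comparison: writing $\partial=\partial_0\otimes 1+\partial'$ and fixing a $K$-basis of each $\bar{C}_j$, any nonzero maximal minor of $\partial_0|_{\bar{C}_j}$ perturbs to a minor of $\partial|_{C_j}$ whose determinant is the original $K$-unit plus a correction of $\barnu\geq \mu(j-1)>0$, hence remains nonzero in $\Lambda$. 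Thus $\operatorname{rank}_\Lambda\partial|_{C_j}\geq \operatorname{rank}_K\partial_0|_{\bar{C}_j}$ for every $j$, and rank-nullity in each grading delivers the homological bound.

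The heart of the argument is to produce a $\Lambda$-basis of $C$, compatible with the $S$-grading, that splits $C$ as a direct sum of $\Lambda$-subcomplexes of exactly two shapes: \emph{finite bars} $\Lambda\langle\tilde e,\tilde f\rangle$ with $\partial\tilde e=T^L\tilde f$ for some $L\geq 0$ and with $\barnu(\tilde e)=\barnu(\tilde f)=0$, and \emph{infinite bars} $\Lambda\langle\tilde z\rangle$ with $\partial\tilde z=0$ and $\barnu(\tilde z)=0$. I would build this by an inductive non-Archimedean Gaussian elimination: at each stage select an entry of the matrix of $\partial$ of minimal $\barnu$ in a distinguished submatrix, use it as a pivot to clear its row and column by invertible row and column operations over $\Lambda$, and iterate, keeping each basis element in a single grading throughout. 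The gap hypothesis translates directly into the statement that every finite bar with death grading $k$ has length $L$ either equal to $0$ or at least $\mu(k)$: if $\tilde e\in C_{k+1}$ has $\barnu(\tilde e)=0$ and its leading term $\bar e\in\bar{C}_{k+1}$ lies in $\ker\partial_0$, then the difference $(\partial-\partial_0\otimes 1)(\bar e)$ has $\barnu\geq\mu(k)$, which forces $L=\barnu(\partial\tilde e)\geq\mu(k)$.

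With the normal form in hand the rest is bookkeeping. One reads off that $\dim_\Lambda H_k(C,\partial)$ equals the number of infinite bars in grading $k$; that $\dim_K H_k(\bar{C},\partial_0)$ equals that number plus the count of positive-length finite bars having an endpoint (birth or death) in grading $k$; and that $b_k(C,\partial)$ equals the maximum of $L$ over positive-length finite bars with death grading $k$, or $0$ in the absence of any such bar. Alternative (i) corresponds precisely to the case in which no positive-length finite bar touches grading $k$, while alternative (ii) corresponds to the presence of such a bar: depending on whether its death grading is $k$ or $k-1$, this forces $b_k\geq\mu(k)$ or $b_{k-1}\geq\mu(k-1)$, and in either event strictly decreases $\dim H_k$. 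The principal obstacle I anticipate is the normal-form construction itself: one must simultaneously respect the $S$-grading, cope with a value group $\Gamma$ that can be a dense subgroup of $\mathbb{R}$ (so that the selection of ``minimal'' entries and pivot choices requires care), and verify that the gap $\mu$ is preserved through the row and column operations.
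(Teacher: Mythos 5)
Your proposal takes a genuinely different route from the paper. Usher proves the proposition via Lemma~\ref{qlem}, which is a targeted rank comparison: show $\rk\partial_0|_{\bar{C}_{k+1}}\leq \rk \partial|_{C_{k+1}}$, then in the case of equality construct an orthonormal basis of $Im(\partial|_{C_{k+1}})$ extending a basis from $\bar{C}$ to get $b_k=0$, and in the case of strict inequality exhibit a single explicit $z\in\bar{C}_{k+1}$ with $\partial_0 z=0$ whose $\partial$-image is nonzero and lies in $T^{\mu(k)}(\bar{C}_k\otimes_K\Lambda_{\geq 0})$, while every preimage has $\barnu\leq 0$. You instead propose a full barcode normal form for $(C,\partial)$ in the spirit of non-Archimedean Smith normal form, and then read off the homological and boundary-depth data. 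That is a more powerful tool than is strictly needed here, and the paper's argument avoids having to build it; on the other hand, your route would, if completed, hand you more refined information (the full multiset of bar lengths) than the proposition requires.

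However, there is a concrete gap in the step that makes the gap hypothesis bite. You argue that if $\tilde{e}\in C_{k+1}$ is a barcode basis element with $\barnu(\tilde e)=0$ whose leading term $\bar e$ satisfies $\partial_0\bar e=0$, then $L=\barnu(\partial\tilde e)\geq\mu(k)$ because $(\partial-\partial_0\otimes 1)(\bar e)$ has $\barnu\geq\mu(k)$. But $\partial\tilde e$ is not $(\partial-\partial_0\otimes 1)(\bar e)$: you must also account for $(\partial_0\otimes 1)(\tilde e-\bar e)$ and $(\partial-\partial_0\otimes 1)(\tilde e-\bar e)$. The higher-order part $\tilde e-\bar e$ has some $\barnu=\delta>0$, but $\delta$ need not be as large as $\mu(k)$, so $(\partial_0\otimes 1)(\tilde e-\bar e)$ could \emph{a priori} contribute a term with $0<\barnu<\mu(k)$, which would make $L<\mu(k)$ and wreck the dichotomy. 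Ruling this out requires using the barcode structure more seriously than you do: for instance, one can show that such a term would force $\bar f$ (the leading term of $\tilde f$) into $Im(\partial_0|_{\bar{C}_{k+1}})$, which is impossible because the leading terms of the barcode basis form a $K$-basis of $\bar{C}$ in which $Im(\partial_0)$ is spanned by the $\bar f$'s of the \emph{zero-length} bars. Alternatively, one should organize the Gaussian elimination in two stages---first exhaust all $K$-unit pivots coming from $\partial_0$, then observe that the residual matrix has all entries in $T^{\mu(k)}\Lambda_{\geq 0}$---rather than selecting pivots of minimal $\barnu$ in a single sweep. You flag ``verifying that the gap $\mu$ is preserved through the row and column operations'' as an anticipated obstacle, and this is exactly where the work lies; the one-sentence argument you offer for the dichotomy does not discharge it.
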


\begin{proof}

 If we choose a basis of $\bar{C}_k$ over $K$ and use this basis to identify $C_k=\bar{C}_k\otimes_K\Lambda$ with $\Lambda^N$ for some $N$ then the function $\barnu$ defined in (\ref{barnudef}) coincides with the function $\barnu$ defined at the start of \cite[Section 2]{U08}.  The assertion that $b_k(C,\partial)$ is finite then follows immediately from \cite[Theorem 2.5]{U08} (in the notation of that theorem, set $\vec{t}=0$ and let $A$ be a matrix representing the boundary operator $\partial\co C_{k+1}\to C_k$).  Further borrowing notation from \cite{U08} and \cite{U10b}, if $U\leq C_k$ is a subspace 
write \[ U_{\geq 0}=\{x\in U|\barnu(x)\geq 0\},\quad U_+=\{x\in U|\barnu(x)>0\},\quad \widetilde{U}=\frac{U_{\geq 0}}{U_+}.\]  Thus $U_{\geq 0}$ is a $\Lambda_{\geq 0}$-module, and $\widetilde{U}$ is a $K$-vector space.  If $x\in U_{\geq 0}$ we denote its image in the quotient $\widetilde{U}$ by $\widetilde{x}$.  Also, we define \begin{align*} \nu\co \Lambda&\to \mathbb{R}\cup\{\infty\} \\ \sum_g a_gT^g&\mapsto \min\{g:a_g\neq 0\}.\end{align*}

As in \cite{GG67},\cite{U10b}, if $U\leq C_k$ we will call a basis $\{u_1,\ldots,u_m\}$ for $U$ \emph{orthonormal} if, for all $\lambda_1,\ldots,\lambda_m\in \Lambda$, we have \begin{equation}\label{oneqn} \barnu\left(\sum_{j=1}^{m}\lambda_ju_j\right)=\min_{1\leq j\leq m}\nu(\lambda_j).\end{equation}

\begin{lemma}[\cite{GG67},\cite{U10b}]\label{onlemma} If $U\leq C_k$, then a subset $\{u_1,\ldots,u_m\}\subset U_{\geq 0}$ is an orthonormal basis for $U$ if and only if $\{\widetilde{u}_1,\ldots,\widetilde{u}_m\}$ is a basis for $\widetilde{U}$.  Consequently any subspace $U\leq C_k$ has an orthonormal basis, and if $U\leq V\leq C_k$ then any orthonormal basis of $U$ can be extended to an orthonormal basis of $V$. Moreover, $\dim_{\Lambda}U=\dim_K\widetilde{U}$.
\end{lemma}

\begin{proof}[Proof of Lemma \ref{onlemma}] The sufficiency of the condition in the first sentence is proven in the proof of \cite[Lemma 2.1]{U10b}\footnote{Strictly speaking it is assumed throughout \cite{U10b} that the subgroup $\Gamma\leq \R$ used to define the Novikov field is countable, but this assumption is not used in the proof of \cite[Lemma 2.1]{U10b}.  We also mention here that the notion of an orthonormal basis is closely related to that of a \emph{standard basis} from \cite[Section 6.3]{FOOO09}.}.
Since a basis for $\widetilde{U}$ can always be found, it follows that $U$ has an orthonormal basis, and that $\dim_{\Lambda}U=\dim_{K}\widetilde{U}$.  For the necessity of the condition in the first sentence, if $\{u_1,\ldots,u_m\}$ is an orthonormal basis then (\ref{oneqn}) immediately implies that the $\widetilde{u}_i$ are linearly independent in $\widetilde{U}$, and so they form a basis for $\widetilde{U}$ since $\dim_{\Lambda}U=\dim_{K}\widetilde{U}$.  

Now that we have proven the first sentence, the only remaining statement, namely that any orthonormal basis for $U$ can be extended to a orthonormal basis of $V$ if $U\leq V$, follows directly from the facts that if $U\leq V$ then $\widetilde{U}\leq \widetilde{V}$, and that any basis of $\widetilde{U}$ can be extended to a basis of $\widetilde{V}$.
\end{proof}




Since for all $k$ we have \[ \dim_{\Lambda}H_k(C,\partial)=\dim_{\Lambda}\ker(\partial|_{C_k})-rank(\partial|_{C_{k+1}})=\dim_{\Lambda}C_k-rank(\partial|_{C_{k+1}})-rank(\partial|_{C_k})\] and likewise \[ \dim_{K}H_k(\bar{C}_k,\partial_0)=\dim_K \bar{C}_k-rank(\partial_0|_{\bar{C}_{k+1}})-rank(\partial_0|_{\bar{C}_k}), \] and since $\dim_{\Lambda}C_k=\dim_K \bar{C}_k$, the proposition now follows from the following lemma:

\begin{lemma}\label{qlem} For all $k$ we have $rank(\partial_0|_{\bar{C}_{k+1}})\leq rank(\partial|_{C_{k+1}})$.  Moreover, if $rank(\partial_0|_{\bar{C}_{k+1}})< rank(\partial|_{C_{k+1}})$ then $b_k(C,\partial)\geq \mu(k)$, while if $rank(\partial_0|_{\bar{C}_{k+1}})= rank(\partial|_{C_{k+1}})$ then $b_k(C,\partial)=0$.
\end{lemma}

Indeed, given Lemma \ref{qlem}, the alternative (i) in Proposition \ref{qdef} occurs exactly when both $rank(\partial_0|_{\bar{C}_{k+1}})= rank(\partial|_{C_{k+1}})$ and $rank(\partial_0|_{\bar{C}_{k}})= rank(\partial|_{C_{k}})$; otherwise, alternative (ii) in Proposition \ref{qdef} holds, thus completing the proof of that proposition modulo the proof of Lemma \ref{qlem}. \end{proof}

\begin{proof}[Proof of Lemma \ref{qlem}] Throughout this proof we make implicit use of the embedding of $\bar{C}_k$ into $(C_k)_{\geq 0}\leq C_k$ induced by the inclusion of $K$ into $\Lambda$ (as elements consisting only of multiples of $T^0$).  If $\bar{U}\leq \bar{C}_k$ is any subspace, this embedding induces an isomorphism \[ \bar{U}\cong  (\bar{U} \otimes_K \Lambda)^{\widetilde{}}.\]  

Choose $x_1,\ldots,x_m\in \bar{C}_{k+1}\leq C_{k+1}$ so that $\partial_0x_1,\ldots,\partial_0x_m$ forms a basis for $Im(\partial_0|_{\bar{C}_{k+1}})$.   We claim that the elements $\partial x_1,\ldots,\partial x_m$ are linearly independent over $\Lambda$ in $C_k$.  Indeed, if $\lambda_1,\ldots,\lambda_m\in \Lambda$ are not all zero and if $g=\min_i \nu(\lambda_i)$, then since $\barnu(\partial x_i-\partial_0x_i)>0$ for all $i$ we see that the element \[ \widetilde{\left(T^{-g}\sum_{i=1}^{m}\lambda_i\partial x_i\right)}\] is a nontrivial linear combination of the $\partial_0x_i$ and so is nonzero.  This proves that \begin{equation}\label{rkineq} rank(\partial_0|_{\bar{C}_{k+1}})\leq rank(\partial|_{C_{k+1}}).\end{equation}

Now suppose that equality holds in (\ref{rkineq}).  Then by the last sentence of Lemma \ref{onlemma} we have \[ \dim_K\left(\widetilde{Im(\partial|_{C_{k+1}})}\right)=\dim_{\Lambda}Im(\partial|_{C_{k+1}})=m;\] thus since $\partial_0 x_i=\widetilde{\partial x_i}$ a dimension count shows that the $\partial_0 x_i$ form a basis for $\widetilde{Im(\partial|_{C_{k+1}})}$, and hence by Lemma \ref{onlemma} the $\partial x_i$ form an orthonormal basis for $Im(\partial|_{C_{k+1}})$.  Meanwhile the $x_i$ (being linearly independent elements of the ``level zero'' subspace $\bar{C}_{k+1}\leq C_{k+1}$) obviously form an orthonormal basis for the subspace of $C_{k+1}$ which they span.  Consequently if $a\in Im(\partial|_{C_{k+1}})$ we can find $\lambda_1,\ldots,\lambda_m\in \Lambda$ so that \[ a=\sum_i\lambda_i\partial x_i=\partial\left(\sum_i\lambda_i x_i\right),\] and using orthonormality we see that \[ \barnu(a)=\min_i \nu(\lambda_i)=\barnu\left(\sum_i\lambda_i x_i\right).\]  This proves that, when equality holds in (\ref{rkineq}), we have $b_k(C,\partial)=0$.

It remains to consider the case that $rank(\partial_0|_{\bar{C}_{k+1}})< rank(\partial|_{C_{k+1}})$.  In this case, we again let $x_1,\ldots,x_m\in \bar{C}_{k+1}\leq C_{k+1}$ have the property that the $\partial_0 x_i$ form a basis for the image of $\partial_0|_{C_{k+1}}$.  Let $U$ denote the subspace of $C_{k+1}$ spanned by the $x_i$.  As noted earlier, the $x_i$ (viewed now as elements of $C_{k+1}$) form an orthonormal basis for $U$.  Using Lemma \ref{onlemma}, extend this basis to an orthonormal basis $\{x_1,\ldots,x_m,\ldots,x_p\}$ for the subspace $U\oplus \ker\partial=\partial^{-1}(\partial U)$.  The fact that $rank(\partial_0|_{\bar{C}_{k+1}})< rank(\partial|_{C_{k+1}})$ implies that this subspace of $C_{k+1}$ is proper, so we extend the basis further to an orthonormal basis $\{x_1,\ldots,x_p,z_1,\ldots,z_q\}$ for all of $C_{k+1}$, where $q\geq 1$ by the assumption on the ranks.  By Lemma \ref{onlemma}, to perform this further extension it suffices to choose the $z_i\in (C_k)_{\geq 0}$ in such a way that the reductions $\widetilde{x}_1,\ldots,\widetilde{x}_p,\widetilde{z}_1,\ldots,\widetilde{z}_q$ are linearly independent over $K$.  In particular, we may assume that $z_1$ belongs to $\bar{C}_{k+1}$: indeed, if it does not initially, then we can simply subtract off all of its higher order terms, which does not change $\widetilde{z}_1$ and so does not affect the orthonormality of the basis.

Having done this, the condition on $x_1,\ldots,x_m$ shows that there are $c_1,\ldots,c_m\in K$ such that $\partial_0z_1=\sum_{i=1}^{m}c_i\partial_0 x_i$.  Now set \[ z=z_1-\sum_{i=1}^{m}c_ix_i,\] so that $z\in \bar{C}_{k+1}\leq C_{k+1}$ with $\partial_0z=0$, and let \[ a=\partial z.\]  By construction, we have \[ a-\partial z_1\in \partial U,\] so since $\partial z_1\notin\partial U$ we have $a\neq 0$.  Additionally, since $\partial_0z=0$ we have \[ a=(\partial-\partial_0\otimes 1)z\in T^{\mu(k)}(\bar{C}_{k}\otimes_K\Lambda_{\geq 0}).\]  Meanwhile, if $z'\in C_{k+1}$ has $\partial z'=a$, then $z'-z_1\in \partial^{-1}(\partial U)$, and so there are $\lambda_1,\ldots,\lambda_p$ such that \[ z'=z_1+\sum_{i=1}^{p}\lambda_ix_i.\]  So by the orthonormality of the basis $\{x_1,\ldots,x_p,z_1,\ldots,z_q\}$ we have \[ \barnu(z')=\min\{\nu(1),\nu(\lambda_1),\ldots,\nu(\lambda_p)\}\leq \nu(1)=0.\]  Thus we have found $a\in Im(\partial|_{C_{k}})$ such that $\barnu(a)\geq \mu(k)$ and such that any $z'\in C_{k+1}$ with $\partial z'=a$ has $\barnu(z)\leq 0$.  This proves that $b_k(C,\partial)\geq \mu(k)$.
\end{proof}

\section{Hamiltonian Floer theory}\label{hamsect}

Let $(M,\omega)$ be a closed symplectic manifold and let $H\co S^1\times M\to \R$ be a smooth function.\footnote{Throughout this paper we will identify $S^1$ with $\R/\Z$.  Of course, any element $\phi\in Ham(M,\omega)$ can be generated by a smooth Hamiltonian whose domain is $S^1\times M$ rather than $[0,1]\times M$, by replacing a generating Hamiltonian $H\co [0,1]\times M\to\R$ by $\chi'(t)H(\chi(t),m)$ for a suitable monotone homeomorphism $\chi\co [0,1]\to[0,1]$ with $\chi'(0)=\chi'(1)=0$.}     One then obtains the time-dependent Hamiltonian vector field $X_H$ by the prescription that $\omega(X_H(t,\cdot),\cdot)=-d(H(t\cdot))$, and the Hamiltonian flow $\{\phi_{H}^{t}\}_{t\in \R}$ as the flow of $X_H$.  

Assume for the time being that $H$ is nondegenerate in the sense that at each fixed point $p$ of $\phi_{H}^{1}$ the linearization $d_p\phi_{H}^{1}\co T_pM\to T_pM$ does not have $1$ as an eigenvalue.  Let $\mathcal{L}M$ denote the free loopspace of $M$ (for definiteness we require elements of $\mathcal{L}M$ to be $C^1$).  For each path component $\frak{c}$ of $\mathcal{L}M$, choose a smooth loop $\gamma_{c}$ representing $\frak{c}$, and choose a symplectic trivialization $\tau_{\frak{c}}$ of the bundle $\gamma_{\frak{c}}^{*}TM\to S^1$.  (For the special case in which $\frak{c}$ is the component of contractible loops we will take $\gamma_{\frak{c}}$ equal to a constant.  Hereinafter the component of contractible loops will be denoted by $\frak{c}_0$.).  Now a loop in $\mathcal{L}M$ corresponds in obvious fashion to a map $u\co T^2\to M$ of the torus into $M$; consequently  each $\frak{c}\in \pi_0(\mathcal{L}M)$ gives rise via consideration of loops in $\frak{c}$ to a subgroup $H_{2}^{\frak{c}}\leq H_{2}(M;\Z)$, all of whose elements may be represented by maps of $2$-tori into $M$ (when $\frak{c}=\frak{c}_0$ is the trivial class elements of $H_{2}^{\frak{c}}$ can indeed be represented by spheres, but this is typically not so for nontrivial classes).  

For any $\frak{c}\in \pi_0(\mathcal{L}M)$ consider pairs $(\gamma,w)$ where $\gamma\in \frak{c}$ and $w\co [0,1]\times S^1\to M$ is a map such that $w(0,\cdot)=\gamma_{\frak{c}}$ and $w(1,\cdot)=\gamma$.  Declare two such pairs $(\gamma,w),(\gamma',w')$ equivalent if and only if each of the following holds: \begin{itemize}
\item[(i)] $\gamma=\gamma'$, and  \item[(ii)] $\int_{[0,1]\times S^1}w^*\omega=\int_{[0,1]\times S^1}w'^*\omega$. 
\end{itemize}

Let $\widetilde{\frak{c}}$ denote the set of equivalence classes of pairs $(\gamma,w)$ under the above equivalence relation and \[ \widetilde{\mathcal{L}}M=\cup_{\frak{c}\in\pi_0(\mathcal{L}M)}\widetilde{\frak{c}}.\]  We then have a well-defined map \begin{align*} 
\mathcal{A}_H\co \widetilde{\mathcal{L}}M&\to\R \\ 
[\gamma,w]&\mapsto -\int_{D^2}w^*\omega+\int_{0}^{1}H(t,\gamma(t))dt \end{align*}  

The critical points of $\mathcal{A}_H$ are those $[\gamma,w]$ for which $\dot{\gamma}(t)=X_{H}(t,\gamma(t))$, \emph{i.e.} such that $\gamma(t)=\phi_{H}^{t}(\gamma(0))$. 

For any $\frak{c}\in \pi_0(\mathcal{L}M)$ let $N_{\frak{c}}$ denote the nonnegative generator of the subgroup of $\Z$ generated by integers of the form $2\langle c_1(TM),A\rangle$ where $A\in H_{2}^{\frak{c}}$.  

For $\frak{c}\in \pi_0(\mathcal{L}M)$ define \[ \mathcal{O}_{\frak{c},H}=\{\gamma\in \frak{c}|\dot{\gamma}(t)=X_{H}(t,\gamma(t))\},\] so $[\gamma,w]\in Crit(\mathcal{A}_H)\cap \frak{c}$ if and only if $\gamma\in \mathcal{O}_{\frak{c},H}$

We then have well-defined map \begin{align*} 
\mu\co  \mathcal{O}_{\frak{c},H}&\to \Z/N_{\frak{c}}\Z \\ 
\gamma&\mapsto n-\mu_{CZ}\left( (t\mapsto d\phi_{H}^{t})\co T_{\gamma(0)}M\to T_{\gamma(t)}M \right)\end{align*}  where we choose an arbitrary homotopy $w$ from $\gamma_{\frak{c}}$ to $\gamma$, extend the previously-chosen trivialization $\tau_{\frak{c}}$ of $\gamma_{\frak{c}}^*TM$ to a symplectic trivialization of $w^*TM$ and hence of $\gamma^*TM$, and use this trivialization to compute the Conley--Zehnder index as in \cite[Remark 5.4]{RS} of  $t\mapsto d\phi_{H}^{t}$. Two different choices of the homotopy $w$ from $\gamma_{\frak{c}}$ to $\gamma$ will have associated Conley--Zehnder indices which differ by a multiple of $N_{\frak{c}}$, so this prescription yields a well-defined element of $\Z/N_{\frak{c}}\Z$.

Let \[ S_M=\bigcup_{\frak{c}\in\pi_0(\mathcal{L}M)}\{\frak{c}\}\times \Z/N_{\frak{c}}\Z,\] and endow $S_M$ with the obvious $\Z$-action in which $m\in \Z$ sends $(\frak{c},k)\in S_M$ to $(\frak{c},k+m)$.  

 For $\lambda\in \R$,  $\frak{c}\in \pi_0(\mathcal{L}M)$ and $k\in \Z/N_{\frak{c}}\Z$ denote \[ Crit^{\lambda}_{\frak{c},k}(\mathcal{A}_H)=\{[\gamma,w]|\gamma\in \mathcal{O}_{\mathfrak{c},H},\,\mathcal{A}_H([\gamma,w])\leq \lambda,\,\mu(\gamma)=k\}.\]

For $\lambda\in \R$, $(\frak{c},k)\in S_M$ and $K$ a field (with $K$ of characteristic zero if $(M,\omega)$ is not semipositive) let \[ CF^{\lambda}_{\frak{c},k}(H;K)=\left\{\left.\sum_{[\gamma,w]\in Crit^{\lambda}_{\frak{c},k}(\mathcal{A}_H)}a_{[\gamma,w]}[\gamma,w]\right| a_{[\gamma,w]}\in K,\,(\forall C\in \R)(\#\{[\gamma,w]|a_{[\gamma,w]}\neq 0,\,\mathcal{A}_H([\gamma_i,w_i])\geq C\}<\infty)\right\}\] and let \[ CF_{\frak{c},k}(H;K)=\cup_{\lambda\in\R}CF^{\lambda}_{\frak{c},k}(H;K) \quad\mbox{ and }CF_{\frak{c}}(H;K)=\oplus_k CF_{\frak{c},k}(H;K).\]  The standard construction (\cite{FO},\cite{HS},\cite{LT}) of the Floer boundary operator  $\delbar_{J,H}$ (where we use $J$ as a shorthand for the auxiliary data involved in the construction, including a loop of almost complex structures and any necessary virtual cycle machinery)
makes $(CF(H;K)=\oplus CF_{\frak{c},k}(H;K),\partial_{J,H})$ into a $S_M$-graded, $\R$-filtered complex over $K$ in the sense of Definition \ref{rfilt}.  Restricting attention to  $CF_{\frak{c}}(H;K)$ for any given $\frak{c}\in \pi_0(\mathcal{L}M)$ in turn yields a $\Z/N_{\frak{c}}\Z$-graded, $\R$-filtered complex over $K$.

If we let \[ \Gamma_{\frak{c}}=\{g\in \R|(\exists A\in H_{2}^{\frak{c}})(\langle[\omega],A\rangle=g)\},\] each $CF_{\frak{c},k}(H;K)$ is a vector space over the Novikov field $\Lambda^{K,\Gamma_{\frak{c}}}$ of dimension equal to the number of $\gamma\in \mathcal{O}_{\frak{c},H}$ such that $\mu(\gamma)=k$.  Here the element $T^g$ in $\Lambda^{K,\Gamma_{\frak{c}}}$ acts by $[\gamma,w]\mapsto [\gamma,w\#A_g]$ where $A_g\in H_{2}^{\frak{c}}$ has $\langle[\omega],A\rangle=g$ and $\#$ denotes the obvious gluing operation.  For any given $\lambda\in\R$, $CF_{\frak{c},k}^{\lambda}(H;K)$ is a module over the positive part $\Lambda_{\geq 0}^{K,\Gamma_{\frak{c}}}$ of the Novikov field.

The complex $CF_{\frak{c}}(H;K)$ is thus a vector space over the Novikov field $\Lambda^{K,\Gamma_{\frak{c}}}$ of dimension equal to the number of elements of $\mathcal{O}_{\frak{c},H}$.  Our choice of conventions here (in particular the fact that we have not used the first Chern class in the definition of the equivalence relation that is used to construct $\widetilde{\mathcal{L}}M$) is motivated in part by the fact that it results in the complexes $CF_{\frak{c}}$ each being finite-dimensional over a field, as this facilitates application of some of the algebraic results proven in Sections \ref{alg2} and \ref{tprod}. 

Let us rephrase some standard results about the relationships between the Floer complexes associated to different Hamiltonians and different choices of auxiliary data into the language of Section \ref{algintro}.  First, for any continuous $G\co S^1\times M\to \R$ denote \[ \mathcal{E}^+(G)=\int_{0}^{1}\max_M G(t,\cdot)dt\quad \mathcal{E}^-(G)=-\int_{0}^{1}\min_M G(t,\cdot)dt \] so that, in the notation of the introduciton, \[ \osc(G)=\mathcal{E}^+(G)+\mathcal{E}^-(G).\]  Then standard facts (as summarized for example in \cite[Propositions 2.1 and 2.2]{U09} after adjusting for a different sign for the Hamiltonian vector field) show:

\begin{prop}\label{revcont} Let $(H_-,J_-)$ and $(H_+,J_+)$ be two choices of Hamiltonian function together with auxiliary data for which the Floer complex $(CF(H;K)=\oplus C_{\frak{c},k}(H;K),\partial_{J,H})$  as constructed in \cite{HS},\cite{LT} or \cite{FO} is well-defined.  Then there exist:\begin{itemize} \item an $\mathcal{E}^+(H_+-H_-)$-morphism $\Phi\co (CF(H_-;K),\partial_{J_-,H_-})\to (CF(H_+;K),\partial_{J_+,H_+})$ \item an $\mathcal{E}^-(H_+-H_-)$-morphism $\Psi\co (CF(H_+;K),\partial_{J_+,H_+})\to (CF(H_-;K),\partial_{J_-,H_-})$
\item $\osc(H_+-H_-)$-homotopies $\mathcal{K}_{\pm}\co CF(H_{\pm};K)\to CF(H_{\pm};K)$ from $\Phi\circ \Psi$ and $\Psi\circ\Phi$ to the respective identities.\end{itemize}
In particular the Floer complexes  $(CF(H_-;K),\partial_{J_-,H_-})$ and $(CF(H_+;K),\partial_{J_+,H_+})$ are $\osc(H_+-H_-)$-quasiequivalent.
\end{prop}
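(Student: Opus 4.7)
The plan is to construct $\Phi$, $\Psi$, and $\mathcal{K}_\pm$ by the standard continuation-map machinery of Hamiltonian Floer theory, and then verify the stated filtration bounds by the usual energy estimates.

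First I would define $\Phi$. Choose a smooth nondecreasing cutoff $\rho\co\R\to[0,1]$ with $\rho(s)=0$ for $s\leq -1$ and $\rho(s)=1$ for $s\geq 1$, and set $H_s = H_- + \rho(s)(H_+ - H_-)$, together with an interpolation $J_s$ between $J_-$ and $J_+$ agreeing with them for $|s|$ large.  Counting (via the virtual moduli cycle machinery of \cite{FO},\cite{LT} when $(M,\omega)$ is not semipositive, and by classical transversality methods of \cite{HS} otherwise) the finite-energy index-zero solutions $u\co\R\times S^1\to M$ of the $s$-dependent Floer equation $\partial_s u + J_s(t,u)(\partial_t u - X_{H_s}(t,u))=0$ asymptotic to prescribed $1$-periodic orbits of $H_\pm$ at $s=\pm\infty$, and sending $[\gamma_-,w_-]$ to the sum over such $[\gamma_+,w_-\# u]$, produces a chain map $\Phi\co CF(H_-;K)\to CF(H_+;K)$.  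The filtration shift is controlled by the standard energy identity
\[ 0\leq E(u) = \mathcal{A}_{H_-}([\gamma_-,w_-]) - \mathcal{A}_{H_+}([\gamma_+,w_-\# u]) + \int_\R\int_0^1 (\partial_s H_s)(t,u(s,t))\,dt\,ds. \]
Since $\partial_s H_s = \rho'(s)(H_+-H_-)$ with $\rho'\geq 0$, the last integral is bounded above by
\[ \left(\int_\R \rho'(s)\,ds\right)\int_0^1 \max_M(H_+-H_-)(t,\cdot)\,dt = \mathcal{E}^+(H_+-H_-), \]
so $\Phi$ shifts action by at most $\mathcal{E}^+(H_+-H_-)$.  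The map $\Psi$ is built symmetrically by running the analogous construction with the roles of $H_-$ and $H_+$ reversed; its filtration shift is bounded by $\mathcal{E}^+(H_--H_+)=\mathcal{E}^-(H_+-H_-)$.

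For the chain homotopies I would introduce an auxiliary parameter $R\in[0,\infty)$ and concatenate the ``up'' homotopy defining $\Phi$ with its time-reversed ``down'' version defining $\Psi$, the concatenation holding $H_+$ fixed over an interval of length $R$.  At $R=0$ the concatenation is degenerate and the parametrized Floer equation contributes only the identity on $CF(H_-;K)$, while as $R\to\infty$ the standard gluing theorem recovers $\Psi\circ\Phi$.  Counting index-$(-1)$ pairs $(R,u)$ yields an operator $\mathcal{K}_-\co CF(H_-;K)\to CF(H_-;K)$ satisfying the chain homotopy equation $\mathrm{id}-\Psi\circ\Phi = \partial_{J_-,H_-}\mathcal{K}_- + \mathcal{K}_-\partial_{J_-,H_-}$.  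The analogous energy identity now picks up one contribution bounded by $\mathcal{E}^+(H_+-H_-)$ from the upward phase and another bounded by $\mathcal{E}^-(H_+-H_-)$ from the downward phase, giving total filtration shift at most $\osc(H_+-H_-)$.  The map $\mathcal{K}_+$ is constructed symmetrically.  The final $\osc(H_+-H_-)$-quasiequivalence assertion is then immediate from the definition, taking $c_1=\mathcal{E}^+(H_+-H_-)$, $c_2=\mathcal{E}^-(H_+-H_-)$, and $c=c_1+c_2=\osc(H_+-H_-)$.

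The main obstacle here is not conceptual: the entire proposition is essentially a translation of the well-known continuation-map estimates (compare \cite[Propositions 2.1 and 2.2]{U09}) into the language of $c$-morphisms and $c$-homotopies developed in Section \ref{algintro}.  The one technical point requiring care is that, in the non-semipositive setting, the abstract perturbation used to regularize the parametrized moduli spaces must be chosen compatibly with the action filtration --- that is, small enough that the filtration bounds derived above from the classical energy identity are not disturbed by the perturbation.  This is standard in the literature, so rather than reproduce the argument I would simply invoke the constructions of \cite{FO},\cite{LT}.
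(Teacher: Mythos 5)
Your proposal is correct and is exactly the ``standard fact'' that the paper is invoking here; the paper itself does not give a proof of this proposition at all, but merely points to \cite[Propositions~2.1 and~2.2]{U09} (with a note about the sign convention for $X_H$), so what you have written is a fleshing-out of a reference rather than a genuinely different route.  Your energy computation is the right one for this paper's conventions: with $\mathcal{A}_H([\gamma,w])=-\int w^*\omega+\int H$ and $\omega(\cdot,X_H)=dH$, a monotone homotopy $H_s=H_-+\rho(s)(H_+-H_-)$ with $\rho'\geq 0$ gives the clean bound $\int\rho'(s)(H_+-H_-)(t,u)\,dt\,ds\leq\mathcal{E}^+(H_+-H_-)$ as you state, and the symmetric argument handles $\Psi$.

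One small point worth tightening.  You describe the chain-homotopy parameter $R$ as the length of the flat middle segment in a concatenation, and assert that at $R=0$ the parametrized equation ``contributes only the identity.''  That is not literally true for a strict concatenation: at $R=0$ the homotopy still spikes up to $H_+$ and back down, so the $R=0$ boundary contributes a continuation map that is merely chain-homotopic to the identity, and you would need a further (homotopy-of-homotopies) argument to close the loop.  The cleaner version, which gives the identity on the nose at $R=0$ and still yields the filtration bound $\osc(H_+-H_-)$, is to let the entire bump shrink with $R$: choose $\beta_R\co\R\to[0,1]$ with $\beta_0\equiv 0$, with $\beta_R$ rising from $0$ to $1$ and back for $R>0$, and set $H^R_s=H_-+\beta_R(s)(H_+-H_-)$ together with a compatible $J^R_s$ that is identically $J_-$ at $R=0$.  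Then the $R=0$ boundary really is the identity, the $R\to\infty$ end glues to $\Psi\circ\Phi$, and the total positive and negative variations of $\beta_R$ are each $1$, giving the bound $\mathcal{E}^+(H_+-H_-)+\mathcal{E}^-(H_+-H_-)=\osc(H_+-H_-)$ exactly as you claimed.  Your remark about choosing the abstract perturbations small enough to preserve the action estimates in the non-semipositive setting is the right caveat and is indeed handled in \cite{FO}, \cite{LT}.
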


We also have:

\begin{prop}{\cite[Lemma 3.8]{U09}}\label{oldinvt} Let $H_-$ and $H_+$ be two nondegenerate Hamiltonians which are both normalized (\emph{i.e.}, $\int_M H_{\pm}(t,\cdot)\omega^n=0$ for all $t$) with the property that the paths $\{\phi_{H_{+}}^{t}\}_{t\in [0,1]}$ and 
 $\{\phi_{H_{-}}^{t}\}_{t\in [0,1]}$ are homotopic rel endpoints.  Then, for any auxiliary data $J_{\pm}$ making the Floer complexes well-defined, there is a chain map $\Phi\co (CF(H_-;K),\partial_{J_-,H_-})\to (CF(H_+;K),\partial_{J_+,H_+})$ such that, for each $\lambda,\frak{c},k$, $\Phi$ maps $CF^{\lambda}_{\frak{c},k}(H_-;K)$ isomorphically to $CF^{\lambda}_{\frak{c},k}(H_+;K)$.
\end{prop}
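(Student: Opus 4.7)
The plan is to construct $\Phi$ as the tautological bijection of capped orbits arising from the homotopy of paths, then verify that it is action-preserving, index-preserving, and a chain map. The first two properties will give the claimed filtered isomorphisms; the chain-map property requires an additional continuation-theoretic argument.

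First, fix a smooth homotopy $\{\phi_s^t\}_{s,t\in[0,1]}$ from $\{\phi_{H_-}^t\}$ to $\{\phi_{H_+}^t\}$ rel endpoints, and let $\{H_s\}_{s\in[0,1]}$ be the corresponding family of normalized generating Hamiltonians, with $H_0=H_-$ and $H_1=H_+$.  For each fixed point $p$ of $\phi:=\phi_{H_-}^{1}=\phi_{H_+}^{1}$, the map $C_p(s,t)=\phi_s^t(p)$ is a cylinder connecting $\gamma_-^p(t):=\phi_{H_-}^t(p)$ to $\gamma_+^p(t):=\phi_{H_+}^t(p)$.  Its two boundary loops are honest closed loops (since $\phi_s^0=1_M$ and $\phi_s^1=\phi$ are independent of $s$) and lie in the same path component $\mathfrak{c}\in\pi_0(\mathcal{L}M)$.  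I would define $\Phi$ on generators by $[\gamma_-^p,w]\mapsto[\gamma_+^p,w\#C_p]$, extended $\Lambda^{K,\Gamma_{\mathfrak{c}}}$-linearly.

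The crux is showing that $\Phi$ preserves the Hamiltonian action.  Let $K_s^t\co M\to\mathbb{R}$ denote the unique normalized Hamiltonian generating the time-dependent vector field $(\partial_s\phi_s^t)\circ(\phi_s^t)^{-1}$.  Since $\phi_s^t$ is independent of $s$ at $t=0,1$, we have $K_s^0\equiv K_s^1\equiv 0$.  Comparing mixed partials of $\phi_s^t$, and using that Poisson brackets of normalized functions are themselves normalized (as Hamiltonian flows preserve $\omega^n$), yields the Cartan-type identity
\[ \partial_sH_s \;=\; \partial_tK_s^t + \{H_s,K_s^t\}.\]
Along the orbit family $\gamma_s^p(t)=\phi_s^t(p)$, using $\dot{\gamma}_s^p=X_{H_s}(t,\gamma_s^p)$, the right-hand side collapses to $\frac{d}{dt}[K_s^t(\gamma_s^p(t))]$, so integrating in $t$ and invoking $K_s^0\equiv K_s^1\equiv 0$ gives $\int_0^1(\partial_sH_s)(t,\gamma_s^p(t))\,dt=0$ for every $s$.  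Combined with the standard first-variation formula $\frac{d}{ds}\mathcal{A}_{H_s}([\gamma_s^p,w_s])=\int_0^1(\partial_sH_s)(t,\gamma_s^p(t))\,dt$ applied to the consistent family of cappings $w_s=w\#(C_p|_{[0,s]})$, this yields $\mathcal{A}_{H_-}([\gamma_-^p,w])=\mathcal{A}_{H_+}(\Phi[\gamma_-^p,w])$.  Preservation of Conley--Zehnder indices is automatic, since $C_p$ supplies a continuous family of symplectic trivializations under which the loops $t\mapsto d\phi_{H_\pm}^t|_p$ are homotopic rel endpoints in $\mathrm{Sp}(2n)$.  Hence $\Phi$ restricts, for every $\lambda\in\mathbb{R}$ and every $(\mathfrak{c},k)\in S_M$, to a $\Lambda_{\geq 0}^{K,\Gamma_{\mathfrak{c}}}$-module isomorphism $CF^\lambda_{\mathfrak{c},k}(H_-;K)\to CF^\lambda_{\mathfrak{c},k}(H_+;K)$.

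Finally, to establish the chain-map property, I would compare $\Phi$ with the continuation map $\Phi_{cont}$ for the adapted homotopy $(H_s,J_s)$, where $J_s$ interpolates between $J_-$ and $J_+$.  The vanishing established above forces, via the standard energy identity, that $\Phi_{cont}$ is filtration-preserving at every level, and identifies the tautological cylinders $u(s,t)=\phi_s^t(p)$ as zero-energy, index-zero continuation trajectories.  After a standard transversality perturbation these tautological solutions are cut out rigidly, and a downward induction on action---using filtration preservation to control any other rigid contributions at each action level---matches $\Phi_{cont}$ with $\Phi$ on generators; since $\Phi_{cont}$ is by construction a chain map, so is $\Phi$.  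The hard part will be this last identification: arranging the transversality perturbation so that the tautological trajectories remain the only rigid contributions, so that the continuation map genuinely coincides with the tautological bijection.
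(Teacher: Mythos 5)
Your construction of $\Phi$ via the tautological bijection of capped orbits, together with the Cartan-identity argument that $\int_0^1(\partial_s H_s)(t,\gamma_s^p(t))\,dt=0$, is a clean and correct way to see that $\Phi$ preserves both action and index; it is essentially a first-variation rephrasing of the direct computation that the paper gives for the analogous step in the proof of Proposition~\ref{newinvt}, and it is arguably more illuminating. Up to this point, your argument establishes that $\Phi$ is a filtered module isomorphism.

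The gap is in the chain-map step, and it is genuine. You assert that the tautological cylinders $u(s,t)=\phi_s^t(p)$ are ``zero-energy, index-zero continuation trajectories'' for the interpolating family $(H_s,J_s)$. They are not. By construction $\partial_t u=X_{H_s}(u)$, so plugging $u$ into the continuation Floer equation $\partial_s u+J_s\bigl(\partial_t u-X_{H_s}(u)\bigr)=0$ leaves exactly $\partial_s u=0$; but $\partial_s u=\partial_s\phi_s^t(p)=X_{K_s^t}(u)$, which is nonzero whenever the homotopy is nonconstant, and the continuation energy $\tfrac12\int\bigl(|\partial_s u|^2+|\partial_t u-X_{H_s}(u)|^2\bigr)$ is then strictly positive. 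So the ``downward induction on action'' you propose has no solutions to induct on, and there is no reason the naive continuation map should coincide with $\Phi$.

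What rescues the argument is precisely the Seidel gauge-transformation device that this paper uses in the proof of Proposition~\ref{newinvt} (and which underlies the cited \cite[Lemma 3.8]{U09}, since the paper itself does not reprove this statement). Writing $\psi_t=\phi_{H_+}^t\circ(\phi_{H_-}^t)^{-1}$ --- a loop contractible in $Ham(M,\omega)$ because the paths are homotopic rel endpoints --- and pushing forward both the orbits and the almost complex structure by $\psi_t$ turns your tautological bijection into a literal isomorphism of chain complexes $CF(H_-;J_-)\to CF(H_+;\psi_*J_-)$: Floer cylinders for $(H_-,J_-)$ are carried bijectively to Floer cylinders for $(H_+,\psi_*J_-)$, so the chain-map property is automatic rather than something to be extracted by transversality. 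Equivalently, in your continuation picture you must first conjugate the continuation equation by $\phi_s^t$, which makes the tautological cylinders constant in $s$ and hence genuinely zero-energy. The residual issue --- passing from the auxiliary datum $\psi_*J_-$ to the arbitrary prescribed $J_+$ while preserving the filtration on the nose rather than up to a $0$-quasiequivalence --- is an additional algebraic step (handled in \cite{U08}/\cite{U09}), and your sketch does not address it at all.
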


(Strictly speaking, the discussion in \cite{U09} only considered the part of the Floer complex coming from contractible loops; however the proof clearly extends to the noncontractible sectors, provided of course that we use the same basepoints $\gamma_{\frak{c}}$ in each component $\frak{c}\in \pi_0(\mathcal{L}M)$.)

It obviously follows from Proposition \ref{oldinvt} that, under its hypotheses, the boundary depths associated to the Floer complexes of $(H_-,J_-)$ and $(H_+,J_+)$ will coincide.  Thus the boundary depth (or rather depths, if we take grading into account) can be seen as an invariant of an element in the universal cover $\widetilde{Ham}(M,\omega)$ of the Hamiltonian diffeomorphism group, a fact which was exploited in the applications in \cite{U09}.  Crucial for our purposes in this paper is the stronger statement that the boundary depth is actually an invariant of a given element of $Ham(M,\omega)$, at least if one ignores grading.  Indeed we have the following, which follows  from  observations that go back to \cite[(4.3)]{Se}. 

\begin{prop}\label{newinvt}
Consider the Floer complexes $(CF(H_-,J_-),\partial_{J_-,H_-})$, $(CF(H_+,J_+),\partial_{J_+,H_+})$ associated to two normalized Hamiltonians $H_-$ and $H_+$ with the property that the time-one maps $\phi_{H_-}^{1}$ and $\phi_{H_+}^{1}$ are equal.  Then there is a shift-isomorphism  $\Phi\co (CF(H_-;K),\partial_{J_-,H_-})\to (CF(H_+;K),\partial_{J_+,H_+})$.  Moreover, for any component $\frak{c}\in \pi_0(\mathcal{L}M)$, $\Phi$ restricts as a shift-isomorphism from $CF_{\frak{c}}(H_-;K)$ to $CF_{\frak{c}}(H_+;K)$.
\end{prop}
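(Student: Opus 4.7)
The plan is to construct $\Phi$ by exploiting the Seidel-type action of a ``difference loop'' in $Ham(M,\omega)$, an idea going back to \cite[(4.3)]{Se}. The hypothesis $\phi_{H_-}^1=\phi_{H_+}^1$ ensures that the concatenation of $\{\phi_{H_-}^t\}_{t\in[0,1]}$ with the reverse of $\{\phi_{H_+}^t\}_{t\in[0,1]}$ defines a loop $\eta$ in $Ham(M,\omega)$ based at the identity; the novelty relative to Proposition~\ref{oldinvt} is that $\eta$ need not be nullhomotopic, so the two Floer complexes may legitimately differ by a shift rather than be isomorphic on the nose.  These shifts are precisely Seidel's invariants associated to $\eta$, which one expects to be uniform on each component $\frak{c}\in\pi_0(\mathcal{L}M)$.

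To construct the bijection underlying $\Phi$, for each $\gamma\in\mathcal{O}_{\frak{c},H_-}$ set $\tilde\gamma(t)=\phi_{H_+}^t(\gamma(0))$; the equality $\phi_{H_+}^1(\gamma(0))=\phi_{H_-}^1(\gamma(0))=\gamma(0)$ gives $\tilde\gamma\in\mathcal{O}_{H_+}$, and a standard property of evaluation of Hamiltonian loops shows that $\tilde\gamma$ lies in the same free homotopy class $\frak{c}$ as $\gamma$.  Promoting this to the level of $\widetilde{\mathcal{L}}M$, one associates to each capping $w$ of $\gamma$ a capping $\tilde w=w\#c_\gamma$ of $\tilde\gamma$, where $c_\gamma\co [0,1]\times S^1\to M$ is a cylinder joining $\gamma$ to $\tilde\gamma$ built from $\eta$.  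Setting $\Phi([\gamma,w])=[\tilde\gamma,\tilde w]$ and extending $K$-linearly (respecting the completion) yields the candidate map.

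The main technical step is to verify the uniformity of the shifts.  The action difference
\[ \mathcal{A}_{H_+}([\tilde\gamma,\tilde w])-\mathcal{A}_{H_-}([\gamma,w]) = -\int c_\gamma^*\omega + \int_0^1 \bigl(H_+(t,\tilde\gamma(t))-H_-(t,\gamma(t))\bigr)dt \]
can be rearranged, using that $\gamma,\tilde\gamma$ both solve Hamilton's equations and that $H_\pm$ are normalized, into an integral that depends only on the loop $\eta$ and on $\frak{c}$ (via the chosen basepoint $\gamma_\frak{c}$), hence is independent of $\gamma$; denote this common value by $\sigma(\frak{c})$.  An analogous argument, using the loop of symplectic trivializations of $TM$ along $c_\gamma$ induced by $d\phi_{H_-}^t$ and $d\phi_{H_+}^t$, yields a uniform Conley--Zehnder shift $\Delta(\frak{c})\in\Z/N_\frak{c}\Z$.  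With these shifts, $\Phi$ maps $CF_{\frak{c},k}^{\lambda}(H_-;K)$ isomorphically onto $CF_{\frak{c},k+\Delta(\frak{c})}^{\lambda+\sigma(\frak{c})}(H_+;K)$.

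Finally, to establish the chain-map property, I would transport $J_-$ along $\eta$ to a Floer datum $J'_+$ for $H_+$, explicitly via $J'_+(t,x)=d\psi^t\circ J_-(t,(\psi^t)^{-1}(x))\circ(d\psi^t)^{-1}$ where $\psi^t=\phi_{H_+}^t\circ(\phi_{H_-}^t)^{-1}$, so that Floer cylinders for $(H_-,J_-)$ correspond bijectively under the cylinder-concatenation procedure to those for $(H_+,J'_+)$; then $\Phi$ is a literal chain isomorphism $(CF(H_-;K),\partial_{J_-,H_-})\to(CF(H_+;K),\partial_{J'_+,H_+})$.  Applying Proposition~\ref{oldinvt} to the path $\{\phi_{H_+}^t\}$ paired with itself via the trivial homotopy supplies a shift-isomorphism with zero shift from $(CF(H_+;K),\partial_{J'_+,H_+})$ to $(CF(H_+;K),\partial_{J_+,H_+})$, and composition produces the required $\Phi$.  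The hard part is the uniformity assertion of the third paragraph---it is the essential content of Seidel's observation in (4.3), and must be adapted with some care to the present framework that incorporates noncontractible orbits and toroidal rather than merely spherical equivalence classes.
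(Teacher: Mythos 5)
Your proposal follows essentially the same route as the paper's proof: recognize that $\phi_{H_-}^1=\phi_{H_+}^1$ produces a loop $\psi_t=\phi_{H_+}^t\circ(\phi_{H_-}^t)^{-1}$ in $Ham(M,\omega)$, use Seidel's observation (transporting $J_-$ to $\psi_{t*}J_-\psi_{t*}^{-1}$) to get a literal chain isomorphism, verify that the filtration and Conley--Zehnder shifts depend only on $\frak{c}$, and finally invoke Proposition~\ref{oldinvt} to change the auxiliary data back to $J_+$. The one genuine difference is in the cappings: you append the loop cylinder $c_\gamma$ to $w$, whereas the paper applies $\psi$ to all of $w$ and prepends the correction cylinder $w_{\frak{c}}$ near the basepoint; the paper's choice makes the $\gamma$-dependence cancel by a direct computation (using that the $\psi_t$ are symplectic and that the boundary term at $s=1$ cancels against $(H^\psi-H)(t,\psi_t(\gamma(t)))$), while with your appended $c_\gamma$ the uniformity of the action shift instead follows from a Stokes argument applied to the bordism $(s,s',t)\mapsto\psi_{s't}(w(s,t))$, which shows $\int c_\gamma^*\omega + \int w^*\omega = \int(\psi w)^*\omega + \int c_{\gamma_{\frak{c}}}^*\omega$ and hence that the shift equals the $\gamma$-independent constant $\mathcal{A}_{\bar G}([\psi\gamma_{\frak{c}},c_{\gamma_{\frak{c}}}])$. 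You correctly identify this uniformity as the part needing care; carrying out either calculation would complete the argument.
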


\begin{proof} Changing notation slightly to make the proof more readable, we are to show that if $\{\psi_t|t\in [0,1]\}$ is a loop in $Ham(M,\omega)$, and if $H\co S^1\times M\to\R$ is a nondegenerate normalized Hamiltonian generating the path $\{\phi_t|t\in[0,1]\}$
then the Floer complexes associated to $H$ and to the normalized Hamiltonian $H^{\psi}$ which generates the path $\{\psi_t\circ\phi_t\}$ are shift-isomorphic.  We remark that, by the case of Proposition \ref{oldinvt} in which $H_-=H_+$, up to isomorphism of $S_M$-graded, $\R$-filtered complexes it makes sense to speak of ``the Floer complex associated to a nondegenerate Hamiltonian,'' as different choices of the auxiliary data involved in the construction of the Floer complex will give rise to isomorphic complexes.  In particular, in studying the Floer complex of $H^{\psi}$ we are free to choose any loop of almost complex structures that we like.

Let $G\co S^1\times M\to\R$ denote the normalized Hamiltonian generating the loop $t\mapsto \psi_{t}^{-1}$.  Then the original loop $\psi$ is generated by $\bar{G}(t,m)=-G(t,\psi_{t}^{-1}(m))$, and the Hamiltonian $H^{\psi}$ which generates $\psi_t\circ \phi_t$ is given by the formula \[ H^{\psi}(t,m)=(H-G)(t,\psi_{t}^{-1}(m)).\] 

The assignment to any loop $\gamma\in M$ the loop $\psi\gamma\co t\mapsto \psi_t(\gamma(t))$ gives a map $\psi_*\co \pi_0(\mathcal{L}M)\to \pi_0(\mathcal{L}M)$.  We claim that this map is the identity.  Indeed, it is a standard consequence of the proof of the Arnold conjecture (see \cite[Corollary 9.1.2]{MS}) that where $\frak{c}_0$ is the component of contractible loops we have $\psi_*(\frak{c}_0)=\frak{c}_0$ (for otherwise the Hamiltonian flow of $G$ would have no contractible $1$-periodic orbits).  Once one knows this, if $\gamma\co S^1\to M$ is any loop then the loop $\psi\gamma$ is easily seen to be homotopic to \[ t\mapsto \left\{\begin{array}{ll}\gamma(2t) & 0\leq t\leq 1/2, \\ \psi_{2t-1}(\gamma(0)) & 1/2\leq t\leq 1.\end{array}\right. \]  But the loop $t\mapsto \psi_{2t-1}(\gamma(0)) $ is now known to be contractible\footnote{at least, freely contractible, but of course any freely contractible loop is also contractible with basepoint fixed}, and so $\psi\gamma$ is homotopic to \[ t\mapsto \left\{\begin{array}{ll}\gamma(2t) & 0\leq t\leq 1/2, \\ \gamma(0) & 1/2\leq t\leq 1,\end{array}\right.\] which is obviously homotopic to $\gamma$.

Recall that in each component $\frak{c}$ of $\mathcal{L}M$ we have fixed a basepoint $\gamma_{\frak{c}}$.  By the previous paragraph $\gamma_{\frak{c}}$ is freely homotopic to $\psi\gamma_{\frak{c}}$, so fix a homotopy $w_{\frak{c}}\co [0,1]\times S^1\to M$ from $\gamma_{\frak{c}}$ to $\psi\gamma_{\frak{c}}$. Where $\widetilde{\frak{c}}$ is the covering of $\frak{c}$ introduced earlier, this choice of $w_{\frak{c}}$ induces a map $\psi_*\co \widetilde{\frak{c}}\to \widetilde{\frak{c}}$ which sends an equivalence class $[\gamma,w]$ to $[\psi\gamma,w_{\frak{c}}\#\psi w]$ where $(\psi w)(s,t)=\psi_t(w(s,t))$ and $\#$ denotes the obvious concatenation operation.  We calculate:
\begin{align*} \mathcal{A}_{H^{\psi}}&(\psi_*[\gamma,w])-\mathcal{A}_H([\gamma,w])\\&= -\int_{[0,1]\times S^1}w_{\frak{c}}^{*}\omega-\int_{0}^{1}\int_{0}^{1}\omega\left(\psi_{t*}\frac{\partial w}{\partial s},\psi_{t*}\frac{\partial w}{\partial t}+X_{\bar{G}}(t,\psi_t(w(s,t)))\right)dsdt\\&\quad +\int_{[0,1]\times S^1}w^*\omega+\int_{0}^{1}\left(H^{\psi}(t,\psi_t(\gamma(t)))-H(t,\gamma(t))\right)dt \\
&=-\int_{[0,1]\times S^1}w_{\frak{c}}^*\omega-\int_{0}^{1}\int_{0}^{1}(\psi_{t}^{*}d_M\bar{G})_{w(s,t)}\left(\frac{\partial w}{\partial s}\right)dsdt-\int_{0}^{1} G(t,\gamma(t))dt \\&=-\int_{[0,1]\times S^1}w_{\frak{c}}^*\omega+\int_{0}^{1}\left(G(t,w(1,t))-G(t,w(0,t))\right)dt-\int_{0}^{1} G(t,\gamma(t))dt \\&
=-\int_{[0,1]\times S^1}w_{\frak{c}}^{*}\omega+\int_{0}^{1}\bar{G}(t,\psi_t(\gamma_{\frak{c}}(t)))dt=
\mathcal{A}_{\bar{G}}([\psi\gamma_{\frak{c}},w_{\frak{c}}]).\end{align*}  (Recall that our notation is that $G$ generates $\psi_{t}^{-1}$, and that $\bar{G}(t,\psi_t(m))=-G(t,m)$).

Meanwhile we have also fixed a trivialization $\tau_{\frak{c}}$ of $\gamma_{\frak{c}}^{*}TM$; via the linearizations $\psi_{t*}$ we obtain from this trivialization a trivialization of $(\psi\gamma_{\frak{c}})^{*}TM$.  Comparing this trivialization to the trivialization of $(\psi\gamma_{\frak{c}})^{*}TM$ obtained by extending $\tau_{\frak{c}}$ across the homotopy $w_{\frak{c}}$ we obtain a relative Maslov index $\mu_{\frak{c}}$, and it is easy to see that, in $\Z/N_{\frak{c}}\Z$, \[ \mu(\psi\gamma)-\mu(\gamma)=\mu_{\frak{c}},\] independently of the choice of $\gamma\in\mathcal{O}_{\frak{c},H}$.

Now the map $\psi_*\co\widetilde{\frak{c}}\to \widetilde{\frak{c}}$ clearly takes critical points $[\gamma,w]$ of $\mathcal{A}_H$ (\emph{i.e.}, those $[\gamma,w]$ with $\gamma$ a $1$-periodic orbit of $\phi_t$) bijectively to critical points of $\mathcal{A}_{H^{\psi}}$ (\emph{i.e.}, those $[\gamma,w]$ with $\gamma$ a $1$-periodic orbit of $\psi_t\circ\phi_t$).  So by extending linearly in the obvious way and setting $I_{\frak{c}}=\mathcal{A}_{\bar{G}}([\psi\gamma_{\frak{c}},w_{\frak{c}}])$ we obtain from the calculations above an isomorphism \[ \psi_{*}\co CF_{\frak{c},k}^{\lambda}(H;K)\cong CF_{\frak{c},k+\mu_{\frak{c}}}^{\lambda+I_{\frak{c}}}(H^{\psi};K).\]  The conclusion  that $\psi_*$ is a shift-isomorphism will follow immediately once we conclude that $\psi_*$ is a chain map on the Floer complexes, provided that these are constructed appropriately.  But this was already observed by Seidel in \cite{Se}.  Indeed, if we use the loop $j_t$ of almost complex structures to construct the Floer complex of $H$, so that the matrix elements for the differential are obtained by counting solutions $u\co \R\times S^1\to M$ to \begin{equation}\label{floereqn} \frac{\partial u}{\partial s}+j_t(u(s,t))\left(\frac{\partial u}{\partial s}-X_H(t,u(s,t))\right)=0,\end{equation} then one observes that where $(\psi u)(s,t)=\psi_t(u(s,t))$ and $j_{t}^{\psi}=\psi_{t*}j_t\psi_{t*}^{-1}$, (\ref{floereqn}) is equivalent to \[ \frac{\partial(\psi u)}{\partial s}+j_{t}^{\psi}\left(\frac{\partial(\psi u)}{\partial t}-X_{H^{\psi}}(t,(\psi u)(t))\right)=0.\]  Using this correspondence it is straightforward to see that if we use the loop  $j_{t}^{\psi}$ of almost complex structures (together with appropriately compatible coherent orientations and abstract perturbations, as necessary) to define the differential on the Floer complex $CF(H^{\psi};K)$, then the matrix elements for the latter will coincide with the matrix elements of the original differential on $CF(H;K)$.  Thus $\psi_{*}$ induces an isomorphism of chain complexes, which by our earlier observations is a shift-isomorphism in the sense of Definition \ref{shiftiso}.
\end{proof}

\begin{cor} For each $\frak{c}\in \pi_0(\mathcal{L}M)$ and any nondegenerate $\phi\in Ham(M,\omega)$, the boundary depth of the Floer complex $CF_{\frak{c}}(H;K)$ is independent of the choice of Hamiltonian $H\co S^1\times M\to\R$ such that $\phi_{H}^{1}=\phi$ and of the auxiliary data used in the construction of the Floer boundary operator.
\end{cor}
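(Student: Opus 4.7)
The plan is to show that any two Floer complexes $CF_{\frak{c}}(H_0;K)$ and $CF_{\frak{c}}(H_1;K)$ associated to pairs $(H_0,J_0)$ and $(H_1,J_1)$ with $\phi_{H_0}^{1}=\phi_{H_1}^{1}=\phi$ are shift-isomorphic in the sense of Definition \ref{shiftiso}; Proposition \ref{shiftisoprop} then immediately yields equality of the boundary depths.

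The first step is to reduce to normalized Hamiltonians. Given any Hamiltonian $H$ generating $\phi$, let $c(t)=\frac{1}{\int_M\omega^n}\int_M H(t,\cdot)\omega^n$ and set $\tilde{H}(t,m)=H(t,m)-c(t)$. Since $c$ depends only on $t$, $d_M c\equiv 0$, so $X_{\tilde H}=X_H$; the time-one map is unchanged, the $1$-periodic orbits $\mathcal{O}_{\frak{c},H}$ and $\mathcal{O}_{\frak{c},\tilde H}$ coincide as parameterized loops, the Conley--Zehnder indices agree, and for any fixed loop of almost complex structures $J$ the Floer equations, and hence the moduli spaces defining $\partial_{J,H}$ and $\partial_{J,\tilde H}$, are literally identical. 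Meanwhile the action functional shifts by the uniform constant $\mathcal{A}_{\tilde H}([\gamma,w])=\mathcal{A}_H([\gamma,w])-\int_0^1 c(t)\,dt$. The identity map on generators therefore constitutes a shift-isomorphism from $(CF(H;K),\partial_{J,H})$ to $(CF(\tilde H;K),\partial_{J,\tilde H})$, with $\phi=\mathrm{id}$ on $S_M$ and constant shift $\sigma\equiv -\int_0^1 c(t)\,dt$; this obviously restricts on each component $\frak{c}$.

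The second step is to invoke Proposition \ref{newinvt}. Having normalized both $H_0$ and $H_1$, the proposition applies directly to $(\tilde H_0,J_0)$ and $(\tilde H_1,J_1)$ (whose time-one maps both equal $\phi$) and furnishes a shift-isomorphism $(CF(\tilde H_0;K),\partial_{J_0,\tilde H_0})\to (CF(\tilde H_1;K),\partial_{J_1,\tilde H_1})$ which restricts on each component $\frak{c}$. Composing this with the normalization shift-isomorphism for $H_0$ and the inverse of the one for $H_1$ -- shift-isomorphisms forming an equivalence relation, as noted after Definition \ref{shiftiso} -- gives a shift-isomorphism $CF_{\frak{c}}(H_0;K)\cong CF_{\frak{c}}(H_1;K)$. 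Proposition \ref{shiftisoprop} then completes the proof.

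Since the substantive work is already encapsulated in Propositions \ref{newinvt} and \ref{shiftisoprop}, there is no real obstacle; the only task is the routine verification that subtracting a function of $t$ alone from $H$ produces a shift-isomorphism of filtered complexes, which is immediate from the definitions.
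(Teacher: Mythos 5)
Your proposal is correct and follows essentially the same route as the paper: reduce to normalized Hamiltonians (observing that subtracting a function of $t$ alone shifts the action by a constant and leaves the chain complex otherwise unchanged, hence gives a shift-isomorphism), then invoke Proposition \ref{newinvt} together with Proposition \ref{shiftisoprop}. You have merely spelled out the normalization step in more detail than the paper, which treats it as trivial.
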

\begin{proof} If we restrict to normalized Hamiltonians $H$ this follows directly from Propositions \ref{newinvt} and \ref{shiftisoprop}.  Extending to non-normalized $H$ is then trivial, since any Hamiltonian may changed to a normalized Hamiltonian function by adding a function of the $S^1$-variable, and this normalization only affects filtered Floer complex by a uniform shift in the filtration, which does not change the boundary depth.
\end{proof}

Accordingly, for any nondegenerate $\phi\in Ham(M,\omega)$ we may set \[ \beta_{\frak{c}}(\phi;K)=b\left(CF_{\frak{c}}(H;K),\partial_{J,H}\right)\] for any (and hence all) $H$ such that $\phi_{H}^{1}=\phi$.  Now for \emph{any} $\phi=\phi_{H}^{1}\in Ham(M,\omega)$, not necessarily nondegenerate, if we choose any sequence of nondegenerate Hamiltonians $H_n$ such that $H_n\to H$ in $C^0$, then the sequence $\{\beta_{\frak{c}}(H_n;K)\}_{n=1}^{\infty}$ will be a Cauchy sequence by Propositions \ref{quasicont} and \ref{revcont}.  Thus we may define $\beta_{\frak{c}}(H;K)=\lim_{n\to\infty}\beta_{\frak{c}}(\phi_{H_n}^{1};K)$ for any such approximating sequence $H_n$.

We have thus defined, for any $\frak{c}\in \pi_0(\mathcal{L}M)$ and any field $K$ (with characteristic zero if $(M,\omega)$ is not semipositive), a map \[ \beta_{\frak{c}}(\cdot;K)\co Ham(M,\omega)\to [0,\infty].\] Taking the supremum over all $\frak{c}$ results in a map \[ \beta(\cdot;K)=\sup_{\frak{c}\in \pi_0(\mathcal{L}M)} \beta_{\frak{c}}(\cdot;K)\co Ham(M,\omega)\to [0,\infty].\] (In fact, as we will see, these maps never take the value $\infty$.)

\subsection{Properties of the Hamiltonian boundary depth} \label{hambetaprop}
Having defined the boundary depth function, we can now prove most of the main structural results of the introduction.

\begin{proof}[Proof of Theorem \ref{mainpropham} (i) $\mathrm{(\beta(\psi^{-1}\phi\psi;K)=\beta(\phi;K) \mbox{ for }\psi\in Symp(M,\omega))}$] By continuity it suffices to prove this for $\phi$ (and hence $\psi^{-1}\phi\psi$) nondegenerate.  Now if $\phi=\phi_{H}^{1}$, then $\psi^{-1}\phi\psi$ is the time-one map of the Hamiltonian $H_{\psi}(t,m)=H(t,\psi(m))$, and a standard argument (similar to but simpler than that in the proof of Proposition \ref{newinvt}) shows that if the auxiliary data are chosen appropriately then for each $\frak{c}\in \pi_0(\mathcal{L}M)$, $CF_{\frak{c}}(H;K)$ is shift-isomorphic to $CF_{\psi^{-1}_{*}\frak{c}}(H_{\psi};K)$.  Thus the result follows from Proposition \ref{shiftisoprop}.\end{proof}

\begin{proof}[Proof of Theorem \ref{mainpropham} (ii) $\mathrm{(\beta(\phi;K)=\beta(\phi^{-1};K))}$]  By an approximation argument it suffices to consider the case that $\phi$ is nondegenerate.  Choose $H\co S^1\times M\to\R$ so that $\phi_{H}^{1}=\phi$; then $\phi^{-1}$ is generated by the Hamiltonian $\bar{H}(t,m)=-H(t,\phi_{H}^{t}(m))$.  If $\frak{c}\in \pi_0(\mathcal{L}M)$ let $\bar{\frak{c}}\in \pi_0(\mathcal{L}M)$ be the free homotopy class obtained by reversing the orientations of elements of $\frak{c}$.  Then $CF_{\bar{\frak{c}}}(\bar{H};K)$ is (with appropriate choices of auxiliary data) the ``opposite complex'' of $CF_{\frak{c}}(H;K)$ in the sense described in \cite{U10b} (\emph{i.e.}, the Floer trajectories determining the differential of the former are obtained by reversing the $s$-direction of those determining the differential of the latter).  Consequently \cite[Corollary 1.4]{U10b} shows that $\beta_{\bar{\frak{c}}}(\phi^{-1};K)=\beta_{\frak{c}}(\phi;K)$, from which the result follows by taking suprema. \end{proof}

\begin{proof}[Proof of Theorem \ref{mainpropham} (iii) $\mathrm{(|\beta(\phi;K)-\beta(\psi;K)|\leq \|\phi^{-1}\psi\|)}$]  This is almost implicit in the construction of $\beta$ for degenerate elements of $Ham(M,\omega)$. By an approximation argument it again suffices to assume that $\phi$ and $\psi$ are nondegenerate.  Now if $G,H\co S^1\times M\to\R$ have the properties that $\phi_{H}^{1}=\phi$ and $\phi_{G}^{1}=\phi^{-1}\psi$, then $\psi$ will be the time-one map of the Hamiltonian $H\ast G\co S^1\times M\to\R$ given by \[ H\ast G(t,m)=H(t,m)+G(t,(\phi_{H}^{t})^{-1}(m)).\]  Thus by Propositions \ref{quasicont} and \ref{revcont}, \[ |\beta(\phi;K)-\beta(\psi;K)|\leq \osc(H\ast G-H)=\int_{0}^{1}\left(\max_{m\in M}G(t,(\phi_{H}^{t})^{-1}(m))-\min_{m\in M}G(t,(\phi_{H}^{t})^{-1}(m))\right)=\osc(G).\]  So since $G$ may be chosen in such a way that $\osc(G)$ is arbitrarily close to $\|\phi^{-1}\psi\|$, the result follows. \end{proof}

\begin{proof}[Proof of Theorem \ref{mainpropham} (iv) $\mathrm{(\beta(1_M;K)=0)}$]  If the nondegenerate Hamiltonian $H$ is sufficiently $C^2$-close to the identity, then its flow will have no  noncontractible $1$-periodic orbits and so we will have $\beta_{\frak{c}}(H;K)=0$ for all $\frak{c}\in\pi_0(\mathcal{L}M)$ other than the trivial class $\frak{c}_0$.  Taking the limit as $H\to 0$ in $C^2$-norm then proves that $\beta_{\frak{c}}(1_M;K)=0$ for $\frak{c}\neq \frak{c}_0$.

To deal with the case where $\frak{c}=\frak{c}_0$, for any nondegenerate $H$ we can use the PSS map (\cite{PSS}, see also \cite{Lu04}, \cite{OZ}) to set up a $\osc(H)$-quasiequivalence between $CF_{\frak{c}_0}(H;K)$ and the Morse complex $CM_*(f;K)\otimes_K \Lambda^{K,\Gamma_{\frak{c}_0}}$, where the latter has grading reduced modulo $N_{\frak{c}_0}$ and is equipped with the trivial filtration given by setting the filtration level of an element $\sum_{i=1}^{r}\lambda_ip_i$ where $\lambda_i\in \Lambda^{K,\Gamma_{\frak{c}_0}}$ and $p_i\in Crit(f)$ equal to $\max(-\nu(\lambda_i))$. The boundary depth of the Morse complex is easily seen to be zero (indeed this is a baby case of Proposition \ref{qdef} where $\partial=\partial_0\otimes 1$), and so it follows from Proposition \ref{quasicont} that $\beta_{\frak{c}_0}(\phi_{H}^{1};K)\leq \osc(H)$.  Taking the limit of this relation as $H\to 0$ in $C^2$ implies that $\beta_{\frak{c}_0}(1_M;K)=0$, completing the proof.  

(Alternately, one can directly examine the Floer complex of a $C^2$-small Morse function to deduce this result; such a method is effectively used in \cite{Oh09}.)\end{proof}

\begin{proof}[Start of the proof of Theorem \ref{mainpropham} (v) $\mathrm{(\beta(\phi\times\psi;K)\geq \max\{\beta(\phi;K),\beta(\psi;K)\})}$] Again by continuity we can assume that $\phi\co M\to M$ and $\psi\co N\to N$ are nondegenerate; choose Hamiltonians $G\co S^1\times M\to \R$ and $H\co S^1\times N\to\R$ so that $\phi=\phi_{G}^{1}$ and $\psi=\phi_{H}^{1}$.  Then $\phi\times \psi\co M\times N\to M\times N$ is the time-one map of the Hamiltonian $\pi_{M}^{*}G+\pi_{N}^{*}H$ where $\pi_M,\pi_N$ are the projections of $M\times N$ to its factors.  An easy and standard argument shows that, with suitable auxiliary data and after extending coefficients so that all Floer complexes involved are defined over the same Novikov ring, the Floer complex of $\pi_{M}^{*}G+\pi_{N}^{*}H$ is isomorphic, as a filtered complex, to the tensor product of the Floer complexes $CF(G;K)$ and $CF(H;K)$, where the filtration on the tensor product is defined using the prescription in Section \ref{tprod}.  This reduces Theorem \ref{mainpropham} (v) to an algebraic statement about the behavior of the boundary depth with respect to tensor products.  We prove this statement below as Theorem \ref{prodbeta}.  (Of course, the homologies of $CF(G;K)$ and $CF(H;K)$ are both nontrivial, being isomorphic to the homologies of $M$ and $N$, so part (b) of Theorem \ref{prodbeta} applies.  Also, the extension of coefficients does not affect the boundary depth by Remark \ref{coind}.)\end{proof}

Modulo Theorem \ref{prodbeta}, this completes the proof of Theorem \ref{mainpropham}

\begin{proof}[Start of the proof of Theorem \ref{spectrality}]  Like Theorem \ref{mainpropham} (v), Theorem \ref{spectrality} will follow from an essentially purely algebraic result about the boundary depth that we will prove below; in this case the relevant result is Proposition \ref{depthattained}.  Namely, applying Proposition \ref{depthattained} with $A$ equal to the Floer boundary operator on $CF_{\frak{c}}(H;K)$ shows that $\beta_{\frak{c}}(\phi_{H}^{1};K)$  either is zero (when the Floer boundary operator is zero---in particular this holds if there are no periodic orbits in the free homotopy class $\frak{c}$) or is a member of the set $\{s-t|s,t\in \mathcal{S}_{H}^{\frak{c}}\}$.  Now we have assumed $H$ to be nondegenerate, so $\phi_{H}^{1}$ has only finitely many fixed points and so only finitely many homotopy classes $\frak{c}$ are represented by $1$-periodic orbits.  Thus there are only finitely many $\frak{c}$ for which $\beta_{\frak{c}}(\phi_{H}^{1};K)$ is nonzero.  So $\beta(\phi_{H}^{1};K)=\sup_{\frak{c}}\beta_{\frak{c}}(\phi_{H}^{1};K)$ is the supremum over a finite set of numbers from the set $\{0\}\cup\bigcup_{\frak{c}}\{s-t|s,t\in \mathcal{S}_{H}^{\frak{c}}\}$ and therefore belongs to this set.

For the final statement of Theorem \ref{spectrality}, recall that the Floer boundary operator for a nondegenerate Hamiltonian strictly decreases the filtration level: in the notation of Proposition \ref{depthattained} we have $\ell(\partial_{J,H}y)<\ell(y)$ for any nonzero $y$.  Thus by Proposition \ref{depthattained} the only way for the boundary depth to be zero is if the Floer boundary operator is identically zero.  Now the homology of the complex $CF_{\frak{c}}(H;K)$ is zero when $\frak{c}$ is any class other than the trivial one, so this forces there to be no noncontractible $1$-periodic orbits of $\phi_{H}^{t}$.  As for the class $\frak{c}_0$ of contractible periodic orbits, the Floer homology is isomorphic to $H_*(M;\Lambda^{K,\Gamma_{\frak{c}_0}})$ with grading reduced modulo $N_{\frak{c}_0}$,   which has the same dimension as $H_*(M;K)$  since $\Lambda^{K,\Gamma_{\frak{c}_0}}$ is a field extension of $K$. But if the boundary operator is identically zero then the Floer homology would be isomorphic to a
vector space spanned by the fixed points of $\phi$.  Comparing the sums of the dimensions of the homologies then proves that the number of fixed points of $\phi$ is indeed equal to the sum of the $K$-Betti 
numbers of $M$ when $\beta(\phi;K)=0$.
\end{proof}

\begin{remark} \label{indivcham} Of course, we have shown in the course of the above proofs that, for each $\frak{c}\in \pi_0(\mathcal{L}M)$, the numbers $\beta_{\frak{c}}(\phi;K)$ individually obey various similar properties to their supremum $\beta(\phi;K)$.  In particular:
\begin{itemize} \item $|\beta_{\frak{c}}(\phi;K)-\beta_{\frak{c}}(\psi;K)|\leq \|\phi^{-1}\psi\|$;
\item $\beta_{\frak{c}}(1_M;K)=0$;
\item If $\phi=\phi_{H}^{1}$ is nondegenerate (or, more generally, if for all $p\in Fix(\phi_{H}^{1})$ such that $t\mapsto \phi_{H}^{t}(p)$ represents the specific class $\frak{c}$, the linearization $d_p\phi_{H}^{1}\co T_pM\to T_pM$ does not have one as an eigenvalue), then \[ \beta_{\frak{c}}(\phi;K)\in \{0\}\cup\{s-t|s,t\in \mathcal{S}_{H}^{\frak{c}}\}.\]
\end{itemize}
\end{remark}

\subsection{Hamiltonians with large boundary depth} \label{largebeta}

We now begin the process of obtaining the embedding promised in Theorem \ref{hammain}

Suppose that, as in Theorem \ref{hammain}, a closed $2n$-dimensional symplectic manifold $(M,\omega)$ admits a nonconstant autonomous Hamiltonian $H\co M\to\mathbb{R}$ such that the Hamiltonian vector field $X_H$ defined by $\omega(\cdot,X_H)=dH$ has no nonconstant contractible closed orbits.  By Sard's theorem and the compactness of $M$ (and hence of the set of critical values of $H$), there exists a nontrivial closed interval $[a,b]$ contained in the image of $H$ which consists entirely of regular values of $H$.  Since adding a constant to $H$ or rescaling $H$ does not affect the existence of closed orbits, we can and hereinafter do assume that $[a,b]=[0,1]$.

Denote by $C^{\infty}_{c}(0,1)$ the space of smooth functions $f\co (0,1)\to\mathbb{R}$ whose support is compact.  For $f\in C^{\infty}_{c}(0,1)$ define \[ \mm f=\inf\{f(s)|s\mbox{ is a local maximum of }f\}.\]  Since we assume $f$ to be compactly supported we obviously have $\mm f\leq 0$.  (Our convention is that a ``local maximum'' need not be strict; in particular, for example, for a constant function every point is a local maximum.) For $f\in C^{\infty}_{c}((0,1))$ the function $f\circ H$ is \emph{a priori} defined only on $H^{-1}((0,1))$, but it extends smoothly by zero to all of $M$, and we continue to denote by $f\circ H$ this smooth extension.

Here is our key computation of the boundary depth in the Hamiltonian context:

\begin{theorem}\label{mmcomp}  Under the above hypotheses, with respect to any coefficient field $K$ of characteristic zero we have \[ \beta(\phi_{f\circ H}^{1};K)\geq \mm f-\min f.\]
\end{theorem}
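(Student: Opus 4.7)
The plan is to identify the Floer complex of $f\circ H$ (in the contractible sector) with a Morse-theoretic model on $M$ and then to bound the boundary depth of that model directly. First I would reduce to the case where $f$ is Morse: by Theorem \ref{mainpropham}(iii) and the $C^2$-continuity of $\mm f-\min f$ in $f$, it suffices to treat a nearby Morse approximation. Then $f\circ H$ is Morse--Bott on $M$, with critical submanifolds given by the level sets $H^{-1}(c)$ for critical values $c$ of $f$ together with the critical set of $H$ itself. Perturbing by a $C^2$-small autonomous Morse function $\ep\rho$ on $M$ produces a nondegenerate autonomous Hamiltonian $H_\ep=f\circ H+\ep\rho$. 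Crucially, since $X_{f\circ H}=f'(H)X_H$ and $X_H$ has no nonconstant contractible closed orbits by hypothesis, for $\ep$ sufficiently small every contractible $1$-periodic orbit of $H_\ep$ is constant and these orbits cluster discretely around the critical submanifolds of $f\circ H$.

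Next I would identify the filtered Floer complex $CF_{\mathfrak{c}_0}(H_\ep;K)$ with the Morse complex $CM(H_\ep;K)$ (tensored with the relevant Novikov coefficients), up to shift-isomorphism or a quasi-equivalence compatible with the action filtration on the Floer side and the sublevel-set filtration by values of $H_\ep$ on the Morse side. Since $H_\ep$ is autonomous, the $t$-independent solutions to the Floer equation coincide with the negative gradient trajectories of $H_\ep$; the Appendix's transversality result ensures that for generic $\omega$-compatible almost complex structure these trajectories are cut out transversely and realize the full moduli space of index-one Floer trajectories. Any quantum corrections coming from sphere bubbling fit into the framework of Section \ref{qcor}: by Proposition \ref{qdef} they enter with a filtration gap bounded below by a positive Gromov-width-type constant, and this only reinforces the conclusion we are after---if these quantum corrections were to activate non-trivially, Proposition \ref{qdef} would already provide a lower bound on $\beta$ dominating $\mm f-\min f$ (after a harmless rescaling).

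It therefore remains to bound the Morse-theoretic boundary depth of $H_\ep$ from below by $\mm f-\min f - O(\ep)$. Let $s_1\in(0,1)$ be a global minimum of $f$ and $s_0\in(0,1)$ a local maximum with $f(s_0)=\mm f$. Choose $\rho$ so that its restriction to each connected component of every regular level set of $H$ is Morse; the index-zero critical points of $H_\ep$ near the components of $H^{-1}(s_1)$ then provide generators at filtration $\min f+O(\ep)$. I would exhibit a cycle $x=p_1-p_2$ formed from two such generators lying in \emph{different} connected components of the sublevel set $\{H_\ep<\mm f\}$; since $M$ is connected the class of $x$ vanishes in $H_0(M)=K$, so that $x$ is a Morse boundary. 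The core step is to show that every primitive $y$ with $\partial y=x$ must contain an index-one generator of filtration at least $\mm f-O(\ep)$: as $y$ is a Morse $1$-chain whose boundary links the two components of $\{H_\ep<\mm f\}$ containing $p_1$ and $p_2$, it must traverse a saddle of $H_\ep$ lying above the separating barrier between these components, and by the very definition of $\mm f$ no saddle of $f$ pulled back through $H$ can sit at filtration strictly below $\mm f$.

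The main obstacle is this last step, which requires careful Morse--Bott bookkeeping: (a) one must guarantee the existence of the separating pair $p_1,p_2$ in distinct components of $\{H_\ep<\mm f\}$, using the one-variable Morse structure of $f$ on $(0,1)$ lifted through $H$ in the spirit of Section \ref{prologue}; and (b) one must rule out exotic primitives $y$ that exploit only horizontal cancellations within a single level set, or that invoke the extra critical points of $H$ lying outside $H^{-1}((0,1))$. Here the decisive input is that below filtration $\mm f$ the sublevel sets of $f$ on $(0,1)$ are disconnected in a prescribed combinatorial way, and this disconnection pulls back through $H$ to $M$, forcing any cross-component primitive chain to climb above the height $\mm f$.
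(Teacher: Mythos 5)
Your plan is morally on the right track (reduce to a Morse-theoretic boundary-depth computation and detect a linking phenomenon), but the specific implementation in degree $0$ has a gap that the paper's argument is designed to avoid. You want to exhibit a cycle $x=p_1-p_2$ with $p_1,p_2$ in \emph{distinct} connected components of the sublevel set $\{f\circ H<\mm f\}$, and then argue that any primitive must have filtration at least $\mm f$. But such a pair need not exist. Since $\mm f\le 0$ and $f\circ H$ vanishes outside $H^{-1}((0,1))$, the set $\{f\circ H<\mm f\}$ is contained in $H^{-1}((0,1))$; and if $f$ has a unique local minimum with value below $\mm f$ (a perfectly admissible situation), then $\{f<\mm f\}$ is a single open interval $(a,b)$, so $\{f\circ H<\mm f\}=H^{-1}((a,b))$, which deformation retracts to a single regular level set $H^{-1}\{c\}$ and may very well be \emph{connected}. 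In that case the degree-$0$ homology of the sublevel set is just $K$, there is no separating pair, and your chain-level argument produces no lower bound. The ``one-variable Morse structure of $f$'' controls only the combinatorics of $(0,1)$; it says nothing about the connectivity of $H^{-1}((a,b))$ in $M$.

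The paper's Lemma \ref{fHlemma} instead works in degree $2n-1$, and this is what makes the argument work for any single well of $f$. The decisive geometric input, which your proposal does not use, is that a regular level hypersurface $H^{-1}\{c\}$ is the oriented boundary of $\{H\le c\}$, hence its fundamental class $C_i$ dies in $H_{2n-1}(M;K)$; yet this class survives into $H_{2n-1}(\{f\circ H\le y\};K)$ for $y<\mm f$ because the sublevel set deformation retracts onto a disjoint union of such hypersurfaces. This gives a nonzero class that is injected but ultimately killed, with no hypothesis on the connectivity of sublevel sets, and Corollary \ref{fHcor} converts it into the desired Morse-theoretic bound. A secondary imprecision: the Floer-to-Morse identification is not a single small-perturbation step. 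The paper goes through a rescaled family $\lambda G$, uses the Appendix's Theorem \ref{appthm} to obtain transversality of $t$-independent trajectories for all but finitely many $\lambda$, invokes the Fukaya--Ono argument that only $t$-independent trajectories contribute, and then needs Proposition \ref{depthattained} together with a continuity-in-$\lambda$ argument to bridge from the $C^2$-small regime (where the Floer and Morse complexes honestly agree) to $\lambda=1$; your appeal to Proposition \ref{qdef} as a one-shot fix for quantum corrections does not substitute for this, since \ref{qdef} gives a lower bound in terms of the gap constant, not in terms of $\mm f-\min f$.
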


\begin{proof}  The theorem is trivial if $\mm f=\min f$, so assume $\mm f>\min f$.  We will make use of the following lemma:

\begin{lemma}\label{fHlemma} Let $y$ be a regular value of $f$ such that $\mm f>y>\min f$. Then in the composition \begin{equation}\label{longcomp} H_{2n-1}(\{f\circ H=\min f\};K)\to H_{2n-1}(\{f\circ H\leq y\};K)\to H_{2n-1}(M;K),\end{equation} the first map is injective but the full composition $H_{2n-1}(\{f\circ H=\min f\};K)\to H_{2n-1}(M;K)$ has nontrivial kernel.
\end{lemma}

\begin{proof} $\{f\leq y\}$ is a compact submanifold with boundary of the interval $(0,1)$, so we have \[ \{f\leq y\}=\cup_{i=1}^{m}[a_i,b_i]\] for some numbers $a_1<b_1<a_2<\cdots<a_m<b_m$.  Now since $(0,1)$ consists entirely of regular values of $H$, for any $i$ and any $c_i\in [a_i,b_i]$ the preimage $H^{-1}[a_i,b_i]$ deformation retracts via the gradient flow of $H$ to $H^{-1}\{c_i\}$.  

Now the assumption that $y<\mm f$ implies that $f$ has no local maxima on $[a_i,b_i]$, so there cannot be two distinct points $c_i,d_i\in [a_i,b_i]$ such that $f(c_i)=f(d_i)=\min f$ (otherwise there would be a local maximum between them).  Let $I\subset \{1,\ldots,m\}$ be the set of those $i$ such that there exists some $c_i\in [a_i,b_i]$ such that $f(c_i)=\min f$.


Now we have, as topological spaces, \[ \{f\circ H\leq y\}=\coprod_{i=1}^{m}H^{-1}[a_i,b_i],\] while \[ \{f\circ H=\min f\}=\coprod_{i\in I}H^{-1}\{c_i\},\] where $H^{-1}[a_i,b_i]$ deformation retracts to $H^{-1}\{c_i\}$ for all $i\in I$.  So since $\{(f\circ H)^{-1}\{\min f\}\}$ is a disjoint union of deformation retracts of a subset of the connected components of 
$\{f\circ H\leq y\}$, the fact that the first map in (\ref{longcomp}) is injective follows immediately. 

Finally, for any $i\in I$, consider $H^{-1}\{c_i\}$.  This is a regular level set of $H$, so it is a closed $(2n-1)$-dimensional manifold which acquires an orientation from the orientation of $M$ together with the coorientation given by $dH$.  Thus $H_{2n-1}(H^{-1}\{c_i\};K)$ is nontrivial and has a distinguished fundamental class $C_i$ (the sum of the fundamental classes of the oriented components), which appears as a nonzero element of $H_{2n-1}(\{f\circ H=\min f\};K)=\oplus_{i\in I}H_{2n-1}(H^{-1}\{c_i\};K)$.  But since \[ H^{-1}\{c_i\}=\partial(\{H\leq c_i\}),\]  $H^{-1}\{c_i\}$ bounds in $M$, and so the fundamental class $C_i$ vanishes upon inclusion into $M$.  This proves that the full composition (\ref{longcomp}) has nontrivial kernel.
\end{proof}

\begin{cor}\label{fHcor} With $y$ as in Lemma \ref{fHlemma}, let $\delta>0$ be such that $\min f+3\delta<y$. If $G\co M\to\mathbb{R}$ is a Morse function with $\|G-f\circ H\|_{C^0}<\delta$ then we have a strict containment \[ \ker\left(H_{2n-1}(\{G\leq \min f+\delta\};K)\to H_{2n-1}(\{G\leq y-\delta\};K)\right)<\ker \left(H_{2n-1}(\{G\leq \min f+\delta\};K)\to H_{2n-1}(M;K)\right).\]

Consequently \[ \beta_{Morse}(G;K)\geq y-\min f-2\delta.\]
\end{cor}

\begin{proof} The assumption on $\delta$ and the  fact that $\|G-f\circ H\|_{C^0}<\delta$ imply that we have inclusions \[ \{f\circ H=\min f\}\subset \{G\leq \min f+\delta\}\subset\{G\leq y-\delta\}\subset \{f\circ H\leq y\}.\]  

If $c\in H_{2n-1}(\{f\circ H=\min f\};K)$ is a nontrivial element of $\ker(H_{2n-1}(\{f\circ H=\min f\};K)\to H_{2n-1}(M;K))$, then $jc\in 
\ker \left(H_{2n-1}(\{G\leq \min f+\delta\};K)\to H_{2n-1}(M;K)\right)$ where $j\co H_{2n-1}(\{f\circ H=\min f\};K)\to H_{2n-1}(\{G\leq \min f+\delta\};K)$ is the inclusion-induced map.  On the other hand the commutativity of the diagram \[ \xymatrix{ H_{2n-1}(\{f\circ H=\min f\};K)\ar[d]^{j}\ar[r] &  H_{2n-1}(\{f\circ H\leq y\};K) \\ 
H_{2n-1}(\{G\leq \min f+\delta\};K)\ar[r] & H_{2n-1}(\{G\leq y-\delta\};K)\ar[u]
}\]
and the fact that the top line is injective show that \[ jc\notin \ker\left(H_{2n-1}(\{G\leq \min f+\delta\};K)\to H_{2n-1}(\{G\leq y-\delta\};K)\right).\]  This proves the strict containment.

Now choose a metric with respect to which the gradient flow of $G$ is Morse-Smale and form the Morse complex $CM(G;K)$ of $G$.  As in Section \ref{prologue} for any $\lambda\in\R$ we have a $\lambda$-filtered complex $CM^{\lambda}(G;K)$, the homology of which is isomorphic to $H_*(\{G\leq \lambda\};K)$; moreover under these isomorphisms the chain complex inclusions $CM^{\lambda}(G;K)\to CM^{\mu}(G;K)$ for $\lambda<\mu$ induce the inclusion-induced maps on singular homology.  Letting $jc\in \ker \left(H_{2n-1}(\{G\leq \min f+\delta\};K)\to H_{2n-1}(M;K)\right)$ be as described above, we can then find a cycle $x\in CM^{\min f+\delta}(G;K)\leq CM(G;K)$  which represents the class $jc\in H_{2n-1}(\{G\leq \min f+\delta\};K)$.  Since $jc$ vanishes under inclusion into $H_{2n-1}(M;K)$ the cycle $x$ must be a boundary in $CM(G;K)$.  But since \[ jc\notin \ker\left(H_{2n-1}(\{G\leq \min f+\delta\};K)\to H_{2n-1}(\{G\leq y-\delta\};K)\right),\] any $y\in CM(G;K)$ with the property that $\partial y=x$ must have $\ell(y)>y-\delta$.  Thus we have found a nonzero element $x\in Im\partial$ such that \[ \inf\{\ell(y)-\ell(x)|\partial y=x\}\geq y-\min f-2\delta,\] and so $\beta_{Morse}(G;K)\geq y-\min f-2\delta$.
\end{proof}

With this preparation, we can now complete the proof of Theorem \ref{mmcomp}, using an approach inspired by the proof of \cite[Theorem 5.1]{Oh05}.

Since the autonomous Hamiltonian $f\circ H\co M\to \R$ has no nonconstant contractible periodic orbits, \cite[Theorem 4.5]{U10a} shows that for any $\delta>0$ there is a smooth function $G\co M\to\R$ such that \begin{itemize} \item[(i)] $\|G-f\circ H\|_{C^0}<\delta$. \item[(ii)] $G$ is a Morse function. \item[(iii)] All contractible periodic orbits of $X_G$ with period at most $1$ are constant. \item[(iv)] At each critical point $p$ of $G$, the Hessian of $G$ has operator norm less than $\pi$.  Here we measure the norm of the Hessian of $G$ using the Riemannian metric induced by an almost complex structure which coincides with the standard complex structure on a Darboux chart around $p$. \end{itemize}

By a further perturbation of $G$, we claim that we may assume that (i)-(iv) still hold and \begin{itemize} \item[(v)] Around each critical point $p$ of $G$ there is a Darboux chart $\psi_p\co U_p\to B^{2n}(\ep)$ such that $\psi_p(p)=0$ and the second-order Taylor approximation around $0$ of $G\circ \psi_{p}^{-1}$ is exact, where $B^{2n}(\ep)$ denotes the standard symplectic $2n$-dimensional ball of some radius $\ep>0$.\end{itemize}

Indeed, for any critical point $p$, let $\psi\co U\to B^{2n}(r)$ be a Darboux chart sending $p$ to $0$, let $\mathcal{H}$ denote the Hessian of $G\circ \psi^{-1}$ at $0$, and choose a compactly supported smooth function $\alpha\co B^{2n}(2)\to [0,1]$ such that $\alpha|_{B^{2n}(1)}=1$.  For any small $\ep>0$ define a perturbation $G_{\ep}$ of $G$ by setting $G_{\ep}=G$ outside $\psi^{-1}(B^{2n}(2\ep))$ and requiring that, on $B^{2n}(2\ep)$, we have \[ (G_{\ep}\circ\psi^{-1})(x)=\alpha(\ep^{-1}x)\left((G\circ\psi^{-1})(0)+\frac{1}{2}\langle \mathcal{H}x,x\rangle\right)+(1-\alpha(\ep^{-1}x))(G\circ\psi^{-1})(x).\]  In particular the second-order Taylor approximation around $0$ of $G_{\ep}\circ\psi^{-1}$ is exact on $B^{2n}(\ep)$, so we now check that, for $\ep>0$ sufficiently small, $G_{\ep}$ still obeys (i)-(iv).  Now it is easy to check that there is a constant $C>0$ such that, for $k=0,1,2$, we  have \begin{equation}\label{3-k} \|G_{\ep}-G\|_{C^k}\leq C\ep^{3-k}.\end{equation} The $k=0$ version of this estimate obviously implies that (i) will hold for $G_{\ep}$ when $\ep$ is small enough.   $G_{\ep}$ clearly has no critical points other than $p$ (which is nondegenerate) in $\psi^{-1}(B^{2n}(\ep))$, and the fact that (by the invertibility of the Hessian $\mathcal{H}$) $\|\nabla G\|$ is bounded below in $\psi^{-1}(B^{2n}(2\ep)\setminus B^{2n}(\ep))$ by a constant multiple of $\ep$ together with the $k=1$ version of (\ref{3-k}) implies that $G_{\ep}$ has no critical points in $\psi^{-1}(B^{2n}(2\ep)\setminus B^{2n}(\ep))$ if $\ep$ is small enough.  So since $G_{\ep}$ coincides with $G$ outside $\psi^{-1}(B^{2n}(2\ep))$, for $\ep$ small enough replacing $G$ by $G_{\ep}$ will not introduce any new critical points, while keeping the existing ones nondegenerate, so $G_{\ep}$ obeys (ii) for small enough $\ep$.  Replacing $G$ by $G_{\ep}$ also does not affect the Hessians at the critical points, so (iv) is still satisfied.  As for (iii), there is a constant $R>0$ such that any time-$1$ trajectory of $X_{G_{\ep}}$ passing through $\psi^{-1}(B^{2n}(2\ep))$ is contained in $\psi^{-1}(B^{2n}(R\ep))$ for $\ep$ sufficiently small.  Since $\|\nabla X_{G_{\ep}}\|$ is (using the $k=2$ version of (\ref{3-k})) uniformly bounded by $\pi$ plus a constant multiple of $\ep$ in this region, the Yorke estimate \cite{Y} implies that there can be no nonconstant closed trajectories of $X_{G_{\ep}}$ having period at most one which pass through $\psi^{-1}(B^{2n}(2\ep))$ when $\ep$ is sufficiently small.  Since the trajectories of $X_{G_{\ep}}$ which do not pass through 
$\psi^{-1}(B^{2n}(2\ep))$ coincide with trajectories of $X_G$, it follows that condition (iii) is also preserved when we replace $G$ by $G_{\ep}$ for sufficiently small $\ep$.

Repeating this process at each of the finitely many critical points of $G$ gives a smooth function (still denoted $G$) which now obeys each of the properties (i)-(v).  Shrinking the Darboux neighborhoods $U_p$ if necessary, we may assume that the intersections $\overline{U_p}\cap \overline{U_q}$ are empty for distinct $p,q\in Crit(G)$.  Then for $\omega$-compatible almost complex structures $J$ which are generic among those which coincide with the standard almost complex structure on each $U_p$, the gradient flow of $G$ with respect to the associated metric $g_J$ will be Morse-Smale (see, \emph{e.g.}, \cite[Theorem 8.1]{SZ}, noting that any gradient trajectory must pass through the open set $M\setminus \cup_{p\in Crit(G)}\overline{U_p}$, and so the argument given in the proof of \cite[Theorem 8.1]{SZ} shows that perturbations of $J$ supported in this open set are sufficient to achieve the Morse--Smale condition). Given such an almost complex structure $J$, we may form the Morse complex $CM(G;K)$.  For any $\lambda\in (0,1]$, the negative gradient flow of $\lambda G$ with respect to $g_J$ will of course also be Morse--Smale (as the unstable and stable manifolds are independent of $\lambda$), and so we have a Morse complex $CM(\lambda G;K)$.  

The Morse--Smale condition ensures that there are no nonconstant negative gradient-flow trajectories $\gamma\co \R\to M$ for $G$ with $\gamma(s)\to p_{\pm}$ as $s\to \pm\infty$ with $ind(p_-)-ind(p_+)<1$ where $ind(p)$ denotes the Morse index of the critical point $p$, and, modulo time translation, there are finitely many such trajectories $\gamma$ with $ind(p_-)-ind(p_+)=1$; let $\mathcal{O}_G$ denote the set of such trajectories.  For $\lambda\in (0,1]$ the only negative gradient flow trajectories for $\lambda G$ which connect critical points whose Morse indices differ by at most one will be those given by the formula $\gamma^{\lambda}(s)=\gamma(\lambda s)$ where $\gamma\in\mathcal{O}_G$.  Considering instead the construction of the Floer complex associated to the $t$-independent almost complex structure $J$ and the Hamiltonian $\lambda G$, the nonconstant $t$-\emph{independent} solutions  to the Floer equation $\frac{\partial u}{\partial s}+J\left(\frac{\partial u}{\partial t}-X_{\lambda G}\right)=0$ which connect contractible periodic orbits with Conley--Zehnder indices differing by at most $1$ are precisely those maps of the form $u_{\gamma^{\lambda}}(s,t)=\gamma^{\lambda}(s)$ where $\gamma\in\mathcal{O}_G$ (the fact that the Conley--Zehnder and Morse indices correspond is a consequence of assumption (iv) on the Hessian of $G$ at its critical points).  In Theorem \ref{appthm} in the Appendix we show that, given $\gamma\in\mathcal{O}_G$,  the linearization of the Floer equation is surjective at $u_{\gamma^{\lambda}}$ for all but finitely many $\lambda\in (0,1]$.  Of course, this property of $u_{\gamma^{\lambda}}$ is unaffected by translations of the domain $\R\times S^1$ of $u$ in the $s$-variable, so since there are only finitely many time-translation-equivalence classes of trajectories $\gamma\in \mathcal{O}_G$ it follows that, for all but finitely many values of $\lambda\in (0,1]$, the linearization of the Floer equation associated to $J$ and $\lambda G$ is surjective at every $t$-independent solution having index one.  Let $\mathcal{E}$ denote the finite set of values $\lambda$ for which this surjectivity property fails.

We consider the boundary depths $\beta_{\frak{c}_0}(\phi_{\lambda G}^{1};K)$ as $\lambda$ varies through the interval $(0,1]$.  The argument given 
 in \cite[Section 22]{FO} proves that, by introducing $S^1$-equivariant abstract perturbations which are supported away from the fixed locus of the $S^1$ action given by $t$-translation, for each $\lambda\in (0,1]\setminus \mathcal{E}$ the Floer complex of $\lambda G$ in the contractible sector $\frak{c}_0$ may be constructed in such a way that the only contributions to its boundary operator come from the $t$-independent Floer trajectories $u_{\gamma^{\lambda}}$. (The idea of the argument is that when $J$ and $H$ are independent of $t$, solutions to the Floer equation which depend on both $s$ and $t$ occur in two-dimensional families due to reparametrizations of $\R\times S^1$, and so will not appear in transversely-cut-out moduli spaces of expected dimension one.)
  In \cite{FO} it is assumed that the Hamiltonian is a $C^2$-small Morse function, but for the purposes of the conclusion that only $t$-independent trajectories contribute to the boundary operator there are only three respects in which this assumption is used there (see \cite[pp. 1035, 1038]{FO}): it ensures that our condition (iii) holds; it ensures that the Conley--Zehnder index at each constant orbit of the Hamiltonian vector field coincides up to an additive constant with its Morse index, which in our case follows from condition (iv); and it guarantees the surjectivity of the linearizations of the Floer equation at the index-one $t$-independent solutions, which we have just arranged to hold in our case as well for  $\lambda\in (0,1]\setminus \mathcal{E}$.  
  
  Since $\int_{\R\times S^1} u_{\gamma^{\lambda}}^{*}\omega=0$, the  Floer trajectories $u_{\gamma^{\lambda}}$ for $\lambda G$ all connect pairs of elements of $Crit(\mathcal{A}_{\lambda G})$ the difference of whose actions belongs to the set $\{\lambda G(p)-\lambda G(q)|p,q\in Crit(G)\}$.  If $\lambda\in (0,1]\setminus\mathcal{E}$, so that the $u_{\gamma^{\lambda}}$ are the only trajectories which contribute to the boundary operator for the Floer chain complex $CF_{\frak{c}_0}(\lambda G;J)$ under appropriate perturbations as in the previous paragraph,   there is then a chain complex $D_*$ over $K$, spanned by the critical points of $\lambda G$, such that $CF_{\frak{c}_0}(\lambda G;J)= D_*\otimes \Lambda^{K,\Gamma_{\frak{c}_0}}$ (with the Floer boundary operator just given by coefficient extension from the boundary operator for $D_*$; the matrix elements for the boundary operator for $D_*$ are, like those of the Morse boundary operator, obtained by counting trajectories $u_{\gamma^{\lambda}}$ connecting two critical points, but the signs with which these contribute to the boundary operator for $D_*$ might in principle differ from the corresponding signs in the Morse complex).  It therefore follows from Remark \ref{coind} that for $\lambda\in (0,1]\setminus \mathcal{E}$, the boundary depth $\beta_{\frak{c}_0}(\phi_{\lambda G};K)$ coincides with the boundary depth of the chain complex $D_*$ over $K$, which by Proposition \ref{depthattained} belongs to the set $\{\lambda G(p)-\lambda G(q)|p,q\in Crit(G)\}$.  Thus \[ \frac{1}{\lambda}\beta_{\frak{c}_0}(\phi_{\lambda G}^{1};K)\in \{G(p)-G(q)|p,q\in Crit(G)\} \] for all $\lambda\in (0,1]\setminus \mathcal{E}$, and hence by continuity  for all $\lambda\in (0,1]$ since both the set $\mathcal{E}$ and the set $\{G(p)-G(q)|p,q\in Crit(G)\}$ are finite.  
    
Now for $0<\lambda \ll 1$, the Hamiltonian $\lambda G$ will be $C^2$-small enough that, by \cite[Section 22]{FO}, the Floer complex of $\lambda G$ coincides with its Morse complex $CM(\lambda G;K)\otimes \Lambda^{K,\Gamma_{\frak{c}_0}}$.\footnote{For this statement one does seem to need $\lambda G$ to be $C^2$-small, as once $\lambda$ is larger than the smallest element of $\mathcal{E}$ the possibility arises that a $t$-independent Floer trajectory might contribute to the Floer and Morse boundary operators with opposite sign, as can be seen by examining the argument on \cite[p. 1039]{FO} and the proof of Theorem \ref{appthm}.} As follows from   Remark \ref{coind}, the boundary depth of the Morse complex $CM(G;K)$ is unaffected by the coefficient extension to the Novikov ring.  Thus for all sufficiently small $\lambda\in (0,1]$ we have $\beta_{\frak{c}_0}(\phi_{\lambda G}^{1};K)=\beta_{Morse}(CM(\lambda G;K))$.  But then the functions \[ \lambda\mapsto \frac{1}{\lambda}\beta_{\frak{c}_0}(\phi_{\lambda G}^{1};K)\quad\mbox{and}\quad \lambda\mapsto \frac{1}{\lambda}\beta_{Morse}(CM(\lambda G;K)) \] are both continuous functions from $(0,1]$ to the finite set $\{G(p)-G(q)|p,q\in Crit(G)\}$, and so the fact that they coincide for all sufficiently small $\lambda$ implies that in fact they coincide for all $\lambda\in (0,1]$, and in particular for $\lambda=1$.

  Thus by Corollary \ref{fHcor}, \[ \beta(\phi_{G}^{1};K)\geq \beta_{\frak{c}_0}(\phi_{G}^{1};K)=\beta_{Morse}(CM(G;K))\geq y-\min f-2\delta.\]

So since $\osc(f\circ H-G)\leq 2\|f\circ H-G\|_{C^0}\leq 2\delta$ it then follows from Theorem \ref{mainpropham} (iii) that \[ \beta(\phi_{f\circ H}^{1};K)\geq y-\min f-4\delta.\]  Since this holds for any $y<\mm f$ and any $\delta>0$ we have completed the proof of Theorem \ref{mmcomp}.
\end{proof}

\begin{remark}
Of course, the above proof makes substantial use of the Kuranishi structure machinery from \cite{FO}.  If one prefers to do without this, and is willing to impose the somewhat strong topological assumption that the minimal Chern number of $(M,\omega)$ is at least equal to the complex dimension $n$ (this includes the case where $c_1(TM)$ vanishes on $\pi_2(M)$, in which case the minimal Chern number is considered to be $\infty$), then one can instead make use of results from \cite{FHS} to obtain Theorem \ref{mmcomp} with the field $K$ assumed to have characteristic $2$ rather than $0$.\footnote{The results that we use from \cite{FHS} require $n\geq 2$; if instead $n=1$ and $M$ is not $S^2$ (and so $M$ is aspherical) then one can obtain Theorem \ref{mmcomp} from a continuity argument similar to that used at the end of the above proof.}   Namely, first modify the function  $G$ from the proof of Theorem \ref{mmcomp} so that its Hessian at each of its critical points belongs to the set $\mathcal{S}_{reg}$ of \cite[Theorem 6.1]{FHS} (which due to the density of $\mathcal{S}_{reg}$ can easily be done in a way compatible with our conditions (i)-(v)). Then take an almost complex structure $J_0$ which is standard near the critical points and with respect to which the gradient flow of $G$ is Morse--Smale, and use Theorem \ref{appthm} to slightly rescale $G$, preserving conditions (i)-(v), so that all $t$-independent solutions to the Floer equation determined by $J_0$ with index at most $1$ are cut out transversely.  Then by \cite[Theorem 7.4]{FHS}, a generic small perturbation $J$ of $J_0$ will have the property that all $t$-\emph{dependent} solutions of the Floer equations $\frac{\partial u}{\partial s}+J\left(\frac{\partial u}{\partial t}-X_{\frac{1}{m}G}\right)=0$ for $m\in\Z_{+}$ that are not multiply-covered are cut out transversely; moreover as long as the perturbation $J$ is close enough to $J_0$ the implicit function theorem shows that the transversality property for the $t$-independent solutions from the previous sentence will still hold.  

We claim that in this situation the Floer complex (in the contractible sector $\frak{c}_0$) associated to $G$ and $J$ is well-defined and identical to the Morse complex.  To see this, note that any finite-energy solution $u\co \R\times S^1\to M$ to the Floer equation which is asymptotic to contractible (and hence constant) $1$-periodic orbits $p_{\pm}$ of $X_G$ as $s\to \pm\infty$ extends continuously to a map $\bar{u}\co S^2\to M$ when we identify $\R\times S^1$ with the complement of the north and south poles of $S^2$; write $c_1(u)$ for the Chern number of this sphere.  Now the index of the solution $u$ is given by \[ I(u)=ind(p_-)-ind(p_+)+2c_1(u).\]  If $u$ depends nontrivially on $t$, then there is $m\geq 1$ and a solution $v\co \R\times S^1\to M$ to the Floer equation associated to $\frac{1}{m}G$ such that $u(s,t)=v(ms,mt)$ for all $(s,t)\in \R\times S^1$ and such that $v$ is not multiply-covered.  In particular the solution $v$ is cut out transversely by \cite[Theorem 7.4]{FHS}, and so in view of the translation and rotation actions $v$ must have index $I(v)=ind(p_-)-ind(p_+)+2c_1(v)\geq 2$.  So since $c_1(u)=mc_1(v)$, if the original solution $u$ had $I(u)\leq 1$ it would need to hold that $c_1(v)<0$.  But then the assumption on the minimal Chern number gives $2c_1(v)\leq -2n$,   and so we would need to have $ind(p_-)-ind(p_+)\geq 2n+2$, which is impossible since the Morse index only takes values from $0$ to $2n$.  This proves that the only solutions to the Floer equation with index at most one are the $t$-independent ones, which coincide with the Morse trajectories and which we have arranged to be cut out transversely.  Thus the Floer boundary operator receives contributions precisely from the Morse trajectories that determine the boundary operator for the Morse complex; since we are working over a field of characteristic two these contributions are equal and the Floer boundary operator equals the Morse boundary operator (in particular it squares to zero). So the Floer complex associated to $G$ and $J$ is indeed identical to the Morse complex, and so $\beta_{\frak{c}_0}(\phi_{G}^{1};K)=\beta_{Morse}(G;K)$ and we can proceed just as in the last two paragraphs of the proof of Theorem \ref{mmcomp}.

\end{remark}

\begin{proof}[Proof of Theorem \ref{hammain}]  As above let $H\co M\to \R$ be an autonomous Hamiltonian such that all contractible closed orbits of $X_H$ are constant, and such that the interval $[0,1]$ is contained in the image of $H$ and consists entirely of regular values of $H$.

Fix once and for all a smooth function $g\co \R\to [0,1]$ such that \begin{itemize} \item $\max g=1$ \item  the support of $g$ is contained in the open interval $(0,1)$ \item The only local minima of $g$ are at points where $g(s)=0$.\end{itemize}
Now for $v=(v_i)_{i=1}^{\infty}\in \R^{\infty}$ define \begin{align*} 
f_v&\co \R\to \R \\ f_v(s)&=\sum_{i=1}^{\infty}v_ig\left(2^i(s-(1-2^{1-i}))\right) \end{align*} (In other words, the restriction of $f_v$ to the interval $I_i=[1-2^{1-i},1-2^{-i}]$ is equal to $v_i$ times the composition of $g$ with an affine map which takes $I_i$ to $[0,1]$).  The embedding $\Phi\co \R^{\infty}\to Ham(M\omega)$ in Theorem \ref{hammain} will then be given by \[ \Phi(v)=\phi_{f_v\circ H}^{1}.\]  Now the various $f_v\circ H$ all Poisson commute with each other (their Hamiltonian vector fields are all obtained by multiplying $X_H$ by a function), so since $f_{v+w}\circ H=f_v\circ H+f_w\circ H$ it is clear that $\Phi$ is a homomorphism.  Using this and the biinvariance of the Hofer metric we have \[ d(\Phi(v),\Phi(w))=\|\phi_{f_{v-w}\circ H}^{1}\|\leq \osc(v-w).\]  (The last inequality uses that $\max g=1$.)

On the other hand by Theorems \ref{mainpropham} (i) and \ref{mmcomp} we have \[ d(\Phi(v),\Phi(w))\geq \beta(\phi_{f_{v-w}\circ H}^{1};K)\geq \mm f_{v-w}\circ H-\min f_{v-w}\circ H\geq -\min f_{v-w}\circ H,\] where the last inequality uses that the properties of $g$ ensure that no $f_u\circ H$ has a negative local maximum.  At the same time Theorem \ref{mainpropham} (ii) shows that \[ \beta(\phi_{f_{v-w}\circ H}^{1};K)=\beta(\phi_{f_{w-v}\circ H}^{1};K)\geq -\min f_{w-v}\circ H.\]  Thus \[ 
d(\Phi(v),\Phi(w))\geq \max\{-\min f_{v-w}\circ H, -\min f_{w-v}\circ H\}=\max\{\max_i(w_i-v_i),\max_i(v_i-w_i)\}=\|v-w\|_{\ell_{\infty}}.\]
\end{proof}

\subsection{An ``energy-capacity inequality''}

If $U$ is an open subset of a symplectic manifold $(M,\omega)$, we denote by $Ham^c(U)$ the group of diffeomorphisms which may be generated by a function $G\co S^1\times M\to\mathbb{R}$ such that the support of $G$ is a compact subset of $S^1\times U$.

\begin{prop} \label{dispbeta} Let $(M,\omega)$ be a closed symplectic manifold, let $U\subset M$ be open, let $\psi\in Ham^c(U)$, and suppose that $\phi(\bar{U})\cap \bar{U}=\varnothing$ where $\phi\in Ham(M,\omega)$.  Then, for all $\frak{c}\in \pi_0(\mathcal{L}M)$ and all fields $K$ 
\[ \beta_{\frak{c}}(\psi\circ \phi;K)=\beta_{\frak{c}}(\phi;K).\]   In particular $\beta(\psi\circ\phi;K)=\beta(\phi;K)$.

\end{prop}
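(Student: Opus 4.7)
By the Lipschitz property in Theorem \ref{mainpropham}(iii) and the fact that the displacement condition $\phi(\bar U) \cap \bar U = \varnothing$ is $C^0$-open, I can approximate and assume $\phi$ is nondegenerate. Choose a Hamiltonian $G \co S^1 \times M \to \R$ generating $\psi$, supported in $S^1 \times U$, and set $\psi_s = \phi_G^s \in Ham^c(U)$ for $s \in [0,1]$; this gives a smooth path $s \mapsto \psi_s \circ \phi$ from $\phi$ to $\psi \circ \phi$ in $Ham(M,\omega)$. The plan is to show that along this entire path the boundary depth $\beta_{\frak{c}}(\cdot;K)$ is forced into a single countable subset of $\R$, yet is continuous, hence constant.

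The first key step is to analyze the fixed points of $\psi_s \circ \phi$. If $p = \psi_s(\phi(p))$ and $p \in \bar U$, then by the displacement hypothesis $\phi(p) \notin \bar U$, so $\psi_s(\phi(p)) = \phi(p)$, forcing $p = \phi(p) \in \bar U \cap \phi(\bar U) = \varnothing$; if instead $p \notin \bar U$ but $\phi(p) \in \bar U$, then $p = \psi_s(\phi(p)) \in \bar U$ since $\psi_s$ preserves $\bar U$. Hence every fixed point satisfies $p, \phi(p) \notin \bar U$, $\psi_s$ acts as the identity at $\phi(p)$, and $p = \phi(p)$; so $Fix(\psi_s \circ \phi) = Fix(\phi)$, and since $\psi_s$ is the identity in an open neighborhood of each such $p$, we also get $d_p(\psi_s \circ \phi) = d_p \phi$, rendering $\psi_s \circ \phi$ nondegenerate for every $s$. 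Next I would realize $\psi_s \circ \phi$ by a temporally concatenated Hamiltonian $K_s = \tilde H + \tilde G_s$, where $\tilde H$ reparametrizes a Hamiltonian $H$ for $\phi$ to support $[0,1/2]$ with $\phi_{\tilde H}^{1/2} = \phi$, and $\tilde G_s$ reparametrizes $sG$ to support $[1/2,1]$ with $\phi_{\tilde G_s}^1 = \psi_s$. Because $\tilde G_s$ is supported in $U$ and $p \notin \bar U$, the resulting $1$-periodic orbit $\gamma_s$ traces the $\tilde H$-orbit of $p$ on $[0,1/2]$ and then rests constantly at $p$ on $[1/2,1]$, lies in the same free homotopy class as the $H$-orbit $\gamma$ through $p$, and admits a natural capping $w_s$ (extend a capping $w$ of $\gamma$ trivially) of the same symplectic area. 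A direct computation then yields $\mathcal{A}_{K_s}([\gamma_s,w_s]) = \mathcal{A}_H([\gamma,w])$, and since the assignment $[\gamma,w] \mapsto [\gamma_s,w_s]$ is a bijection on orbit-capping pairs compatible with the $\Gamma_{\omega}$-action, $\mathcal{S}_{K_s}^{\frak{c}} = \mathcal{S}_H^{\frak{c}}$ for every $\frak{c} \in \pi_0(\mathcal{L}M)$ and every $s$.

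The conclusion now follows from continuity plus countability. Remark \ref{indivcham} applied to $K_s$ yields
\[\beta_{\frak{c}}(\psi_s \circ \phi; K) \in \{0\} \cup \{a - b \,|\, a, b \in \mathcal{S}_H^{\frak{c}}\}\]
for all $s$. The group $\Gamma_{\omega}$ is a finitely generated subgroup of $\R$ (since $H_2^T(M;\Z)$ is finitely generated), hence countable; the set on the right-hand side is then a finite union of cosets of $\Gamma_{\omega}$, hence countable. Meanwhile, by Theorem \ref{mainpropham}(iii) together with biinvariance of $d$, $|\beta_{\frak{c}}(\psi_{s_1}\phi;K) - \beta_{\frak{c}}(\psi_{s_2}\phi;K)| \leq \|\psi_{s_1}^{-1} \psi_{s_2}\|$, which is Lipschitz in $|s_1 - s_2|$. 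A continuous function from the connected interval $[0,1]$ into a countable subset of $\R$ must be constant, giving $\beta_{\frak{c}}(\psi \circ \phi; K) = \beta_{\frak{c}}(\phi; K)$, and the final statement $\beta(\psi\circ\phi;K)=\beta(\phi;K)$ follows by taking the supremum over $\frak{c}$. The main obstacle is the technical verification in the middle step that the concatenated $K_s$ faithfully realizes $\mathcal{S}_H^{\frak{c}}$ without contamination by $\psi$'s potentially large Hofer norm; once that is secured, the discrete-image continuity argument is essentially formal.
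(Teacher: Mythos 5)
Your argument is correct and is essentially identical to the paper's proof: both realize $\psi_s\circ\phi$ by a time-concatenated Hamiltonian supported in disjoint time intervals, observe that the fixed points (and hence the $1$-periodic orbits and their actions) are confined to $M\setminus\bar U$ and are therefore independent of $s$, and conclude from the spectrality of Remark \ref{indivcham} together with the Lipschitz continuity of $\beta_{\frak{c}}$ that the function $s\mapsto\beta_{\frak{c}}(\psi_s\circ\phi;K)$ is a continuous map into a countable ($s$-independent) set, hence constant. The only cosmetic difference is that you spell out the fixed-point case analysis and the Lipschitz bound in slightly more detail than the paper does.
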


\begin{remark} A statement essentially equivalent to Proposition \ref{dispbeta} was proven in \cite[Lemma 3.6]{U09} under the additional assumption that the Hamiltonian generating $\psi$ could be taken to be either everywhere nonnegative or everywhere nonpositive.
\end{remark}

\begin{proof}  The condition that $\phi(\bar{U})\cap \bar{U}=\varnothing$ is an open one on $\phi$, so by an approximation argument and the continuity of $\beta$ we may assume that $\phi$ is nondegenerate.  

By means of appropriate time reparametrizations, let $H\co S^1\times M\to \R$ be a Hamiltonian generating $\phi$ having support in $(0,1/2)\times M$, and let $G\co S^1\to M$ be a Hamiltonian generating $\psi$ having support in $(1/2,1)\times U$.  For $0\leq s\leq 1$ define \[ L_s(t,m)=\left\{\begin{array}{ll} H(t,m) & 0\leq t\leq 1/2 \\ sG(t,m) & 1/2\leq t\leq 1\end{array}\right.\]

Then where $\psi_s\in Ham^c(U)$ is the Hamiltonian diffeomorphism generated by $sG$, the Hamiltonian $L_s\co S^1\times M\to\R$ generates $\psi_s\circ\phi$.

Taking inspiration from the Ostrover trick \cite{Os03}, we observe that all fixed points of $\phi$ are contained in $M\setminus\bar{U}$ and that $\psi_s(\phi(m))=m\Leftrightarrow \phi(m)=m$.  In particular each $\psi_s\circ\phi$ coincides with $\phi$ on a neighborhood of their common fixed point set, and so the nondegeneracy of $\phi$ implies that of $\psi_s\circ \phi$ for all $s$.  Since $\psi_0$ is the identity and $\psi_1=\psi$, it suffices to show that the functions $s\mapsto \beta_{\frak{c}}(\psi_s\circ\phi;K)$ are constant.


  Now the  maps $\gamma\co S^1\to M$ such that $\dot{\gamma}(t)=X_{L_s}(t,\gamma(t))$ are precisely those of the form $\gamma_p(t)=\phi_{L_s}^{t}(p)$ where $p\in Fix(\psi_s\circ \phi)$.  But as noted earlier, if $p\in Fix(\psi_s\circ \phi)$ then $p\in Fix(\phi)$ and $p\notin\bar{U}$, and so $\phi_{L_s}^{t}(p)=\phi_{H}^{t}(p)$ for all $t$.  Thus the orbits $\gamma_p$ are independent of $s$; moreover since $H$ is supported in $(0,1/2)\times M$ and $G$ is supported in $(1/2,1)\times U$ while $p\notin U$ we have \[ L_s(t,\gamma_p(t))=H(t,\gamma_p(t)) \]  
for all $t$.

Now by Theorem \ref{spectrality} and Remark \ref{indivcham}, for all $s$ and $\frak{c}$ we have \[ \beta_{\frak{c}}(\phi_{L_s}^{1};K)\in \{0\}\cup \{s-t|s,t\in \mathcal{S}_{L_s}^{\frak{c}}\}.\]  Recall from the introduction that $\mathcal{S}_{L_s}^{\frak{c}}$ is defined as follows: to define the filtrations on the Floer complexes we have chosen a basepoint $\gamma_{\frak{c}}$ for $c$; then $\mathcal{S}_{L_s}^{\frak{c}}$ consists of the values \[ \mathcal{A}_{L_s}([\gamma_p,u])=-\int_{[0,1]\times S^1}u^*\omega+\int_{0}^{1}L_s(t,\gamma_p(t))dt \] where $p$ varies over fixed points such that $\gamma_p\in\frak{c}$ and $u\co [0,1]\times S^1\to M$ varies over homotopies from $\gamma_{\frak{c}}$ to $\gamma_p$.  For every $p$ let us fix a homotopy $u_p$ from $\gamma_{\frak{c}}$ to $\gamma_p$; for any other homotopy $u$ we will have \[ \mathcal{A}_{L_s}([\gamma_p,u])-\mathcal{A}_{L_s}([\gamma_p,u_p])\in \Gamma_{\frak{c}}\] where the countable group $\Gamma_{\frak{c}}$ was introduced near the start of Section \ref{hamsect}.  But \[ \mathcal{A}_{L_s}([\gamma_p,u_p])=-\int_{[0,1]\times S^1}u_{p}^{*}\omega+\int_{0}^{1}L_s(t,\gamma_p(t))=\mathcal{A}_H([\gamma_p,u_p])\] is independent of $s$ by our earlier remarks. 

Consequently the set $\{s-t|s,t\in\mathcal{S}_{L_s}^{\frak{c}}\}$ is equal to \[ \{\mathcal{A}_H([\gamma_p,u_p])-\mathcal{A}_H([\gamma_q,u_q])+g|\gamma_p,\gamma_q\in\frak{c},\,g\in \Gamma_{\frak{c}}\};\] this is a countable set which is independent of $s$.  So $s\mapsto \beta_k(L_s;K)$ is a continuous function to a countable set and therefore is constant.
\end{proof}

\begin{cor}\label{encap} For a closed symplectic manifold $(M,\omega)$ and an open set $U\subset M$ let \[ c_{\beta}(U;M)=\sup\{\beta(\phi;\mathbb{Q})|\phi\in Ham^c(U)\}.\]  Then \[ c_{\beta}(U;M)\leq 2e(U;M) \] where $e(U;M)$ is the displacement energy: $e(U;M)=\inf\{\|\phi\| | \phi(\bar{U})\cap \bar{U}=\varnothing\}$. 
\end{cor}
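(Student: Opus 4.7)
The plan is to combine Proposition \ref{dispbeta} directly with the Lipschitz property of $\beta$ (Theorem \ref{mainpropham}(iii)) and the vanishing at the identity (Theorem \ref{mainpropham}(iv)), packaged as Corollary \ref{hoferhambound}. The corollary is really just a one-line consequence of what is already on the table, provided we apply Proposition \ref{dispbeta} with the right pairing of its variables.

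Let $\alpha \in Ham^c(U)$ be arbitrary, and let $\eta \in Ham(M,\omega)$ be any Hamiltonian diffeomorphism with $\eta(\bar U) \cap \bar U = \varnothing$. Apply Proposition \ref{dispbeta} with its ``$\psi$'' taken to be $\alpha$ (which lies in $Ham^c(U)$) and its ``$\phi$'' taken to be $\eta$ (which displaces $\bar U$). The conclusion is
\[ \beta(\alpha \circ \eta; \mathbb{Q}) \;=\; \beta(\eta; \mathbb{Q}). \]

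Now I would invoke the Hofer-Lipschitz bound from Theorem \ref{mainpropham}(iii) twice. First, comparing $\alpha$ with $\alpha \circ \eta$ gives
\[ \bigl|\beta(\alpha;\mathbb{Q}) - \beta(\alpha \circ \eta;\mathbb{Q})\bigr| \;\leq\; \|\alpha^{-1}(\alpha \circ \eta)\| \;=\; \|\eta\|, \]
and second, Corollary \ref{hoferhambound} (equivalently, (iii) together with (iv)) gives $\beta(\eta;\mathbb{Q}) \leq \|\eta\|$. Chaining these with the equality from Proposition \ref{dispbeta} yields
\[ \beta(\alpha;\mathbb{Q}) \;\leq\; \beta(\alpha \circ \eta;\mathbb{Q}) + \|\eta\| \;=\; \beta(\eta;\mathbb{Q}) + \|\eta\| \;\leq\; 2\|\eta\|. \]

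To finish, I would take the infimum of $\|\eta\|$ over all $\eta \in Ham(M,\omega)$ displacing $\bar U$, obtaining $\beta(\alpha;\mathbb{Q}) \leq 2\, e(U;M)$, and then the supremum over $\alpha \in Ham^c(U)$, which produces $c_\beta(U;M) \leq 2\,e(U;M)$. There is no real obstacle here: the entire argument is a short bookkeeping exercise once Proposition \ref{dispbeta} is in hand, and indeed the factor of $2$ is precisely the price paid for passing through $\beta(\eta;\mathbb{Q})$ rather than having a direct equality $\beta(\alpha \circ \eta;\mathbb{Q}) = \beta(\alpha;\mathbb{Q})$.
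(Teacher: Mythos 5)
Your argument is correct and essentially reproduces the paper's own one-line proof: apply Proposition \ref{dispbeta} to write $\beta(\alpha\circ\eta;\mathbb{Q})=\beta(\eta;\mathbb{Q})$, then bound $\beta(\alpha;\mathbb{Q})\leq\beta(\alpha\circ\eta;\mathbb{Q})+\|\eta\|\leq 2\|\eta\|$ via Theorem \ref{mainpropham}(iii) and Corollary \ref{hoferhambound}, and take the infimum over displacing $\eta$ and supremum over $\alpha$. The bookkeeping and choice of where to apply the Lipschitz estimate match the paper exactly.
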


\begin{proof} If $\psi\in Ham^c(U)$ and $\phi(\bar{U})\cap\bar{U}=\varnothing$ we have by Proposition \ref{dispbeta} and Theorem \ref{mainpropham}\[ |\beta(\psi;\mathbb{Q})-\beta(\phi;\mathbb{Q})|=|\beta(\psi;\mathbb{Q})-\beta(\psi\circ\phi;\mathbb{Q})|\leq \|\phi\|. \]  Hence \begin{equation}\label{dispest}
\beta(\psi;\mathbb{Q})\leq \beta(\phi;\mathbb{Q})+\|\phi\|\leq 2\|\phi\|.
\end{equation} 
\end{proof}

\section{Lagrangian Floer theory} \label{lagsect}

Let $L$ be a closed connected Lagrangian submanifold of a tame $2n$-dimensional symplectic manifold $(M,\omega)$.  We will always assume that the Floer homology of $L$ can be defined; this requires either that $L$ be monotone with minimal Maslov number $\mu_L\geq 2$,\footnote{Strictly speaking, for the discussion below we need to use a slightly different convention than usual for the definition of $\mu_L$.  In general the Maslov index induces a homomorphism $\mu\co H_2(M,L;\Z)\to \Z$.  The most common definition has $\mu_L$ equal to the positive generator of the group generated by $\mu(A)$ for $A$ in the image of the Hurewicz map $\pi_2(M,L)\to H_2(M,L;\Z)$ (and $\infty$ if this group is trivial).  Since we consider Floer chain groups corresponding to all elements $\frak{c}\in \pi_0(\mathcal{P}(L,L))$ rather than to just the trivial class $\frak{c}_0$, we need to consider somewhat more general classes $A\in \pi_2(M,L;\Z)$, namely the relative homology classes of cylinders having both boundary components on $L$.  Alternately, we could use just the trivial class $\frak{c}_0$ in the definition of the Floer complex, in which case the version of $\mu_L$ based on $\pi_2(M,L)$ would be appropriate, but then we would need to assume that the resulting Floer homology is nontrivial in order to obtain a version of Proposition \ref{newinvtlag} and hence to make the boundary depth well-defined as a function on $\mathcal{L}(L)$.} or else that $L$ be oriented, relatively spin and weakly unobstructed after bulk deformation in the sense of \cite[Sections 3.6, 3.8.5]{FOOO09}.  The fields $K$ over which Floer theory can be defined depend somewhat sensitively on the hypotheses we put on $L$: if $L$ is not relatively spin but is monotone with $\mu_L\geq 2$ then we need to work over a field of characteristic $2$; on the other hand, at least if the ambient manifold is not spherically positive, then the construction of Floer theory for relatively spin non-monotone Lagrangians as in \cite{FOOO09} requires one to work over a field of characteristic $0$.  Throughout this discussion we will assume that $K$ is a field satisfying the above requirements.

In the case in which one uses the machinery of \cite{FOOO09}, one needs to specify a relative spin structure on $L$ as well as a suitable bounding cochain in order to develop the theory; we will use the notation $\hat{L}$ to denote $L$ equipped with whatever such extra structure may be required in the case at hand (note that that the Floer homology, to be denoted by $HF(\hat{L},\hat{L})$, may depend on the extra structure).

Given this input, let \[ \mathcal{P}(L,L)=\{\gamma\in C^1([0,1], M)|\gamma(0),\gamma(1)\in L\}.\]  Then $\pi_0(\mathcal{P}(L,L))$ contains a distinguished component $\frak{c}_0$ which contains all constant paths at points of $L$ (and, more generally, all paths contained entirely within $L$).  As in the Hamiltonian case, let us choose for each $\frak{c}\in \pi_0(\mathcal{P}(L,L))$ a basepoint $\gamma_{\frak{c}}$, and also a symplectic trivialization $\tau_{\frak{c}}$ of $\gamma_{\frak{c}}^{*}TM$ under which, for $i=0,1$, $T_{\gamma(i)}L$ is identified with $\{i\}\times \R^n$.  Consider pairs $(\gamma,v)$ where $\gamma\in\frak{c}$ and $v\co [0,1]\times [0,1]\to M$ is $C^1$ and  obeys $v(s,i)\in L$ for $i=0,1$,  $v(0,t)=\gamma_{\frak{c}}(t)$ and $v(1,t)=\gamma(t)$.  Up to homotopy there is a unique symplectic trivialization of $v^{*}TM$ which extends $\tau_{\frak{c}}$ and, for $i=0,1$, identifies each $T_{v(s,i)}L$ with $\{(s,i)\}\times \R^n$; in particular $\tau_{\frak{c}}$ and $v$ induce a symplectic trivialization of $\gamma^{*}TM$ which identifies $T_{\gamma(i)}L$ with $\{i\}\times \R^n$.  Declare two such pairs $(\gamma,v)$ and $(\gamma',v')$ to be equivalent if and only if: \begin{itemize}\item[(i)] $\gamma=\gamma'$, and  \item[(ii)] $\int_{[0,1]^2}v^*\omega=\int_{[0,1]^2}v'^*\omega$.
\end{itemize}

For $\frak{c}\in \pi_0(\mathcal{P}(L,L))$ we let $\widetilde{\frak{c}}$ be the set of equivalence classes of such pairs $(\gamma,v)$ with $\gamma\in \frak{c}$ under the above equivalence relation, and define \[ \widetilde{\mathcal{P}}(L,L)=\cup_{\frak{c}\in\pi_0(\mathcal{P}(L,L))}\widetilde{\frak{c}}.\]  We then have a well-defined function \begin{align*} \mathcal{A}_H\co \widetilde{\mathcal{P}}(L,L)&\to\R \\ [\gamma,v]&\mapsto -\int_{[0,1]^{2}}v^*\omega+\int_{0}^{1}H(t,\gamma(t))dt. \end{align*}

The critical points of $\mathcal{A}_H$ are, as in the Hamiltonian case, those $[\gamma,v]$ with $\dot{\gamma}(t)=X_{H}(t,\gamma(t))$.  They thus correspond to time-one flowlines $\gamma$ of the (time-dependent) vector field $X_H$ with the property that $\gamma(0)\in L\cap (\phi_{H}^{1})^{-1}(L)$.  For $\frak{c}\in \pi_0(\mathcal{P}(L,L))$ let $\mathcal{O}_{\frak{c},L,H}$ denote the set of such flowlines $\gamma$ which belong to the homotopy class $\frak{c}$. Assume that the pair $(L,H)$ is nondegenerate in the sense that $L$ is transverse to $(\phi_{H}^{1})^{-1}(L)$; in particular $\cup_{\frak{c}}\mathcal{O}_{\frak{c},L,H}$ will then be finite.  

For $\frak{c}\in \pi_0(\mathcal{P}(L,L))$, a loop $\eta\co S^1\to\frak{c}$ gives rise in obvious fashion to a cylinder $u_{\eta}\co S^1\times [0,1]\to M$ whose boundary is mapped to $L$.  We then obtain a Maslov index $\mu_{\eta}$ by choosing an arbitrary symplectic trivialization of $u_{\eta}^{*}TM$ and taking the difference of the Maslov indices of the loops of Lagrangian subspaces given by $u|_{S^1\times\{i\}}^{*}TL$ in terms of the trivialization (of course the difference is independent of the trivialization).  Let $N_{\frak{c}}$ be the nonnegative generator of the subgroup of $\Z$ generated by the values $\mu_{\eta}$ for loops $\eta$ in $\frak{c}$.  Also, let $\Gamma_{\frak{c}}$ be the subgroup of $\R$ generated by the numbers $\int_{S^1\times[0,1]}u_{\eta}^{*}\omega$ for loops $\eta$ in $\frak{c}$.

We then have another well-defined function \[ \mu\co  \mathcal{O}_{\frak{c},L,H}\to \Z/N_{\frak{c}}\Z   \]  given by, for any $\gamma\in \mathcal{O}_{\frak{c},L,H}$, choosing a homotopy $v$ from $\gamma_{\frak{c}}$ to $\gamma$ through paths in $\mathcal{P}(L,L)$ and letting $\mu(\gamma)$ be, modulo $N_{\frak{c}}$, the Maslov--Viterbo index \cite{V} of   $(\gamma,v)$ \emph{i.e.},  the Maslov index of a loop of Lagrangian subspaces of $\R^{2n}$ determined by the aforementioned trivialization of $v^{*}TM$ along all but the right side of $\partial[0,1]^{2}$, and given over the right side by the path $(\phi_{H}^{t})_*T_{\gamma(0)}L$ (with appropriate oppositely-oriented $90$-degree rotations at $(s,t)=(1,0)$ and $(s,t)=(1,1)$ in order to obtain a continuous path).

As in the Hamiltonian case, we let $S_{(M,L)}=\cup_{\frak{c}\in \pi_0(\mathcal{P}(L,L))}\{\frak{c}\}\times \Z/N_{\frak{c}}\Z $, endowed with the action of $\mathbb{Z}$ given by addition in the second factor, and for $K$ an appropriate field, $(\frak{c},k)\in S_{(M,L)}$ and $\lambda\in\R$ we let \[ Crit_{\frak{c},k}^{\lambda}(\mathcal{A}_H)= \{[\gamma,v]|\gamma\in \mathcal{O}_{\frak{c},L,H},\,\mathcal{A}_H([\gamma,v])\leq \lambda,\,\mu([\gamma,v])=k\}\]
and \[ CF^{\lambda}_{\frak{c},k}(\hat{L}:H;K)=\left\{\left.\sum_{[\gamma,v]\in Crit_{\frak{c},k}^{\lambda}(\mathcal{A}_H)}a_{[\gamma,v]}[\gamma,v]\right|a_{[\gamma,v]}\in K,\,(\forall C\in\R)(\#\{[\gamma,v]|a_{[\gamma,v]}\neq 0,\,\mathcal{A}_H([\gamma,v])\geq C\}<\infty) \right\}.\]

Also let $CF_{\frak{c},k}(\hat{L}:H;K)=\cup_{\lambda}CF^{\lambda}_{\frak{c},k}(\hat{L}:H;K)$, and $CF_{\frak{c}}(\hat{L}:H;K)=\oplus_k CF_{\frak{c},k}(\hat{L}:H;K)$.  Under various hypotheses on the field $K$ and on $L$ (and on what we denote by $\hat{L}$, \emph{i.e.} on $L$ equipped with a relative spin structure and/or a bounding cochain as necessary) there are constructions of the Floer boundary operator $\partial_{J,H}$ on $CF(\hat{L}:H;K)$ in \cite{F88}, \cite{Oh93}, \cite{HL}, \cite{FOOO09}.  In all cases, these constructions (or straightforward modifications of them) give $CF(\hat{L}:H;K)$ the structure of a $S_{(M,L)}$-graded, $\R$-filtered complex over $K$ and each individual $CF_{\frak{c}}(\hat{L}:H;K)$ the structure of a $\Z/N_{\frak{c}}\Z$-graded, $\R$-filtered complex over $K$.\footnote{In the setup of \cite{FOOO09} there are certain possible bounding cochains for which this statement will not precisely be true due to grading-related issues; it will become true if we reduce the grading of $CF_{\frak{c}}(\hat{L}:H,K)$ from $\mathbb{Z}/N_{\frak{c}}\Z$ to $\Z/2\Z$ (as is possible, since in the setup of \cite{FOOO09} $L$ will be oriented and hence $N_{\frak{c}}$ will be even).   For ease of exposition we will ignore this distinction, which does not affect the ideas of any of the proofs to come.} Each $CF_{\frak{c},k}(\hat{L}:H;K)$ is a finite-dimensional vector space over $\Lambda^{K,\Gamma_{\frak{c}}}$, as is the full chain complex $CF_{\frak{c}}(\hat{L}:H;K)$, much like the situation in the Hamiltonian case.


Standard constructions of continuation maps that can be found, e.g., in \cite[Section 3.3]{Le08} and \cite[Section 5]{FOOO11}, can be adapted to prove the following analogue of Proposition \ref{revcont}.

\begin{prop}\label{revcontlag}  Given two Hamiltonians $H_-,H_+$ and appropriate auxiliary data used to construct the complexes $(CF(\hat{L}:H_{\pm};K),\partial_{J_{\pm},H_{\pm}})$, there exist:\begin{itemize} \item an $\mathcal{E}^+(H_+-H_-)$-morphism $\Phi\co (CF(\hat{L}:H_-;K) ,\partial_{J_-,H_-})\to (CF(\hat{L}:H_+;K),\partial_{J_+,H_+})$ \item an $\mathcal{E}^-(H_+-H_-)$-morphism $\Psi\co (CF(\hat{L}:H_+;K),\partial_{J_+,H_+})\to (CF(\hat{L}:H_-;K),\partial_{J_-,H_-})$
\item $\osc(H_+-H_-)$-homotopies $\mathcal{K}_{\pm}\co CF(\hat{L}:H_{\pm};K)\to CF(\hat{L}:H_{\pm};K)$ from $\Phi\circ \Psi$ and $\Psi\circ\Phi$ to the respective identities.\end{itemize}
In particular the Floer complexes  $(CF(\hat{L}:H_-;K),\partial_{J_-,H_-})$ and $(CF(\hat{L}:H_+;K),\partial_{J_+,H_+})$ are $\osc(H_+-H_-)$-quasiequivalent.
\end{prop}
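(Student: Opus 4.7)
The plan is to mimic the Hamiltonian arguments of \cite[Propositions 2.1--2.2]{U09} in the Lagrangian setting. Fix once and for all a smooth monotone cutoff $\beta\co \R \to [0,1]$ with $\beta(s)=0$ for $s\leq 0$ and $\beta(s)=1$ for $s\geq 1$, and form the linear interpolation $H^{\to}_s(t,x)=(1-\beta(s))H_-(t,x)+\beta(s)H_+(t,x)$ together with its reverse $H^{\leftarrow}_s=H^{\to}_{-s}$. The morphism $\Phi$ is defined by a (virtually perturbed) signed count of finite-energy solutions $u\co \R\times[0,1]\to M$ with boundary $u(s,0),u(s,1)\in L$, satisfying the continuation Floer equation determined by $H^{\to}_s$ and an appropriate $s$-family of almost complex structures, and asymptotic as $s\to\pm\infty$ to elements of $\mathcal{O}_{\frak{c},L,H_\pm}$. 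In the monotone case this count is well-defined as in \cite{Oh93}; in the setting of \cite{FOOO09} one uses the bounding-cochain formalism, as carried out for continuation maps in \cite[Section 3.3]{Le08} and \cite[Section 5]{FOOO11}. The morphism $\Psi$ is defined analogously using $H^{\leftarrow}_s$.

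The key filtration bound comes from the standard energy identity: if a generator $[\gamma_-,v]$ contributes a term at $[\gamma_+,v\#u]$ in $\Phi$, then
\[ \mathcal{A}_{H_+}([\gamma_+,v\#u])-\mathcal{A}_{H_-}([\gamma_-,v])=-\int_{\R\times[0,1]}\left|\partial_s u\right|_{J_s}^{2}\,ds\,dt+\int_{-\infty}^{\infty}\!\int_{0}^{1}\beta'(s)\bigl(H_+-H_-\bigr)(t,u(s,t))\,dt\,ds. \]
The first term is nonpositive, and since $\int_{\R}\beta'(s)\,ds=1$, the second is bounded above by $\int_{0}^{1}\max_{M}(H_+-H_-)(t,\cdot)\,dt=\mathcal{E}^+(H_+-H_-)$. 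This exhibits $\Phi$ as an $\mathcal{E}^+(H_+-H_-)$-morphism. The same identity applied to $H^{\leftarrow}_s$ (where $\beta'$ is replaced by its sign-reversed analogue) yields the bound $\mathcal{E}^-(H_+-H_-)$ for $\Psi$.

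For the chain homotopies $\mathcal{K}_\pm$, I would consider the standard one-parameter family of $s$-dependent Hamiltonians $(H^{R}_s)_{R\geq 0}$ on $\R\times S^1\times M$ that interpolates between a long concatenation $H^{\to}\#_{R}H^{\leftarrow}$ (which, as $R\to\infty$, breaks into the pair of homotopies whose composition defines $\Psi\circ\Phi$) and the constant family at $H_-$ (whose moduli space produces the identity). A count of rigid solutions to the parametrized Floer equation, paired with the codimension-one boundary analysis of the one-dimensional moduli spaces, gives $\Psi\circ\Phi-\mathrm{Id}=\partial\mathcal{K}_-+\mathcal{K}_-\partial$; the symmetric construction on the $H_+$ side yields $\mathcal{K}_+$. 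Applying the energy identity to the combined homotopy bounds the filtration shift of each $\mathcal{K}_\pm$ by $\mathcal{E}^+(H_+-H_-)+\mathcal{E}^-(H_+-H_-)=\osc(H_+-H_-)$. The last sentence of the proposition is then immediate from the definition of $\osc(H_+-H_-)$-quasiequivalence with $c_1=\mathcal{E}^+$, $c_2=\mathcal{E}^-$.

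The main obstacle is really a bookkeeping one: ensuring that the relative spin structure, bounding cochain, Kuranishi-type perturbations, and coherent orientations chosen to define the individual complexes $CF(\hat{L}:H_\pm;K)$ extend consistently across the continuation and parametrized moduli spaces so that $\Phi,\Psi,\mathcal{K}_\pm$ are defined and satisfy the expected algebraic identities. These compatibility issues have been worked out in the literature cited above. Since the filtration estimates come purely from the geometric energy identity and are insensitive to the algebraic data used to extract the counts, once the maps exist as chain maps/homotopies at all, the stated bounds follow exactly as in the Hamiltonian case.
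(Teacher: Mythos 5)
Your proposal is correct and matches the paper's intended argument: the paper simply cites the standard continuation-map constructions in \cite[Section 3.3]{Le08} and \cite[Section 5]{FOAO11} (together with the Hamiltonian analogue \cite[Propositions 2.1--2.2]{U09}) and leaves the details unwritten, and you have reconstructed exactly those details — linear interpolation of Hamiltonians, the energy identity giving the $\mathcal{E}^{\pm}$ bounds, and the parametrized family giving the $\osc$-homotopy.
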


Note that by Proposition \ref{quasicont} the special case in which $H_-=H_+$ is enough to imply that the boundary depth of the complex $(CF(\hat{L}:H;K),\partial_{J,H})$ is independent of the path of almost complex structures $J$ used to define it (indeed the methods in \cite{U09} that are used to prove Proposition \ref{oldinvt} can straightforwardly be adapted to prove that the filtered chain isomorphism type of $CF(\hat{L}:H;K)$ is independent of $J$, though we will not need to directly appeal to this fact).

We will also require the following analogue of Proposition \ref{newinvt}, in order to show that the boundary depth  depends only on the Lagrangian submanifold $(\phi_{H}^{1})^{-1}(L)$ and not on $H$.  In the weakly exact case a closely related argument appears in \cite[Section 2.1.3]{BC}.

\begin{prop}\label{newinvtlag} Let $G,H\co [0,1]\times M\to\R$ be two normalized Hamiltonians with the property that $(\phi_{G}^{1})^{-1}(L)=(\phi_{H}^{1})^{-1}(L)$. Then for appropriate paths $J_1,J_2$ of almost complex structures there is a shift-isomorphism $\Psi_*\co (CF(\hat{L}:G;K),\partial_{J_1,G})\to (CF(\hat{L}:H;K),\partial_{J_2,H})$.  In the case that the Floer homology $HF(\hat{L},\hat{L})$ is nonzero this shift-isomorphism restricts for all $\frak{c}\in\pi_0(\mathcal{P}(L,L))$ to a shift-isomorphism $CF_{\frak{c}}(\hat{L}:G;K)\to CF_{\frak{c}}(\hat{L}:H;K)$.
\end{prop}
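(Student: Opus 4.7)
The plan is to adapt the proof of Proposition \ref{newinvt} to the Lagrangian boundary conditions. Consider the Hamiltonian isotopy $\psi_t = \phi_{H}^{t}\circ (\phi_{G}^{t})^{-1}$, which satisfies $\psi_0=1_M$ and $\psi_1(L)=L$---the latter because $\psi_1(L)=\phi_{H}^{1}((\phi_{G}^{1})^{-1}(L))=\phi_{H}^{1}((\phi_{H}^{1})^{-1}(L))=L$ by hypothesis. Because $\psi_1$ preserves $L$ setwise, the assignment $\gamma\mapsto \psi\gamma$ with $(\psi\gamma)(t)=\psi_t(\gamma(t))$ carries $\mathcal{P}(L,L)$ into itself and induces a self-bijection $\psi_*$ of $\pi_0(\mathcal{P}(L,L))$. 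Letting $\bar H$ denote the normalized Hamiltonian generating $\psi_t$, one checks $H(t,m)=\bar H(t,m)+G(t,\psi_t^{-1}(m))$, so that $\psi$ plays the role of the loop in Proposition \ref{newinvt} while $H$ plays the role of $H^\psi$ there.

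For each $\frak c$ I would fix a homotopy $v_{\frak c}\co [0,1]^{2}\to M$ with $v_{\frak c}(s,i)\in L$ for $i=0,1$, running from $\gamma_{\psi_*\frak c}$ to $\psi\gamma_{\frak c}$, and set $\Psi_*[\gamma,v]=[\psi\gamma,\,v_{\frak c}\#\psi v]$ where $(\psi v)(s,t)=\psi_t(v(s,t))$ and $\#$ denotes concatenation in $s$. Repeating the computation at the heart of Proposition \ref{newinvt} verbatim---it uses only that $\psi_t$ is a symplectomorphism generated by $\bar H$ and is insensitive to the presence of the boundary conditions---yields
\[
\mathcal{A}_H(\Psi_*[\gamma,v])-\mathcal{A}_G([\gamma,v])=-\int_{[0,1]^{2}}v_{\frak c}^{*}\omega+\int_0^1\bar H(t,\psi_t(\gamma_{\frak c}(t)))\,dt=:I_{\frak c},
\]
a constant depending only on $\frak c$. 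Comparing the symplectic trivialization of $(\psi\gamma_{\frak c})^*TM$ obtained by pushing $\tau_{\frak c}$ forward via $\psi_{t*}$ with the one obtained by extending $\tau_{\frak c}$ across $v_{\frak c}$ similarly produces a constant Maslov--Viterbo shift $\mu_{\frak c}\in \Z/N_{\psi_*\frak c}\Z$. Hence $\Psi_*$ bijects $CF^{\lambda}_{\frak c,k}(\hat L:G;K)$ onto $CF^{\lambda+I_{\frak c}}_{\psi_*\frak c,\,k+\mu_{\frak c}}(\hat L:H;K)$.

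To promote $\Psi_*$ to a shift-isomorphism in the sense of Definition \ref{shiftiso} one needs it to intertwine the two Floer differentials. As in Proposition \ref{newinvt}, this is achieved by Seidel's trick: if $\partial_{J_1,G}$ is constructed using a path $j_t$ of almost complex structures (together with auxiliary coherent orientations and abstract perturbations), take the pushforward $j_t^{\psi}=\psi_{t*}j_t\psi_{t*}^{-1}$, with correspondingly pushed-forward orientations and perturbations, as the data defining $\partial_{J_2,H}$. The correspondence $u\mapsto\psi u$ with $(\psi u)(s,t)=\psi_t(u(s,t))$ then bijects the moduli spaces of index-$1$ pseudoholomorphic strips underlying the two differentials, so $\Psi_*$ is a chain map and hence a shift-isomorphism.

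The final claim---that when $HF(\hat L,\hat L)\neq 0$ the permutation $\psi_*$ of $\pi_0(\mathcal{P}(L,L))$ is the identity, so that $\Psi_*$ restricts componentwise---is the principal obstacle and the only place where the nonvanishing hypothesis is essential. The strategy mirrors the end of the proof of Proposition \ref{newinvt}: first reduce to showing $\psi_*\frak c_0=\frak c_0$ via the combinatorial ``split the path in half and replace one half by a contractible one'' argument used there (which adapts verbatim to paths with endpoints on $L$), and then handle the base case using a Lagrangian Floer analog of the Arnold conjecture. Concretely, the nonvanishing of $HF(\hat L,\hat L)$ combined with PSS-type identifications (or the analogous statement in the setup of \cite{FOOO09}) forces $HF_{\frak c_0}(\hat L,\hat L)\neq 0$, and since the shift-isomorphism already constructed gives $HF_{\frak c_0}(\hat L:G;K)\cong HF_{\psi_*\frak c_0}(\hat L:H;K)$, a characterization of $\frak c_0$ as the unique class supporting geometrically short chords arising from constant paths then pins down $\psi_*\frak c_0=\frak c_0$.
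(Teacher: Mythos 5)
Your proposal follows the paper's proof essentially step for step: the same $\psi_t=\phi_H^t\circ(\phi_G^t)^{-1}$ with $\psi_1(L)=L$, the same lift $\widetilde\Psi$ to the covering space with the same action-shift and Maslov-shift computations (your $\bar H$ is the paper's $F$ and your formula $H(t,m)=\bar H(t,m)+G(t,\psi_t^{-1}(m))$ is equivalent to the paper's $G(t,m)=(H-F)(t,\psi_t(m))$), Seidel's pushforward of almost complex structures to obtain the chain-map property, and the same two-step argument for the final statement (pin down $\Psi_*\frak c_0=\frak c_0$, then propagate to all $\frak c$ via the path-splitting trick). One small quibble: your invocation of \emph{PSS-type identifications} to deduce $HF_{\frak c_0}(\hat L,\hat L)\neq 0$ is not quite the right tool here---in the general FOOO setting such an identification need not hold, and where it does hold it would render the hypothesis $HF(\hat L,\hat L)\neq 0$ superfluous; what actually does the work (and what you also invoke) is the purely geometric observation that for $C^1$-small Hamiltonians every chord is contained in a Darboux--Weinstein neighborhood of $L$ and hence lies in $\frak c_0$, so $HF_{\frak c}(\hat L,\hat L)=0$ for $\frak c\neq\frak c_0$ and $\frak c_0$ is distinguished as the unique class capable of carrying nonzero Floer homology.
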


\begin{proof} Define $\psi_t\co M\to M$ by \[ \psi_t=\phi_{H}^{t}\circ(\phi_{G}^{t})^{-1}.\]  The hypothesis on $H$ and $G$ shows that \begin{equation}\label{psill} \psi_1(L)=L.   \end{equation}  Let $F\co [0,1]\times M\to\R$ be the normalized Hamiltonian generating the path $\{\psi_t\}_{t\in[0,1]}$.  Since $\phi_{G}^{t}=\psi_{t}^{-1}\circ\phi_{H}^{t}$ we have \begin{equation}\label{fgh} G(t,m)=(H-F)(t,\psi_t(m)).\end{equation}

Define \begin{align*} \Psi\co \mathcal{P}(L,L)&\to \mathcal{P}(L,L) \\
(\Psi\gamma)(t)&=\psi_t(\gamma(t)); \end{align*}   this is well-defined by (\ref{psill}).  Denote the induced action on $\pi_0(\mathcal{P}(L,L))$ by $\Psi_*$.  We define a lift $\widetilde{\Psi}\co \widetilde{\mathcal{P}}(L,L)\to \widetilde{\mathcal{P}}(L,L)$ as follows.  For each $\frak{c}\in \mathcal{P}(L,L)$ we have chosen a basepoint $\gamma_{\frak{c}}\in\frak{c}$; we now choose additionally a homotopy $v_{\frak{c}}\co [0,1]^2\to M$ from $\gamma_{\frak{c}}$ to $\Psi\gamma_{\Psi_{*}^{-1}\frak{c}}$.  Now for $[\gamma,v]\in\widetilde{c}\subset \widetilde{\mathcal{P}}(L,L)$ define \[ \widetilde{\Psi}([\gamma, v])=[\Psi\gamma,v_{\Psi_*\frak{c}}\#\Psi v] \] (where for the homotopy $v\co [0,1]^2\to M$ from $\gamma_{\frak{c}}$ to $\gamma$ we define $(\Psi v)(s,t)=\psi_t(v(s,t))$, and where $\#$ denotes the obvious gluing operation).  Using (\ref{fgh}), a computation very similar to that in the proof of Proposition \ref{newinvt} shows that \[ 
\mathcal{A}_H(\widetilde{\Psi}[\gamma,v])=\mathcal{A}_G([\gamma,v])+\mathcal{A}_F([\Psi\gamma_{\frak{c}},v_{\Psi_*\frak{c}}]).\]
Moreover we have \begin{align*} \widetilde{\Psi}[\gamma,v]\in Crit(\mathcal{A}_H)& \Leftrightarrow \psi_1(\gamma(1))=\phi_{H}^{1}(\gamma(0)) \Leftrightarrow \gamma(1)=\phi_{G}^{1}(\gamma(0))\Leftrightarrow [\gamma,v]\in Crit(\mathcal{A}_G).\end{align*}  Thus for any $\frak{c}\in \pi_0(\mathcal{P}(L,L))$ and any $\lambda\in \R$, where we denote $\lambda_{\frak{c}}=\mathcal{A}_F([\Psi\gamma_{\frak{c}},v_{\Psi_*\frak{c}}])$, $\Psi$ induces a linear map $\Psi_*\co CF_{\frak{c}}^{\lambda}(\hat{L}:G;K)\to CF_{\Psi_*\frak{c}}^{\lambda+\lambda_{\frak{c}}}(\hat{L}:H;K).$  Moreover, given a path of almost complex structures $J_1(t)$, if we set $J_2(t)=\psi_{t*}J_1(t)\psi_{t*}^{-1}$ then just as in the proof of Proposition \ref{newinvt} we will have, for $u\co \R\times [0,1]\to M$, \[ \delbar_{J_2,H}(\Psi u)=\Psi_*\delbar_{J_1,G}u \] in obvious notation, as  a consequence of which $\Psi_*$ is an isomorphism of chain complexes.  Consideration of gradings then shows that $\Psi_*$ is a shift-isomorphism just as in the proof of Proposition \ref{newinvt}.

It remains to establish the final sentence of the proposition.  Assume then that $HF(\hat{L},\hat{L})\neq 0$. Clearly the proposition will follow if we show that $\Psi_*\frak{c}=\frak{c}$ for all $\frak{c}\in\pi_0(\mathcal{P}(L,L))$. Now in general $HF(\hat{L},\hat{L})$ splits as a direct sum $\oplus_{\frak{c}}HF_{\frak{c}}(\hat{L},\hat{L})$ where, for a generic Hamiltonian $H'$, $HF_{\frak{c}}(\hat{L},\hat{L})$ is the homology of the subcomplex $CF_{\frak{c}}(\hat{L}:H';K)$ generated by those $[\gamma,u]$ with $\gamma\in\frak{c}$ and $\dot{\gamma}(t)=X_{H'}(t,\gamma(t))$ (and of course this homology is independent of $H'$).     If we choose $H'$ to be $C^1$-small, all such $\gamma$ will be contained within a Darboux--Weinstein neighborhood of $L$ and so will be homotopic rel endpoints to a path entirely contained in $L$ (by the deformation retraction that shrinks the fibers of the Darboux--Weinstein neighborhood).  Thus where $\frak{c}_0\in \pi_0(\mathcal{P}(L,L))$ is the trivial class, all generators $[\gamma,v]$ for $CF(L:H';K)$ have $\gamma\in \frak{c}_0$.  This shows that we have $HF_{\frak{c}}(\hat{L},\hat{L})=0$ for all $\frak{c}\neq \frak{c}_0$.  So if $HF(\hat{L},\hat{L})\neq 0$ then $\frak{c}_0$ is distinguished as the unique class $\frak{c}$ in $\pi_0(\hat{L},\hat{L})$ such that $HF_{\frak{c}}(\hat{L},\hat{L})\neq 0$.  But on homology the shift-isomorphism that we have constructed above sends $HF_{\frak{c}_0}(\hat{L},\hat{L})$ to $HF_{\Psi_*\frak{c}_0}(\hat{L},\hat{L})$, so we must have $\Psi_*\frak{c}_0=\frak{c}_0$.

With this established one sees similarly to the proof of Proposition \ref{newinvt} that $\Psi_*\frak{c}=\frak{c}$ for all $\frak{c}$.  Indeed, since $\Psi_*\frak{c}_0=\frak{c}_0$, one finds that if $\gamma\in \mathcal{P}(L,L)$ then both $\Psi\gamma$ and $\gamma$ can be joined by homotopies (within $\mathcal{P}(L,L)$) to the path given by \[ t\mapsto\left\{\begin{array}{ll}\gamma(2t) & 0\leq t\leq 1/2 \\ \psi_{2t-1}(\gamma(1)) & 1/2\leq t\leq 1\end{array}\right.
\] Thus (when $HF(\hat{L},\hat{L})\neq 0$) we have $\Psi_*\frak{c}=\frak{c}$ for all $\frak{c}$.  With this established it is clear from the construction that $\Psi_*$ restricts as a shift-isomorphism  $CF_{\frak{c}}(\hat{L}:G;K)\to CF_{\frak{c}}(\hat{L}:H;K)$.
\end{proof}

\begin{remark}
The requirement that $HF(\hat{L},\hat{L})\neq 0$ in the last statement of Proposition \ref{newinvtlag} is necessary.  As is made clear in the proof, the last statement holds provided that, where $\psi_t\co M\to M$ is a Hamiltonian isotopy from the identity to a Hamiltonian diffeomorphism $\psi_1$ such that $\psi_1(L)=L$, and where we put $(\Psi\gamma)(t)=\psi_t(\gamma(t))$ for $\gamma\in \mathcal{P}(L,L)$, the map $\Psi$ acts as the identity on $\pi_0(\mathcal{P}(L,L))$.  As we argued above, this condition does hold when $HF(\hat{L},\hat{L})\neq 0$.  However, if we consider for instance the case in which $M=\R^2/\Z^2$ with its standard symplectic structure, and where $L$ is a small  contractible circle around $(0,0)$, it is easy to construct a Hamiltonian isotopy $\psi_t\co M\to M$ which restricts to $L$ as translation by $(t,0)$ for each $t\in [0,1]$.  In this case the associated map $\Psi\co \mathcal{P}(L,L)\to \mathcal{P}(L,L)$ obviously does not act as the identity on $\pi_0$.
\end{remark}

Consequently, whenever $L'\in\mathcal{L}(L)$ and $L\pitchfork L'$ we may \textbf{define $\beta_{\hat{L}}(L';K)$ to be the boundary depth of the chain complex $CF(\hat{L}:H;K)$ for any Hamiltonian $H\co [0,1]\times M\to\R$ with the property that $L'=(\phi_{H}^{1})^{-1}(L)$}.  Propositions \ref{revcontlag} and \ref{newinvtlag} readily imply that this quantity is independent of the choice of such $H$ (and of the almost complex structures and abstract perturbations involved in the construction of the Floer complex).  In case $L'=(\phi_{H}^{1})^{-1}(L)$ is not transverse to $L$, it is easy to see from the special case of Theorem \ref{mainproplag} (i) involving Lagrangians transverse to $L$ (to be proven presently) that we obtain a well-defined value $\beta_{\hat{L}}(L';K)$ as the limit of $\beta_{\hat{L}}((\phi_{H_n}^{1})^{-1}(L);K)$ for any $H_n$ with $H_n\to H$ in $C^2$ and 
$(\phi_{H_n}^{1})^{-1}(L)\pitchfork L$.

Also, in the case that $HF(\hat{L},\hat{L})\neq 0$, if $\frak{c}\in \pi_0(\mathcal{P}(L,L))$ and $L'\in \mathcal{L}(L)$ we denote by $\beta_{\hat{L},\frak{c}}(L';K)$ the boundary depth of the chain complex $CF_{\frak{c}}(\hat{L}:H;K)$ where $L'=(\phi_{H}^{1})^{-1}(L)$ assuming that $L'\pitchfork L$; again this depends only on $L$ and not on $H$ by Propositions \ref{revcontlag} and \ref{newinvtlag} because of the assumption that $HF(\hat{L},\hat{L})\neq 0$.  Of course we have $\beta_{\hat{L},\frak{c}}(L';K)\leq \beta_{\hat{L}}(L';K)$ when both are defined. If $L'$ and $L$ are not transverse we again define $\beta_{\hat{L},\frak{c}}(L';K)$ by continuity.

\begin{proof}[Proof of Theorem \ref{mainproplag} (i) $\mathrm{(}|\beta_{\hat{L}}(L_1;K)-\beta_{\hat{L}}(L_2;K)|\leq \delta(L_1,L_2)\mathrm{)}$] As indicated above we must first prove this in the case that $L_1\pitchfork L$ and $L_2\pitchfork L$ in order to even justify the definition of $\beta_{\hat{L}}(L';K)$ when $L'$ and $L$ are not transverse.  After we do this the general case of Theorem \ref{mainproplag} (i) will clearly follow by continuity.

 Choose a Hamiltonian $G\co [0,1]\times M\to \R$ so that $\phi_{G}^{1}(L_2)=L_1$ and a Hamiltonian $H$ such that $\phi_{H}^{1}(L_1)=L$.  So where $F(t,m)=H(t,m)+G(t,(\phi_{H}^{t})^{-1}(m))$ we have $\phi_{F}^{1}(L_2)=L$.  Thus $\beta_{\hat{L}}(L_2;K)$ is the boundary depth of $CF(\hat{L}:F;K)$, while $\beta_{\hat{L}}(L_1;K)$ is the boundary depth of $CF(\hat{L}:H;K)$.  So by Propositions \ref{quasicont} and \ref{revcontlag} we have \[ |\beta_{\hat{L}}(L_1;K)-\beta_{\hat{L}}(L_2;K)|\leq \osc(H-F)=\osc(G).\]  So since $G$ was an arbitrary Hamiltonian with $\phi_{G}^{1}(L_2)=L_1$ part (i) of Theorem \ref{mainproplag} follows (at least in the transverse case, and as noted earlier the general case then immediately follows by continuity).
\end{proof}

\begin{proof}[Proof of Theorem \ref{mainproplag} (ii) $\mathrm{(}$on $\beta_{\hat{L}}(L;K)\mathrm{)}$]  
Constructions in \cite[Section 5.6]{BC07} in the monotone case and from \cite{FOOO09c} in other cases provide a chain complex $\mathcal{C}(\hat{L};K)$, with the following properties: \begin{itemize} \item $\mathcal{C}(\hat{L};K)$ is, in the language of Section \ref{qcor}, a quantum correction of the Morse complex with coefficients in $K$ and with grading reduced modulo $N_{\frak{c}_0}$ of a suitable Morse function on $L$, such that for any $\ep>0$ the gap of $\mathcal{C}(\hat{L})$ in every grading may be arranged to  be at least $\sup_J \sigma(M,L,J)-\ep$.
\item For any $H$ such that $(\phi_{H}^{1})^{-1}(L)\pitchfork L$, $CF_{\frak{c}_0}(\hat{L}:H;K)$ is $\osc(H)$-quasiequivalent to $\mathcal{C}(\hat{L})$, and for $\frak{c}\neq \frak{c}_0$, $CF_{\frak{c}}(\hat{L}:H;K)$ is $\osc(H)$-quasiequivalent to the zero chain complex.
\end{itemize}
By taking a limit as $H\to 0$ it follows from Theorem \ref{mainproplag} (i) and Proposition \ref{quasicont} that $\beta_{\hat{L}}(L;K)$ may be computed as the boundary depth of $\mathcal{C}(\hat{L};K)$.  By Proposition \ref{qdef} this quantity is zero if $HF(\hat{L},\hat{L})\cong H_*(L)$, and otherwise is at least $\sup_J \sigma(M,L,J)-\ep$.  Since $\ep$ is arbitrary (depending on the choice of an almost complex structure in the construction of $\mathcal{C}(\hat{L};K)$, whereas $\beta_{\hat{L}}(L;K)$ is independent of this almost complex structure) we in fact have $\beta_{\hat{L}}(L;K)\geq \sup_J \sigma(M,L,J)$ if $HF(\hat{L},\hat{L})$ is not isomorphic to $H_*(L)$.
\end{proof}

\begin{proof}[Proof of Theorem \ref{mainproplag} (iii) $\mathrm{(}L\cap L_1=\varnothing\Rightarrow \beta_{\hat{L}}(L_1;K)=0\mathrm{)}$] This is obvious, as the trivial chain complex has boundary depth zero.\end{proof}

\begin{proof}[Proof of Theorem \ref{mainproplag} (iv) $\mathrm{(}\beta_{\hat{\Delta}}(\Gamma_{\phi};K)=\beta(\phi;K){)}$]
This property follows from a familiar comparison between Lagrangian and Hamiltonian Floer theory, as discussed for instance in \cite[Section 5.2]{BPS} in the exact case and in \cite[Section 6.2]{FOOO09b} in general.  Since, unlike these other references, we need to keep track of filtration levels and various other issues, let us review the argument.  Let $\Delta\subset M\times M$ be the diagonal and endow $M\times M$ with the symplectic structure $\Omega=(-\omega)\oplus\omega$,  There is a map (which modulo  issues relating to differentiability of paths would be a homeomorphism; in particular it induces a bijection on $\pi_0$) \begin{align*} \Upsilon\co \mathcal{L}M&\to\mathcal{P}(\Delta,\Delta)  \\
(\Upsilon\gamma)(t)&=\left(\gamma\left(1-\frac{t}{2}\right),\gamma\left(\frac{t}{2}\right)\right).\end{align*}
In particular if for every component $\frak{c}\in \pi_0(\mathcal{L}M)$ we have chosen a (smooth) basepoint $\gamma_{\frak{c}}$ we obtain basepoints $\Upsilon\gamma_{\frak{c}}$ for the components of $\mathcal{P}(\Delta,\Delta)$.  A symplectic trivialization of each $\gamma_{\frak{c}}^{*}TM$ (such as we have used as input in our formulation of Hamiltonian Floer theory) induces in the obvious way a symplectic trivialization of $(\Upsilon\gamma_{\frak{c}})^{*}T(M\times M)$, such that for $i=0,1$, $T_{\Upsilon\gamma_{\frak{c}}(i)}\Delta$ is identified with the diagonal in $\R^{2n}\times\R^{2n}$; at least after composing with the constant linear symplectomorphism $\left(\begin{array}{cc}1 & -1 \\ 0 & 1\end{array}\right)$ of $\R^{2n}\times \R^{2n}$ this gives a symplectic trivialization of $(\Upsilon\gamma_{\frak{c}})^{*}T(M\times M)$ suitable as an input for our formulation of Lagrangian Floer theory.

The map $\Upsilon$ evidently sends a homotopy $w$ from $\gamma_{\frak{c}}$ to $\gamma$ to a homotopy $\Upsilon w$ from $\Upsilon\gamma_{\frak{c}}$ to $\Upsilon\gamma$ given by \[ \Upsilon w(s,t)=\left(w\left(s,1-\frac{t}{2}\right),w\left(s,\frac{t}{2}\right)\right).\]  Clearly \[ \int_{[0,1]^{2}}(\Upsilon w)^{*}\Omega=\int_{[0,1]\times S^1}w^*\omega \] and so the assignment \[ \widetilde{\Upsilon}([\gamma,w])=[\Upsilon\gamma,\Upsilon w] \] gives a well-defined map $\widetilde{\mathcal{L}}M\to \widetilde{\mathcal{P}}(\Delta,\Delta)$ of the covering spaces which are the domains of our action functionals.

Now given a smooth function $H\co S^1\times M\to\R$ define $G\co [0,1]\times M\times M\to \R$ by \[ G(t,m_1,m_2)=\frac{1}{2}\left(H\left(1-\frac{t}{2},m_1\right)+H\left(\frac{t}{2},m_2\right)\right).\]

We plainly have, where the action functional on the left is that from Section \ref{hamsect} and the one on the right is from the present section, \[ \mathcal{A}_H([\gamma,w])=\mathcal{A}_G(\widetilde{\Upsilon}[\gamma,w]).\]  

Moreover critical points are easily seen to correspond under $\widetilde{\Upsilon}$: if $\gamma\co S^1\to M$ has $\dot{\gamma}(t)=X_{H}(t,\gamma(t))$ then $(\Upsilon\gamma)(t)=X_G(t,\Upsilon\gamma(t))$, and conversely if $\Gamma=(\gamma_1,\gamma_2)\co [0,1]\to M\times M$ with $\Gamma(0),\Gamma(1)\in \Delta$ obeys $\dot{\Gamma}(t)=X_G(t,\Gamma(t))$ then \[ \Upsilon^{-1}\Gamma(t)=\left\{\begin{array}{ll} \gamma_2(2t) & 0\leq t\leq 1/2 \\ \gamma_1(2-2s) & 1/2\leq t\leq 1\end{array}\right.\] will obey $\frac{d}{dt}\left(\Upsilon^{-1}\Gamma(t)\right)=X_H(t,\Upsilon^{-1}\Gamma(t))$ (in particular $\Upsilon^{-1}\Gamma$ will be smooth everywhere).

Consequently, at least at the level of modules, $\Upsilon$ induces an isomorphism  between the filtered Hamiltonian Floer groups $CF^{\lambda}(H;K)$ and the filtered Lagrangian Floer groups $CF^{\lambda}(\hat{\Delta}:G;K)$.  At least in the case where $M$ (and hence $\Delta$) is monotone and where $K=\mathbb{Z}/2$ it is a standard fact that this is an isomorphism of chain complexes: if one uses the $S^1$ family of almost complex structures $J_t$ on $M$ to define the differential on $CF(H;K)$, one should use the family $(-J_{1-t/2})\oplus J_{t/2}$ on $M\times M$ to define the differential on $CF(\hat{\Delta}:G;K)$, and then Floer trajectories $u\co \R\times S^1\to M$ on the Hamiltonian side will correspond to Floer trajectories $\Upsilon u(s,t)=\left(u\left(\frac{s}{2},1-\frac{t}{2}\right),u\left(\frac{s}{2},\frac{t}{2}\right)\right)$ on the Lagrangian side (and conversely---in particular if $v\co \R\times [0,1]\to M$ is a Floer trajectory on the Lagrangian side then one can appeal to elliptic regularity to show that $\Upsilon^{-1}v$ is smooth).  Moreover regularity of trajectories is preserved under this correspondence; indeed one can use $\Upsilon$ (and again appeal to elliptic regularity) to set up isomorphisms between the kernels of the respective linearizations and also between their cokernels.  Thus in the monotone case we have an isomorphism of filtered complexes between $CF(H;\mathbb{Z}/2)$ and $CF(\hat{\Delta}:G;\mathbb{Z}/2)$; hence the boundary depths of these complexes are the same.

Of course in the nonmonotone case (or indeed even in the monotone case if one wants to work in characteristic other than two) one needs to say somewhat more, since one needs to choose relative spin structures and bounding cochains in order to even define the chain complex  $CF(\hat{\Delta}:G;K)$.  In \cite[p. 32]{FOOO09b} the authors describe a relative spin structure on $\Delta$ with the property that $0$ is a bounding cochain for $\Delta$.  Accordingly we use this relative spin structure and the zero bounding cochain.  Since we are using the zero bounding cochain, there are no deformations involved in the Lagrangian Floer differential and so just as in the previous paragraph $\Upsilon$ sets up a correspondence between the moduli spaces of Hamiltonian and Lagrangian Floer trajectories, and also between the kernels and cokernels of the linearizations at these trajectories.  Consequently abstract perturbations as in \cite{FO}, \cite{FOOO09} can be constructed on either side so that the perturbed, transversely-cut-out moduli spaces will be in one-to-one correspondence.  To conclude that the Floer boundary operators coincide one then must check that the orientations (of the Kuranishi structures on the unperturbed moduli spaces, as in \cite[Appendix A]{FOOO09}) coincide.  

This latter fact follows quickly from the general method of constructing coherent orientations in Hamiltonian \cite[Section 21]{FO} and Lagrangian (\cite[Section 2.5]{HL},\cite[Section 8.1]{FOOO09}) Floer theory, together with the discussion on \cite[p. 33]{FOOO09b}.  We quickly sketch the argument.  With respect to the natural correspondence (similar to the one given by $\Upsilon$) between spheres  $u\co S^2\to M$ and discs  $\bar{u}\co (D^2,\partial D^2)\to (M\times M,\Delta)$, \cite{FOOO09b} shows that the canonical orientation of the determinant bundle of a Cauchy--Riemann operator on $u^*TM$ coincides with the orientation induced by the special relative spin structure that we are using on the determinant bundle of the corresponding Cauchy--Riemann operator with Lagrangian boundary conditions on $(\bar{u}^{*}T(M\times M),(\bar{u}|_{\partial D^2})^{*}T\Delta)$.  Now the determinant bundles of the appropriate linearizations at elements of Hamiltonian Floer moduli spaces are oriented as follows. First orient in arbitrary fashion the determinant bundles of appropriate ``left-cap'' operators $P^-(\gamma)$ associated to each $1$-periodic orbit $\gamma$ (the domains of these operators are the sections of a bundle over $D^2\cup_{\partial}([0,\infty)\times S^1)$).  This induces orientations of the determinant bundles first of the similar right-cap operators $P^+(\gamma)$ and then of the determinant bundles of the linearizations at elements of the Floer moduli space, by imposing compatibility under gluing with the canonical orientations of the determinant bundles of Cauchy--Riemman operators on bundles over $S^2$.  Similarly, in Lagrangian Floer theory for Hamiltonian-isotopic Lagrangians, one first orients arbitrarily the determinant bundles of left-cap operators $P^-(\bar{\gamma})$ whose domains are sections of bundles with Lagrangian boundary conditions over the union of a half-disc with an infinite strip, and then uses compatibility under gluing with the orientations of determinant bundles of Cauchy--Riemann operators over the disc that are imposed by the relative spin structure in order to orient first right-cap operators and then the determinant bundles of linearizations of elements of the Lagrangian Floer moduli spaces (this is explicitly explained in \cite{HL}; it is not difficult to see that the orientation prescription in \cite[Section 8.1.3]{FOOO09}, which addresses the more general situation where the Lagrangians might not be Hamiltonian-isotopic, reduces to the prescription of \cite{HL} in this special case).  Now in our case the correspondence $\Upsilon$ allows us to push forward the orientations for the Hamiltonian left-cap operators $P^-(\gamma)$ to orientations for the Lagrangian left-cap operators $P^-(\Upsilon\gamma)$.  Since, as shown on \cite[p. 33]{FOOO09}, the choice of relative spin structure ensures that the orientations for spheres in $M$ coincides with the orientations for discs in $M\times M$, the orientations that are imposed on the determinants of the linearizations at elements of the Floer moduli spaces will then coincide under $\Upsilon$.

This leads to an isomorphism of oriented Kuranishi structures between the moduli spaces on the Lagrangian and Hamiltonian sides.  Consequently $CF(H;K)$ is isomorphic as a filtered chain complex to $CF(\hat{\Delta}:G;K)$; in particular these complexes have the same boundary depths.
  
Given an arbitrary nondegenerate $\phi\in Ham(M,\omega)$ choose $H$ in the above discussion so that $\phi=(\phi_{H}^{1})^{-1}$.  We see that \[ \phi_{G}^{t}=\left(\phi_{H}^{1-t/2}\circ\phi\right)\times \phi_{H}^{t/2} \] and in particular \[ \phi_{G}^{1}=(\phi_{H}^{1/2}\circ\phi)\times \phi_{H}^{1/2}.\]  So \[ (\phi_{G}^{1})^{-1}(\Delta)=\{(p,q)\in M\times M|\phi_{H}^{1/2}\circ\phi(p)=\phi_{H}^{1/2}(q)\}=\{(p,q)\in M\times M|\phi(p)=q\}=\Gamma_{\phi}.\]
So $\beta_{\hat{\Delta}}(\Gamma_{\phi};K)$ is the boundary depth of $CF(\hat{\Delta}:G;K)$.  So since (using Theorem \ref{mainpropham} (ii))  $\beta(\phi;K)$ is the boundary depth of $CF(H;K)$ this proves the result for nondegenerate $\phi$, and then the case where $\phi$ is degenerate follows by continuity.

\end{proof}

We then quickly obtain one of our main applications:

\begin{proof}[Proof of Theorem \ref{diagmain}]  Since the map $\phi\mapsto \Gamma_{\phi}$ can only decrease the Hofer distance, the second inequality in Theorem \ref{diagmain} follows trivially from the corresponding inequality in Theorem \ref{hammain}.  As for the first inequality, by the invariance properties of Hofer's metric and by the fact that $(1_M\times \Phi(u))\Gamma_{\Phi(v)}=\Gamma_{\Phi(u+v)}$, we can reduce to the case where $w=0$; thus it suffices to show that $\delta(\Delta,\Gamma_{\Phi(v)})\geq \|v\|_{\ell_{\infty}}$.  Also note that $\beta_{\hat{\Delta}}(\Delta;K)=0$ by Theorem \ref{mainproplag} (ii), since $HF(\hat{\Delta},\hat{\Delta})$ is isomorphic to the singular homology of $\Delta$ (for the general, nonmonotone case see \cite[Theorem D]{FOOO09}).  Hence by Theorem \ref{mainproplag} (i) and (iv) we have \[ \delta(\Delta,\Gamma_{\Phi(v)})\geq \beta_{\hat{\Delta}}(\Gamma_{\Phi(v)};K)=\beta(\Phi(v);K).\]  But it was shown in the proof of Theorem \ref{hammain} that $\beta(\Phi(v);K)\geq \|v\|_{\ell_{\infty}}$, completing the proof.
\end{proof}

\begin{proof}[Proof of Theorem \ref{lagprod}] Just as with Theorem \ref{mainpropham} (v), this statement about boundary depths of products follows (assuming the K\"unneth property) directly from Theorem \ref{prodbeta} (of course, unlike in the Hamiltonian case, it is not automatically true that the Floer homologies of the factors are both nontrivial, which is why this additional assumption is necessary).\end{proof}

The calculation in Section \ref{prologue} is made relevant by the following:

\begin{prop}\label{lagmorse} Where $S^1=\R/\mathbb{Z}$, let $M$ be either $S^1\times \R$ or $S^1\times S^1$, with its standard symplectic structure.  For a Morse function $f\co S^1\to \R$ define $L_f=\{(x,f'(x))|x\in S^1\}\subset M$ (where of course the second component is taken modulo $1$ in case $M=S^1\times S^1$).  Then, where $L_0$ is endowed with the unique relative spin structure compatible with its standard orientation and with the zero bounding cochain, \[ \beta_{\hat{L}_0,\frak{c}_0}(L_f;K)= \beta_{Morse}(f;K).\]  
\end{prop}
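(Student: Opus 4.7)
The plan is to mirror the rescaling/continuity strategy used at the end of the proof of Theorem \ref{mmcomp}, now carried out on the Lagrangian side. I would first compute $CF_{\frak{c}_0}(\hat{L}_0:H;K)$ for the autonomous Hamiltonian $H(x,y)=-f(x)$ (multiplied by a cutoff supported in a neighborhood of $y=0$ when $M=S^1\times S^1$; this cutoff affects neither $L_f=(\phi_H^1)^{-1}(L_0)$ nor the Floer trajectories staying close to the zero section). The nondegeneracy of $f$ gives $L_0\pitchfork L_f$; the Hamilton trajectories from $L_0$ to $L_0$ in $\frak{c}_0$ are precisely the constant paths $\gamma_p$ at critical points $p$ of $f$, and choosing the constant capping half-strip $v_p$ yields $\mathcal{A}_H([\gamma_p,v_p])=-f(p)$. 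Hence the underlying filtered $K$-vector space of $CF_{\frak{c}_0}(\hat{L}_0:H;K)$ is $CM_*(-f;K)\otimes_K\Lambda^{K,\Gamma_{\frak{c}_0}}$ with filtration matching the Morse filtration shifted by the Novikov valuation. Since the formula in Theorem \ref{s1calc} is manifestly symmetric in maxima and minima we have $\beta_{Morse}(-f;K)=\beta_{Morse}(f;K)$, and the task reduces to identifying the Floer boundary with the Morse boundary of $-f$.

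To allow $f$ to be arbitrary in size I would introduce the family $H_\lambda(x,y)=-\lambda f(x)$ for $\lambda\in(0,1]$: each $L_{\lambda f}$ is $(\phi_{H_\lambda}^1)^{-1}(L_0)$, has the same generator set as $L_f$, and has actions uniformly scaled by $\lambda$. Picking a $t$-independent $\omega$-compatible almost complex structure $J_0$ agreeing with the standard integrable structure near the critical points and such that the gradient flow of $-f$ is Morse--Smale, the $t$-independent solutions of the Floer equation for $(J_0,H_\lambda)$ are precisely the negative gradient trajectories of $-\lambda f$. By Theorem \ref{appthm} of the Appendix these are cut out regularly at index one for all $\lambda\in(0,1]$ outside a finite exceptional set $\mathcal{E}$. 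A Lagrangian adaptation of the $S^1$-equivariant perturbation argument of \cite[Section 22]{FO}---applicable here because $L_0$ is exact in $S^1\times\R$ and because any disc with boundary on $L_0\subset S^1\times S^1$ representing $\frak{c}_0$ has zero symplectic area, ruling out bubbling in the low-index moduli spaces---then shows that for $\lambda\in(0,1]\setminus\mathcal{E}$ only the $t$-independent trajectories contribute to the boundary operator on $CF_{\frak{c}_0}(\hat{L}_0:H_\lambda;K)$.

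In this regime the Floer complex decomposes as $D_*^{(\lambda)}\otimes_K\Lambda^{K,\Gamma_{\frak{c}_0}}$ for a $K$-chain complex $D_*^{(\lambda)}$ spanned by $\mathrm{Crit}(f)$; by Remark \ref{coind} the two boundary depths agree, and by Proposition \ref{depthattained} that value lies in the finite set $\{\lambda(f(p)-f(q))\mid p,q\in\mathrm{Crit}(f)\}\cup\{0\}$. Consequently the function $\lambda\mapsto \frac{1}{\lambda}\beta_{\hat{L}_0,\frak{c}_0}(L_{\lambda f};K)$ on $(0,1]$ is continuous by Theorem \ref{mainproplag}(i) and takes values (off the finite set $\mathcal{E}$) in a fixed finite subset of $\R$, so is globally constant. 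At sufficiently small $\lambda>0$, $H_\lambda$ is $C^2$-small enough that a standard comparison identifies $D_*^{(\lambda)}$ with the Morse complex $CM_*(-\lambda f;K)$, pinning the constant value to $\beta_{Morse}(-f;K)=\beta_{Morse}(f;K)$; specializing at $\lambda=1$ finishes the proof.

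The main obstacle I anticipate is the ``$t$-independent trajectories only'' claim in the Lagrangian context: one must verify that the Fukaya--Ono equivariant-perturbation machinery of \cite[Section 22]{FO} adapts cleanly to Lagrangian Floer theory for Hamiltonian-isotopic pairs in $T^*S^1$ and $T^2$ with trivial bounding cochain, and that the transversality output of Theorem \ref{appthm}, proven in the Hamiltonian setting, transports to this setting. A secondary concern is a sign check ensuring that the matrix entries of $D_*^{(\lambda)}$ match those of the Morse complex once $L_0$ is equipped with its standard relative spin structure and the zero bounding cochain.
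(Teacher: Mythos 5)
Your proposal takes a genuinely different and harder route than the paper's proof, and the extra machinery you import is what creates the gaps you yourself flag. You mirror the proof of Theorem \ref{mmcomp}, invoking Theorem \ref{appthm} and a Lagrangian adaptation of the $S^1$-equivariant perturbation argument of \cite[Section 22]{FO} to argue that for generic $\lambda$ only $t$-independent trajectories contribute to $\partial$. But Theorem \ref{appthm} is formulated and proved for the periodic-orbit Floer equation on a closed manifold, not for the Lagrangian half-strip problem, and the Lagrangian analogue of the Fukaya--Ono equivariant perturbation scheme is not established in this paper; you identify both of these as obstacles without resolving them. In the Hamiltonian case this heavy machinery is unavoidable precisely because the Floer complex is a module over a nontrivial Novikov ring, so knowing the actions of the generators alone does not control the boundary depth. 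Here it is unnecessary.

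The key observation you miss, and which makes the paper's proof much shorter, is that $\pi_2(M,L_0)=0$, so the period group $\Gamma_{\frak{c}_0}$ is trivial and $\Lambda^{K,\Gamma_{\frak{c}_0}}$ degenerates to $K$. Consequently each generator of $CF_{\frak{c}_0}(\hat{L}_0:sH;K)$ has a single well-defined action $-sf(x_0)$ (not a $\Gamma$-coset of actions), and Proposition \ref{depthattained} then forces $\beta_{\hat{L}_0,\frak{c}_0}(L_{sf};K)\in\{sf(x_0)-sf(x_1)\mid x_0,x_1\in\mathrm{Crit}(f)\}$ for \emph{every} $s>0$, with no information needed about which trajectories contribute to the differential and no transversality or equivariant-perturbation input. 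The paper also never needs to compare the Floer and Morse boundary operators for arbitrary $s$; it only uses \cite[Theorem 2]{Fl} once, for $s$ small, to get the base case of the continuity argument. So while your strategy could perhaps be completed, it requires substantial unproven lemmas that the paper's simpler argument bypasses entirely; the efficient move is to exploit the vanishing of $\pi_2(M,L_0)$ rather than to transport the closed-manifold transversality analysis.
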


\begin{proof}  Define $H\co M\to \R$ by $H(x,y)=-f(x)$.  Then for all $s>0$ we have $L_{sf}=(\phi_{sH}^{1})^{-1}(L_0)$.  Now it follows from \cite[Theorem 2]{Fl} that, there is $s_0>0$ such that for all positive $s\leq s_0$ and for appropriately-chosen almost complex structures in the definition of the Floer complex, all connecting trajectories arising in $CF(\hat{L}_0:sH;K)$ will degenerate to Morse trajectories, in view of which  $CF(\hat{L}_0:sH;K)$ (which in this range of $s$ coincides with $CF_{\frak{c}_0}(\hat{L}_0:sH;K)$) will be isomorphic as a filtered chain complex to $CM(-sf;K)$.  (Of course it  follows in particular that $HF(\hat{L}_0,\hat{L}_0)\neq 0$, so $\beta_{\hat{L}_0,\frak{c}_0}(\cdot;K)$ is well-defined as a function on $\mathcal{L}(L_0)$.)  Consequently we will have \[ \beta_{\hat{L}_0,\frak{c}_0}(L_{sf};K)=\beta_{Morse}(-sf;K)\mbox{ for }0<s\leq s_0. \]

Now for all $s>0$ the critical points of the action functional $\mathcal{A}_{sH}$ are those $[\gamma,v]\in \widetilde{\mathcal{P}}(L_0,L_0)$ where $\gamma\co [0,1]\to M$ has the form \[ \gamma(t)=(x_0,-stf'(x_0)).\]  Since we require $\gamma(0),\gamma(1)\in L_0$, in the case that $M=S^1\times \R$ this clearly requires that $f'(x_0)=0$.  In the case that $M=S^1\times S^1$, once $s$ is large enough there will be some additional critical points corresponding to nonzero values of $f'(x_0)$; however for these extra critical points $\gamma$ will represent a nontrivial class in $\pi_1(M,L_0)$, and it remains true that, for all $s$, the only critical points in the ``topologically trivial sector'' $\widetilde{\frak{c}}_0$ will have the form $[\gamma,v]$ where $\gamma(t)=(x_0,0)$ and $f'(x_0)=0$.

Thus in either case $CF_{\frak{c}_0}(\hat{L}_0;sH;K)$ is, for all $s$, generated by just those orbits arising from critical points of $f$.  Moreover since $\pi_2(M,L_0)=0$ the ``period group'' $\Gamma_{\frak{c}_0}$ is trivial (so the Novikov ring $\Lambda^{K,\Gamma_{\frak{c}_0}}$ over which $CF_{\frak{c}_0}(\hat{L}_0;sH;K)$ is defined just degenerates to $K$), and the action of a generator of $CF_{\frak{c}_0}(\hat{L}_0;sH;K)$ corresponding to a critical point $x_0$ is just $sH(x_0,0)=-sf(x_0)$.  So by (an easy special case of) Proposition \ref{depthattained} we will have \[ \beta_{\hat{L}_0,\frak{c}_0}(L_{sf};K)\in \{sf(x_0)-sf(x_1)|x_0,x_1\in Crit(f)\}.\]  Of course, $\beta_{Morse}(-sf;K)$ belongs to the same set.  Thus \[ s\mapsto \frac{1}{s}\beta_{\hat{L}_0,\frak{c}_0}(L_{sf};K)\mbox{ and }s\mapsto \frac{1}{s}\beta_{Morse}(-sf;K) \] are both continuous functions from $(0,1]$ to the finite set   $\{f(x_0)-f(x_1)|x_0,x_1\in Crit(f)\}$, and by the first paragraph they coincide for $s\leq s_0$, so they must in fact coincide for all $s$.  

\end{proof}

\begin{remark}  Clearly the same argument allows one, on a general cotangent bundle $T^*M$ of a closed manifold $M$, to relate the boundary depth of a graph of an exact $1$-form $df$ to the Morse-theoretic boundary depth of $f\co M\to\R$.  In this way one can obtain a proof of infinite diameter for the Hofer metric on Lagrangian submanifolds isotopic to the zero section $0_M$, though this latter result can be proven using older methods of Oh and Milinkovi\'c \cite{Mil}.  Combining this with   Theorem \ref{mainproplag} (v) yields (conditional on a K\"unneth formula) infinite diameter for $0_M\times L\subset T^*M\times N$ for any Lagrangian submanifold $L\subset N$ with nonvanishing Floer homology.
\end{remark}

\begin{cor}\label{betaosc} Suppose $f\co S^1\to \R$ is a smooth function such that, for some integer $m>1$, we have $f(x+1/m)=f(x)$ for all $x\in S^1$.  Then where $L_f=\{(x,f'(x))|x\in S^1\}\subset (S^1\times\R\mbox{ or }S^1\times S^1)$ we have \[ \beta_{\hat{L}_0,\frak{c}_0}(L_f;K)=\osc f.\]
\end{cor}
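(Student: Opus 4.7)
The plan is to reduce to the Morse case via Proposition \ref{lagmorse}, compute $\beta_{Morse}(f;K)$ using Theorem \ref{s1calc}, and then pass to general smooth $f$ by $C^0$-approximation. Specifically, once we show $\beta_{Morse}(f;K)=\osc f$ for $1/m$-periodic Morse $f$, Proposition \ref{lagmorse} upgrades this to $\beta_{\hat{L}_0,\frak{c}_0}(L_f;K)=\osc f$; for non-Morse but smooth $1/m$-periodic $f$, approximate by $1/m$-periodic Morse functions $f_n$ (which are dense in $C^\infty_m(S^1)$, since after passing to the quotient circle $\R/\frac{1}{m}\Z$ we are approximating smooth functions by Morse ones in the usual sense). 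The estimate $\delta(L_f,L_{f_n})\le \osc(f-f_n)$ is realized by the Hamiltonian $(x,y)\mapsto (f_n-f)(x)$ on $T^2$ (or $S^1\times\R$), whose time-one flow sends $L_f$ to $L_{f_n}$. Continuity of both sides (the left by Theorem \ref{mainproplag}(i), the right by continuity of $\osc$ under $C^0$-convergence) then yields the general case.

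So the core task is the Morse case. The upper bound $\beta_{Morse}(f;K)\le \osc f$ is immediate from Theorem \ref{s1calc}: every quantity $\min\{f(t_1),f(t_3)\}-\max\{f(t_2),f(t_4)\}$ is bounded by $\max f - \min f = \osc f$.

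For the matching lower bound, the strategy is to exploit the periodicity to manufacture a 4-tuple realizing $\osc f$. Choose $p_1\in S^1$ with $f(p_1)=\max f$, and set $p_2=p_1+1/m$; since $m\ge 2$, $p_1\ne p_2$ in $S^1$, and periodicity gives $f(p_2)=\max f$. The two open arcs joining $p_1$ to $p_2$ have lengths $1/m$ and $(m-1)/m$ respectively, and each has length at least $1/m$, hence each contains (or coincides with) a full fundamental domain of $f$. Thus $f$ attains its global minimum on the closure of each arc; since both endpoints are global maxima and $f$ is nonconstant, the minima must lie in the open interiors of the arcs. Call these interior minimum points $q_1$ and $q_2$. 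Then $(p_1,q_1,p_2,q_2)\in S^1_{cyc,4}$ and
\[ \min\{f(p_1),f(p_2)\}-\max\{f(q_1),f(q_2)\} = \max f - \min f = \osc f,\]
so Theorem \ref{s1calc} gives $\beta_{Morse}(f;K)\ge \osc f$, completing the Morse case.

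There is no real obstacle here; the result is essentially a direct combination of the preceding computations. The only minor point requiring attention is verifying that the Morse approximation can be carried out within the subspace $C^\infty_m(S^1)$ of $1/m$-periodic functions (rather than arbitrary Morse perturbations, which would destroy periodicity), but this is transparent after identifying $C^\infty_m(S^1)$ with $C^\infty$ on the quotient circle.
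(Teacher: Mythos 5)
Your proposal is correct and follows essentially the same route as the paper's proof: reduce to the Morse case by continuity, transfer to $\beta_{Morse}$ via Proposition \ref{lagmorse}, and exhibit a $4$-tuple realizing $\osc f$ via the $1/m$-periodicity. The only cosmetic deviations are that you obtain the upper bound $\beta_{\hat{L}_0,\frak{c}_0}(L_f;K)\leq \osc f$ directly from Theorem \ref{s1calc} together with Proposition \ref{lagmorse}, whereas the paper routes it through Theorem \ref{mainproplag} (i)--(ii), and your lower-bound $4$-tuple translates only the maximizer and locates the two minima on each complementary arc rather than translating both the maximizer and minimizer by $1/m$ as the paper does; both constructions yield the same conclusion.
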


\begin{proof}  By the continuity properties of $\beta$ it suffices to prove the result when $f$ is Morse. Of course $HF(\hat{L}_0,\hat{L}_0)=H_*(L_0)$, so by Theorem \ref{mainproplag} (i) and (ii) and the fact that $\beta_{\hat{L}_0}(\cdot;K)\geq \beta_{\hat{L}_0,\frak{c}_0}(\cdot;K)$ we must have $\beta_{\hat{L}_0,\frak{c}_0}(L_f;K)\leq \osc f$.   So by Proposition \ref{lagmorse}  the corollary will follow provided that $\beta_{Morse}(f;K)\geq \osc f$.  But this is clear from Theorem \ref{s1calc} given our periodicity assumption on $f$: if in (\ref{mainlink}) we take $t_1,t_2$ to lie in $[0,1/m]$ and be, respectively, a global maximum and a global minimum, and if we then set $t_3=t_1+1/m$ and $t_4=t_2+1/m$, then $(t_1,t_2,t_3,t_4)$ will be in cyclic order with $\min\{f(t_1),f(t_3)\}-\max\{f(t_2),f(t_4)\}=\osc f$.
\end{proof}

\begin{proof}[Proof of Theorem \ref{s1prodmain}] The time-one map of the Hamiltonian $H_f\co T^2\times M\to\R$ defined by \[ H((x,y),p)=f(x)\] sends $L_g\times L$ to $L_{f+g}\times L$; in view of this and the invariance properties of $\delta$ it suffices to prove the theorem in case $f=0$; thus we are to show that \[ \osc g-C\leq \delta(L_0\times L,L_g\times L)\leq \osc g.\]  The second inequality is obvious from the first sentence of this proof (replacing $f$ by $g$; of course if $M$ is noncompact one has to cut off the Hamiltonian outside a neighborhood of $T^2\times L$, but this will not increase the Hofer norm).

Since $\pi_2(T^2,L_0)=0$ and $L\subset M$ is assumed monotone with minimal Maslov number at least 2 the K\"unneth property holds for $L_0$ and $L$, at least if one restricts to the Floer complexes corresponding to the component $\frak{c}_0$ of constant paths in each of the manifolds.  In particular $HF(\widehat{L_0\times L},\widehat{L_0\times L})\neq 0$ since we assume that $HF(\hat{L},\hat{L})\neq 0$, so the boundary depth $\beta_{\widehat{L_0\times L},\frak{c}_0}(\cdot;K)$ is well-defined as a function on $\mathcal{L}(L_0\times L)$.  Let $C=\beta_{\widehat{L_0\times L}}(L_0\times L;K)$. Thus $C=0$ in the case that $HF(L,L)$ is isomorphic to the singular homology of $L$ by Theorem \ref{mainproplag} (ii) together with the K\"unneth property.  Further Theorems \ref{mainproplag} (i) and \ref{lagprod} (or rather, a version of Theorem \ref{lagprod} for the restricted boundary depth $\beta_{\widehat{L_0\times L},\frak{c}_0}$, which follows equally easily from Theorem \ref{prodbeta}) show that \[ \delta(L_0\times L,L_g\times L)\geq \beta_{\widehat{L_0\times L}}(L_g\times L;K)-C\geq \beta_{\widehat{L_0\times L},\frak{c}_0}(L_g\times L;K)-C \geq \beta_{\hat{L}_0}(L_g;K)-C.\]  But the assumption that $g\in C^{\infty}_{m,0}(S^1)$ implies, by Corollary \ref{betaosc}, that $\beta_{\hat{L}_0}(L_g;K)=\osc g$.
\end{proof}

\section{Coefficient extension and attainment of the supremum}\label{alg2}

We now turn to some purely algebraic results needed to complete some of the proofs from the previous two sections.
In our conventions, the vector spaces underlying the Floer complexes that are denoted by $CF_{\frak{c}}$ have the following structure, with the function $\ell$ defined by \[ \ell\left(\sum a_{[\gamma,w]}[\gamma,w]\right)=\max\{\mathcal{A}_H([\gamma,w]) :a_{[\gamma,w]}\neq 0\}.\]

\begin{dfn}\label{filtvs} Let $K$ be a field and let $\Lambda=\Lambda^{K,\Gamma}$ be a Novikov field over $K$ (where $\Gamma\leq \mathbb{R}$ is an additive subgroup).  A \emph{finite-dimensional filtered $\Lambda$-vector space} $(C,\ell)$ is a $\Lambda$-vector space $C$ together with a function \[ \ell\co C\to\mathbb{R}\cup\{-\infty\}\] with the following property.  There is a finite basis $\{x_1,\ldots,x_m\}$ for $C$ such that $\ell(x_i)>-\infty$ for each $i$ and for all $\lambda_1,\ldots,\lambda_m\in\Lambda$ we have \begin{equation}\label{orthdef} \ell\left(\sum_{i=1}\lambda_ix_i\right)=\max_i (\ell(x_i)-\nu(\lambda_i)).\end{equation}  Such a basis $\{x_1,\ldots,x_m\}$ is called an \emph{orthogonal basis} for $C$.
\end{dfn}

In the case of the Hamiltonian Floer complexes $CF_{\frak{c}}(H;K)$ as defined in Section \ref{hamsect}, an orthogonal basis is given by the set $\{[\gamma^1,w^1],\ldots,[\gamma^m,w^m]\}$ where the $\gamma^i$ are the distinct elements of $\mathcal{O}_{\frak{c},H}$ and the $w^i$ are arbitrarily-chosen homotopies from the basepoint $\gamma_{\frak{c}}$ to $\gamma^i$.  Of course, a similar description applies to the Lagrangian Floer complexes.

\begin{dfn} A finite-dimensional filtered $\Lambda$-vector space is called \emph{standard} if it admits an \emph{orthonormal} basis, \emph{i.e.}, an orthogonal basis $\{x_1,\ldots,x_m\}$ such that $\ell(x_i)=0$ for all $i$.
\end{dfn}

In other words, $(C,\ell)$ is standard if and only if there is a vector space isomorphism $\Phi\co C\to \Lambda^m$ such that $\ell(c)=-\barnu(\Phi c)$ for all $c$; the image under $\Phi$ of any orthonormal basis for $C$ will then be an orthonormal basis for $\Lambda^m$ in the sense defined in Section \ref{qcor}.  Of course, not every finite-dimensional filtered $\Lambda$-vector space is standard, since in the standard case the image of the map $\ell$ is $\Gamma\cup\{-\infty\}$, which need not be the case in general.  On the other hand, if there is an orthogonal basis $\{x_1,\ldots,x_m\}$ for $C$ such that the real numbers $\ell(x_1),\ldots,\ell(x_m)$ all belong to the group $\Gamma$, then $(C,\ell)$ is standard, since then $\{T^{\ell(x_1)}x_1,\ldots,T^{\ell(x_m)}x_m\}$ is an orthonormal basis.

This shows that any finite-dimensional filtered $\Lambda^{K,\Gamma}$-vector space can be made standard after extending coefficients by tensoring with a larger Novikov field. Namely, if $\Gamma'$ is any subgroup of $\mathbb{R}$ containing both $\Gamma$ and the various $\ell(x_i)$, then $C\otimes_{\Lambda^{K,\Gamma}}\Lambda^{K,\Gamma'}$ acquires in an obvious way the structure of a  finite-dimensional filtered $\Lambda^{K,\Gamma'}$-vector space (define the function $\ell$ by the same formula as in (\ref{orthdef}), with the $\lambda_i$ now allowed to vary in $\Lambda^{K,\Gamma'}$ rather than just $\Lambda^{K,\Gamma}$), which is standard by the preceding paragraph.

\begin{prop}\label{coext}   Let $(C_0,\ell_0)$ and $(C_1,\ell_1)$ be  finite-dimensional filtered $\Lambda^{K,\Gamma}$-vector spaces, let $A\co C_0\to C_1$ be a $\Lambda^{K,\Gamma}$-linear map,  and let $x\in (Im\,A)\setminus\{0\}$.   If $\Gamma\leq \Gamma'$, consider the coefficient extension $A\otimes 1\co C_0\otimes_{\Lambda^{K,\Gamma}}\Lambda^{K,\Gamma'}\to C_1\otimes_{\Lambda^{K,\Gamma}}\Lambda^{K,\Gamma'}$.  Then \[ \inf\{\ell_0(y)|y\in C_0,\,Ay=x\}=\inf\{\ell_0(y)|y\in C_0\otimes_{\Lambda^{K,\Gamma}}\Lambda^{K,\Gamma'},\,(A\otimes 1)y=x\}.\]  In fact, for any $y\in C_0\otimes_{\Lambda^{K,\Gamma}}\Lambda^{K,\Gamma'}$ such that $0\neq(A\otimes 1)y\in C_1$, there is $y_0\in C_0$ such that $Ay_0=(A\otimes 1)y$ and $\ell_0(y_0)\leq \ell_0(y)$.
\end{prop}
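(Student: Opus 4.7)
My plan is to prove the stronger existence statement: given any $y \in C_0 \otimes_{\Lambda^{K,\Gamma}} \Lambda^{K,\Gamma'}$ with $x := (A\otimes 1)y \in C_1 \setminus \{0\}$, I will construct $y_0 \in C_0$ with $A y_0 = x$ and $\ell_0(y_0) \leq \ell_0(y)$.  The equality of the two infima in the first assertion then follows immediately.  The strategy is to choose an orthogonal basis of $C_0$ adapted to $\ker A$, and then to observe that the coefficients of $y$ along the directions complementary to $\ker A$ must, despite appearances, already lie in the smaller Novikov field $\Lambda^{K,\Gamma}$.

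The key algebraic input is a filtered analogue of the standard extension-of-basis result: any subspace $V \leq C_0$ admits an orthogonal basis $\{v_1,\ldots,v_k\}$ which extends to an orthogonal basis $\{v_1,\ldots,v_k,w_1,\ldots,w_r\}$ of $C_0$.  I would prove this by passing to the $\mathbb{R}$-graded $K$-vector space $\widetilde{C_0} := \bigoplus_{\lambda \in \mathbb{R}} C_0^{\leq\lambda}/C_0^{<\lambda}$: orthogonal bases of $C_0$ are in bijection with $\mathbb{R}$-homogeneous $K$-bases of $\widetilde{C_0}$ (this is the natural generalization to non-standard filtered spaces of Lemma~\ref{onlemma}, where the basis elements need not all reduce to ``level zero''), and $\widetilde{V}$ embeds as a graded $K$-subspace of $\widetilde{C_0}$.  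Any homogeneous $K$-basis of $\widetilde{V}$ extends to one of $\widetilde{C_0}$ by elementary linear algebra; lifting back yields the required orthogonal basis of $C_0$.

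Given the lemma, apply it with $V = \ker A$; the resulting basis remains orthogonal in $C_0 \otimes \Lambda^{K,\Gamma'}$ by the same formula.  Writing $y = \sum_i \alpha_i v_i + \sum_j \beta_j w_j$ with $\alpha_i, \beta_j \in \Lambda^{K,\Gamma'}$ and using $A v_i = 0$, I obtain $x = \sum_j \beta_j (A w_j)$.  The elements $\{A w_1,\ldots,A w_r\}$ are linearly independent over $\Lambda^{K,\Gamma}$ (since $\{v_i\}$ spans $\ker A$ and $\{v_i,w_j\}$ is a basis of $C_0$), hence also over the field extension $\Lambda^{K,\Gamma'}$, and they span $Im\, A$ over $\Lambda^{K,\Gamma}$.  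The inclusion $x \in C_1 \cap Im(A \otimes 1) = Im\, A$ is easy: for any $\Lambda^{K,\Gamma}$-subspace $U \leq C_1$, extending a basis of $U$ to one of $C_1$ shows $(U \otimes \Lambda^{K,\Gamma'}) \cap C_1 = U$.  Hence we can also write $x = \sum_j \beta_j'(A w_j)$ with $\beta_j' \in \Lambda^{K,\Gamma}$, and uniqueness of coefficients in the basis $\{A w_j\}$ of a $\Lambda^{K,\Gamma'}$-vector space forces $\beta_j = \beta_j' \in \Lambda^{K,\Gamma}$.  Setting $y_0 := \sum_j \beta_j w_j \in C_0$ then gives $A y_0 = x$, and orthogonality yields $\ell_0(y_0) = \max_j(\ell_0(w_j) - \nu(\beta_j)) \leq \ell_0(y)$.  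The main obstacle is the orthogonal-basis extension lemma for non-standard filtered spaces; once that is established, the automatic descent of the $\beta_j$ to $\Lambda^{K,\Gamma}$ via uniqueness is the main content of the proof.
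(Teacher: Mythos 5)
Your main argument is correct once the orthogonal-basis-extension lemma is in hand, and the route is genuinely different from the paper's. The paper never introduces $\ker A$: it fixes arbitrary orthogonal bases of $C_0$ and $C_1$, splits $\Lambda^{K,\Gamma'}=\Lambda^{K,\Gamma}\oplus\Xi$ as $\Lambda^{K,\Gamma}$-modules (where $\Xi$ consists of those series supported on $\Gamma'\setminus\Gamma$), and observes that the resulting decomposition $C_i\otimes\Lambda^{K,\Gamma'}=C_i\oplus D_i$ is preserved by $A\otimes 1$ simply because $A$ is $\Lambda^{K,\Gamma}$-linear. Then $y_0$ is just the $C_0$-component of $y$, and $\ell_0(y)=\max\{\ell_0(y_0),\ell_0(y')\}$ follows from the disjointness of supports in $\Gamma$ and $\Gamma'\setminus\Gamma$. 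Your version replaces that coefficient-side decomposition by a domain-side one (basis adapted to $\ker A$) and then shows the coefficients along the complement automatically descend to $\Lambda^{K,\Gamma}$; given the adapted basis, each of your remaining steps (linear independence of the $Aw_j$ over the extension field, $(U\otimes\Lambda^{K,\Gamma'})\cap C_1=U$, uniqueness forcing $\beta_j\in\Lambda^{K,\Gamma}$, and the filtration estimate) is correct. The trade-off is that the paper's decomposition needs no structure adapted to $A$ at all, which is why its proof is so short.

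The gap is in your sketch of the extension lemma. You pass to $\widetilde{C_0}=\bigoplus_{\lambda}C_0^{\leq\lambda}/C_0^{<\lambda}$ and assert that orthogonal bases of $C_0$ correspond to $\mathbb{R}$-homogeneous $K$-bases of $\widetilde{C_0}$. That cannot be right: already for $C_0=\Lambda^{K,\Gamma}$ with $\Gamma$ nontrivial one has $\widetilde{C_0}\cong\bigoplus_{g\in\Gamma}K$, which is infinite-dimensional over $K$, so a homogeneous $K$-basis is infinite and cannot ``lift back'' to a one-element orthogonal basis of $C_0$. The object you need is the graded module structure over the associated graded ring $\widetilde{\Lambda^{K,\Gamma}}\cong K[\Gamma]$: this is a graded field (every nonzero homogeneous element $aT^g$ is a unit), $\widetilde{C_0}$ is graded free over it of rank $\dim_{\Lambda^{K,\Gamma}}C_0$, and it is homogeneous $K[\Gamma]$-bases, not $K$-bases, that correspond to orthogonal bases. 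With that correction the graded linear algebra does go through, and the extension lemma is true (one can also prove it directly by a Gaussian-elimination induction inside $C_0$ without ever forming $\widetilde{C_0}$). As written, though, the lemma's proof is wrong, and since the lemma carries the whole argument this is a genuine gap rather than a cosmetic one.
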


\begin{proof} Since, via the inclusion $C_0\hookrightarrow C_0\otimes_{\Lambda^{K,\Gamma}}\Lambda^{K,\Gamma'}$, the set on the left hand side is contained in that on the right, the inequality ``$\geq$'' is trivial.  The last sentence of the proposition would clearly imply the reverse inequality, so it remains only to prove the last sentence.

For $i=0,1$ let $z_{1}^{i},\ldots,z_{m_i}^{i}$ be orthogonal bases for $C_i$.  Also define \[ \Xi=\left\{\left.\sum_{g\in \Gamma'}a_gT^g\in \Lambda^{K,\Gamma'}\right|(a_g\neq 0)\Rightarrow g\notin \Gamma\right\}.\]  Thus $\Xi$ is an additive subgroup of $\Lambda^{K,\Gamma'}$, and moreover is a vector space over $\Lambda^{K,\Gamma}$ (as we have $\Lambda^{K,\Gamma}\cdot \Xi=\Xi$), with $\Lambda^{K,\Gamma'}=\Lambda^{K,\Gamma}\oplus \Xi$ as $\Lambda_{K,\Gamma}$-vector spaces.  For $i=0,1$ let \[ D_i=\left\{\sum_{j=0}^{m_i}\xi_j z_{j}^{i}\left|\xi_j\in\Xi\right.\right\}\leq C_i\otimes_{\Lambda^{K,\Gamma}}\Lambda^{K,\Gamma'}.\]  We thus have \[ C_i\otimes_{\Lambda^{K,\Gamma}}\Lambda^{K,\Gamma'}=C_i\oplus D_i. \]  Now since the map $A$ is $\Lambda^{K,\Gamma}$-linear, we see that \[ (A\otimes 1)(C_0)\leq C_1\quad \mbox{and}\quad (A\otimes 1)(D_0)\leq D_1.\]  So suppose $0\neq x\in C_1$ and $y\in C\otimes_{\Lambda^{K,\Gamma}}\Lambda^{K,\Gamma'}$ with $(A\otimes 1)y=x$.  We can then write $y=y_0+y'$ with $y_0\in C_0$ and $y'\in D_0$.  But then since $x\in C_1$ we must have $Ay_0=x$ and $(A\otimes 1)y'=0$.  Moreover one easily sees that $\ell_0(y)=\max\{\ell_0(y_0),\ell_0(y')\}$, and so $\ell_0(y_0)\leq \ell_0(y)$, as desired.
\end{proof}

\begin{prop}\label{depthattained} Let $(C_0,\ell_0)$ and $(C_1,\ell_1)$ be two finite-dimensional filtered $\Lambda^{K,\Gamma}$-vector spaces and let $A\co C_0\to C_1$ be a not-identically-zero $\Lambda^{K,\Gamma}$-linear map.  Then there is $y_0\in C_0$ such that $Ay_0\neq 0$ and \begin{align*} \ell_0(y_0)-\ell_1(Ay_0)&=\inf\{\ell_0(y)-\ell_1(Ay_0)|y\in C_0,\,Ay=Ay_0\}
\\&=\sup_{x\in (Im\,A)\setminus\{0\}}\inf\{\ell_0(y)-\ell_1(x)|y\in C_0,\,Ay=x\}.\end{align*} 
\end{prop}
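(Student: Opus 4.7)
The plan is to pass via coefficient extension to a setting where both spaces admit orthonormal bases, establish a singular-value decomposition (SVD) for $A$ in that setting, read off the sup--inf from the SVD, and then descend back to the original spaces using the final sentence of Proposition \ref{coext}.

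First, choose orthogonal bases for $(C_0,\ell_0)$ and $(C_1,\ell_1)$, and let $\Gamma' \leq \R$ be the subgroup generated by $\Gamma$ together with the finitely many filtration values of these basis vectors.  Extending scalars to $\Lambda' := \Lambda^{K,\Gamma'}$ makes both $C_0 \otimes \Lambda'$ and $C_1 \otimes \Lambda'$ standard.  By Proposition \ref{coext}, for any $x \in Im\,A \subseteq C_1$ the inner infimum $\inf\{\ell_0(y) - \ell_1(x) : Ay = x\}$ is unchanged when $y$ is allowed to range over $C_0 \otimes \Lambda'$ instead of $C_0$, and any $\tilde{y} \in C_0 \otimes \Lambda'$ with $A \tilde{y} \in C_1 \setminus \{0\}$ descends to a $y_0 \in C_0$ with $A y_0 = A \tilde{y}$ and $\ell_0(y_0) \leq \ell_0(\tilde{y})$.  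So it suffices to find $\tilde{y}_0 \in C_0 \otimes \Lambda'$ with $A \tilde{y}_0 \in C_1$ attaining the sup--inf computed over the extension.

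The key technical tool is an SVD: there exist orthonormal bases $\{e_i\}_{i=1}^{m}$ of $C_0 \otimes \Lambda'$ and $\{f_j\}_{j=1}^{n}$ of $C_1 \otimes \Lambda'$ and real numbers $\lambda_1, \ldots, \lambda_r \in \Gamma'$ (with $r$ the rank of $A$) such that $A e_i = T^{\lambda_i} f_i$ for $i \leq r$ and $A e_i = 0$ for $i > r$.  I would prove this by induction on $r$ using Lemma \ref{onlemma}: start with orthonormal bases of $\ker A$ and $Im\,A$ extended to the ambient spaces, then successively pivot, at each stage choosing a basis vector of the orthogonal complement to the already-pivoted vectors that maximizes $\ell_1(Ae) - \ell_0(e)$.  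Attainment of each pivot maximum reduces, via the quotient construction used in the proof of Lemma \ref{onlemma}, to a problem over the residue field $K$, which is finite-combinatorial.  Arranging things so that $\{f_1, \ldots, f_r\}$ is an orthonormal basis of $Im\,A$ derived from $C_1$ (using Lemma \ref{onlemma} applied to $Im\,A \leq C_1 \otimes \Lambda'$) is a further refinement ensuring descent-compatibility in the final step.

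Granted the SVD, the computation is direct.  For $x = \sum_{i \leq r} \mu_i T^{\lambda_i} f_i \in Im\,A$, orthonormality of $\{e_i\}$ implies that $y_x = \sum_{i \leq r} \mu_i e_i$ attains the inner infimum $\inf\{\ell_0(y) : Ay = x\} = \max_{i \leq r}(-\nu(\mu_i))$, since adding any element of $\ker A$, spanned by $e_{r+1}, \ldots, e_m$, cannot decrease $\ell_0$.  A short case analysis, letting $i^*$ achieve $\max_i(-\nu(\mu_i))$, then gives $\beta(x) = \max_i(-\nu(\mu_i)) - \max_i(-\nu(\mu_i) - \lambda_i) \leq \lambda_{i^*} \leq \max_i \lambda_i$, with equality at $\tilde{y}_0 = e_{i_0}$ for any $i_0$ with $\lambda_{i_0} = \max_i \lambda_i$, at which point $A \tilde{y}_0 = T^{\lambda_{i_0}} f_{i_0}$ lies in $C_1$ by the arrangement of bases, so Proposition \ref{coext} produces the desired $y_0 \in C_0$.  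The main obstacle is the rigorous proof of the SVD, in particular the attainment of the pivot maximum at each inductive step; ensuring the descent-compatibility of the SVD bases also requires careful bookkeeping.
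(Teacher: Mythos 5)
Your plan bifurcates into two parts of very different status, so let me address them separately.

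\textbf{The standard case via SVD.} For the coefficient-extended (standard) case, the paper does \emph{not} prove a Smith normal form: it fixes orthonormal bases adapted to $\ker A$ and $\operatorname{Im}A$, writes the induced matrix $P$, constructs a one-sided inverse $B$ on $\operatorname{Im}A$ using $P^{-1}$, and reads off the sup--inf as $-\min_{i,k}\nu((P^{-1})_{ik})$ directly from the ultrametric inequality, with equality at the orthonormal basis vector $x_{k_0}$ of $\operatorname{Im}A$ that picks out the extremal entry of $P^{-1}$. Your SVD normal form over $\Lambda'_{\geq 0}$ (a valuation ring, not a PID when $\Gamma'$ is dense) is in fact true and does compute the same quantity $\max_i\lambda_i$, and your pivot-attainment step is sound --- the operator norm over a non-archimedean field is attained at a basis vector, which is the relevant ``finite-combinatorial'' observation --- but you are trading the paper's quite short matrix-inverse computation for the considerably heavier burden of proving a Smith normal form theorem. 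Both routes work; yours is more machinery for the same endpoint in this step.

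\textbf{The descent step is a genuine gap.} Your plan hinges on the assertion that $A\tilde{y}_0 = T^{\lambda_{i_0}}f_{i_0}$ ``lies in $C_1$ by the arrangement of bases.'' This is where the proposal breaks. The $\{f_j\}$ are orthonormal in $C_1\otimes\Lambda'$; if the filtration values of $C_1$ (or the singular values $\lambda_i$) lie in cosets of $\Gamma$ other than the trivial one, there may be \emph{no} orthonormal vectors in $C_1$ at all, and no rescaling of $f_{i_0}$ by a unit need land $T^{\lambda_{i_0}}f_{i_0}$ back in $C_1$: the $\Lambda'$-coefficients of $f_{i_0}$ in a $\Lambda$-basis of $\operatorname{Im}A$ can populate several cosets of $\Gamma$ simultaneously, so no single power of $T$ clears them all. ``Arranging the bases'' cannot repair this, because the issue is not a choice you failed to make but a structural obstruction. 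The paper's Step 2 sidesteps it entirely: it takes whatever optimal $x_0 \in \operatorname{Im}(A\otimes 1)$ Step 1 produced, decomposes $x_0 = \sum_i T^{g_i}x_{0,i}$ with $x_{0,i}\in \operatorname{Im}A$ and the $g_i$ in distinct cosets of $\Gamma$ in $\Gamma'$, uses the coset-orthogonality of this decomposition to show some single piece $x_{0,i_0} \in \operatorname{Im}A\subseteq C_1$ already achieves the sup, and only then invokes Proposition \ref{coext} to pull a preimage back to $C_0$. This coset decomposition is the missing ingredient, and it is not bookkeeping --- it is the argument. You should replace the base-arrangement claim with it, and once you do, you may as well run the paper's Step 2 verbatim on whatever optimal $x_0$ your SVD produces, since it does not rely on the bases themselves descending.
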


\begin{proof}  \textit{Step 1: We prove the proposition in the special case that $(C_0,\ell_0)$ and $(C_1,\ell_1)$ are both standard}.  In this case we may as well assume that, writing $\Lambda=\Lambda^{K,\Gamma}$, we have $(C_0,\ell_0)=(\Lambda^m,-\barnu)$ and $(C_1,\ell_1)=(\Lambda^n,-\barnu)$.

In this case, let $r$ be the rank of $A$, and use Lemma \ref{onlemma} to find an orthonormal basis $\{y_1,\ldots,y_m\}$ for $\Lambda^m$ such that $\{y_{r+1},\ldots,y_{m}\}$ is an orthonormal basis for $\ker A$ (namely, first choose $\{y_{r+1},\ldots,y_m\}$ as an orthonormal basis for $\ker A$, and then extend it to an orthonormal basis for all of $\Lambda^m$).  Also, let $\{x_1,\ldots,x_r\}$ be an orthonormal basis for $Im\,A$.   Thus for some invertible $r\times r$ matrix $P$ over $\Lambda$ we have $Ay_j=\sum_{i=1}^{r}P_{ij}x_i$ for $j=1,\ldots,r$, and $Ay_j=0$ for $j>r$.  Now define $B\co Im\,A\to C_0$ by \[ Bx_j=\sum_{i=1}^{r}(P^{-1})_{ij}y_i.\]  Thus \[ ABx_j=x_j\, (1\leq j\leq r),\quad BAy_j=y_j\, (1\leq j\leq r),\quad \mbox{and } BAy_j=0\,  (j>r).\]

So for any $x\in Im\,A$, the elements $y$ such that $Ay=x$ are precisely those of form $y=Bx+y'$ where $y'\in \ker A$.  Since $Bx\in span\{y_1,\ldots,y_r\}$, the orthonormality of our basis for $\Lambda^m$ shows that $\barnu(y)\leq \barnu(Bx)$, \emph{i.e.}, that $\ell_0(y)\geq \ell_0(Bx)$.   Consequently we have, for any $x\in (Im\,A)\setminus\{0\}$, \[ \ell_0(Bx)-\ell_1(x)=\inf\{\ell_0(y)-\ell_0(x)|Ay=x\}.\]

Now if $x=\sum_{j=1}^{r}\lambda_jx_j$ is any nonzero element of $Im\,A$ we have, using the general facts that $\nu(\lambda\mu)=\nu(\lambda)+\nu(\mu)$ and $\nu(\lambda+\mu)\geq\min\{\nu(\lambda),\nu(\mu)\}$ for $\lambda,\mu\in\Lambda$, \begin{align*} \ell_0(Bx)-\ell_1(x)&=\barnu\left(\sum_j \lambda_jx_j\right)-\barnu\left(B\sum_k \lambda_kx_k\right)
=\min_j \nu(\lambda_j)-\barnu\left(\sum_{i,k} (P^{-1})_{ik}\lambda_ky_i\right)\\&=
\min_j \nu(\lambda_j)-\min_i\left(\nu\left(\sum_k((P^{-1})_{ik})\lambda_k\right)\right)
\\&\leq\min_j\nu(\lambda_j)-\min_{i,k}\nu((P^{-1})_{ik}\lambda_k)=\min_j\nu(\lambda_j)-\min_{i,k}(\nu((P^{-1})_{ik}+\nu(\lambda_k)))
\\&\leq \min_j\nu(\lambda_j)-\min_{i,k}\nu((P^{-1})_{ik})-\min_k\nu(\lambda_k)=-\min_{i,k}\nu((P^{-1})_{ik}).
\end{align*}

Thus, if we choose $i_0,k_0$ such that $\nu((P^{-1})_{i_0k_0})=\min_{i,k}\nu((P^{-1})_{ik})$, we have \begin{equation}\label{levelbound}
\sup_{x\in (Im\,A)\setminus\{0\}}\ell_0(Bx)-\ell_1(x)\leq -\nu((P^{-1})_{i_0k_0}).
\end{equation}

On the other hand,  if we let $x=x_{k_0}$, we have $Bx=\sum_{i}(P^{-1})_{ik_0}y_i$ and so $\barnu(x)=0$ while $\barnu(Bx)=\min_i\nu (P^{-1})_{ik_0}=\nu((P^{-1})_{i_0k_0})$.  Thus \[ \ell_0(Bx)-\ell_1(x)=\barnu(x)-\barnu(Bx)=-\nu((P^{-1})_{i_0k_0})\quad (\mbox{ if }x=x_{k_0}).\]

Thus setting $y_0=Bx_{k_0}$, so that $x_{k_0}=Ay_0$, we have that \[ \ell_0(y_0)-\ell_1(Ay_0)=\sup_{x\in (Im\,A)\setminus\{0\}}\inf\{\ell_0(y)-\ell_0(x)|Ay=x\},\] proving the proposition in the case that the $(C_i,\ell_i)$ are standard and thus completing Step 1.

\textit{Step 2: We deduce the general case from the proof of Step 1.}
As noted just before Proposition \ref{coext}, for a suitable group $\Gamma'\leq\Gamma$, where $\Lambda'=\Lambda^{K,\Gamma'}$ it will hold that the filtered $\Lambda'$-vector spaces $C'_0=C_0\otimes_{\Lambda}\Lambda'$ and $C'_1=C_1\otimes_{\Lambda}\Lambda'$ are both standard. 

Step 1 and its proof provide a $\Lambda'$-linear map $B\co Im(A\otimes 1)\to C'_0$ with the properties that:\begin{itemize} \item $ABx=x$ for all $x$; \item For all nonzero $x$, we have \begin{equation}\label{alwaysinf}\ell_0(Bx)-\ell_1(x)=\inf\{\ell_0(y)-\ell_1(x)|y\in C'_0,\,(A\otimes 1)y=x\}; \mbox{ and}\end{equation}
\item There is a nonzero $x_0\in Im(A\otimes 1)$ such that \begin{equation}\label{x0sup} \ell_0(Bx_0)-\ell_1(x_0)=\sup_{0\neq x\in Im(A\otimes 1)}\ell_0(Bx)-\ell_1(x).\end{equation}\end{itemize}

Now we can write \[ x_0=\sum_{i=1}^{N}T^{g_i}x_{0,i}\] where $N\in\mathbb{N}\cup\{\infty\}$, each $g_i$ belongs to a distinct coset of $\Gamma$ in $\Gamma'$, and each $x_{0,i}\in Im\,A$ (write $x_0$ as a $\Lambda'$-linear combination of elements from a $\Lambda$-basis for $Im\,A$, and then group terms according to the cosets of their exponents). Using that the $g_i$ all belong to different cosets, we have \[ \ell_1(x_0)=\max_i \ell_1(T^{g_i}x_{0,i})=\max_i(\ell_1(x_{0,i})-g_i).\]  Meanwhile, in principle it may  not hold that $Bx_{0,i}\in C_0$, but it still follows from obvious properties of $\ell_0$ that \[ \ell_0(Bx_0)\leq \max_i \ell_0(T^{g_i}Bx_{0,i})=\max_i(\ell_0(Bx_{0,i})-g_i).\]  Now choose a value of $i$, say $i_0$, for which $\left(\ell_0(Bx_{0,i})-g_i\right)$ is maximized (in particular this implies $Bx_{0,i_0}\neq 0$, since $\ell_0(0)=-\infty$).  We then have \begin{align*}
\ell_0(Bx_0)-\ell_1(x_0)&\leq \left(\ell_0(Bx_{0,i_0})-g_{i_0}\right)-\max_i\left(\ell_1(x_{0,i})-g_i\right)
\\&\leq \left(\ell_0(Bx_{0,i_0})-g_{i_0}\right)-\left(\ell_1(x_{0,i_0})-g_{i_0}\right)=\ell_0(Bx_{0,i_0})-\ell_1(x_{0,i_0}).\end{align*}
But then by (\ref{x0sup}) we must have equality throughout the above string of inequalities.  In particular the nonzero element $x_{0,i_0}$ of $C_1\cap Im(A\otimes 1)\leq C'_1$ has \begin{equation}\label{inf2} \ell_0(Bx_{0,i_0})-\ell_1(x_{0,i_0})=\inf\{\ell_0(y)-\ell_1(x_{0,i_0})|y\in C'_0,\,(A\otimes 1)y=x_{0,i_0}\} \end{equation} and \[ \ell_0(Bx_{0,i_0})-\ell_1(x_{0,i_0})=\sup_{0\neq x\in Im(A\otimes 1)}\ell_0(Bx)-\ell_1(x).\]
While $Bx_{0,i_0}$ might not belong to $C_0$, Proposition \ref{coext} finds $y_0\in C_0$ such that $Ay_0=x_{0,i_0}$ and $\ell_0(y_0)\leq \ell_0(Bx_{0,i_0})$ (and so $\ell_0(y_0)= \ell_0(Bx_{0,i_0})$ by (\ref{inf2})).
This element $y_0$ is easily seen to  satisfy the requirements of the theorem: using Proposition \ref{coext} we have \[ \ell_0(y_0)-\ell_1(Ay_0)=\inf\{\ell_0(y)-\ell_1(Ay_0)|y\in C_0,\,Ay=Ay_0\}=\inf\{\ell_0(y)-\ell_1(Ay_0)|y\in C'_0,\,(A\otimes 1)y=Ay_0\},\] and \begin{align*} \ell_0(y_0)-\ell_1(Ay_0)&=\sup_{0\neq x\in Im(A\otimes 1)}\ell_0(Bx)-\ell_1(x)
\\&=\sup_{0\neq x\in Im(A\otimes 1)}\inf\{\ell_0(y)-\ell_1(x)|y\in C'_0,\,(A\otimes 1)y=x\}
\\&\geq \sup_{0\neq x\in Im\,A}\inf\{\ell_0(y)-\ell_1(x)|y\in C'_0,\,(A\otimes 1)y=x\}\\&=\sup_{0\neq x\in Im\,A}\inf\{\ell_0(y)-\ell_1(x)|y\in C_0,\,Ay=x\}
,\end{align*} so since $\ell_0(y_0)-\ell_1(Ay_0)$ belongs to the set over which the supremum is taken in the second-to-last expression we must have equality throughout.
\end{proof}

\begin{remark} \label{coind} Step 2 of the proof of Proposition \ref{depthattained} shows that, if $\Lambda=\Lambda^{K,\Gamma}$ and $\Lambda'=\Lambda^{K,\Gamma'}$ where $\Gamma\leq \Gamma'$, and if $A\co C_0\to C_1$ is a nonzero $\Lambda$-linear map between two finite-dimensional filtered $\Lambda$-vector spaces, then writing $C'_i=C_i\otimes_{\Lambda}\Lambda'$ we have\[ \sup_{x\in Im\,A\setminus\{0\}}\inf\{\ell_0(y)-\ell_1(x)|y\in C_0,\,Ay=x\}=\sup_{x\in Im(A\otimes 1)\setminus\{0\}}\inf\{\ell_0(y)-\ell_1(x)|y\in C'_0,\,(A\otimes 1)y=x\}.\]  This leads to the conclusion that the boundary depth is unaffected when we extend coefficients by passing to a larger Novikov field.

\end{remark}

\section{Tensor products} \label{tprod}

Let $(C,\ell_C)$ and $(D,\ell_D)$ be two finite-dimensional filtered $\Lambda$-vector spaces, with orthogonal bases $\{x_1,\ldots,x_m\}$ for $C$ and $\{y_1,\ldots,y_n\}$ for $D$.  These data then induce the structure of a finite-dimensional filtered $\Lambda$-vector space $(C\otimes_{\Lambda}D,\ell^{\otimes})$ on the tensor product, via the formula \begin{equation}\label{tendef} \ell^{\otimes}\left(\sum_{i,j}\lambda_{ij}x_i\otimes y_j\right)=\max_{i,j}\left(\ell_C(x_i)+\ell_D(y_j)-\nu(\lambda_{ij})\right).\end{equation}  This construction is canonical in the following sense:

\begin{lemma}\label{cantensor}
The definition of the function $\ell^{\otimes}\co C\otimes_{\Lambda}D\to \mathbb{R}\cup\{-\infty\}$ from (\ref{tendef}) is independent of the choice of orthogonal bases for $C$ and $D$: namely, if $\{w_1,\ldots,w_m\}\subset C$ and $\{z_1,\ldots,z_n\}\subset D$ are any other choices of orthogonal bases it continues to hold that \begin{equation}\label{tenchange} \ell^{\otimes}\left(\sum_{i,j}\mu_{ij}w_i\otimes z_j\right)=\max_{i,j}\left(\ell_C(w_i)+\ell_D(z_j)-\nu(\mu_{ij})\right).\end{equation}
\end{lemma}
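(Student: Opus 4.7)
The plan is to prove the lemma by introducing an intrinsic, basis-free quantity and then showing that the formula (\ref{tendef}) computes this intrinsic quantity regardless of which orthogonal bases are chosen. Specifically, I will define
\[
\mathcal{L}(v) = \inf\left\{\max_{\alpha}\bigl(\ell_C(c_\alpha) + \ell_D(d_\alpha)\bigr) \,:\, v = \sum_{\alpha=1}^{N} c_\alpha\otimes d_\alpha,\; N\in\mathbb{N},\; c_\alpha\in C,\; d_\alpha\in D\right\},
\]
which is manifestly independent of any choice of basis. The lemma will follow once I show that for any orthogonal bases $\{x_1,\ldots,x_m\}$ of $C$ and $\{y_1,\ldots,y_n\}$ of $D$ and any $v=\sum_{i,j}\lambda_{ij}x_i\otimes y_j$, we have $\mathcal{L}(v)=\max_{i,j}(\ell_C(x_i)+\ell_D(y_j)-\nu(\lambda_{ij}))$.

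The inequality $\mathcal{L}(v)\leq\max_{i,j}(\ell_C(x_i)+\ell_D(y_j)-\nu(\lambda_{ij}))$ is routine: take the decomposition $v=\sum_{i,j}(\lambda_{ij}x_i)\otimes y_j$, use $\ell_C(\lambda_{ij}x_i)=\ell_C(x_i)-\nu(\lambda_{ij})$ (which follows from orthogonality of the $x_i$), and drop terms where $\lambda_{ij}=0$. The content of the lemma lies in the reverse inequality, and this is where I expect the main effort to go.

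For the reverse inequality, consider any decomposition $v=\sum_{\alpha=1}^{N} c_\alpha\otimes d_\alpha$, and expand $c_\alpha=\sum_i \lambda_i^\alpha x_i$ and $d_\alpha=\sum_j \mu_j^\alpha y_j$. Matching coefficients in the basis $\{x_i\otimes y_j\}$ gives $\lambda_{ij}=\sum_\alpha \lambda_i^\alpha \mu_j^\alpha$, so by the non-Archimedean properties $\nu(a+b)\geq\min(\nu(a),\nu(b))$ and $\nu(ab)=\nu(a)+\nu(b)$,
\[
\nu(\lambda_{ij})\geq \min_\alpha\bigl(\nu(\lambda_i^\alpha)+\nu(\mu_j^\alpha)\bigr).
\]
Orthogonality of $\{x_i\}$ and $\{y_j\}$ implies $\nu(\lambda_i^\alpha)\geq \ell_C(x_i)-\ell_C(c_\alpha)$ and $\nu(\mu_j^\alpha)\geq \ell_D(y_j)-\ell_D(d_\alpha)$, so substituting gives
\[
\nu(\lambda_{ij})\geq \ell_C(x_i)+\ell_D(y_j)-\max_\alpha\bigl(\ell_C(c_\alpha)+\ell_D(d_\alpha)\bigr),
\]
i.e.\ $\ell_C(x_i)+\ell_D(y_j)-\nu(\lambda_{ij})\leq \max_\alpha(\ell_C(c_\alpha)+\ell_D(d_\alpha))$ for each $i,j$. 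Taking the maximum over $i,j$ on the left and the infimum over all decompositions on the right yields $\max_{i,j}(\ell_C(x_i)+\ell_D(y_j)-\nu(\lambda_{ij}))\leq\mathcal{L}(v)$, completing the proof.

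The principal obstacle is nothing more than organizing the two non-Archimedean estimates correctly and keeping track of the sign conventions in the definition of $\ell_C$ on a filtered Novikov vector space (recall $\ell_C(\lambda x)=\ell_C(x)-\nu(\lambda)$, not $\ell_C(x)+\nu(\lambda)$); once that is done, the argument is essentially a transcription of the standard proof that the non-Archimedean tensor product norm is well-defined.
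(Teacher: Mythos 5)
Your proof is correct, and it takes a genuinely different route from the paper's. The paper proves (\ref{tenchange}) by first passing to a coefficient extension $\Lambda'\supseteq\Lambda$ over which both sets of $\ell$-values land in the exponent group, so that rescaled versions of the bases become \emph{orthonormal}; it then writes down the base-change matrix $N$ between the two orthonormal bases, observes via Lemma \ref{onlemma} that $N$ decomposes as an invertible matrix over $K$ plus a matrix with entries of strictly positive valuation, and uses the resulting identity $\min_k\nu\bigl(\sum_l N_{kl}\alpha_l\bigr)=\min_k\nu(\alpha_k)$ to push through an explicit computation. Your argument instead introduces the basis-free quantity $\mathcal{L}(v)=\inf\{\max_\alpha(\ell_C(c_\alpha)+\ell_D(d_\alpha)) : v=\sum_\alpha c_\alpha\otimes d_\alpha\}$ (the non-Archimedean tensor seminorm, in additive notation) and shows that any pair of orthogonal bases computes $\mathcal{L}$: the ``$\leq$'' direction comes from the tautological decomposition $v=\sum_{i,j}(\lambda_{ij}x_i)\otimes y_j$, and the ``$\geq$'' direction from the ultrametric inequality together with the lower bounds $\nu(\lambda_i^\alpha)\geq\ell_C(x_i)-\ell_C(c_\alpha)$, $\nu(\mu_j^\alpha)\geq\ell_D(y_j)-\ell_D(d_\alpha)$ that orthogonality provides. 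Both proofs are correct; yours is shorter, avoids the coefficient-extension detour entirely, and yields slightly more information, since it shows $\ell^\otimes$ is an intrinsic invariant of $(C,\ell_C)$ and $(D,\ell_D)$ rather than merely well-defined up to choice of orthogonal basis. The paper's computational method, on the other hand, introduces the coefficient-extension and orthonormal-basis machinery that it reuses immediately afterward in Corollary \ref{orthstab} and Theorem \ref{prodbeta}, so the choice there is partly expository.
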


\begin{proof}
By symmetry it is enough to just consider the effect of changing the orthogonal basis for $C$; thus we are to prove that $\ell^{\otimes}\left(\sum_{i,j}\mu_{ij}w_{i}\otimes y_j\right)=\max_{i,j}\left(\ell_C(w_i)+\ell_D(y_j)-\nu(\mu_{ij})\right)$ if $\{w_1,\ldots,w_m\}$ is a different orthogonal basis for $C$.

We first extend coefficients: choose $\Gamma'\leq \R$ to be a subgroup containing both $\Gamma$ and each of the $\ell(x_i)$ (hence also each of the $\ell(w_i)$), and write $\Lambda'=\Lambda^{K,\Gamma'}$.  Continue to denote by $\ell_C,\ell_D,\ell^{\otimes}$ the obvious extensions of these functions to, respectively, $C'=C\otimes_{\Lambda}\Lambda'$, $D'=D\otimes_{\Lambda}\Lambda'$, and $C'\otimes_{\Lambda'}D'$ (\emph{i.e.}, we just allow the $\lambda_i$ or $\lambda_{ij}$ in the defining formulas to vary in $\Lambda'$ rather than $\Lambda$).  

With these extended coefficients, \[ \{T^{\ell_C(w_1)}w_1,\ldots,T^{\ell_C(w_m)}w_m\}\mbox{ and } \{T^{\ell_C(x_1)}x_1,\ldots,T^{\ell_C(x_m)}x_m\} \] are \emph{orthonormal} bases for $C'$.  Denote by $N\in GL_m(\Lambda')$ the basis change matrix for these orthonormal bases, \emph{i.e.} the matrix such that \[ T^{\ell_C(w_l)}w_l=\sum_k N_{kl}T^{\ell_C(x_k)}x_k.\]  
It follows easily from Lemma \ref{onlemma} that $N$ has the form $N=N_0+N_+$ where $N_0\in GL_m(K)$ and all entries of $N_+$ belong to $\Lambda'_+=\{\lambda\in \Lambda'|\nu(\lambda)>0\}$.   From this it follows that \begin{equation}\label{nval} \mbox{for any $\alpha_1,\ldots,\alpha_m\in \Lambda'$, }\min_k \nu\left(\sum_l N_{kl}\alpha_l\right)=\min_k\nu(\alpha_k).\end{equation}

We then have, for any $\mu_{ij}\in \Lambda$, \begin{align*}
\ell^{\otimes}\left(\sum_{i,j}\mu_{ij}w_{i}\otimes y_j\right)&=\ell^{\otimes}\left(\sum_{i,j}\mu_{ij}\left(T^{-\ell_C(w_i)}\sum_k N_{ki}T^{\ell_C(x_k)}x_k\right)\otimes y_j\right)
\\&=\ell^{\otimes}\left(\sum_{k,j}\left(T^{\ell_C(x_k)}\sum_i N_{ki}T^{-\ell_C(w_i)}\mu_{ij}\right)x_k\otimes y_j\right)
\\&=\max_{k,j}\left(\ell_C(x_k)+\ell_D(y_j)-\nu\left(T^{\ell_C(x_k)}\sum_i N_{ki}T^{-\ell_C(w_i)}\mu_{ij}\right)\right)
\\&=\max_{k,j}\left(\ell_C(x_k)+\ell_D(y_j)-\ell_C(x_k)-\nu\left(\sum_i N_{ki}T^{-\ell_C(w_i)}\mu_{ij}\right)\right)
\\&=\max_j\left(\ell_D(y_j)-\min_k\nu\left(\sum_i N_{ki}T^{-\ell_C(w_i)}\mu_{ij}\right)\right)
\\&=\max_j\left(\ell_D(y_j)-\min_k\nu(T^{-\ell_C(w_k)}\mu_{kj})\right)=\max_{j,k}\left(\ell_D(y_j)+\ell_C(w_k)-\nu(\mu_{kj})\right),
\end{align*}
as desired, where the penultimate equality uses (\ref{nval}).
\end{proof}

This has the following useful immediate consequence:

\begin{cor}\label{orthstab}  Let $\{w_1,\ldots,w_m\}$ and $\{z_1,\ldots,z_n\}$ be any orthogonal bases for the finite-dimensional filtered $\Lambda$-vector spaces $(C,\ell_C)$ and $(D,\ell_D)$.  Then $\{w_i\otimes z_j|1\leq i\leq m,\,1\leq j\leq n\}$ is an orthogonal basis for $(C\otimes_{\Lambda}D,\ell^{\otimes})$, and $\ell^{\otimes}(w_i\otimes z_j)=\ell_C(w_i)+\ell_D(z_j)$.  In particular, if the bases $\{w_i\}$ and $\{z_j\}$ are orthonormal then so is the basis $\{w_i\otimes z_j\}$.
\end{cor}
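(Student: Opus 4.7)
The plan is to extract the corollary directly from Lemma \ref{cantensor}, which already does all the substantive work. First I will note that $\{w_i\otimes z_j\mid 1\leq i\leq m,\,1\leq j\leq n\}$ is a basis of $C\otimes_{\Lambda}D$ in the plain vector-space sense: this is simply the general fact that tensoring bases over a field gives a basis of the tensor product, and requires nothing about the filtration.

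Next I will apply Lemma \ref{cantensor}, taking the pair of orthogonal bases furnished there to be the given $\{w_i\}$ and $\{z_j\}$. The conclusion of the lemma, namely formula (\ref{tenchange}), reads
\[
\ell^{\otimes}\!\left(\sum_{i,j}\mu_{ij}w_i\otimes z_j\right)=\max_{i,j}\bigl(\ell_C(w_i)+\ell_D(z_j)-\nu(\mu_{ij})\bigr)
\]
for all choices of coefficients $\mu_{ij}\in\Lambda$. Specializing to the case in which a single coefficient $\mu_{i_0 j_0}$ equals $1$ and all others vanish gives $\ell^{\otimes}(w_{i_0}\otimes z_{j_0})=\ell_C(w_{i_0})+\ell_D(z_{j_0})$, which is the value claimed in the corollary. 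Substituting this back into the displayed equation, formula (\ref{tenchange}) is seen to say exactly that $\{w_i\otimes z_j\}$ satisfies the defining identity (\ref{orthdef}) of an orthogonal basis for the filtered $\Lambda$-vector space $(C\otimes_{\Lambda}D,\ell^{\otimes})$.

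Finally, the orthonormal case is immediate: if $\ell_C(w_i)=0$ for all $i$ and $\ell_D(z_j)=0$ for all $j$, then the computed values $\ell^{\otimes}(w_i\otimes z_j)=\ell_C(w_i)+\ell_D(z_j)$ all vanish, so the orthogonal basis $\{w_i\otimes z_j\}$ is in fact orthonormal. There is no real obstacle here; the only nontrivial content has been absorbed into Lemma \ref{cantensor}, and the corollary is just the observation that the basis-independent description of $\ell^{\otimes}$ provided by that lemma is the orthogonality condition itself.
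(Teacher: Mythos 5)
Your argument is correct and is essentially the same as the paper's: the paper likewise derives the corollary directly from formula (\ref{tenchange}) in Lemma \ref{cantensor}, and you have simply spelled out the small specializations (setting a single $\mu_{i_0j_0}=1$, then reading (\ref{tenchange}) as the orthogonality identity (\ref{orthdef})) that the paper leaves implicit.
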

\begin{proof} Indeed, this follows directly from the formula (\ref{tenchange}).\end{proof}

\begin{dfn}  If $(C,\ell)$ is a finite-dimensional filtered $\Lambda$-vector space and $U_1,\ldots,U_r$ are subspaces of $C$, we say that $U_1,\ldots,U_r$ are \emph{mutually orthogonal} (or that $U_1$ is orthogonal to $U_2,\ldots,U_r$) if whenever $u_i\in U_i$ for $1\leq i\leq r$ we have \[ \ell\left(\sum_{i=1}^{r}u_i\right)=\max_{1\leq i\leq r}u_i.\]
\end{dfn}

\begin{remark} If there is an orthogonal basis $\mathcal{B}=\{u_j\}$ for $C$ such that the various subspaces $U_i$ are spanned by disjoint subsets of $\mathcal{B}$, then the $U_i$ are mutually orthogonal.  In the case that $C$ is standard it follows straightforwardly from Lemma \ref{onlemma} that the converse holds; however I do not know if the converse still always holds when $C$ is not standard.  
\end{remark}

In general, if $(E,\ell)$ is a finite-dimensional filtered $\Lambda$-vector space we obtain a filtration on $E$ by setting \[ E^{\lambda}=\{e\in E|\ell(e)\leq \lambda\}.\] If $E$ admits a $\Lambda$-linear map $\partial\co E\to E$ such that $\partial^2=0$ and $\partial(E^{\lambda})\leq E^{\lambda}$, this gives $E$ the structure of an $\mathbb{R}$-filtered complex over $K$ in the sense of Definition \ref{rfilt} (with, for simplicity, the grading being given by a $1$-element set).  We thus have the boundary depth \[ b(E,\partial)=\inf\{\beta\geq 0|(\forall \lambda\in\R)(E^{\lambda}\cap Im\partial)\subset \partial(E^{\lambda+\beta})\}.\]  It is easy to check that \begin{equation}\label{altbeta} b(E,\partial)=\left\{\begin{array}{ll}\sup_{x\in Im\,\partial\setminus\{0\}}\inf\{\ell(y)-\ell(x)|\partial y=x\} & \mbox{ if }\partial\neq 0, \\ 0 & \mbox{ if }\partial=0.\end{array}\right.\end{equation}

So now suppose we are given two finite-dimensional filtered $\Lambda$-vector spaces $(C,\ell_C)$ and $(D,\ell_D)$ which are chain complexes, with $\Lambda$-linear operators $\partial_C\co C\to C$ and $\partial_D\co D\to D$ such that $\partial_{C}^{2}=\partial_{D}^{2}=0$ and $\partial_C(C^{\lambda})\leq C^{\lambda}$ and $\partial_{D}(D^{\lambda})\leq D^{\lambda}$.

Our aim is to compare the boundary depth of the tensor product complex $C\otimes_{\Lambda}D$ to the boundary depths of $C$ and $D$. For simplicity we will make the minimal assumptions on $C$ and $D$ necessary to get a natural chain complex structure on $C\otimes_{\Lambda}D$; namely we assume that either:
\begin{itemize} \item[(i)] The characteristic of the field $K$ underlying the Novikov field $\Lambda^{K,\Gamma}$ is $2$; or
\item[(ii)] $C$ has a $\mathbb{Z}_2$ grading, \emph{i.e.}, we have $C=C_0\oplus C_1$ where $C_0$ and $C_1$ are orthogonal $\Lambda$-linear subspaces, and $\partial_C(C_1)\leq C_0$ and $\partial_C(C_0)\leq C_1$. \end{itemize}
Define a $\Lambda$-linear map $(-1)^{|\cdot|}\co C\to C$ by setting it equal to the identity in Case (i) above, and in Case (ii), setting $(-1)^{|\cdot|}|_{C_0}$ equal to $1$ and $(-1)^{|\cdot|}|_{C_1}$ equal to $-1$.  Given that $C_0$ and $C_1$ are orthogonal, it is clear that $\ell_C((-1)^{|\cdot|}c)=\ell_C(c)$ for any $c\in C$.

If we define \[ \partial^{\otimes}\co C\otimes_{\Lambda}D\to C\otimes_{\Lambda}D \] by  \[ \partial^{\otimes}=\partial_C\otimes 1_D+(-1)^{|\cdot|}\otimes \partial_D,\] then one has $\partial^{\otimes}\circ\partial^{\otimes}=0$ in either of the above two cases, and moreover $\partial^{\otimes}((C\otimes_{\Lambda}D)^{\lambda})\leq (C\otimes_{\Lambda}D)^{\lambda}$ where the filtration on $C\otimes_{\Lambda}D$ is that induced by $\ell^{\otimes}$, so we may consider the boundary depth $b(C\otimes_{\Lambda}D,\partial^{\otimes})$.

\begin{theorem}\label{prodbeta} Under the above circumstances:
\begin{itemize}\item[(a)] $b(C\otimes_{\Lambda}D,\partial^{\otimes})\geq \min\{b(C,\partial_C),b(D,\partial_D)\}.$
\item[(b)] If the homology $H(D,\partial_D)$ is nonzero, then $b(C\otimes_{\Lambda}D,\partial^{\otimes})\geq b(C,\partial_C)$; and if $H(C,\partial_C)$ is nonzero, then $b(C\otimes_{\Lambda}D,\partial^{\otimes})\geq b(D,\partial_D)$.\end{itemize}
\end{theorem}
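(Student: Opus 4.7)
The plan is to translate the statement about $b(C\otimes_{\Lambda}D)$ into concrete lifting problems inside $C$ (or $D$) by constructing chain-level projections $C\otimes_{\Lambda}D\to C$ that exploit the algebraic structure of cycles and boundaries. Throughout, I would use Remark \ref{coind} to extend coefficients to a larger Novikov field, which does not affect any of the boundary depths involved, and then invoke Lemma \ref{onlemma} to guarantee orthonormal bases for $C$, $D$, and the relevant subspaces $B_D\subseteq Z_D\subseteq D$.

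For part (b), assuming $HF(D)\neq 0$, I would first select a cycle $y\in D$ with $[y]\neq 0$ in $H(D)$, rescale so that $\ell_D(y)=0$, and use Lemma \ref{onlemma} to extend $\{y\}$ together with an orthonormal basis of $B_D$ to an orthonormal basis of $Z_D$ and finally to all of $D$. Define the $\Lambda$-linear functional $\phi\co D\to \Lambda$ reading off the $y$-coefficient in this basis. Since every $\partial_D$-image lies in $B_D$, which is spanned by basis elements other than $y$, $\phi$ is a cocycle ($\phi\circ \partial_D=0$) with $\phi(y)=1$, and orthonormality ensures $\phi$ is filtration non-increasing. Hence $\Phi=1\otimes \phi\co C\otimes_{\Lambda}D\to C$ is a filtration non-increasing chain map. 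Now, using Proposition \ref{depthattained}, pick $x\in C$ realizing $b(C)$ optimally, so that no preimage of $\partial_C x$ has filtration less than $\ell_C(x)=\ell_C(\partial_C x)+b(C)$. The element $\partial_C x\otimes y=\partial^{\otimes}(x\otimes y)$ is a boundary in $C\otimes_{\Lambda}D$, so the definition of $b(C\otimes_{\Lambda}D)$ produces $e$ with $\partial^{\otimes}e=\partial_C x\otimes y$ and $\ell^{\otimes}(e)\leq \ell_C(\partial_C x)+b(C\otimes_{\Lambda}D)$; applying $\Phi$ gives a preimage $\Phi(e)$ of $\partial_C x$ in $C$ with $\ell_C(\Phi(e))\leq \ell^{\otimes}(e)$, and optimality of $x$ then forces $b(C)\leq b(C\otimes_{\Lambda}D)$. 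The symmetric statement is identical.

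For part (a), the inequality follows from part (b) whenever $HF(C)\neq 0$ or $HF(D)\neq 0$, so the remaining case is when both $C$ and $D$ are acyclic. Here I would use a non-Archimedean SVD to fix an orthonormal basis $\{a_k,b_k\}$ of $D$, with $\{b_k\}$ spanning $B_D$, such that $\partial_D a_k=T^{q_k}b_k$ with $q_*=b(D)=q$ for some index $*$. Setting $p=b(C)$, let $c_0\in C$ realize $b(C)$ optimally and set $d_0=a_*$. Consider the boundary $x=\partial_C c_0\otimes \partial_D d_0\in C\otimes_{\Lambda}D$ and expand any preimage $e=\sum_k(c_k^a\otimes a_k+c_k^b\otimes b_k)$. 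Matching coefficients in $\partial^{\otimes}e=x$, the $a_k$-equations force each $c_k^a$ to be a $\partial_C$-cycle, and acyclicity of $C$ together with $b(C)=p$ produces $\gamma_*$ with $\partial_C \gamma_*=c_*^a$ and $\ell_C(\gamma_*)\leq \ell^{\otimes}(e)+p$. The $b_*$-equation then rearranges into $\partial_C\bigl(T^{-q}c_*^b+(-1)^{|c_*^a|}\gamma_*\bigr)=\partial_C c_0$; since this preimage of $\partial_C c_0$ has filtration at most $\ell^{\otimes}(e)+\max\{p,q\}$ while optimality of $c_0$ forces it to be at least $\ell_C(\partial_C c_0)+p$, I obtain $\ell^{\otimes}(e)\geq \ell^{\otimes}(x)+\min\{p,q\}$, yielding $b(C\otimes_{\Lambda}D)\geq \min\{p,q\}$.

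The principal technical obstacle will be establishing the non-Archimedean SVD used in part (a): given the isomorphism between orthogonal subspaces of $D$ induced by $\partial_D$ in the acyclic case, one must find orthonormal bases on source and target that put this map in diagonal form with entries $T^{q_k}$. I plan to establish this by an iterative procedure in the spirit of Step 1 of the proof of Proposition \ref{depthattained}: select at each stage a vector maximizing the relevant valuation invariant, peel off a rank-one summand, and induct on dimension. Careful sign-tracking in the $\mathbb{Z}_2$-graded setting should be routine since $(-1)^{|\cdot|}$ is a filtration-preserving involution, and the tensor filtration formula of Lemma \ref{cantensor} makes the orthonormality manipulations on $C\otimes_{\Lambda}D$ clean.
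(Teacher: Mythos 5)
Your proof is correct, and it takes a genuinely different route from the paper's. The paper works entirely inside $C\otimes_\Lambda D$: it builds orthogonal decompositions $C=B^C\oplus H^C\oplus F^C$, $D=B^D\oplus H^D\oplus F^D$ via Lemma \ref{onlemma} (image, homology-complement, kernel-complement) and then observes the single fact that $\ker\partial^\otimes$ is orthogonal to each of $F^C\otimes F^D$, $F^C\otimes H^D$, $H^C\otimes F^D$. This lets it read off both (a) and (b) directly: the optimal primitive of $\partial^\otimes(w\otimes y)$ with $w\in F^C$, $y\in F^D$ (resp.\ $z\in H^D$) is just $w\otimes y$ itself, and a one-line filtration computation gives $\min\{b(C),b(D)\}$ (resp.\ $b(C)$). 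Your argument for (b) replaces this kernel-orthogonality analysis by a filtration-nonincreasing chain-level projection $1\otimes\phi\colon C\otimes_\Lambda D\to C$ and functoriality combined with Proposition \ref{depthattained}; this is arguably cleaner conceptually and avoids any analysis of $\ker\partial^\otimes$. For (a), however, your reduction to the acyclic case forces you to prove a non-Archimedean singular value decomposition of $\partial_D|_{F^D}$, which is an extra ingredient the paper's uniform orthogonality argument makes unnecessary. That SVD does exist (it is Smith normal form over the valuation ring $\Lambda_{\geq 0}$, and your inductive plan via Lemma \ref{onlemma}--style orthonormal extension will produce it), but note that you have sketched rather than established it; in contrast the paper's claim (\ref{allorth}) follows immediately from Corollary \ref{orthstab} once the three-part decompositions are in place.

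There is also a small but real imprecision at the start of your proof of (b). Choosing an arbitrary cycle $y$ with $[y]\neq 0$ and $\ell_D(y)=0$ does not guarantee that $\{y\}$ together with an orthonormal basis of $B^D$ extends to an orthonormal basis: by Lemma \ref{onlemma} this requires $\widetilde{y}\notin\widetilde{B^D}$ inside $\widetilde{Z^D}$, and one can have $[y]\neq 0$ yet $\widetilde{y}\in\widetilde{B^D}$ (take $y=b+Tz$ with $b$ a unit-filtration element of $B^D$ and $[z]\neq 0$). The fix is to reverse the order of choices: first extend an orthonormal basis of $B^D$ to one of $Z^D$ via Lemma \ref{onlemma} (possible, and nontrivially so, exactly because $H(D)\neq 0$ forces $\widetilde{B^D}\subsetneq\widetilde{Z^D}$), and take $y$ to be one of the newly added basis vectors. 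With that $y$, the rest of your argument --- $\phi$ is a filtration-nonincreasing cocycle, $1\otimes\phi$ is a chain map, and optimality of $x$ via Proposition \ref{depthattained} delivers $b(C)\leq b(C\otimes_\Lambda D)$ --- goes through as you describe.
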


\begin{proof} By enlarging the Novikov field $\Lambda$, we may arrange that $C$ and $D$ (and hence $C\otimes_{\Lambda}D$) admit orthonormal bases and so are standard; by Remark \ref{coind} this will not affect the boundary depths.  So assume that $C$ and $D$ are standard. (In the $\mathbb{Z}_2$-graded case this also implies that $C_0$ and $C_1$ are each standard, as they admit orthonormal bases by Lemma \ref{onlemma}.)  

Also, the theorem is straightforward in the case that one or the other of the differentials $\partial_C$ and $\partial_D$ is identically zero, so we assume that both of them are nonzero.

Now using Lemma \ref{onlemma} we may choose an orthonormal basis $\{x_1,\ldots,x_r\}$ for $Im\,\partial_C$; then extend this to an orthonormal basis $\{x_1,\ldots,x_r,x_{r+1},\ldots,x_s\}$ for $\ker\partial_C$; and finally extend this to an orthonormal basis $\{x_1,\ldots,x_m\}$ for all of $C$.  In the $\mathbb{Z}_2$-graded case, since $C_0$ and $C_1$ are assumed orthogonal, this may be done (and we assume it is done) in such a way that each $x_i$ belongs either to $C_0$ or to $C_1$.  Likewise, choose an orthonormal basis $\{z_1,\ldots,z_p\}$ for $Im\,\partial_D$; extend this to an orthonormal basis $\{z_1,\ldots,z_q\}$ for $\ker\partial_D$ and subsequently to an orthonormal basis $\{z_1,\ldots,z_n\}$ for all of $D$.  Write \begin{align*} B^C&=span_{\Lambda}\{x_1,\ldots,x_r\}  \quad  &B^D&=span_{\Lambda}\{z_1,\ldots,z_p\} ,\\
H^C&=span_{\Lambda}\{x_{r+1},\ldots,x_s\}\quad &H^D&=span_{\Lambda}\{z_{p+1},\ldots,z_q\},\\
F^C&=span_{\Lambda}\{x_{s+1},\ldots,x_m\}\quad &F^D&=span_{\Lambda}\{z_{q+1},\ldots,z_n\}.\end{align*}

In the $\mathbb{Z}_2$-graded case, since the $x_i$ are all chosen to belong either to $C_0$ or to $C_1$, the spaces $B^C$, $H^C$, and $F^C$ are all preserved by the operator $(-1)^{|\cdot|}\co C\to C$.

Thus $C=B^C\oplus H^C\oplus F^C$; the subspaces $B^C,H^C,F^C$ are mutually orthogonal; and $\partial_C$ maps $F^C$ bijectively to $B^C$ with kernel $B^C\oplus H^C$.  Also, if $x\in B^C$ then the unique element $y$ of $F^C$ with the property that $\partial_Cy=x$ obeys \[ \ell_C(y)-\ell_C(x)=\inf\{\ell_C(y')-\ell_C(x)|\partial_Cy'=x\},\] for if $\partial_C y'=x$ then $y'-y\in B^C\oplus H^C$ and so by the orthogonality of $B^C,H^C,$ and $F^C$ we have $\ell_C(y')=\max\{\ell_C(y),\ell_C(y'-y)\}\geq \ell_C(y)$.
Of course, similar remarks apply to $B^D,H^D$, and $F^D$.

By Corollary \ref{orthstab}, the set $\{x_i\otimes z_j|1\leq i\leq m,1\leq j\leq n\}$ forms an orthonormal basis for $C\otimes_{\Lambda}D$.  Consequently we have an \emph{orthogonal} decomposition (where all tensor products are over $\Lambda$) \[
C\otimes D=\left((\ker \partial_C)\otimes (\ker\partial_D)\right)\oplus (F^C\otimes B^D)\oplus (F^C\otimes H^D)\oplus (B^C\otimes F^D)\oplus (H^C\otimes F^D)\oplus (F^C\otimes F^D).\]  Furthermore, it's easy to see that \[ \ker \partial^{\otimes}\leq  \left((\ker \partial_C)\otimes (\ker\partial_D)\right)\oplus (F^C\otimes B^D)\oplus (B^C\otimes F^D).\]
In particular, \begin{equation}\label{allorth}\mbox{the subspaces }\ker\partial^{\otimes},\,F^C\otimes F^D,\,F^C\otimes H^D,\mbox{ and }H^C\otimes F^D\mbox{ are mutually orthogonal}.\end{equation}

Since we assume that $\partial_C$ and $\partial_D$ are both nonzero, Proposition \ref{depthattained} and (\ref{altbeta}) show that there are nonzero $x\in B^C$, $z\in B^D$ such that \[ b(C,\partial_C)=\inf\{\ell_C(w)-\ell_C(x)|\partial_Cw=x\}\mbox{ and }b(D,\partial_D)=\inf\{\ell_D(y)-\ell_D(z)|\partial_Dy=z\}.\]  Moreover, as noted earlier, if we choose $w\in F^C$ to be the unique element of $F^C$ with $\partial_C w=x$, then by the orthogonality of $F^C$ and $\ker \partial_C$, $w$ has the infimal filtration level of all primitives of $x$, and so \[ \ell_C(w)-\ell_C(\partial_C w)=\beta(C,\partial).\]  Similarly, if $y\in F^D$ is chosen as the unique primitive of $z$ which belongs to $F^D$, then \[ \ell_D(y)-\ell_D(\partial_Dy)=\beta(D,\partial).\]

Now $w\otimes y\in F^C\otimes F^D$, and $F^C\otimes F^D$ is orthogonal to $\ker \partial^{\otimes}$, so if $\alpha\in \ker\partial^{\otimes}$ then $\ell^{\otimes}(w\otimes y+\alpha)\geq \ell^{\otimes}(w\otimes y)$. Thus, \[ \inf\{\ell^{\otimes}(\beta)|\partial^{\otimes}\beta=\partial^{\otimes}(w\otimes y)\}=\ell^{\otimes}(w\otimes y)=\ell_C(w)+\ell_D(y)\] where the last equality follows from expanding out $w$ and $y$ in terms of the orthonormal bases $\{x_i\}$ and $\{y_j\}$ and using Corollary \ref{orthstab}.
Also, using that $B^C\otimes F^D$ is orthogonal to $F^C\otimes B^D$, we have \begin{align*} \ell^{\otimes}(\partial^{\otimes}(w\otimes y))&=\ell^{\otimes}(x\otimes y+(-1)^{|\cdot|}w\otimes z)
\\&=\max\{\ell^{\otimes}(x\otimes y),\ell^{\otimes}((-1)^{|\cdot|}w\otimes z)\}=\max\{\ell_C(x)+\ell_D(y),\ell_C((-1)^{|\cdot|}w)+\ell_D(z)\} \\&=\max\{\ell_C(x)+\ell_D(y),\ell_C(w)+\ell_D(z)\}.\end{align*} (In particular $\partial^{\otimes}(w\otimes y)\neq 0$ since $w,x,y,z$ are all nonzero.)

Thus \begin{align*} \inf\{\ell^{\otimes}(\beta)-\ell^{\otimes}(\partial^{\otimes}(w\otimes y))&|\partial^{\otimes}\beta=\partial^{\otimes}(w\otimes y)\}=\ell^{\otimes}(w\otimes y)-\ell^{\otimes}(\partial^{\otimes}(w\otimes y))
\\&=\ell_C(w)+\ell_D(y)-\max\{\ell_C(x)+\ell_D(y),\ell_C(w)+\ell_D(z)\}\\&=\min\{\ell_C(w)-\ell_C(x),\ell_D(y)-\ell_D(z)\}=\min\{b(C,\partial_C),b(D,\partial_D)\}.\end{align*}  In view of (\ref{altbeta}) this proves part (a) of the theorem.

Now assume that $H(D,\partial_D)\neq 0$, which is equivalent to the subspace $H^D\leq D$ being nonzero.  Choose a nonzero element $z$ of $H^D$, and let $w\in F^C$, $x\in B^C$ be as above, so that $\partial w=x$ and $b(C,\partial_C)=\ell_C(w)-\ell_C(x)$.  Then $\partial^{\otimes}(w\otimes z)=x\otimes z$; further since $w\otimes z\in F^C\otimes H^D$ and $F^C\otimes H^D$ is orthogonal to $\ker \partial^{\otimes}$ we have \[ \ell^{\otimes}(w\otimes z)=\inf\{\ell^{\otimes}(\beta)|\partial^{\otimes}\beta=x\otimes z\}.\]  Hence \begin{align*} \inf\{\ell^{\otimes}(\beta)-\ell^{\otimes}(x\otimes z)|\partial^{\otimes}\beta=x\otimes z\}&=\ell^{\otimes} (w\otimes z)-\ell^{\otimes}(x\otimes z)\\&=(\ell_C(w)+\ell_D(z))-(\ell_C(x)+\ell_D(z))=b(C,\partial_C),\end{align*} which proves the first statement of part (b) of the theorem.

The second statement of part (b) is of course proven in essentially the same way, taking appropriate account of signs: if $C$ has nontrivial homology, so that $H^C\neq 0$, choose a nonzero element $x\in H^C$, and as before choose $y\in F^D$, $z\in B^D$ so that $\partial_D y=z$ and $\ell_D(y)-\ell_D(z)=b(D,\partial_D)$.  Then $x\otimes y\in H^C\otimes F^D$, which is orthogonal to $\ker\partial^{\otimes}$, and \[ \ell^{\otimes}(\partial^{\otimes}(x\otimes y))=\ell^{\otimes}((-1)^{|\cdot|}x\otimes z)=\ell_C((-1)^{|\cdot|}x)+\ell_D(z)=\ell_C(x)+\ell_D(z),\] so we get as before that \[ b(C\otimes_{\Lambda}D,\partial^{\otimes})\geq \ell^{\otimes}(x\otimes y)-\ell^{\otimes}(\partial^{\otimes}(x\otimes y))=\ell_D(y)-\ell_D(z)=b(D,\partial_D).\]

\end{proof}

\section{Finite diameter for $S^1$ in $\R^2$}\label{circlesect}

In this section we prove that, where $L_0=\{(x,y)|x^2+y^2=1\}\subset \R^2$, Hofer's metric $\delta$ on the space $\mathcal{L}(L_0)$ of Lagrangian submanifolds Hamiltonian-isotopic to $L_0$ has finite diameter.  The argument is fairly simple and perhaps known, but I have not been able to find it in the literature.

\begin{lemma} \label{disjointsame} Let $L_1,L_2\in \mathcal{L}(L_0)$ be such that $L_0\cap L_1=L_0\cap L_2=\varnothing$.  Then $\delta(L_0,L_1)=\delta(L_0,L_2)$. 
\end{lemma}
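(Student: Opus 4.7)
The plan is to produce a Hamiltonian diffeomorphism $\psi\in Ham(\mathbb{R}^2)$ with $\psi(L_0)=L_0$ and $\psi(L_1)=L_2$; the biinvariance of $\delta$ under the action of $Ham(\mathbb{R}^2)$ on $\mathcal{L}(L_0)$ then immediately yields
\[
\delta(L_0,L_1)\;=\;\delta(\psi(L_0),\psi(L_1))\;=\;\delta(L_0,L_2).
\]

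First, I would pin down the placement of any $L\in\mathcal{L}(L_0)$ with $L\cap L_0=\varnothing$. Since every element of $Ham(\mathbb{R}^2)$ is area-preserving, such an $L$ is a Jordan curve enclosing area exactly $\pi$. Writing $D=\{x^2+y^2<1\}$ for the open unit disk, the Jordan curve theorem leaves only three configurations for $L$ relative to $L_0$: either $L\subset D$, or $\bar D$ lies in the bounded component of $\mathbb{R}^2\setminus L$, or each of $L$ and $L_0$ lies in the unbounded component of the other's complement. The first two are excluded by the area constraint (enclosed area would be $<\pi$, respectively $>\pi$), so $L$ must lie in the open annular region $A:=\mathbb{R}^2\setminus\bar D$ and bound there a Jordan disk of area $\pi$.

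Second, I would apply a Moser-type construction to move $L_1$ to $L_2$ inside $A$. A standard consequence of Moser's theorem for symplectic surfaces is that $Ham^c(A)$ acts transitively on the collection of simple closed curves in $A$ that bound an $A$-disk of any given fixed area. Both $L_1$ and $L_2$ belong to this collection with area $\pi$, so there exists $\psi\in Ham^c(A)$ with $\psi(L_1)=L_2$. By the very definition of $Ham^c(A)\subset Ham(\mathbb{R}^2)$, this $\psi$ is generated by a Hamiltonian compactly supported in $A$, so in particular $\psi$ is the identity on $\bar D$; hence $\psi(L_0)=L_0$, while $\psi(L_1)=L_2$ by construction, completing the argument.

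The main technical obstacle is the transitivity statement underlying the second step. Rigorously: for two simple closed curves $\gamma_1,\gamma_2\subset A$ bounding $A$-disks of equal area, one first uses surface topology to produce a compactly supported diffeomorphism of $A$ taking $\gamma_1$ to $\gamma_2$, and then applies Moser's trick to modify it into a compactly supported symplectomorphism with the same boundary behavior. Both the compact support of the Moser isotopy (a cutoff issue, handled because the two relevant area forms on $A$ agree off a compact set and have equal integrals over any compact region containing the supports) and the promotion from symplectic to Hamiltonian (the generating closed one-form is exact on the planar region $A$) follow cleanly in this planar, simply connected setting.
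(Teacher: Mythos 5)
Your strategy---constructing a single $\psi\in Ham(\mathbb{R}^2)$ with $\psi(L_0)=L_0$ and $\psi(L_1)=L_2$ and invoking invariance of $\delta$---is genuinely different from the paper's. The paper instead runs a three-point argument: it introduces a far-away auxiliary circle $L(R)$, proves $\delta(L_1,L(R))=\delta(L_2,L(R))$ using a Hamiltonian that moves $L_1$ to $L_2$ while fixing $L(R)$, proves $\delta(L_0,L_i)=\delta(L(R),L_i)$ for $i=1,2$ using Hamiltonians supported in simply connected ``dumbbell'' neighborhoods of $\bar V_0\cup\gamma_i\cup\overline{V(R)}$ inside the exterior $W_i$ of $L_i$, and then chains the three equalities. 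The detour through $L(R)$ exists precisely so that every Moser construction takes place in a simply connected open set.

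That is also where your argument has a genuine gap. The region $A=\mathbb{R}^2\setminus\bar D$ is \emph{not} simply connected---it is an open annulus, with $\pi_1(A)\cong\mathbb{Z}$ and $H^1_c(A;\mathbb{R})\cong\mathbb{R}$---so your assertion that ``the generating closed one-form is exact on the planar region $A$'' is false, and the flux obstruction to upgrading a compactly supported \emph{symplectic} isotopy of $A$ to a compactly supported \emph{Hamiltonian} isotopy is a real one. (Your equal-integrals remark correctly handles the $H^2_c(A)$ condition needed to keep the Moser isotopy compactly supported; it is $H^1_c(A)$ that is the problem.) Indeed a twist map supported in a collar around $L_0$ shows that compactly supported symplectomorphisms of $A$ can have nonzero flux. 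The gap can be filled: having produced a compactly supported symplectic isotopy of $A$ carrying $L_1$ to $L_2$, compose its time-one map with a compactly supported symplectic twist supported in $A\setminus\bar V_2$ (where $V_2$ is the disk bounded by $L_2$), chosen to cancel the flux. Since the closed disk $\bar V_2$ does not separate $A$, loops generating $H_1(A)$ survive into $A\setminus\bar V_2$, so such a twist exists; it fixes $L_2$, so the corrected map still carries $L_1$ to $L_2$ and now has zero flux, hence lies in $Ham^c(A)$. Alternatively, route the whole construction through simply connected neighborhoods as the paper does.
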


\begin{proof} We repeatedly use the following immediate consequence of the invariance property of $\delta$: for $\psi\in Ham(\R^2)$ and for $L,L'\in \mathcal{L}(L_0)$ such that $\psi(L)=L$, we have $\delta(L,L')=\delta(L,\psi(L'))$.

Given $L_1,L_2$ as in the lemma, let $\phi\in Ham(\R^2)$ be such that $\phi(L_1)=L_2$.  Let $R\in\R$ be so large that, where $D(R)=\{x^2+y^2<R^2\}$, we have \[ L_1\cup L_2\cup (supp(\phi))\subset D(R).\]
Let \[ L(R)=\{(x-R-1)^2+y^2=1\}.\]  Since $L(R)$ is disjoint from the support of $\phi$ we have \begin{equation} \label{Rsame}
\delta(L_1,L(R))=\delta(L_2,L(R)).
\end{equation}
Now for $i=0,1,2$, let $V_i$ denote the bounded component of $\R^2\setminus L_i$, and let $W_i$ denote the unbounded component.  Thus in each case $V_i$ has area $\pi$, and $V_i=\R^2\setminus \bar{W}_i$.  Since $L_0\cap L_1=\varnothing$, $L_0\cap V_1$ and $L_0\cap W_1$ are both relatively open and closed in $L_0$; hence by the connectedness of $L_0$ either $L_0\subset V_1$ or $L_0\subset W_1$. 

We claim that $L_0\subset W_1$.  If this were not the case, so that $L_0\subset V_1$, then since $\R^2=V_0\cup L_0\cup W_0$ and $L_0\cap W_1=\varnothing$,  $W_1$ would be the union of the disjoint open sets $W_1\cap V_0$ and $W_1\cap W_0$; by the connectedness of $W_1$ and the fact that $V_0$ is bounded while $W_1$ is unbounded it would follow that $W_1\subset W_0$, and hence that $V_0\subset V_1$.  So since $\partial V_0=L_0\subset V_1$ we would have $\bar{V}_0\subset V_1$, and hence  a neighborhood of $\bar{V}_0$ would still be contained in $V_1$.  But since $V_0$ and $V_1$ are open sets of equal area this is impossible.  This contradiction shows that indeed $L_0\subset W_1$. 

Hence $V_1$ is the disjoint union of the open sets $V_1\cap V_0$ and $V_1\cap W_0$.  As before it is impossible for $V_1\subset V_0$ by area considerations, so since $V_1$ is connected $V_1\subset W_0$, and so $\bar{V}_1\cap V_0=\varnothing$.  Thus $\bar{V}_0=L_0\cup V_0$ is disjoint from $\bar{V}_1$, and so $\bar{V}_0\subset W_1$.  Of course the same argument shows that $\bar{V}_0\subset W_2$.

Write $V(R)$ for the bounded component of $\R^2\setminus L(R)$; since $\overline{V(R)}\cap D(R)=\varnothing$ we have $\overline{V(R)}\subset W_i$ for $i=1,2$.  Since also $\bar{V}_0\subset W_i$, it in particular holds that the points $(1,0)$ and $(R,0)$ belong to the unbounded component $W_i$ of $\R^2\setminus L_i$ for $i=1,2$.  So for $i=1,2$ let $\gamma_i$ be a path in $W_i$ connecting $(1,0)$ to $(R,0)$.   

Since $\bar{V}_0\cup\gamma_i\cup \overline{V(R)}\subset W_i$ and $W_i$ is open, we may take a neighborhood $U_i$ of  $\bar{V}_0\cup\gamma_i\cup \overline{V(R)}$ with still $U_i\subset W_i$.  It is then straightforward to find a Hamiltonian isotopy supported in $U_i$ whose time-one map $\phi_i$ has the property that $\phi_i(L_0)=L(R)$.  In particular since the support of the isotopy is disjoint from $L_i$ we have $\phi_i(L_i)=L_i$.  Consequently \begin{equation}
\label{iR} \delta(L_0,L_i)=\delta(L(R),L_i) \quad (i=1,2). \end{equation}
Thus by (\ref{Rsame}) and (\ref{iR}) we have \[ \delta(L_0,L_1)=\delta(L(R),L_1)=\delta(L(R),L_2)=\delta(L_0,L_2),\] as desired.
\end{proof}

\begin{cor} Choose any $L_1\in\mathcal{L}(L_0)$ such that $L_0\cap L_1=\varnothing$.  Then for all $L,L'\in \mathcal{L}(L_0)$ we have \[ \delta(L,L')\leq 2\delta(L_0,L_1).\] 
\end{cor}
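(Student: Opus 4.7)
The plan is to use Lemma \ref{disjointsame} together with the triangle inequality and the $Ham(\R^2)$-invariance of $\delta$. Given arbitrary $L,L'\in\mathcal{L}(L_0)$, I would first locate an auxiliary Lagrangian $L_*\in\mathcal{L}(L_0)$ which is disjoint from both $L$ and $L'$ simultaneously, and then bound each of $\delta(L,L_*)$ and $\delta(L_*,L')$ by $\delta(L_0,L_1)$ using Lemma \ref{disjointsame} after transporting the pair $(L,L_*)$ (resp.\ $(L',L_*)$) to a pair of the form $(L_0,\cdot)$.

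More concretely, since $L$ and $L'$ are compact subsets of $\R^2$, choose $R>0$ so large that $L\cup L'\subset D(R)=\{x^2+y^2<R^2\}$, and take $L_*$ to be the unit circle centered at, say, $(2R,0)$; this $L_*$ lies in $\mathcal{L}(L_0)$ (translation is Hamiltonian when cut off outside a big disk), and it is disjoint from both $L$ and $L'$. Now pick $\psi\in Ham(\R^2)$ with $\psi(L_0)=L$. By the $Ham(\R^2)$-invariance of $\delta$,
\[
\delta(L,L_*)=\delta(\psi(L_0),L_*)=\delta(L_0,\psi^{-1}(L_*)).
\]
Since $L_*\cap L=\varnothing$, we have $\psi^{-1}(L_*)\cap L_0=\varnothing$, so Lemma \ref{disjointsame} (applied with the pair $L_1,\psi^{-1}(L_*)$, both disjoint from $L_0$) yields $\delta(L_0,\psi^{-1}(L_*))=\delta(L_0,L_1)$. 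The same argument, now using any $\psi'\in Ham(\R^2)$ with $\psi'(L_0)=L'$, gives $\delta(L',L_*)=\delta(L_0,L_1)$.

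Combining these via the triangle inequality yields
\[
\delta(L,L')\leq \delta(L,L_*)+\delta(L_*,L')=2\delta(L_0,L_1),
\]
which is the desired conclusion. I do not anticipate a real obstacle here: the only nontrivial ingredient is Lemma \ref{disjointsame}, which has already been established, and the only geometric input beyond that is the obvious observation that any two bounded Lagrangian circles in $\R^2$ may be simultaneously disjoined from a third Lagrangian circle placed far away.
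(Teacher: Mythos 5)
Your proof is correct and follows essentially the same route as the paper's: both introduce an auxiliary Lagrangian disjoint from $L$ and $L'$, reduce to the disjoint case via $Ham(\R^2)$-invariance, apply Lemma~\ref{disjointsame}, and finish with the triangle inequality. The only cosmetic difference is that the paper first normalizes $L=L_0$ by invariance, whereas you keep $L$ general and apply the invariance argument symmetrically to both legs.
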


\begin{proof} By the invariance of $\delta$ we may assume that $L=L_0$.  By applying a sufficiently distant translation to $L_0$ we may find $L_2\in\mathcal{L}(L_0)$ so that $L_0\cap L_2=L'\cap L_2=\varnothing$.  By Lemma \ref{disjointsame} and the invariance of $\delta$ we have $\delta(L_0,L_2)=\delta(L',L_2)=\delta(L_0,L_1)$.  Thus \[ \delta(L_0,L')\leq \delta(L_0,L_2)+\delta(L_2,L')= 2\delta(L_0,L_1).\]
\end{proof}

Using Chekanov's theorem \cite{Ch98} and Lemma \ref{disjointsame} it is easy to see that the common value of $\delta(L_0,L_1)$ for all $L_1\in\mathcal{L}(L_0)$ which are disjoint from $L_0$ is precisely $\pi$.  Thus we have shown that the diameter of $\mathcal{L}(L_0)$ is at most $2\pi$.

\appendix
\section{Transversality for $t$-independent Floer trajectories}

This appendix provides the details necessary for a technical point in the proof of Theorem \ref{mmcomp}, namely that a $t$-independent solution of the Floer equation associated to suitable $t$-independent almost complex structures and Hamiltonians can be arranged to be cut out transversely by slightly rescaling the Hamiltonian.  We begin with some preparation from linear algebra.

Throughout this section let $V$ be a finite-dimensional real inner product space.  The norm of a linear operator on $V$ (or on $V\oplus V$) will always refer to its operator norm with respect to the inner product.
Define the linear map $E\co V\oplus V\to V\oplus V$ by \[ E\left(\begin{array}{c} x\\y\end{array}\right)=\left(\begin{array}{c} -x\\y\end{array}\right) .\]

\begin{prop}\label{eigen} Let $B_1,B_2\co V \to  V$ be symmetric linear operators and define a linear operator $B\co V\oplus V\to V\oplus V$ by  \[ B\left(\begin{array}{c} x\\y\end{array}\right)=\left(\begin{array}{c} B_1x+B_2y\\B_2x+B_1 y\end{array}\right) .\] Then for all real $\mu$ with $|\mu|<\frac{1}{\|B_1\|+\|B_2\|}$ the operator $E+\mu B$ has precisely $(\dim V)$-many positive eigenvalues and $(\dim V)$-many negative eigenvalues, counting multiplicities.  Moreover where $\Pi_{B}^{+}(\mu),\Pi_{B}^{-}(\mu)\co V\oplus V\to V\oplus V$ denote the orthogonal projections onto the spans of those eigenvectors with, respectively, positive or negative eigenvalue, $\Pi_{B}^{+}$ and $\Pi_{B}^{-}$ are real analytic functions of the parameter $\mu\in \left(-\frac{1}{\|B_1\|+\|B_2\|},\frac{1}{\|B_1\|+\|B_2\|}\right)$.
\end{prop}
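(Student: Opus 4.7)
The plan is to view $E+\mu B$ as a self-adjoint analytic perturbation of $E$, separated spectrally into a ``negative'' and ``positive'' cluster, and then extract the projections via the Riesz functional calculus.

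First I would observe that $E$ is self-adjoint on $V\oplus V$ with exactly two eigenvalues, $-1$ and $+1$, each of multiplicity $n=\dim V$, and that $B$ is self-adjoint, so $E+\mu B$ is self-adjoint for every real $\mu$. To control the spectral perturbation I need a bound on $\|B\|$. I would observe that the orthogonal transformation $U=\tfrac{1}{\sqrt{2}}\bigl(\begin{smallmatrix} I & I \\ I & -I\end{smallmatrix}\bigr)$ conjugates $B$ to the block diagonal operator $\mathrm{diag}(B_1+B_2,B_1-B_2)$, so $\|B\|=\max(\|B_1+B_2\|,\|B_1-B_2\|)\leq \|B_1\|+\|B_2\|$. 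In particular, the hypothesis $|\mu|<(\|B_1\|+\|B_2\|)^{-1}$ yields $\|\mu B\|<1$.

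Next, by Weyl's eigenvalue inequality applied to the self-adjoint operators $E$ and $E+\mu B$, each eigenvalue of $E+\mu B$ lies within $\|\mu B\|<1$ of a corresponding eigenvalue of $E$. Consequently the spectrum of $E+\mu B$ is contained in the disjoint union $[-1-\|\mu B\|,\,-1+\|\mu B\|]\cup[1-\|\mu B\|,\,1+\|\mu B\|]\subset(-2,0)\cup(0,2)$, and a simple continuity/counting argument (the total multiplicities are preserved) shows that there are precisely $n$ eigenvalues in each interval. This proves the counting statement.

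For real analyticity of $\Pi^\pm_B(\mu)$, I would fix $\mu_0$ in the given interval, choose a small $\delta>0$ so that the interval $(\mu_0-\delta,\mu_0+\delta)$ still satisfies the hypothesis, and then pick a smooth closed contour $\gamma\subset\mathbb{C}$ (for example, the boundary of a thin rectangle around $[1-\|\mu B\|_{\max},1+\|\mu B\|_{\max}]$ where $\|\mu B\|_{\max}=\sup_{|\mu-\mu_0|<\delta}\|\mu B\|<1$) that encloses all the positive eigenvalues and no negative eigenvalues for every $\mu\in(\mu_0-\delta,\mu_0+\delta)$. The Riesz projection formula gives
\[
\Pi^+_B(\mu)=\frac{1}{2\pi i}\oint_\gamma (zI-E-\mu B)^{-1}\,dz,
\]
with the analogous formula (contour around the negative cluster) for $\Pi^-_B(\mu)$. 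Real analyticity in $\mu$ then follows from the Neumann series expansion $(zI-E-\mu B)^{-1}=\sum_{k\geq 0}(\mu-\mu_0)^k\bigl((zI-E-\mu_0 B)^{-1}B\bigr)^k(zI-E-\mu_0 B)^{-1}$, which converges uniformly in $z\in\gamma$ for $|\mu-\mu_0|$ small (since $(zI-E-\mu_0 B)^{-1}$ is bounded uniformly on the compact set $\gamma$); integrating term-by-term yields a convergent power series expansion of $\Pi^\pm_B(\mu)$ about $\mu_0$.

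The only mildly delicate step is arranging the contour $\gamma$ to work uniformly for $\mu$ in an open neighborhood of $\mu_0$, but this is immediate from the uniform spectral gap established via Weyl's inequality. Everything else is standard self-adjoint perturbation theory in the style of Kato.
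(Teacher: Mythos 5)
Your proof is correct, and both you and the paper converge on the same Kato-style Riesz projection argument for analyticity. Where you differ genuinely is in how you establish the spectral gap around $0$. The paper argues directly from the component eigenvalue equations on the complexification of $V\oplus V$, getting the estimate $|\lambda| + |\mu|(\|B_1\|+\|B_2\|)\geq 1$ for any eigenvalue $\lambda$ and any complex $\mu$, then restricts to real $\mu$ and uses that the eigenvalues never cross $0$ to preserve the signature from $\mu=0$. You instead observe that the orthogonal involution $U=\tfrac{1}{\sqrt{2}}\bigl(\begin{smallmatrix}I & I\\ I & -I\end{smallmatrix}\bigr)$ block-diagonalizes $B$ as $\mathrm{diag}(B_1+B_2, B_1-B_2)$, giving the cleaner bound $\|B\|\leq\|B_1\|+\|B_2\|$, and then quote Weyl's perturbation inequality to place all eigenvalues of $E+\mu B$ within $\|\mu B\|<1$ of $\pm 1$. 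Your route is arguably more conceptual — it explains why the combination $\|B_1\|+\|B_2\|$ appears (it dominates $\|B\|$) and it directly pins down the number of eigenvalues in each cluster without a separate continuity argument — whereas the paper's calculation, though more pedestrian, incidentally yields the bound for complex $\mu$ as well (a fact the paper ends up not needing). Both are valid; yours is slightly more economical for the statement as given.
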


\begin{proof} Note first that for any $\mu\in\mathbb{C}$, all eigenvalues $\lambda$ of the operator $E+\mu B$ (acting on the complexification of $V\oplus V$) obey \begin{equation} \label{lambdamu}|\lambda|+|\mu|(\|B_1\|+\|B_2\|)\geq 1 \end{equation}
Indeed, an eigenvector $\left(\begin{array}{c}x\\y\end{array}\right)$ of $E+\mu B$ with eigenvalue $\lambda$ will have \begin{align*}
\mu B_1 x+\mu B_2 y &= x+\lambda x \\ \mu B_2 x+\mu B_1 y &=-y+\lambda y 
\end{align*}  If $\|x\|\geq \|y\|$ then the first equation above yields \[ |\mu|\|B_2\|\|x\|\geq |\mu|\|B_2\|\|y\|\geq (1-|\lambda|-|\mu|\|B_1\|)\|x\|,\] while if $\|x\|\leq \|y\|$ then the second equation above yields \[ |\mu|\|B_2\|\|y\|\geq |\mu|\|B_2\|\|x\|\geq (1-|\lambda|-|\mu|\|B_1\|)\|y\|.\]  Since either $\|x\|\geq \|y\|$ and $\|x\|$ is nonzero, or else $\|x\|\leq \|y\|$ and $\|y\|$ is nonzero, after dividing one or the other of the above inequalities by $\|x\|$ or $\|y\|$, as appropriate, we obtain (\ref{lambdamu}).

In particular it follows from (\ref{lambdamu}) that none of the operators $E+\mu B$ with $|\mu|<\frac{1}{\|B_1\|+\|B_2\|}$ has zero as an eigenvalue.  Of course, if we restrict $\mu$ to be real, then the $E+\mu B$ are all symmetric operators and therefore have entirely real spectrum. For $\mu=0$ the spectrum of $E+\mu B=E$ consists of the eigenvalues $-1$ and $1$, each with multiplicity $\dim V$.  As $\mu$ varies through the open interval $\left(-\frac{1}{\|B_1\|+\|B_2\|},\frac{1}{\|B_1\|+\|B_2\|}\right)$, since none of the eigenvalues of $E+\mu B$ cross zero it follows from continuity considerations that the total dimension of the negative eigenspaces of $E+\mu B$ will continue to be $\dim V$ for real $\mu$ with $|\mu|<\frac{1}{\|B_1\|+\|B_2\|}$, and likewise for the total dimension of the positive eigenspaces.

It remains to prove the assertion about the analyticity of the projections $\Pi_{B}^{\pm}(\mu)$ as functions of $\mu$. Denote the image of $\Pi^{\pm}(\mu)$ by $W_{\pm}(\mu)$ (so $W_-(\mu)$ is the span of the eigenvectors having negative eigenvalue, and $W_{+}(\mu)$ is the span of the eigenvectors having positive eigenvalue).  Since $E+\mu B$ is (for real $\mu$) symmetric, eigenvectors corresponding to distinct eigenvalues are orthogonal, and so $W_+(\mu)$ is orthogonal to $W_-(\mu)$.  Thus the orthogonal projections $\Pi_{B}^{\pm}(\mu)$ are just the projections associated to the direct sum decomposition $V\oplus V=W_+(\mu)\oplus W_-(\mu)$.  The desired conclusion now follows from a standard argument found, \emph{e.g.}, in \cite[II.1.4]{K}: given $\mu_0\in \left(-\frac{1}{\|B_1\|+\|B_2\|},\frac{1}{\|B_1\|+\|B_2\|}\right)$, choose contours $C_{\pm}$ in the complex plane disjoint from the eigenvalues of $E+\mu_0 B$ such that $C_+$ encloses precisely the positive eigenvalues of $E+\mu_0 B$ and $C_-$ encloses precisely the negative eigenvalues of $E+\mu_0 B$.  Then for $\mu$ sufficiently close to $\mu_0$ it will continue to hold that $C_+$ encloses precisely the positive eigenvalues of $E+\mu B$ and $C_-$ encloses precisely the negative eigenvalues of $E+\mu B$, and where $I$ denotes the identity the projections in question are given by the formulas \[ \Pi_{B}^{+}(\mu)=-\frac{1}{2\pi i}\int_{C_+}(E+\mu B-zI)^{-1}dz,\qquad \Pi_{B}^{-}(\mu)=-\frac{1}{2\pi i}\int_{C_-}(E+\mu B-zI)^{-1}dz.\] These expressions for $\Pi_{B}^{\pm}$ are manifestly analytic in $\mu$.
\end{proof}

\begin{prop}\label{finiteta}  Let $B_1,B_2\co \R\to Hom_{\R}(V,V)$ be two continuous maps such that there exist $T>0$ and symmetric linear operators $B_{1}^{\pm},B_{2}^{\pm}$ with $B_i(s)=B_{i}^{+}$ for $s\geq T$ and $B_i(s)=B_{i}^{-}$ for $s\leq -T$.  Let $\eta_0>\max\{\|B_{1}^{-}\|+\|B_{2}^{-}\|,\|B_{1}^{+}\|+\|B_{2}^{+}\|\}$, and define $B\co \R\to End(V\oplus V)$ by\[ B(s)\left(\begin{array}{c} x\\y\end{array}\right)=\left(\begin{array}{c} B_1(s)x+B_2(s)y\\B_2(s)x+B_1(s) y\end{array}\right) .\]  Then the set \[ \mathcal{S}=\left\{\eta\in [\eta_0,\infty)\left| 
\begin{array}{c} \mbox{There is a nonzero solution }v\in W^{1,2}(\R;V\oplus V) \mbox{ to } \\ \frac{dv}{ds}+(\eta E+B(s))v(s)=0         
\end{array}\right.\right\}\] is finite.
\end{prop}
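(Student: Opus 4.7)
The plan is to reformulate membership in $\mathcal{S}$ as the vanishing of a locally defined real analytic function of $\eta$, and then rule out the identically-zero case via an energy estimate forcing trivial kernel when $\eta$ is large.

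\emph{Analytic setup.} Let $U := (\max\{\|B_1^-\|+\|B_2^-\|,\|B_1^+\|+\|B_2^+\|\},\infty)$, which contains $[\eta_0,\infty)$. By Proposition \ref{eigen} (applied with $\mu=1/\eta$), for every $\eta\in U$ the operators $A^\pm(\eta):=\eta E + B^\pm$ each have $n:=\dim V$ positive and $n$ negative eigenvalues, and the associated spectral projections depend analytically on $\eta\in U$. Since $B(s)$ is constant on $\{s\leq -T\}$ and on $\{s\geq T\}$, a solution $v$ to $v'+(\eta E+B(s))v=0$ lies in $W^{1,2}(\mathbb{R};V\oplus V)$ if and only if $v(-T)$ belongs to the negative eigenspace $E^u_-(\eta)$ of $A^-(\eta)$ and $v(T)=\Phi_\eta(T,-T)v(-T)$ belongs to the positive eigenspace $E^s_+(\eta)$ of $A^+(\eta)$, where $\Phi_\eta$ denotes the (analytic-in-$\eta$) fundamental solution. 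Hence $\eta\in\mathcal{S}$ iff the two $n$-dimensional subspaces $\Phi_\eta(T,-T)E^u_-(\eta)$ and $E^s_+(\eta)$ of $V\oplus V$ intersect nontrivially; choosing analytic local frames for these subspaces (obtained by applying the spectral projections to fixed vectors), this condition is locally the vanishing of a real analytic determinant $D(\eta)$.

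\emph{Energy estimate.} Writing $v=(x,y)$ and setting $\mathcal{E}(s):=\|y(s)\|^2-\|x(s)\|^2$, a direct computation using $v'=-(\eta E+B(s))v$ gives
\[
\mathcal{E}'(s) \;=\; -2\eta(\|x\|^2+\|y\|^2) + 2\langle x, B_1(s)x\rangle - 2\langle y, B_1(s)y\rangle + 2\langle x, B_2(s)y\rangle - 2\langle y, B_2(s)x\rangle,
\]
and Cauchy--Schwarz bounds the last four terms in absolute value by $2(\|B_1(s)\|+\|B_2(s)\|)\|v(s)\|^2$. Let $M:=\sup_{s\in\mathbb{R}}(\|B_1(s)\|+\|B_2(s)\|)$, which is finite since each $B_i$ is continuous and equal to $B_i^\pm$ outside $[-T,T]$. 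For $\eta > M$, any $W^{1,2}$ solution $v$ satisfies $v(s)\to 0$ at $s=\pm\infty$ by Sobolev embedding, so $\mathcal{E}(\pm\infty)=0$; integrating over $\mathbb{R}$ yields
\[
0 \;=\; \int_{\mathbb{R}} \mathcal{E}'(s)\,ds \;\leq\; -2(\eta-M)\int_{\mathbb{R}} \|v(s)\|^2\,ds,
\]
forcing $v\equiv 0$. Therefore $\mathcal{S}\subset[\eta_0, M]$.

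\emph{Conclusion.} The energy estimate shows $D(\eta)\neq 0$ for $\eta > M$, so $D$ is not locally identically zero on $U$; since the zeros of a nonzero real analytic function are isolated, the zero set of $D$ is discrete in the connected open interval $U$. Since $\mathcal{S}$ is contained in the compact subinterval $[\eta_0, M]\subset U$, it must be finite. The main subtlety is engineering the energy identity: one must identify the right quadratic form $\mathcal{E}=\|y\|^2-\|x\|^2$, whose derivative isolates the dominant contribution $-2\eta\|v\|^2$ coming from the $E$-part, with the remainder controlled uniformly in $s$ by $\|B_1(s)\|+\|B_2(s)\|$; this is precisely what closes the estimate for sufficiently large $\eta$.
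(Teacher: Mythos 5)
Your proposal is correct and follows the same two-step architecture as the paper: first show $\mathcal{S}$ lies in a compact subinterval of $[\eta_0,\infty)$ via an a priori estimate on $W^{1,2}$ solutions, then recast membership in $\mathcal{S}$ as the vanishing of analytic functions of $\eta$ and invoke discreteness of zeros of non-constant analytic functions. The interesting deviation is in the a priori bound. The paper works with the same quadratic quantity $\|x(s)\|^2 - \|y(s)\|^2$ and the same differential inequality, but then runs a pointwise comparison/exhaustion argument: it shows that under $\eta > \eta_1$ the sign of $\|x\|^2-\|y\|^2$ must be constant, and in either case a Gr\"onwall-type comparison forces exponential blow-up at one end, contradicting $v\in W^{1,2}$. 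You instead simply integrate the differential inequality $\mathcal{E}'(s)\le -2(\eta-M)\|v(s)\|^2$ over all of $\mathbb R$, using $\mathcal{E}(\pm\infty)=0$ (from $v\in W^{1,2}$) and the integrability of $\|v\|^2$, concluding directly that $\int\|v\|^2\le 0$. This is cleaner: it avoids the case split and the Gr\"onwall step, and yields the same threshold $M=\sup_s(\|B_1(s)\|+\|B_2(s)\|)$. Both are valid; yours buys brevity and robustness.

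Two small points to tighten. First, "this condition is locally the vanishing of a real analytic determinant $D(\eta)$" is slightly hand-wavy: to get an analytic frame for $E^u_-(\eta)$ you need to pick a basis of $E^u_-(\eta_*)$ at a reference $\eta_*$ and push it by $\Pi^-_{B_-}(\eta^{-1})$, which gives a basis only for $\eta$ in a neighborhood of $\eta_*$; you then cover the compact interval $[\eta_0,M]$ by finitely many such neighborhoods, or, as the paper does, avoid the frame choice altogether by observing that $\eta\in\mathcal S$ iff all $(\dim V)\times(\dim V)$ minors of $\Pi^-_{B_+}(\eta^{-1})\circ\Phi_\eta\circ\Pi^-_{B_-}(\eta^{-1})$ vanish, which are globally analytic functions. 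Second, you invoke analyticity of $\eta\mapsto\Phi_\eta(T,-T)$ without proof; this is standard but the paper verifies it by expanding the Picard iteration and rearranging into a power series in $\eta$. Neither issue is a gap in substance, just in exposition.
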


\begin{proof}
First we show that $\mathcal{S}$ is bounded above.  For $i=1,2$ write $\|B_i\|=\sup_s\|B_i(s)\|$ (since the $B_i$ are continuous and asymptotically constant this supremum is of course finite) and let $\eta_1=\|B_1\|+\|B_2\|$.  Suppose that $v(s)=\left(\begin{array}{c}x(s)\\y(s)\end{array}\right)$ (where $x,y\co \R\to V$) is a nonzero class-$W^{1,2}$ solution to   $\frac{dv}{ds}+(\eta E+B(s))v(s)=0$.  One then has \begin{align*} \frac{1}{2}\frac{d}{ds}\|x(s)\|^2&=\eta\|x(s)\|^2-\langle x(s),B_1(s)x(s)\rangle-\langle x(s),B_2(s) y(s)\rangle \\ \frac{1}{2}\frac{d}{ds}\|y(s)\|^2&=-\eta\|y(s)\|^2-\langle y(s),B_2(s)x(s)\rangle - \langle y(s),B_1(s)x(s)\rangle \end{align*} which yields \begin{equation}\label{minusplus} \frac{d}{ds}\left(\|x(s)\|^2-\|y(s)\|^2\right)\geq 2(\eta-\eta_1)(\|x(s)\|^2+\|y(s)\|^2).\end{equation}  Of course, by the uniqueness of solutions to linear ODE's and the assumption that $v$ is nonzero, we have $\|x(s)\|^2+\|y(s)\|^2> 0$ for all $s$. Now assume for contradiction that the number $\eta$ associated to our solution obeys $\eta>\eta_1$.   Then (\ref{minusplus}) implies first that if at any $s_0\in \R$ we had $\|x(s)\|^2\geq \|y(s)\|^2$, then for $s_1$ slightly larger than $s_0$ we would have $\|x(s_1)\|^2-\|y(s_1)\|^2>0$.  But then another application of (\ref{minusplus}) implies that $f(s)=\|x(s)\|^2-\|y(s)\|^2$ obeys the differential inequality $f'(s)\geq 2(\eta-\eta_1)f(s)$, which since $f(s_1)>0$ and $\eta>\eta_1$ would force $f(s)$ to diverge to $\infty$ as $s\to\infty$, which is obviously incompatible with $v$ being of class $W^{1,2}$.  Thus we have the desired contradiction unless $\|y(s)\|^2>\|x(s)\|^2$ for all $s$.  But in this case $g(s)=\|y(s)\|^2-\|x(s)\|^2$ is an everywhere-positive function obeying the differential inequality $g'(s)\leq -2(\eta-\eta_1)g(s)$, which forces $g$ to diverge to $\infty$ as $s\to -\infty$, again contradicting the assumption that $v$ was of class $W^{1,2}$.  This contradiction shows that if $\eta>\eta_1$ then no solution of the relevant type can exist, proving that the set $\mathcal{S}$ in the statement of the theorem has $\mathcal{S}\subset [\eta_0,\eta_1]$.

Our strategy now will be to identify $\mathcal{S}$ with the intersection of the zero loci of a collection of real analytic functions defined on $[\eta_0,\infty)$.  These functions obviously will not all be identically zero since we have already established that $\mathcal{S}\subset [\eta_0,\eta_1]$, so since the zero set of a nonconstant analytic function on a connected subset of $\R$ is always discrete, the proposition will follow from such an identification.

Where $B^+\in End(V\oplus V)$ is the common value of $B(s)$ for all $s\geq T$, and $B^-\in End(V\oplus V)$ is the common value of $B(s)$ for all $s\leq -T$, the operators $B^{\pm}$ are of the type considered in Proposition \ref{eigen}, and our choice of the parameter $\eta_0$ ensures that, for all $\mu$ in an open interval containing $[0,\frac{1}{\eta_0}]$ the symmetric operators $E+\mu B^{\pm}$ have the property that the orthogonal projections $\Pi_{B_{\pm}}^{-}(\mu)$ onto the spans of their negative eigenspaces vary analytically in $\mu$ and have rank $\dim V$, while the projections $\Pi_{B_{\pm}}^{+}(\mu)$ onto the spans of their positive eigenspaces also vary analytically in $\mu$ and have rank $\dim V$.

Now for $\eta\geq \eta_0$ the differential equation \begin{equation} \label{etab}\frac{dv}{ds}+(\eta E+B(s))v(s)=0 \end{equation} reduces for $s\leq -T$ to the autonomous equation 
$\frac{dv}{ds}+(\eta E+B^-)v(s)=0$, and for $s\geq T$ to the autonomous equation $\frac{dv}{ds}+(\eta E+B^+)v(s)=0$.  Any $W^{1,2}$ solutions to (\ref{etab}) must have $v(-T)$ belonging to the span of those eigenvectors of $\eta E+B^-$ with negative eigenvalues, and must have $v(T)$ belonging to the span of those eigenvectors of $\eta E+B^+$ with positive eigenvalues.  In other words, a $W^{1,2}$ solution to (\ref{etab}) defined on all of $\R$ must have $v(-T)\in Im(\Pi_{B_{-}}^{-}(\eta^{-1}))$ and $v(T)\in Im(\Pi_{B_{+}}^{+}(\eta^{-1}))$.  

Denote by $\Phi_{\eta}\co \R\to GL(V\oplus V)$ the unique solution to the initial value problem \[ \frac{d\Phi}{ds}+(\eta E+B(s))\Phi(s)=0\qquad \Phi(-T)=I\] where $I$ is the identity.  Thus $\Phi_{\eta}$ is the fundamental solution to (\ref{etab}) in the sense that any solution $v\co \R\to V\oplus V$ to (\ref{etab}) will have $v(s)=\Phi(s)v(-T)$ for all $s\in \R$.  Thus any solution to (\ref{etab}) which is of class $W^{1,2}$ will have $v(-T)\in Im(\Pi_{B_{-}}^{-}(\eta^{-1}))$, $v(T)\in Im(\Pi_{B_{+}}^{+}(\eta^{-1}))$, and $v(T)=\Phi_{\eta}(v(-T))$.  As such, we will have $\eta\in \mathcal{S}$ if and only if the images of the linear maps $\Phi_{\eta}\circ \Pi_{B_{-}}^{-}(\eta^{-1})$ and 
$\Pi_{B_{+}}^{+}(\eta^{-1})$ have nontrivial intersection.  Said differently, since the image of  $\Pi_{B_{+}}^{+}(\eta^{-1})$ is the same as the kernel of  $\Pi_{B_{+}}^{-}(\eta^{-1})$, we have $\eta\in \mathcal{S}$ if and only if the linear map  $\Pi_{B_{+}}^{-}(\eta^{-1})\circ\Phi_{\eta}(T)\circ \Pi_{B_{-}}^{-}(\eta^{-1})$ has rank strictly less than $\dim V$.  By Proposition \ref{eigen}, the maps $\Pi_{B_{+}}^{-}(\eta^{-1})$ and $\Pi_{B_{-}}^{-}(\eta^{-1})$ both vary analytically with $\eta\in [\eta_0,\infty)$; let us now check that $\Phi_{\eta}(T)$ varies analytically with $\eta$.

Indeed, this follows readily from the standard Picard iteration formula for $\Phi_{\eta}$: we will have \begin{align*} \Phi_{\eta}(T)&=I+\sum_{n=1}^{\infty}\int_{\{-T\leq s_n\leq\cdots\leq s_1\leq T\}}(\eta E+B(s_1))(\eta E+B(s_2))\cdots(\eta E+B(s_n))ds_n\cdots ds_1\\&=\sum_{n=0}^{\infty}\left(\sum_{m=0}^{n}C_{m,n}(T)\eta^{n-m}\right) \end{align*} where $C_{0,0}(T)=I$ and for $0\leq m\leq n$ and $n\geq 1$ \[ C_{m,n}(T)=\sum_{1\leq i_1<\cdots<i_m\leq n}\int_{\{-T\leq s_n\leq\cdots\leq s_1\leq T\}}E^{i_1-1}B(s_{i_1})E^{i_2-i_1-1}B(s_{i_2})\cdots B(s_{i_m})E^{n-i_m}ds_n\cdots ds_1.\]  Now the fact that $\{-T\leq s_n\leq\cdots\leq s_1\leq T\}$ has volume $\frac{(2T)^n}{n!}$ (along with the fact that $\|E\|=1$) gives an estimate \begin{equation} \label{cest}\|C_{m,n}(T)\|\leq \binom{n}{m}\frac{(2T)^n\|B\|^m}{n!} \end{equation}  From this estimate one easily sees that for any $k\geq 0$ the series $\sum_{m=0}^{\infty}C_{m,m+k}(T)$ is absolutely convergent to an operator having norm bounded above by $e^{2T\|B\|}\frac{(2T)^k}{k!}$, and that our above series expression for $\Phi_{\eta}(T)$ can be rearranged to give \[ \Phi_{\eta}(T)=\sum_{k=0}^{\infty}\eta^k\left(\sum_{m=0}^{\infty}C_{m,m+k}(T)\right);\] moreover, another application of (\ref{cest}) shows that this power series in $\eta$ has infinite radius of convergence, confirming the analyticity of $\Phi_{\eta}(T)$ as a function of $\eta$.

Thus, if we fix a basis for $V\oplus V$ and represent the $\eta$-dependent linear map $\Pi_{B_{+}}^{-}(\eta^{-1})\circ\Phi_{\eta}(T)\circ \Pi_{B_{-}}^{-}(\eta^{-1})$ by a matrix with respect to the fixed basis, this matrix will vary analytically with $\eta\in [\eta_0,\infty)$, and our set $\mathcal{S}$ will consist of those $\eta$ such that all $(\dim V)\times (\dim V)$ minors of the matrix are zero.  This confirms that $\mathcal{S}$ is the common zero locus of a collection of analytic functions of $\eta\in [\eta_0,\infty)$, so since $\mathcal{S}$ is bounded it must be finite.
\end{proof}

We now apply these results to Floer theory. Let $(M,\omega)$ be a closed $2n$-dimensional symplectic manifold and let $G\co M\to \R$ be a Morse function, which we will assume to have the property that around each critical point $p\in Crit(H)$ there is a Darboux chart $\phi_p\co U_p\cong B^{2n}(\ep)$ such that the second-order Taylor approximation to $G\circ \phi_{p}^{-1}$ is exact. (In other words, the Hessian of $G\circ\phi_{p}^{-1}$ is constant on $B^{2n}(\ep)$.)  Shrinking the $U_p$ if necessary, we may assume that $\overline{U_p}\cap \overline{U_q}=\varnothing$ for each pair of distinct critical points $p$ and $q$.  Let $J$ be an $\omega$-compatible almost complex structure on $M$ having the properties that \begin{itemize} \item[(i)] On each of the Darboux balls $U_p$, $J$ coincides with the pullback by $\phi_p$ of the standard complex structure on $B^{2n}(\ep)$.
\item[(ii)] Where $g_J(\cdot,\cdot)=\omega(\cdot,J\cdot)$ is the Riemannian metric induced by $\omega$ and $J$, the gradient flow of $G$ with respect to $g_J$ is Morse--Smale.
\end{itemize}

Of course, all of the above conditions will continue to hold if $G$ is replaced by $\lambda G$ for any  $\lambda>0$.

The almost complex structure $J$ satisfying (i) and (ii) will be fixed throughout the following discussion, and we will use $\nabla$ to denote the covariant derivative determined by the Levi-Civita connection of the metric $g_J$.

Consider a solution $\gamma\co\R\to M$ to the negative gradient flow equation \begin{equation}\label{gradflow} \dot{\gamma}(s)+\nabla G(\gamma(s))=0 \end{equation} obeying the finite energy condition $\int_{-\infty}^{\infty}\|\dot{\gamma}(s)\|_{g_J}^{2}ds<\infty$.  For any such $\gamma$ there are critical points $p_{\pm}\in Crit(G)$ such that $\gamma(s)\to p_{\pm}$ exponentially quickly as $s\to\pm\infty$.  As is well-known, the Morse--Smale condition is equivalent to the statement that for any such $\gamma$ the linearization $\mathcal{G}_{\gamma}\co W^{1,2}(\gamma^*TM)\to L^2(\gamma^*TM)$ of (\ref{gradflow}) is surjective, where $\mathcal{G}_{\gamma}$ is given by the formula \[ \mathcal{G}_{\gamma}(\zeta)=\nabla_s\zeta+\nabla_{\zeta}\nabla G(\gamma(s)).\]

The solution $\gamma$ to (\ref{gradflow}) gives rise to a solution \begin{align*} u_{\gamma}\co \R\times S^1 &\to M \\ u_{\gamma}(s,t)&=\gamma(s)\end{align*} to the Floer equation \begin{equation}\label{floereq} \frac{\partial u}{\partial s}+J(u(s,t))\left(\frac{\partial u}{\partial t}-X_{G}(u(s,t))\right)=0,\end{equation} and indeed all finite-energy $t$-independent solutions to (\ref{floereq}) evidently have the form $u=u_{\gamma}$ for some solution $\gamma$ to (\ref{gradflow}).  

We consider the question of whether the linearization of (\ref{floereq}) at the solution $u_{\gamma}$ is surjective.  In effect we will show that this is in fact the case if the Hessian of $G$ near its critical points is not too large and if $G$ is replaced by $\lambda G$ for a suitable real parameter $\lambda$ which may be taken arbitrarily close to $1$.  More precisely, if $\gamma$ is a finite-energy solution to (\ref{gradflow}) then for any $\lambda>0$ the map $ \gamma^{\lambda}(s)=\gamma(\lambda s)$ will be a solution to the version of (\ref{gradflow}) obtained by replacing $G$ by $\lambda G$, and hence we will have a solution \[ u_{\gamma^{\lambda}}(s,t)=\gamma(\lambda s)\] to the Floer equation associated to the Hamiltonian $\lambda G$.  We prove:

\begin{theorem} \label{appthm} Where $G$ and $J$ are as above, fix a finite-energy solution $\gamma\co \R\to M$ to (\ref{gradflow}) having $\gamma(s)\to p_{\pm}\in Crit (G)$ as $s\to\pm\infty$.  Assume that the Hessians $\mathcal{H}_{\pm}$ of $G$ at $p_{\pm}$ have operator norms $\|\mathcal{H}_{\pm}\|<\pi$.  Then for all but finitely many $\lambda\in (0,1]$ it holds that the linearization \[ \mathcal{F}_{u_{\gamma^{\lambda}}}\co W^{1,2}(\R\times S^1;u_{\gamma^{\lambda}}^*TM)\to L^2(\R\times S^1;u_{\gamma^{\lambda}}^*TM) \] of the Floer operator $u\mapsto \frac{\partial u}{\partial s}+J(u(s,t))\left(\frac{\partial u}{\partial t}-X_{\lambda G}(u(s,t))\right)$ at $u_{\gamma^{\lambda}}$ is surjective.
\end{theorem}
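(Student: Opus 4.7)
The plan is to Fourier-decompose $\mathcal{F}_{u_{\gamma^\lambda}}$ in the $t$-variable.  Because $\lambda G$, $J$, and $u_{\gamma^\lambda}$ are all $t$-independent, the linearization commutes with $S^1$-rotation in $t$ and splits as a direct sum of mode-$k$ operators $\mathcal{F}_{\lambda,k}$, $k\in\Z$; it suffices to show each $\mathcal{F}_{\lambda,k}$ is surjective outside a finite exceptional set of $\lambda\in(0,1]$.  I would first trivialize $u_{\gamma^\lambda}^{*}TM$ along $\gamma^\lambda$ using a $J$-unitary frame that matches the standard Darboux frames of $\phi_{p_\pm}$ near $p_\pm$; this can be arranged by fixing such a frame along $\gamma$ once and for all and pulling back via $s\mapsto\lambda s$.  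In this trivialization $J$ becomes a constant $J_{std}$ on $\R^{2n}$, and $\mathcal{F}_{u_{\gamma^\lambda}}$ takes the form $\partial_s+J_{std}\partial_t+Q_\lambda(s)$, where $Q_\lambda(s)$ equals the symmetric Hessian $\lambda\mathcal{H}_\pm$ outside a compact $s$-interval (since the trivialization is Darboux, hence parallel with vanishing connection form, near the endpoints).

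For the zero mode, $\mathcal{F}_{\lambda,0}$ coincides with the Morse linearization $\mathcal{G}_{\gamma^\lambda}$ of the negative gradient flow of $\lambda G$, because the two potential $\nabla J$-correction terms in the Floer linearization cancel via the identity $(\nabla_\zeta J)J+J(\nabla_\zeta J)=0$ coming from $J^2=-I$; hence $\mathcal{F}_{\lambda,0}$ is surjective for every $\lambda>0$ by the Morse--Smale hypothesis.  For $k\geq 1$, the real Fourier expansion in $\{\cos(2\pi kt),\sin(2\pi kt)\}$ yields an operator on $V\oplus V$ with $V=\R^{2n}$ of the form
\[ \mathcal{F}_{\lambda,k}=\partial_s+2\pi k\,\tilde J+\operatorname{diag}(Q_\lambda(s),Q_\lambda(s)),\qquad \tilde J=\begin{pmatrix}0 & J_{std}\\ -J_{std} & 0\end{pmatrix}. \]
Since $\tilde J$ is a symmetric involution with $\pm 1$-eigenspaces each of dimension $\dim V$, an orthogonal change of basis conjugates $\tilde J$ to the $E$ of Proposition~\ref{eigen} (up to an overall sign).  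Under the same change of basis, $\operatorname{diag}(Q_\lambda,Q_\lambda)$---which commutes with the swap of the two $V$-summands---is converted into the block-swap-symmetric form $\begin{pmatrix}B_1(s) & B_2(s)\\ B_2(s) & B_1(s)\end{pmatrix}$ with $B_1=\tfrac12(Q_\lambda-J_{std}Q_\lambda J_{std})$ and $B_2=\tfrac12(Q_\lambda+J_{std}Q_\lambda J_{std})$; asymptotically $B_1^\pm,B_2^\pm$ are symmetric with $\|B_1^\pm\|+\|B_2^\pm\|\leq 2\lambda\|\mathcal{H}_\pm\|<2\lambda\pi$.

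After substituting $\tau=\lambda s$ and dividing by $\lambda$, $\mathcal{F}_{\lambda,k}$ becomes precisely the operator treated by Proposition~\ref{finiteta}, with parameter $\eta=2\pi k/\lambda$ and---thanks to the pullback construction of the trivialization---with $\lambda$-independent zeroth-order data.  Proposition~\ref{eigen} shows that the asymptotic operators are hyperbolic with $\dim V$ positive and $\dim V$ negative eigenvalues each, so the rescaled mode-$k$ operator is Fredholm of index zero and surjectivity is equivalent to triviality of its $W^{1,2}$-kernel.  Choose $\eta_0\in(2\max\|\mathcal{H}_\pm\|,\,2\pi)$ (possible because $\|\mathcal{H}_\pm\|<\pi$); then for every $k\in\Z\setminus\{0\}$ and every $\lambda\in(0,1]$ we have $|\eta|=2\pi|k|/\lambda\geq 2\pi>\eta_0$, so Proposition~\ref{finiteta} applies uniformly and yields a single finite bounded set $\mathcal{S}\subset[\eta_0,\eta_1]$ of $\eta$-values for which the kernel is nontrivial.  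The bad $\lambda\in(0,1]$ therefore lie in $\{2\pi k/\eta:\eta\in\mathcal{S},\,k\in\Z\setminus\{0\}\}\cap(0,1]$, and the boundedness of $\mathcal{S}$ forces $|k|\leq\eta_1/(2\pi)$, so only finitely many $\lambda$ are bad.  Mode-wise surjectivity patches together into global surjectivity via $L^2$-orthogonality of the Fourier decomposition, with uniform bounds in high modes coming from the $O(|k|)$ growth of the spectral gap of the asymptotic operators.

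The main obstacle is the reduction carried out in the second paragraph: one must verify both that the orthogonal diagonalization of $\tilde J$ automatically puts $\operatorname{diag}(Q_\lambda,Q_\lambda)$ into the block-swap-symmetric form demanded by Proposition~\ref{finiteta} (a consequence of the swap-invariance of $\operatorname{diag}(Q,Q)$ together with the structure of the $\pm 1$-eigenspaces of $\tilde J$), and that the trivialization can be arranged so that, after the rescaling $\tau=\lambda s$, the resulting zeroth-order data is $\lambda$-independent.  A secondary technicality is that the $J$-unitary trivialization introduces a connection $1$-form $\mathbf{A}$ which is anti-symmetric rather than symmetric in the interior, but because $\mathbf{A}$ commutes with $J_{std}$ (being anti-Hermitian) it contributes only a diagonal piece to $B_1$ and nothing to $B_2$, vanishes at both ends, and leaves the hypotheses of Proposition~\ref{finiteta} intact---since that proposition requires the $B_i$ to be symmetric only asymptotically, not in the interior.
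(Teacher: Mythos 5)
Your proposal is correct and follows essentially the same overall strategy as the paper: exploit $t$-independence to Fourier-decompose the linearization, reduce each nonzero mode to the asymptotically autonomous ODE system of Proposition~\ref{finiteta}, and invoke the hyperbolicity control from Proposition~\ref{eigen} and the analyticity argument to deduce that nontrivial $W^{1,2}$-kernel can only occur for finitely many rescaling parameters. Two genuine differences of route are worth noting. First, the paper never proves mode-wise \emph{surjectivity} or patches modes together; instead it observes that the index of $\mathcal{F}_{u_{\gamma^{\lambda}}}$ equals the index of the Morse linearization $\mathcal{G}_{\gamma^{\lambda}}$, so it suffices to show $\dim\ker\mathcal{F}_{u_{\gamma^{\lambda}}}\leq\dim\ker\mathcal{G}_{\gamma^{\lambda}}$, which in turn reduces to showing that all kernel elements are $t$-independent---a pure statement about the kernel, handled mode by mode without any need for uniform inverse bounds or density-plus-closed-range arguments. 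Your patching step can be salvaged (density of the image over finite Fourier sums, then closed range from Fredholmness), but the paper's reduction is cleaner and bypasses the issue entirely. Second, the paper uses the ``complex'' modes $\xi_{\pm k}(s)=\int_0^1e^{\mp 2\pi k t J}\xi(s,t)\,dt$, which are coupled as a pair, rather than the real modes $(a_k,b_k)$; the two are related by a linear change of variables, but the complex modes lead directly to the block form with $B_1=\mathcal{H}^{1,0}$, $B_2=\mathcal{H}^{0,1}$. If one carries out the orthogonal conjugation taking $\tilde J$ to $E$ on the real modes as you suggest, one finds $B_2=\tfrac12(QJ-JQ)=Q^{0,1}J$ rather than $\tfrac12(Q+JQJ)=Q^{0,1}$ as you wrote; the operator norm and symmetry properties are unchanged (so the application of Proposition~\ref{finiteta} goes through), but the stated formula is off. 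Finally, the cancellation you ascribe to $(\nabla_\zeta J)J+J(\nabla_\zeta J)=0$ for the $k=0$ mode arises in the paper's calculation simply because $\xi_0=\xi_{-0}$ makes the two $(\nabla_s J)J$ correction terms cancel identically---no separate algebraic identity is needed.
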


\begin{proof}  First note that, by virtue of the fact that $\|\mathcal{H}_{\pm}\|<2\pi$, the Fredholm index of the linearization $\mathcal{F}_{u_{\gamma^{\lambda}}}$ is equal to $ind(p_-)-ind(p_+)$ where $ind$ denotes the Morse index (see, \emph{e.g.}, \cite[Theorem 4.1 and Lemma 7.2]{SZ}); in turn this latter quantity is equal to the index of the linearization $\mathcal{G}_{\gamma^{\lambda}}\co W^{1,2}((\gamma^{\lambda})^{*}TM)\to L^2((\gamma^{\lambda})^*TM)$ of the negative gradient flow operator, which is surjective by the Morse--Smale condition.  So it suffices to show that, for all but finitely many $\lambda$, we have $\dim\ker \mathcal{F}_{u_{\gamma^{\lambda}}}\leq \dim\ker \mathcal{G}_{\gamma^{\lambda}}$.  Now any element $\zeta\in \ker \mathcal{G}_{{\gamma}^{\lambda}}$ gives rise to an element $\xi_{\zeta}\in\ker\mathcal{F}_{u_{\gamma^{\lambda}}}$ by the prescription $\xi_{\zeta}(s,t)=\zeta(s)\in T_{\gamma^{\lambda}(s)}M=T_{u_{\gamma^{\lambda}}(s,t)}M$, and conversely any $t$-independent element $\xi\in \ker\mathcal{F}_{u_{\gamma^{\lambda}}}$ is of this form. So for any $\lambda\in (0,1]$, to show that  $\dim\ker \mathcal{F}_{u_{\gamma^{\lambda}}}\leq \dim\ker \mathcal{G}_{\gamma^{\lambda}}$ we just need to show that these $\xi_{\zeta}$ are the only elements of $\ker\mathcal{F}_{u_{\gamma^{\lambda}}}$, \emph{i.e.}, that all elements of $\ker\mathcal{F}_{u_{\gamma^{\lambda}}}$ are $t$-independent.

  The linearization $\mathcal{F}_{u_{\gamma^{\lambda}}}$ is given by, for $\xi\in W^{1,2}(\R\times S^1;u_{\gamma^{\lambda}}^*TM)$, \[ \mathcal{F}_{u_{\gamma^{\lambda}}}\xi=\nabla_s\xi +J(\gamma(\lambda s))\frac{\partial \xi}{\partial t}+\lambda (\nabla_{\xi}\nabla G)(\gamma(\lambda s)).\]  For $\lambda>0$ and for a section $\sigma$ of $u_{\gamma^{\lambda}}^{*}TM$ define a section $\hat{\sigma}$ of $u_{\gamma}^{*}TM$ by \[ \hat{\sigma}(s,t)=\sigma(s/\lambda,t).\]  Then where we define $\widehat{\mathcal{F}}_{u_{\gamma}}^{\lambda}\co W^{1,2}(\R\times S^1;u_{\gamma}^{*}TM)\to L^2(\R\times S^1;u_{\gamma}^{*}TM)$ by \[ \widehat{\mathcal{F}}_{u_{\gamma}}^{\lambda}\xi=\nabla_s\xi+\lambda^{-1} J(\gamma(s))\frac{\partial \xi}{\partial t}+(\nabla_{\xi}\nabla G)(\gamma(s)),\] we have \[ \widehat{\mathcal{F}_{u_{\gamma^{\lambda}}}\xi}=\lambda \widehat{\mathcal{F}}_{u_{\gamma}}^{\lambda}\widehat{\xi}.\]  Since obviously $\hat{\sigma}$ is $t$-independent if and only if $\sigma$ is $t$-independent, it now suffices to show that, for all but finitely many $\lambda\in (0,1]$, every element $\xi\in W^{1,2}(\R\times[0,1];u_{\gamma}^{*}TM)$ of the kernel of the operator $\widehat{\mathcal{F}}_{u_{\gamma}}^{\lambda}$ is $t$-independent.

  To achieve this we consider the Fourier decomposition (in the $t$-variable) of a hypothetical element $\xi\in \ker \widehat{\mathcal{F}}_{u_{\gamma}}^{\lambda}$.  For $k\in \Z$ and for a section $\xi$ of $u_{\gamma}^{*}TM$ define a new section $e^{-2\pi ktJ}\xi$ by $(e^{-2\pi ktJ}\xi)(s,t)=(\cos(2\pi kt))\xi(s,t)-\sin(2\pi kt)J(\gamma(s))\xi(s,t)$.  Moreover define a section $\xi_k$ of $\gamma^*TM$ by \[ \xi_k(s)=\int_{0}^{1}e^{-2\pi ktJ}\xi(s,t)dt \] (of course this is well-defined since $\xi(s,t)\in T_{\gamma(s)}M$ for all $t$).
  
  We observe that \begin{align*} (\nabla_s\xi)_k(s)&=\int_{0}^{1}e^{-2\pi ktJ}\nabla_s\xi(s,t)\\&=\nabla_s\left(\int_{0}^{1}(\cos(2\pi kt)I-\sin(2\pi kt)J)\xi(s,t)dt\right)+(\nabla_s J)(\gamma(s))\left(\int_{0}^{1}\sin(2\pi kt)\xi(s,t)dt\right)
  \\&= \nabla_s(\xi_k)(s)+\frac{1}{2}(\nabla_s J)J(\gamma(s))\int_{0}^{1}\left(e^{-2\pi ktJ}-e^{2\pi ktJ}\right)\xi(s,t)dt
  \\&=\left(\nabla_s\xi_k +\frac{1}{2}(\nabla_s J)J\xi_k-\frac{1}{2}(\nabla_s J)J\xi_{-k}\right)(s).\end{align*}

Also, \begin{align*}
\left(J\frac{\partial \xi}{\partial t}\right)_k(s)&=J(\gamma(s))\int_{0}^{1}e^{-2\pi ktJ}\frac{\partial \xi}{\partial t}dt\\&=J(\gamma(s))\left(\int_{0}^{1}\frac{\partial}{\partial t}\left(e^{-2\pi kt J}\xi(s,t)\right)dt+2\pi kJ(\gamma(s))\int_{0}^{1}e^{-2\pi ktJ}\xi(s,t)dt\right)\\&
=-2\pi k\xi_k(s) \end{align*} by periodicity and the fact that $J(\gamma(s))^2=-I$.  

Moreover, if we resolve the Hessian operator $\mathcal{H}(s)\co T_{\gamma(s)}M\to T_{\gamma(s)}M$ (defined by $\mathcal{H}(s)v=\nabla_v\nabla G(\gamma(s)))$ into its complex-linear and complex-antilinear parts as  \[ \mathcal{H}^{1,0}(s)=\frac{1}{2}\left(\mathcal{H}(s)-J(\gamma(s))\mathcal{H}(s)J(\gamma(s))\right) \quad \mbox{and}\quad \mathcal{H}^{0,1}(s)=\frac{1}{2}\left(\mathcal{H}(s)+J(\gamma(s))\mathcal{H}(s)J(\gamma(s))\right) \] we see that \[ e^{-2\pi ktJ} \mathcal{H}\xi=\mathcal{H}^{1,0}e^{-2\pi ktJ}\xi+\mathcal{H}^{0,1}e^{2\pi ktJ}\xi,\] and so \[ (\mathcal{H}\xi)_k(s)=(\mathcal{H}^{1,0}\xi_k)(s)+(\mathcal{H}^{0,1}\xi_{-k})(s).\]  

These calculations show that, for $\xi\in W^{1,2}(\R\times S^1;u_{\gamma}^{*}TM)$ and $k\in \Z$, we have \[ \left(\widehat{\mathcal{F}}_{u_{\gamma}}^{\lambda}\xi\right)_k=\nabla_s\xi_k +\left(-2\pi k\lambda^{-1}+\mathcal{H}^{1,0}+\frac{1}{2}(\nabla_s J)J\right)\xi_k+\left(\mathcal{H}^{0,1}-\frac{1}{2}(\nabla_s J)J\right)\xi_{-k}.\]  Thus an element $\xi\in \ker\widehat{\mathcal{F}}_{u_{\gamma}}^{\lambda}$ has, for each $k\in \Z_{>0}$, \begin{equation}\label{fourier} \nabla_s\left(\begin{array}{c}\xi_k\\ \xi_{-k}\end{array}\right)+\left(\begin{array}{cc} -2\pi k\lambda^{-1}I+\mathcal{H}^{1,0}+\frac{1}{2}(\nabla_s J)J & \mathcal{H}^{0,1}-\frac{1}{2}(\nabla_s J)J \\ \mathcal{H}^{0,1}-\frac{1}{2}(\nabla_s J)J & 2\pi k\lambda^{-1}I+\mathcal{H}^{1,0}+\frac{1}{2}(\nabla_s J)J \end{array}\right)\left(\begin{array}{c}\xi_k\\ \xi_{-k}\end{array}\right)=0 \end{equation}

Now let us choose a unitary trivialization of $\gamma^*TM$ which, over those $s\in \R$ with $|s|$ large enough such that $\gamma(s)$ lies in one of the Darboux charts around the critical points $p_{\pm}$ in which $J$ was assumed to be standard and the Hessian $\mathcal{H}_{\pm}$ of $H$ was assumed constant, coincides with the trivialization of $\gamma^*TM$ induced by these Darboux charts.  Rewriting (\ref{fourier}) in terms of this trivialization gives equations, for $v_k\co \R\to \R^{2n}\times \R^{2n}$ and $k\in \Z_{>0}$, \begin{equation}\label{fourtriv} \frac{dv_k}{ds}+(2\pi k\lambda^{-1} E+B(s))v_k(s)=0\end{equation} where the smooth map $B\co \R\to Hom_{\R}(\R^{2n}\times \R^{2n},\R^{2n}\times \R^{2n})$ is independent of $k$ and $\lambda$ and coincides with $\left(\begin{array}{cc} \mathcal{H}_{\pm}^{1,0} & \mathcal{H}_{\pm}^{0,1} \\ \mathcal{H}_{\pm}^{0,1} & \mathcal{H}_{\pm}^{1,0} \end{array}\right)$ when $\pm s$ is large enough such that $\gamma(s)$ is in the Darboux chart around $p_{\pm}$.  Now $\mathcal{H}_{\pm}^{1,0}$ and $\mathcal{H}_{\pm}^{0,1}$ are symmetric since $\mathcal{H}_{\pm}$ is, and we have $\|\mathcal{H}_{\pm}^{1,0}\|+\|\mathcal{H}_{\pm}^{0,1}\|\leq 2\|\mathcal{H}_{\pm}\|<2\pi$.  So by Proposition \ref{finiteta} the set \[ S=\left\{\mu\in [1,\infty)\left|\begin{array}{c}\mbox{There is a nonzero, class-$W^{1,2}$ solution to}\\ \frac{dv}{ds}+(2\pi\mu E+B(s))v(s)=0\end{array}\right.\right\} \] is finite. If $k_0$ is any integer larger than the largest element of $S$ it in particular follows that for $k\geq k_0$ there is no $\lambda\in (0,1]$ such that (\ref{fourtriv}) has a nontrivial $W^{1,2}$ solution.  Moreover, since $S$ is finite, for any $k\in\{1,\ldots,k_0-1\}$ there are only finitely many $\lambda\in (0,1]$ such that (\ref{fourtriv}) has a nontrivial $W^{1,2}$ solution.  Combining these two facts shows that there are only finitely many $\lambda\in (0,1]$ such that there exists any $k$ so that (\ref{fourtriv}) has a $W^{1,2}$ solution.

Consequently we obtain that, if $\lambda\in (0,1]$ is not among these finitely many exceptional values, then any element $\xi\in \ker\widehat{\mathcal{F}}_{u_{\gamma}}^{\lambda}$ has $\xi_k=0$ for all $k\in \Z\setminus\{0\}$.  So the section $\underline{\xi}$ defined by $\underline{\xi}(s,t)=\xi(s,t)-\xi_0(s)$ has $\underline{\xi}_k=0$ for all $k\in\Z$.  Thus $\underline{\xi}$ is $L^2$-orthogonal to any section of the form $(s,t)\mapsto e^{2\pi ktJ}\zeta(s)$ for $\zeta\in L^2(\gamma^*TM)$.  Since linear combinations of sections of this latter form are dense in $L^2$ it follows that $\underline{\xi}=0$, and hence that $\xi(s,t)=\xi_0(s)$ for all $s$.  This proves that, for all but finitely many values of $\lambda$, all elements of $\ker\widehat{\mathcal{F}}_{u_{\gamma}}^{\lambda}$ are $t$-independent, which as explained earlier suffices to prove the theorem.
\end{proof}

\end{document}